\documentclass[12pt,oneside,reqno,a4paper]{article}
\usepackage[utf8]{inputenc}
\usepackage{amsmath,amsthm,amssymb}     
\usepackage{bbm}
\usepackage{tikz}
\usetikzlibrary{arrows, math}
\usepackage{tikz-cd}
\usepackage{xcolor}
\usepackage{adjustbox}
\usepackage{caption}
\usepackage{hyperref}
\usepackage{mathtools}

\usepackage[T1]{fontenc}

\usepackage[numbers]{natbib}
\usepackage{graphicx}
\usepackage{url}
\usepackage{fancyhdr}
\usepackage{verbatim}
\usepackage{abstract}
\usepackage[english]{babel}
\usepackage{tipa}

\tikzcdset{
  cells={font=\everymath\expandafter{\the\everymath\displaystyle}},
}
\usepackage{fancyhdr}
\newtheorem{theorem}{Theorem}[section]

\newtheorem{corollary}[theorem]{Corollary}
\newtheorem{proposition}[theorem]{Proposition}
\newtheorem{lemma}[theorem]{Lemma}
\theoremstyle{definition}
\newtheorem{definition}[theorem]{Definition}
\newtheorem{remark}[theorem]{Remark}
\newtheorem{example}[theorem]{Example}
\newtheorem{conjecture}[theorem]{Conjecture}
\newtheorem{construction}[theorem]{Construction}

\DeclareFontFamily{U}{dmjhira}{}
\DeclareFontShape{U}{dmjhira}{m}{n}{
  <-> dmjhira
}{}
\DeclareFontSubstitution{U}{dmjhira}{m}{n}

\newcommand{\yo}{\textup{{\usefont{U}{dmjhira}{m}{n}\symbol{"48}}}}
\newcommand{\esh}{\textup{\raisebox{1pt}\textesh}}
\newcommand{\op}{^\textup{op}}
\newcommand{\co}{^\textup{co}}
\newcommand{\inv}{{^{-1}}}

\newcommand{\Ob}{\textup{Ob}}
\newcommand{\Set}{\textbf{\textup{Set}}}
\newcommand{\Cat}{\textbf{\textup{Cat}}}
\newcommand{\Catic}{\textbf{\textup{Cat}}_\textup{ic}}

\newcommand{\Bicat}{\textbf{\textup{Bicat}}}
\newcommand{\Bicatlic}{{\textbf{\textup{Bicat}}_\textup{lic}}}
\newcommand{\Bicatic}{{\textbf{\textup{Bicat}}_\textup{ic}}}
\newcommand{\Psh}[1]{\textbf{\textup{Psh}}(#1)}
\newcommand{\Pshic}[1]{\textbf{\textup{Psh}}_\textup{ic}(#1)}
\newcommand{\Pshat}[1]{\textbf{\textup{Psh}}^\textup{at}(#1)}
\newcommand{\Pshattwo}{\textbf{\textup{Psh}}^\textup{at}}
\newcommand{\Pshicat}[1]{\textbf{\textup{Psh}}^\textup{at}_\textup{ic}(#1)}
\newcommand{\PshP}[1]{\textbf{\textup{Psh}}_{P}(#1)}

\newcommand{\rAct}[1]{\mathbb{A}\textup{ct-}#1}
\newcommand{\clb}{\clubsuit}
\newcommand{\spd}{\spadesuit}
\newcommand{\Kar}[1]{\textup{Kar}(#1)}
\newcommand{\Kartwo}{\textup{Kar}}

\renewcommand{\tilde}[1]{\widetilde{#1}}
\renewcommand{\phi}{\varphi}
\newcommand{\is}{^\sharp}
\newcommand{\es}{^\flat}
\newcommand{\iso}{\,\cong\,}
\renewcommand{\equiv}{\,\simeq\,}
\newcommand{\id}{\textup{id}}
\newcommand{\B}{\textup{B}}
\newcommand{\C}{\mathcal{C}}
\newcommand{\D}{\mathcal{D}}
\newcommand{\E}{\mathcal{E}}

\newcommand{\J}{\mathcal{J}}
\newcommand{\K}{\mathcal{K}}
\newcommand{\U}{\mathcal{U}}

\newcommand{\bB}{\mathbb{B}}
\newcommand{\bC}{\mathbb{C}}
\newcommand{\bD}{\mathbb{D}}
\newcommand{\bI}{\mathbb{I}}
\newcommand{\bJ}{\mathbb{J}}
\newcommand{\lt}{\vartriangleleft}
\newcommand{\rt}{\vartriangleright}
\newcommand{\p}{^\prime}
\newcommand{\pp}{^{\prime\prime}}
\newcommand{\colim}{\textup{colim}}
\newcommand{\colimic}{\textup{colim}_{\textup{ic}}}

\begin{document}

\renewcommand{\thepage}{C-\Roman{page}}
\title{Bicategorical Idempotent Completion}
\author{Tessa Kammermeier}
\maketitle

\pagenumbering{arabic}
\begin{abstract}
In recent years, higher categorical variants of idempotents have become an object of interest. Notably, they appear when categorifying algebraic structures motivated by mathematical physics. Working in the setting of bicategories, we give a rigorous definition of the bicategorical idempotent completion as outlined by Gaiotto and Johnson-Freyd in \cite{GJF}. We show that this agrees with the Cauchy completion of a locally idempotent complete bicategory in the sense of Lawvere.
\end{abstract}

\tableofcontents

\section{Introduction}

Completions are an important part of a category theorist's toolkit. They allow one to guarantee the existence of certain objects simply by forcing them into existence. This is, of course, a prevailing notion in all of mathematics, e.g., the Cauchy completion of a metric space.

The idempotent completion of a category ensures that all idempotents in this category split. It is also often called the Karoubi envelope, as this construction was first written down by Max Karoubi in his introduction to K-theory \cite{K}. In the last few years, this construction has been generalised to higher categorical contexts such as the equivariant completion of a locally idempotent complete bicategory \cite{CR}, the idempotent completion of a locally idempotent complete bicategory \cite{DR}, the Euler completion of a pivotal 2-category \cite{CRS}, the orbifold completion of a 3-category \cite{CM}, and the Karoubi envelope of an $n$-category \cite{GJF}.

Similarly, William F. Lawvere introduced the Cauchy completion of an enriched category \cite{L} generalising the Cauchy completion of metric spaces. The Cauchy completion ensures the existence of all absolute colimits, or more generally, all absolute weighted colimits, i.e., colimits which are preserved by all functors. In the setting of enriched categories, explicitly describing the Cauchy completion takes great effort, see \cite{BD}, \cite{NST}, \cite{LT}. In the case of ordinary, $\Set$-enriched categories, it was quickly realised, that Cauchy completion and idempotent completion agree.

In this paper, we show that the Cauchy completion of a locally idempotent complete bicategory, which is a category weakly enriched in idempotent complete categories, agrees with its bicategorical idempotent completion as defined by Davide Gaiotto and Theo Johnson-Freyd \cite{GJF}.

\begin{theorem}(\ref{x8}, \ref{x10})
Let $\bB$ be a bicategory whose Hom-categories are all idempotent complete. The Cauchy completion of $\bB$, as a category weakly enriched in idempotent complete categories, and the bicategorical idempotent completion of $\bB$ are equivalent. It follows, that $\bB$ is complete under absolute weighted colimits if and only if it is complete under splittings of 2-idempotents.
\end{theorem}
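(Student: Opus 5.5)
We will identify both completions with the same full sub-bicategory of a presheaf bicategory, and then compare their characterising universal properties. Let $\Pshic{\bB}$ denote the bicategory of pseudofunctors $\bB\op\to\Catic$, the enriched presheaf bicategory for the base $\Catic$. The Cauchy completion in Lawvere's sense is taken to be the full sub-bicategory of $\Pshic{\bB}$ on the \emph{absolute} (small-projective, or Cauchy) presheaves. These are the $P$ for which $\Pshic{\bB}(P,-)$ preserves all weighted colimits; equivalently, $P$ has a right adjoint in the bicategory of $\Catic$-profunctors. On the other side, the bicategorical idempotent completion $\Kar{\bB}$ has as objects the 2-idempotents (condensation monads) $(b,e,\mu,\delta)$ in $\bB$: a 1-morphism $e\colon b\to b$ with $\mu\colon ee\Rightarrow e$ and $\delta\colon e\Rightarrow ee$ satisfying associativity, coassociativity, the Frobenius relations and $\mu\delta=\id_e$. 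The 1-morphisms are bimodules, and the 2-morphisms are bimodule maps. The local idempotent completeness of $\bB$ is what makes relative tensor products of bimodules exist, since they arise as splittings of idempotents.

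The first step is to construct a pseudofunctor $\Phi\colon \Kar{\bB}\to\Pshic{\bB}$. It sends $(b,e)$ to the ``image'' of the idempotent $e_*$ acting on the representable $\yo b=\bB(-,b)$. Concretely, $\Phi(b,e)(a)$ is the category of right $e$-modules $a\to b$, which by local idempotent completeness is the splitting of the idempotent $e\circ-$ on $\bB(a,b)$. A bimodule ${}_eM_f$ is sent to $-\otimes_e M$. Then $\Phi$ is fully faithful on Hom-categories: by a bicategorical Yoneda argument, transformations $\Phi(e)\Rightarrow\Phi(f)$ correspond to $e$-$f$-bimodules. Next we show that each $\Phi(b,e)$ is absolute. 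The point is that $\Phi(b,e)$ is a retract, in the sense of a condensation, of the representable $\yo b$: the maps $\yo b\to\Phi(e)\to\yo b$ compose to $e_*$ and split the condensation monad. Representables are absolute, and absolute presheaves are closed under splittings of condensation monads, because every functor preserves such splittings; they are absolute colimits, namely a weighted colimit whose weight is given by the walking condensation.

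The converse step, which I expect to be the main obstacle, is essential surjectivity: every absolute presheaf $P$ is equivalent to some $\Phi(b,e)$. The plan is to write $P$ as the weighted colimit $P\simeq\colim^{P}\yo$ (density), and apply $\Pshic{\bB}(P,-)$, which preserves it. The identity of $P$ then factors through finitely many representables. More precisely, absoluteness gives a right adjoint profunctor, and the counit/unit data produce an object $b$ together with maps $s\colon P\to\yo b$ and $r\colon \yo b\to P$ such that $rs$ admits a section-type 2-cell exhibiting $P$ as a condensation of $\yo b$. The delicate point is that in $\Catic$ one only obtains a finite ``sum'' of representables, and $\bB$ is not assumed to have direct sums. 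I would handle this by showing that, in the weighted setting over $\Catic$, the coend presenting $\id_P$ still localises at a single object, using idempotent completeness of the Hom-categories to absorb the retraction. If this fails, the statement should be read with the appropriate additivity built into ``absolute''. Once $P$ is a condensation of $\yo b$, set $e=sr$. The Yoneda lemma makes $e$ a 1-morphism $b\to b$ carrying the induced condensation monad structure, and then $\Phi(b,e)\simeq P$.

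Together these give a biequivalence between $\Kar{\bB}$ and the Cauchy completion. For the final sentence of the statement we argue as follows. $\bB$ is Cauchy complete iff the Yoneda embedding $\bB\to$ (Cauchy completion) is essentially surjective. Via $\Phi$, this holds iff every $(b,e)$ is equivalent to a unit object $(c,\id_c)$, which is exactly the statement that every 2-idempotent in $\bB$ splits.
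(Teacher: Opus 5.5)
Your first half matches the paper. $\Phi(b,e)$ is atomic because it splits the 2-idempotent $e_*$ on $\yo_\bB b$ (Remark \ref{x3} with Corollary \ref{x7}), and full faithfulness comes from a Yoneda argument. The genuine gap is essential surjectivity. You correctly identify it as the crux but do not prove it, and your diagnosis of the obstacle is wrong. Over the cartesian base $\Catic$ no ``finite sums of representables'' ever arise, so there is nothing to absorb. No additivity needs to be built into ``absolute''; the statement holds as given.

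What is missing is the explicit shape of weighted colimits in $\Catic$. The colimit in $\Cat$ of Construction \ref{const} has as objects exactly the objects of $\coprod_b Wb\times Fb$: only the isomorphisms $\gamma$ are freely adjoined, never new objects. The colimit in $\Catic$ is its Karoubi envelope (Theorem \ref{x2}). Apply $\Pshic{\bB}(S,-)$ to the density colimit $S\simeq\colim^S\yo_\bB$; since $S$ is atomic, this colimit is preserved. The comparison functor $\Kar{\colim^S\Pshic{\bB}(S,\yo_\bB-)}\to\Pshic{\bB}(S,S)$ sends a pair $(a,\alpha)$, with $a\in Sb$ and $\alpha\colon S\to\yo_\bB b$, to $\lambda_b(a)\circ\alpha$. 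Since it is an equivalence, $\id_S$ is, in the ordinary 1-categorical sense, a retract of a \emph{single} $\lambda_b(a)\circ\alpha$. Concretely, there are 2-cells $\phi\colon\lambda_b(a)\alpha\Rightarrow\id_S$ and $\psi\colon\id_S\Rightarrow\lambda_b(a)\alpha$ with $\phi\psi=\id$. These exhibit $S$ as a splitting of the 2-idempotent $\alpha\circ\lambda_b(a)$ on one representable $\yo_\bB b$ (Proposition \ref{2retato}). Exactly as for $\Set$, the identity localises at one summand, here up to a retraction. Your right-adjoint-profunctor detour, which is asserted without proof, lands in the same coend and needs the same fact.

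Some smaller points:
\begin{itemize}
\item From ``$P$ splits $e_*$ on $\yo_\bB b$'' you still need uniqueness of splittings up to equivalence to conclude $\Phi(b,e)\simeq P$. The paper gets this from the weighted-colimit description of splittings, which uses local idempotent completeness (Corollary \ref{x4}).
\item ``Every functor preserves splittings'' does not by itself give closure of atomic presheaves under 2-retracts. You also need that the comparison transformation of Lemma \ref{univnat}, being an equivalence at $\yo_\bB b$, is one at its 2-retract (Lemma \ref{splitequiv}).
\item Reading ``complete under icAWCs'' as ``the Yoneda embedding is essentially surjective onto the Cauchy completion'' is not merely definitional. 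It rests on Propositions \ref{absato2}, \ref{cau2com} and \ref{cau2emb}.
\end{itemize}
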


\subsection{Outline}

In section \ref{sec3}, we recall the 1-categorical case of idempotent completion and the Cauchy completion of an ordinary, $\Set$-enriched category. We do this as we use the 1-categorical case as a scaffold for the rest of the paper and to prove the result that idempotent completing a category defines a left-adjoint pseudofunctor.

In section \ref{sec4}, we define the idempotent completion of a locally idempotent complete bicategory and prove its universal property, following the outline of \cite[Theorem 2.3.11.]{GJF}. To prove this result for bicategories, we introduce a graphical calculus for locally idempotent complete bicategories which we use frequently to simplify calculations.

In section \ref{sec5}, we show that the splitting of a 2-idempotent defines a weighted colimit in a locally idempotent complete bicategory, proving that splittings of a given 2-idempotent define a trivial groupoid. We do this using extensions of pseudofunctors, which we detail in the appendix \ref{app3}. We then define the Cauchy completion of a locally idempotent complete bicategory and prove its universal property.

In section \ref{sec6}, we prove the main result of this paper, namely that for a locally idempotent complete bicategory, its Cauchy completion is equivalent to its idempotent completion. To prove some necessary statements about this equivalence, we give an explicit construction for weighted colimits in $\Cat$ and $\Catic$ which can also be found in the appendix \ref{app1}, \ref{app2}.

\subsection{Conventions and Notation}

We refer to \cite{JY} for any details and proofs regarding bicategorical fundamentals that are not present in this paper. In it, one can find definitions and proofs of all basic bicategorial concepts up to the bicategorical Yoneda lemma and many more which are not relevant to this work. With some exceptions, we try to stick to the terminology and notation used therein. For the basic theory of weighted colimits we refer to \cite[Chapter 7]{R} and \cite[Chapter 4]{Lor}. For the theory of bicategorical coends we refer to \cite[Chapter 7.1]{Lor}. 

We use $\C$, $\D$, $\E$, $\ldots$ to denote 1-categories. We assume these to be small categories. In general, we denote arbitrary objects by lowercase Latin letters, objects carrying some structure by uppercase Latin letters and morphisms by lowercase Latin letters. We define $\Psh{\C}$ to be the large category of presheaves on a given category $\C$, i.e., the category of functors $\C\op\to\Set$, where $\Set$ is the large category of small sets. We use $\yo_\C$ to denote the Yoneda embedding $\C\to\Psh{\C}$ and we use $\Delta_c$ to denote the constant functor at an object $c$. We assume all (weighted) colimits to be small.

We call weak 2-categories \textit{bicategories} and strict 2-categories simply \textit{2-categories}. We denote them by $\bB$, $\bC$, $\bD$, $\ldots$ and assume them to be small. In general, we denote arbitrary objects by lowercase Latin letters, objects carrying some structure by uppercase Latin letters, 1-morphisms by lowercase Latin letters and 2-morphisms by lowercase Greek letters. Two objects in a bicategory are \textit{equivalent} if they are isomorphic up to 2-isomorphisms. We denote composition of 1-morphisms by $\circ$ and composition of 2-morphisms by $\cdot$. The opposite bicategory $\bB\op$ of a bicategory $\bB$ reverses the direction of 1-morphisms.

Let $P$ be a property of categories. We call a bicategory $\bB$ \textit{locally} $P$ if every Hom-category of $\bB$ satisfies $P$. Examples include locally discrete and locally idempotent complete bicategories. We denote the large 2-category of small categories by $\Cat$ and the full sub-2-category of categories satisfying $P$ by $\Cat_P$.

A \textit{pseudofunctor} is a morphism between bicategories such that all its coherence 2-morphisms are isomorphisms. A \textit{2-functor} is a pseudofunctor between 2-categories such that all its coherence 2-morphisms are given by identities. We denote pseudofunctors by uppercase Latin letters. 

A \textit{strong transformation} is a morphism between pseudofunctors such that all its coherence 2-morphisms are isomorphisms. A \textit{2-natural transformation} is a strong transformation between 2-functors such that all its coherence 2-morphisms are given by identities. We denote strong transformations by lowercase Greek Letters.

A \textit{modification} is a morphism between strong transformations. We denote these by uppercase Greek letters.

We use $f^*$ to denote precomposition with a morphism and $f_*$ to denote postcomposition with a morphism. This applies all types of morphism.

We define $\Psh{\bB}$ to be the large bicategory of presheaves on a given bicategory $\bB$, i.e., the bicategory of pseudofunctors $\bB\op\to\Cat$, strong transformations between them and modifications between those. We use $\yo_\bB$ to denote the Yoneda embedding $\bB\to\Psh{\bB}$. We use $\PshP{\bB}$ to denote the full the subbicategory of presheaves on a given bicategory $\bB$ taking values in $\Cat_P$. More generally we use $\Bicat(\bB,\bC)$ to denote the bicategory of pseudofunctors $\bB\to\bC$, strong transformations between them and modifications between those.

\subsection{Acknowledgements}

This paper originated as a master's thesis supervised by David Reutter. I want to thank him for having given me the opportunity to write this thesis with him and for his continued support since then. Furthermore, I would like to thank Sam Bauer, Julia Path and Liam Urban for many fruitful discussions and helpful conversations and all my friends for both motivating me to keep working on this paper and reminding me take breaks whenever necessary.

This work was funded by the Deutsche Forschungsgemeinschaft (DFG, German Research Foundation) -- 493608176; 531713354.

\section{The 1-Categorical Case} \label{sec3}

The theory of 1-categorical idempotent completion, Cauchy completion and their equivalence can readily be found in \cite{BD}. Yet, in the following, we will describe both constructions and the proof of their equivalence in detail, laying out the blueprint we will later use to prove their bicategorical counterparts.

\subsection{Idempotents and their Splittings}

\begin{definition}{(Idempotent Complete Category)}
Let $\C$ be a category. An \textit{idempotent} is an endomorphism $p:A\to A$ in $\C$ such that $p^2=p$.

An idempotent $p$ \textit{splits} if there exists an object $B$, and morphisms $f:A\to B$ and $g:B\to A$ in $\C$ such that $f\circ g=\id_B$ and $g\circ f=p$. The data of $B$, $f$ and $g$ defines a \textit{splitting} of $p$ and we call $B$ a retract of $A$.

A category $\C$ is \textit{idempotent complete} if every idempotent in it splits.
\end{definition}

\begin{definition}{(Free Walking Idempotent (Splitting))}
We define $\spd_1$ to be the category with two objects $X$ and $Y$ and morphisms generated by $f:X\to Y$ and $g:Y\to X$ subject to the relation $f\circ g=\id_Y$. We call this category the \textit{free walking idempotent splitting}, as functors $F:\spd_1\to\C$ directly correspond to split idempotents in $\C$. Analogously, we define $\clb_1\subseteq\spd_1$ to be the full subcategory on $X$ and call it the \textit{free walking idempotent}.
\end{definition}

\begin{remark}
For a given idempotent $F:\clb_1\to\C$, a splitting corresponds to an extension of $F$ along $\iota:\clb_1\to\spd_1$.
\end{remark}

\begin{proposition}\label{spliscol}
Let $F:\clb_1\to\C$ be an idempotent and denote $F(g\circ f)$ by $p$. The  colimit of $F$ defines a splitting of $p$. This colimit is given by the coequaliser of
\begin{center}
\begin{tikzcd}
FX \arrow[r, "\id_{FX}", shift left=1] \arrow[r, "p"', shift right=1] & FX.
\end{tikzcd}
\end{center}
Furthermore every splitting of $p$ defines such a colimit.
\end{proposition}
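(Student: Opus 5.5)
The plan has three steps. First identify the colimit of $F$ with the stated coequaliser. Then show that a colimit of $F$ gives a splitting of $p$. Finally show that every splitting of $p$ is such a colimit.

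\emph{Colimit equals coequaliser.} A cocone under $F:\clb_1\to\C$ with vertex $C$ is a single morphism $c:FX\to C$ satisfying $c\circ F(h)=c$ for every endomorphism $h$ of $X$ in $\clb_1$. The endomorphisms of $X$ are $\id_X$ and $g\circ f$; every longer word reduces to one of these using $f\circ g=\id_Y$. So the cocone condition reduces to $c\circ p=c$, which is exactly the condition $c\circ\id_{FX}=c\circ p$ defining a cocone over the parallel pair. Hence the two colimits are literally the same universal property, and I will work with the coequaliser from now on.

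\emph{A colimit gives a splitting.} Suppose $f:FX\to B$ is a coequaliser of $\id_{FX}$ and $p$. Since $p\circ p=p$, the map $p$ itself coequalises the pair. The universal property therefore gives a unique $g:B\to FX$ with $g\circ f=p$. It remains to show $f\circ g=\id_B$. Compute
\[(f\circ g)\circ f=f\circ p=f=\id_B\circ f.\]
Coequalisers are epimorphisms, so $f\circ g=\id_B$. This is the step where care is needed: the retraction identity has to come from uniqueness in the universal property, and it does via the epimorphism argument.

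\emph{A splitting gives a colimit.} Let $(B,f,g)$ be any splitting of $p$. Then $f\circ p=f\circ g\circ f=f$, so $f$ coequalises the pair. Given any $c:FX\to C$ with $c\circ p=c$, take $\bar c=c\circ g$. Existence holds because $\bar c\circ f=c\circ p=c$. Uniqueness holds because any $d$ with $d\circ f=c$ satisfies $d=d\circ f\circ g=c\circ g$. So $(B,f)$ is a colimit of $F$.

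None of these steps is difficult. The only points needing attention are the reduction of the hom-set $\clb_1(X,X)$ to $\{\id_X,\,g\circ f\}$, and using the epimorphism property, rather than a direct computation, to obtain $f\circ g=\id_B$.
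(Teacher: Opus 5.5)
The paper gives no proof of this proposition. It is treated as standard (the section defers to \cite{BD}), and only its bicategorical analogue is proved: the proposition in Section~\ref{sec5} identifying splittings of a 2-idempotent with colimits weighted by $\spd_2(\iota-,Y)$. Your argument is correct and complete. For the cocone condition you only need that every endomorphism in $\clb_1(X,X)$ equals $\id_X$ or $g\circ f$, which you establish. The cocone $p$ together with the epimorphism property gives the retraction, and the converse is a clean direct verification.

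Your route is the elementary one, and it differs from the paper's bicategorical proof.

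\emph{Colimit to splitting.} The paper invokes extension along $\iota$ (Proposition~\ref{ext}). It defines the value at $Y$ as the colimit weighted by $\spd_2(\iota-,Y)$ and reads the splitting off from pseudofunctoriality of the extension. In the $\Set$-enriched case this weight is a point, since $\spd_1(X,Y)=\{f\}$, so the weighted colimit is just the ordinary colimit of $F$. Your construction of $g$ from the cocone $p$, followed by the epimorphism argument, replaces that Kan-extension machinery with a two-line computation.

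\emph{Splitting to colimit.} Your inverse $c\mapsto c\circ g$ and your uniqueness computation $d=d\circ f\circ g=c\circ g$ are exactly the $1$-categorical shadows of two pieces of the paper's argument: its inverse $\kappa\mapsto\kappa_X(f)\otimes_p g$, and its isomorphism $(hf)\otimes_p g\cong h$. In the locally discrete setting $\otimes_p$ collapses to plain composition, because the module structure is just the equation $c\circ p=c$.

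A side benefit of your direct version is that it makes absoluteness transparent. The converse uses only the equations $f\circ g=\id_B$ and $g\circ f=p$, and every functor preserves these.
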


\begin{corollary}
A splitting of an idempotent is unique up to unique isomorphism.
\end{corollary}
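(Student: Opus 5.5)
The corollary will follow directly from Proposition \ref{spliscol} together with the uniqueness of colimits up to unique isomorphism. Fix an idempotent $p:A\to A$, viewed as a functor $F:\clb_1\to\C$ with $FX=A$, and let $(B,f,g)$ and $(B',f',g')$ be two splittings of $p$.

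By Proposition \ref{spliscol}, each splitting is a colimit of $F$. Concretely, $f:A\to B$ is a coequaliser of $\id_A$ and $p$: we have $f\circ p=f\circ g\circ f=f$, and for any $h:A\to C$ with $h\circ p=h$, the map $h\circ g$ is the unique factorisation, since $(h\circ g)\circ f=h\circ p=h$, and uniqueness follows because $f$ is split epi. The same holds for $f'$.

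The standard uniqueness argument for colimits then yields a unique isomorphism $\phi:B\to B'$ with $\phi\circ f=f'$, namely $\phi=f'\circ g$, with inverse $f\circ g'$. Explicitly, $f'\circ g\circ f\circ g'=f'\circ p\circ g'=f'\circ g'\circ f'\circ g'=\id_{B'}$, and symmetrically in the other order. Compatibility with the sections also holds: $g'\circ\phi=g'\circ f'\circ g=p\circ g=g\circ f\circ g=g$.

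The only point needing care is stating what ``unique'' refers to. The isomorphism is unique among isomorphisms commuting with the structure maps $f,f'$. Because $f$ is epi, commuting with $f,f'$ already forces $\phi=f'\circ g$, and that $\phi$ automatically commutes with $g,g'$. Hence the isomorphism is unique as an isomorphism of splittings, not just of colimit cocones. I expect no real obstacle beyond this bookkeeping.
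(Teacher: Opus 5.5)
Your proposal is correct and follows the paper's route: the paper states the corollary as an immediate consequence of Proposition \ref{spliscol}, namely that splittings are colimits (coequalisers of $\id_A$ and $p$) and colimits are unique up to unique isomorphism. Your explicit formulas $\phi=f'\circ g$ with inverse $f\circ g'$, together with the check that $\phi$ also respects the sections, spell out what the paper leaves implicit.
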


We note here that the category $\clb_1$ is special. A splitting of an idempotent is defined purely equationally. Since functors preserve equations, functors preserve splittings, i.e., if an idempotent $p:A\to A$ splits via $f:A\to B$ and $g:B\to A$ in $\C$ and $F:\C\to\D$ is a functor, the idempotent $Fp:FA\to FA$ splits via $Ff:FA\to FB$ and $Fg:FB\to FA$ in $\D$. Since an idempotent splitting is defined by a colimit of a functor out of $\clb_1$, $\clb_1$ has the property that every colimit of a functor out of it is preserved by every functor.

\subsection{Idempotent Completion}

\begin{definition}{(Idempotent Completion)}
Let $\C$ be a category. We define the \textit{idempotent completion} or \textit{Karoubi envelope} $\Kar{\C}$ to be the category with the following data.
\begin{itemize}
	\item Objects in $\Kar{\C}$ are idempotents in $\C$, i.e., an object in $\Kar{\C}$ consists of an object $A$ in $\C$ and a morphism $p:A\to A$ in $\C$ such that $p^2=p$. We will denote this object by $A_p$ and we will denote $A_{\id_A}$ simply by $A_{\id}$.
	\item A 1-morphism between $A_p$ and $B_q$ is given by a morphism $f:A\to B$ in $\C$ such that $f\circ p=f$ and $q\circ f=f$.
	\item Composition of morphisms in $\Kar{\C}$ is given by the composition in $\C$.
	\item The identity morphism on $A_p$ is given by $p:A_p\to A_p$. 
\end{itemize}
\end{definition}

For $\Kar{\C}$ to be the completion of $\C$ with respect to splitting idempotents, we want $\Kar{\C}$ itself to be idempotent complete, which we will show in the following.

\begin{proposition} \label{kar1com}
For any category $\C$, the idempotent completion $\Kar{\C}$ is idempotent complete.
\end{proposition}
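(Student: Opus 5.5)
To prove that $\Kar{\C}$ is idempotent complete, I will take an arbitrary idempotent in $\Kar{\C}$ and write down an explicit splitting whose splitting object is again an object of $\Kar{\C}$.

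Let $e:A_p\to A_p$ be an idempotent in $\Kar{\C}$. Unwinding the definition, $e$ is a morphism $e:A\to A$ in $\C$ with $e\circ p=e=p\circ e$ and $e\circ e=e$. The last equation holds because composition in $\Kar{\C}$ is composition in $\C$. In particular $e$ is an idempotent in $\C$, so $A_e$ is an object of $\Kar{\C}$. This will be the splitting object.

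Next I define the two morphisms of the splitting, both carried by the underlying morphism $e$ of $\C$: $f:=e:A_p\to A_e$ and $g:=e:A_e\to A_p$.

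First I check that these are morphisms of $\Kar{\C}$. For $f$ we need $e\circ p=e$ and $e\circ e=e$, and both hold. For $g$ we need $e\circ e=e$ and $p\circ e=e$, which also hold.

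Then I check the splitting equations, with composites computed in $\C$. We have $f\circ g=e\circ e=e$, and $e$ is exactly the identity of $A_e$ in $\Kar{\C}$. We also have $g\circ f=e\circ e=e$, which is the given idempotent on $A_p$. Hence $(A_e,f,g)$ splits $e$.

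Since $e$ was arbitrary, every idempotent in $\Kar{\C}$ splits. There is no real obstacle here. The only point needing care is to remember that the identity of $A_e$ in $\Kar{\C}$ is $e$ and not $\id_A$, since that is what makes $f\circ g=\id_{A_e}$ true.
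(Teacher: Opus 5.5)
Your proof is correct and is essentially the paper's argument: you split $e$ through the object $A_e$ using $e$ itself as both morphisms, and you use the fact that $\id_{A_e}=e$ in $\Kar{\C}$. You additionally verify explicitly that $e:A_p\to A_e$ and $e:A_e\to A_p$ satisfy the morphism conditions of $\Kar{\C}$, which the paper leaves implicit.
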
 

\begin{proof}
Let $A_p$ be an object in $\Kar{\C}$ and $e:A_p \to A_p$ an idempotent on $A_p$, i.e., $e\circ p=e$, $p\circ e=e$ and $e^2=e$. It therefore follows that $A_e$ is an object in $\Kar{\C}$ and we have morphisms $e:A_p\to A_e$ and $e:A_e\to A_p$. Composing these morphisms, we see that $e\circ e=e:A_p\to A_p$ and $e\circ e=e=\id_{A_e}:A_e\to A_e$. Thus the idempotent $e:A_p\to A_p$ splits.
\end{proof}

Since an idempotent $p:A\to A$ in $\C$ defines an idempotent $p:A_{\id}\to A_{\id}$ in $\Kar{\C}$, we also get the following statement.

\begin{remark}\label{eobsplit}
For an object $A_p$ in $\Kar{\C}$, $A_p$ is a splitting of the idempotent $p$ on $A_{\id}$.
\end{remark}

We also want to show that $\Kar{\C}$ is a completion of $\C$ in the sense that $\C$ embeds into $\Kar{\C}$ and that it is equivalent to it if $\C$ was already idempotent complete.

\begin{proposition} \label{kar1emb}
For every category $\C$, there exists a fully faithful functor $\iota_\C:\C\to\Kar{\C}$. If $\C$ is furthermore idempotent complete, i.e., every idempotent in $\C$ splits, this functor is an equivalence.
\end{proposition}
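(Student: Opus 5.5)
I would define $\iota_\C:\C\to\Kar{\C}$ on objects by $A\mapsto A_{\id}$ and on morphisms by $f\mapsto f$. This is well defined, since any $f:A\to B$ satisfies $f\circ\id_A=f=\id_B\circ f$ and is therefore a morphism $A_{\id}\to B_{\id}$ in $\Kar{\C}$. Functoriality is immediate, because composition in $\Kar{\C}$ is composition in $\C$ and the identity on $A_{\id}$ is $\id_A$. Full faithfulness is equally direct: $\Kar{\C}(A_{\id},B_{\id})$ is the set of $f:A\to B$ with $f\circ\id_A=f$ and $\id_B\circ f=f$, which is all of $\C(A,B)$, and $\iota_\C$ acts as the identity on these sets.

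For the second claim, since $\iota_\C$ is fully faithful, it suffices to show it is essentially surjective. Take an object $A_p$ of $\Kar{\C}$. Because $\C$ is idempotent complete, $p$ splits as $p=g\circ f$ and $f\circ g=\id_B$, with $f:A\to B$ and $g:B\to A$. I claim $A_p\iso B_{\id}=\iota_\C(B)$ in $\Kar{\C}$. Consider $f:A_p\to B_{\id}$ and $g:B_{\id}\to A_p$.

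First I check that these are morphisms of $\Kar{\C}$.
\begin{itemize}
\item For $f$: we have $f\circ p=f\circ g\circ f=f$, and $\id_B\circ f=f$.
\item For $g$: we have $g\circ\id_B=g$, and $p\circ g=g\circ f\circ g=g$.
\end{itemize}
Next, the composites are identities: $f\circ g=\id_B$, which is the identity of $B_{\id}$, and $g\circ f=p$, which is by definition the identity of $A_p$. So $f$ and $g$ are mutually inverse isomorphisms.

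Alternatively, one can invoke Remark \ref{eobsplit}: $A_p$ splits $p$ on $A_{\id}=\iota_\C(A)$, and so does $B_{\id}$ via $\iota_\C(f),\iota_\C(g)$. By the corollary on uniqueness of splittings, these two splittings are isomorphic. In either form, every object lies in the essential image, so $\iota_\C$ is an equivalence.

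There is no real obstacle here. The only point requiring care is remembering that the identity on $A_p$ is $p$ rather than $\id_A$, which is exactly what makes $g\circ f=p$ an identity composite. If one wants an explicit pseudo-inverse rather than appealing to "fully faithful and essentially surjective", one chooses a splitting $(B_p,f_p,g_p)$ for each idempotent and sends $h:A_p\to A'_{p'}$ to $f_{p'}\circ h\circ g_p$. Checking functoriality of this assignment again uses only the splitting equations.
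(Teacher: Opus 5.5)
Your proof is correct and follows essentially the paper's route. The paper uses the same embedding $A\mapsto A_{\id}$ and the same full-faithfulness observation, and obtains $A_p\iso B_{\id}$ by noting that both split $p$ on $A_{\id}$ (your alternative argument). Your explicit check that $f$ and $g$ are mutually inverse morphisms in $\Kar{\C}$ simply unpacks that uniqueness-of-splittings step.
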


\begin{proof}
We define $\iota_\C$ to map an object $A$ in $\C$ onto the object $A_{\id}$ in $\Kar{\C}$ and a morphism $f:A\to B$ onto $f:A_{\id}\to B_{\id}$. Since any morphism $f:A\to B$ has the property that $f\circ \id_A=f$ and $\id_B\circ f=f$, we see that $\iota_\C$ is fully faithful.

Now assume that $\C$ is idempotent complete and let $A_p$ be an object in $\C$. We know that the idempotent $p:A\to A$ splits in $\C$ via an object $B$. We now have $A_p\iso B_{\id}$ since they both define splittings of $p:A_{\id}\to A_{\id}$ and thus $\iota_\C$ is an equivalence of categories.
\end{proof}

Finally, we want to show that the idempotent completion $\Kar{\C}$ is universal among all possible idempotent completions of $\C$, which is why we can call it \textit{the} idempotent completion of $\C$. By universal, we mean that for any functor $F$ from $\C$ into an arbitrary idempotent complete category $\D$, there is a functor $F\p:\Kar{\C}\to \D$ such that $F^\prime\circ \iota_\C\cong F$. For this we will make use of 2-categories.

This universality takes the form of an adjunction $\Kartwo\dashv \U$ where $\Kartwo:\Cat\to\Catic$ is idempotent completion and $\U:\Catic\to\Cat$ is the forgetful functor where we define $\Catic$ to be the full sub-2-category of idempotent complete categories. The exact definition of an adjunction between 2-categories can be found in the appendix \ref{2adj}.

\begin{theorem} \label{1adj}
Idempotent completion defines a 2-functor $\Kartwo:\Cat\to\Catic$ which is left adjoint to the forgetful 2-functor $\U:\Catic\to\Cat$.
\end{theorem}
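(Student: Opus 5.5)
First I will make $\Kartwo$ into a strict 2-functor $\Cat\to\Catic$. A functor $F:\C\to\D$ is sent to $\Kar{F}:\Kar{\C}\to\Kar{\D}$, defined by $A_p\mapsto (FA)_{Fp}$ and $f\mapsto Ff$. This is well defined because $F$ preserves the equations $p^2=p$, $f\circ p=f$ and $q\circ f=f$. It preserves identities because the identity on $A_p$ is $p$, which goes to $Fp$, the identity on $(FA)_{Fp}$. A natural transformation $\alpha:F\Rightarrow G$ is sent to $\Kar{\alpha}$, whose component at $A_p$ is $Gp\circ\alpha_A\circ Fp=\alpha_A\circ Fp=Gp\circ\alpha_A$. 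These equalities follow from naturality of $\alpha$, which also shows that this is a morphism $(FA)_{Fp}\to(GA)_{Gp}$ and that $\Kar{\alpha}$ is natural. Strict functoriality in $F$ and in $\alpha$ is immediate: composites are computed in the underlying categories, and identity transformations go to the identities $Fp$. By Proposition \ref{kar1com} the 2-functor lands in $\Catic$.

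For the adjunction I will give the unit, the counit and the triangle identities, in the form required by the definition of a 2-adjunction in appendix \ref{2adj}. The unit $\eta_\C=\iota_\C:\C\to\U\Kar{\C}$ comes from Proposition \ref{kar1emb}, and it is strictly 2-natural since $\Kar{F}\circ\iota_\C=\iota_\D\circ F$ on the nose. For the counit I need $\varepsilon_\D:\Kar{\D}\to\D$ for an idempotent complete $\D$. This requires a choice of splitting $A\xrightarrow{f_p}B_p\xrightarrow{g_p}A$ for every idempotent $p$, and I take the trivial splitting $B_{\id}=A$ when $p=\id$. Then $\varepsilon_\D(A_p)=B_p$, and a morphism $h:A_p\to A'_{p'}$ is sent to $f_{p'}\circ h\circ g_p$. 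Functoriality follows from $g_p f_p=p$, $f_pg_p=\id$ and $h=p'hp$. In particular $\varepsilon_\D(p)=f_ppg_p=\id$.

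Because the counit depends on choices, it is natural only up to isomorphism. Given $G:\D\to\D'$ between idempotent complete categories, the object $G(B_p)$ and the chosen $B'_{Gp}$ both split $Gp$. So by the corollary to Proposition \ref{spliscol} there is a unique comparison isomorphism $f'_{Gp}\circ Gg_p$, with inverse $Gf_p\circ g'_{Gp}$. These isomorphisms assemble into an invertible 2-cell $G\varepsilon_\D\cong\varepsilon_{\D'}\Kar{G}$. The coherence conditions for a pseudonatural transformation (compatibility with composition of functors and with natural transformations) then follow from the uniqueness of the comparison isomorphisms. I expect the statement to be meant as a pseudo/biadjunction in the sense of the appendix; if a strict 2-adjunction were intended, it would fail for choice reasons. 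So I will verify the triangle identities up to invertible modifications, and those modifications will satisfy the swallowtail coherence automatically.

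The triangle identities:
\begin{itemize}
\item For $\U\varepsilon\circ\eta\U$ on $\D$: this is $A\mapsto A_{\id}\mapsto B_{\id}=A$ and $h\mapsto h$, so it is strictly the identity because of the normalisation of choices.
\item For $\varepsilon\Kartwo\circ\Kartwo\eta$ on $\Kar{\C}$: an object $A_p$ is sent to $(A_{\id})_p$ and then to the chosen splitting of $p:A_{\id}\to A_{\id}$. By Remark \ref{eobsplit}, $A_p$ is itself such a splitting, so the unique comparison gives an isomorphism to $\id_{\Kar{\C}}$. Naturality again comes from uniqueness.
\end{itemize}

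The main obstacle is keeping all these coherences under control. To handle it, I will derive everything from one principle: between any two chosen splittings of the same idempotent there is a unique compatible isomorphism. Every required equation between 2-cells then reduces to comparing two isomorphisms between splittings that both commute with the $f$'s and $g$'s, and such isomorphisms must be equal.
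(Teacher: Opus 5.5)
Your proof is correct and follows the paper's argument: the same strict 2-functor $\Kartwo$, the strictly 2-natural unit $\iota$, and a counit built from normalised choices of splittings, whose pseudonaturality 2-cells are the unique comparison isomorphisms. On the triangle identity $(\epsilon\Kartwo)\cdot(\Kartwo\iota)\cong\id$ you are more careful than the paper. The paper reduces it to the normalisation $\epsilon_{\Kar{\C}}\iota_{\Kar{\C}}=\id$, but the actual component is $\epsilon_{\Kar{\C}}\circ\Kar{\iota_\C}$, and $\Kar{\iota_\C}$ agrees with $\iota_{\Kar{\C}}$ only up to the isomorphism you make explicit via Remark \ref{eobsplit}.
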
 

\begin{proof}
The 2-functor $\Kartwo$ maps a category $\C$ to the category $\Kar{\C}$. A functor $F:\C\to\D$ is mapped onto the functor $\Kar{F}:\Kar{\C}\to\Kar{\D}$ which maps an idempotent $A_p$ onto the idempotent $FA_{Fp}$ and a morphism $f:A_p\to B_q$ onto the morphism $Ff:FA_{Fp}\to FB_{Fq}$. The functoriality of $\Kar{F}$ follows directly from the functoriality of $F$. Lastly, a natural transformation $\phi:F\to G$ is mapped onto the natural transformation $\Kar{\phi}:\Kar{F}\to\Kar{G}$ which has components $\Kar{\phi}_{A_p}=G(p)\circ\phi_A=\phi_A\circ F(p):FA_{Fp}\to GA_{Gp}$. This defines the 2-functor $\Kartwo$.

To have an adjunction $\Kartwo\vdash\U$, we now need strong transformations $\eta:\id_\Cat\to \U\circ\Kartwo$ and $\epsilon: \Kartwo\circ\U\to \id_{\Catic}$. These will have component 1-morphisms, i.e., functors $\eta_\C:\C\to\Kar{\C}$ and $\epsilon_\D:\Kar{\D}\to \D$ for categories $\C$ and idempotent complete categories $\D$.

Since we already defined functors $\iota_\C:\C\to\Kar{\C}$, we define $\eta$ to have component 1-morphisms $\iota_\C$ and denote the strong transformation itself simply by $\iota$. This forms a 2-natural transformation as its component 2-morphisms are given by identites. We can define $\epsilon$ and its component 1-morphisms $\epsilon_\D:\Kar{\D}\to\D$ in the following way:

Let $A_p$ be an object in $\Kar{\D}$. Since $\D$ is idempotent complete, we have a splitting $(A,B,f,g)$ of $p$, i.e., an object $B$ in $\D$ and morphisms $f:A\to B$ and $g:B\to A$ such that $g\circ f=p$ and $f\circ g=\id_B$. To define $\epsilon_\D$, we need to choose a splitting for each idempotent $A_p$ and we choose these such that any identity idempotent $A_{\id}$ splits via $(A,A,\id_A,\id_A)$. We now map the idempotent $A_p$ onto $B$. A morphism $h$ between objects $A_p$ and $A\p_{p\p}$ that have splittings $(A,B,f,g)$ and $(A\p,B\p,f\p,g\p)$ is mapped onto the morphism $f\p\circ h\circ g:B\to B\p$. One can check that this assignment is functorial.

In general, $\epsilon$ has non-identity component 2-morphisms. For an object $A_p$ in $\Kar{\D}$ with a splitting $(A,B,f,g)$ and a functor $F:\D\to\D\p$, the splitting we choose for $FA_{Fp}$ does not have to agree with $(FA,FB,Ff,Fg)$. Still, these splittings must be uniquely isomorphic and we definte $\epsilon$ to have component 2-morphisms given by these unique isomorphisms.

Lastly, we need to check that the two triangle identities $(\epsilon\circ\Kartwo)\cdot(\Kartwo\circ\iota)\iso\id_{\Kartwo}$ and ${(\U\circ\epsilon)\cdot(\iota\circ\U)}\iso\id_\U$ hold. By looking at their components, we see that these identities translate to $\epsilon_{\Kar{\C}}\iota_{\Kar{\C}}\iso\id_{\Kar{\C}}$ and $\epsilon_\D\iota_\D\iso\id_\D$, which means we just have to check that $\epsilon_\D\iota_\D\iso\id_\D$ holds for any idempotent complete category $\D$. Since we defined $\epsilon_\D$ by choosing that an identity idempotent splits via the identity, this holds automatically.
\end{proof}

\begin{corollary}\label{kar1equ}
For each category $\C$ and idempotent complete category $\D$, we have an equivalence of categories $\Cat(\Kar{\C},\D)\simeq\Cat(\C,\D)$ induced by precomposing with $\iota_\C$.
\end{corollary}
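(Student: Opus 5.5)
The plan is to derive the corollary directly from the 2-adjunction $\Kartwo\dashv\U$ of Theorem \ref{1adj}. A biadjunction between 2-categories induces, for every $\C$ in $\Cat$ and $\D$ in $\Catic$, an equivalence of Hom-categories
\[
\Catic(\Kar{\C},\D)\simeq\Cat(\C,\U\D).
\]
This equivalence is given by applying $\U$ and then precomposing with the unit component $\iota_\C$. Since $\Catic$ is a full sub-2-category of $\Cat$ and $\U$ is the inclusion, the left-hand side is just $\Cat(\Kar{\C},\D)$ and $\U\D=\D$. So the equivalence is exactly $\iota_\C^*:\Cat(\Kar{\C},\D)\to\Cat(\C,\D)$, which is the claim.

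To make the argument self-contained, I will exhibit the pseudo-inverse explicitly. It is
\[
G\mapsto \epsilon_\D\circ\Kar{G},
\]
acting on natural transformations by $\phi\mapsto\epsilon_\D\Kar{\phi}$. I then check the two composites.

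\textbf{First composite.} For $G:\C\to\D$ we have
\[
\epsilon_\D\circ\Kar{G}\circ\iota_\C=\epsilon_\D\circ\iota_\D\circ G ,
\]
because $\Kar{G}$ sends $A_{\id}$ to $GA_{\id}$, i.e. $\iota$ is 2-natural. Moreover $\epsilon_\D\iota_\D=\id_\D$ strictly, by the chosen identity splittings. So this composite is the identity on $G$.

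\textbf{Second composite.} For $H:\Kar{\C}\to\D$ I need a natural isomorphism $\epsilon_\D\circ\Kar{H\iota_\C}\cong H$. On an object $A_p$, the left side is the chosen splitting object of the idempotent $Hp$ on $HA_{\id}$. By Remark \ref{eobsplit}, $A_p$ is a splitting of $p$ on $A_{\id}$ in $\Kar{\C}$. Since functors preserve splittings, $HA_p$ is a splitting of $Hp$. By the uniqueness corollary, splittings are unique up to unique isomorphism, and this unique isomorphism is the component at $A_p$.

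The main work is naturality of these isomorphisms in $A_p$ and in $H$. Both should follow from the uniqueness of comparison maps between splittings: both sides of each naturality square are maps between splittings that commute with the given retractions, so they are equal. This is equivalently the second triangle identity of the adjunction, specialised to these components.
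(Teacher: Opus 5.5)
Your proposal is correct and follows the paper's route. The paper simply invokes Proposition \ref{2adjequiv} for the adjunction $\Kartwo\dashv\U$ of Theorem \ref{1adj}, and your explicit pseudo-inverse $G\mapsto\epsilon_\D\circ\Kar{G}$, with its two composite checks, is that proposition unpacked in this case. One small correction: in the ordering of Theorem \ref{1adj}, the isomorphism $\epsilon_\D\circ\Kar{H\iota_\C}\cong H$ rests on the \emph{first} triangle identity, the one with component $\epsilon_{\Kar{\C}}\circ\Kar{\iota_\C}\cong\id_{\Kar{\C}}$, not the second; your direct use of Remark \ref{eobsplit} and uniqueness of splittings is what verifies it.
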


\begin{proof}
This follows from the bicategorical generalisation that adjunction induce isomorphisms between Hom-sets and is detailed in proposition \ref{2adjequiv}.
\end{proof}

We will later use this adjunction to prove that the 2-categories $\Catic$ and $\Pshic{\bB}$ are cocomplete where $\bB$ is an arbitrary locally idempotent complete bicategory. Details can be found in \ref{app2}. 

\subsection{Cauchy Completion} \label{1cau}

In the following we unpack the definition of Cauchy completion as detailed in \cite{L} and \cite{BD}. In section \ref{sec5} of this paper, we will go over its bicategorical analogue and as to not repeat ourselves, we will not give detailed proofs for the statements in this section. Their proofs can both be readily found in the literature and they also follow from their bicategorical versions since $\Set$ is a full sub-2-category of $\Catic$, the 2-category of idempotent complete categories. By viewing sets as discrete categories, every set becomes an idempotent complete category and every ordinary category becomes one enriched over $\Catic$.

\begin{definition}{(Absolute Colimit)}
Let $\J$, $\C$ be categories and $F:\J\to\C$ a functor. A colimit of $F$ is called an \textit{absolute colimit} if for every category $\D$ and functor $G:\C\to\D$, it is preserved by $G$.
\end{definition}

\begin{example}
The splitting of an idempotent defines an absolute colimit.
\end{example}

\begin{definition}{(Absolute Functor)}
A functor $V:\E\to\J$ is called an \textit{absolute functor} if for every category $\C$ and functor $F:\J\to\C$, the colimit of $FV:\E\to\C$ is absolute if it exists. 
\end{definition}

\begin{example}
We have already seen that $\id_{\clb_1}:\clb_1\to\clb_1$ defines an absolute functor since the colimit of a functor out of $\clb_1$ is given by the splitting of an idempotent. Another, yet trivial, example is the identity functor $\id_\mathbbm{1}$ on the terminal category $\mathbbm{1}$ which has one object and no non-trivial morphisms.
\end{example}

\begin{definition}{(Completeness under Absolute Colimits)}
A category $\C$ is called \textit{complete under absolute colimits} if for every absolute functor $V:\E\to\J$ and every functor $F:\J\to\C$ the colimit of $FV$ exists.
\end{definition}

This definition of completeness might exclude some absolute colimits but we will see that even those absolute colimits that do not stem from absolute functors can be expressed non-trivially via absolute functors.

We will now define the Cauchy completion of a category, which we want to be its completion with respect to absolute colimits.

\begin{definition}{(Cauchy Completion)}
We call an object $A$ in a cocomplete category $\C$ \textit{atomic} if the functor $\C(A,-):\C\to\Set$ is cocontinuous, i.e., preserves all colimits.

Let $\C$ be a category. The \textit{Cauchy completion} of $\C$ is defined to be the the full subcategory of atomic objects in the category $\Psh{\C}$ of presheaves on $\C$, i.e., an object in this category is a functor $S:\C\op\to\Set$ such that $\Psh{\C}(S,-):\Psh{\C}\to\Set$ is cocontinuous. We will denote this category as $\Pshat{\C}$.
\end{definition}

\begin{proposition} \label{repato}
Representable presheaves are atomic.
\end{proposition}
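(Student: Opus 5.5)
We prove that for every object $c$ in $\C$ the functor $\Psh{\C}(\yo_\C c,-):\Psh{\C}\to\Set$ is cocontinuous. The approach is to identify this functor, via the Yoneda lemma, with an evaluation functor, and then use that colimits in presheaf categories are computed pointwise.

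First, the Yoneda lemma gives a bijection $\Psh{\C}(\yo_\C c,S)\iso S(c)$, natural in $S$. This yields a natural isomorphism of functors
\[
\Psh{\C}(\yo_\C c,-)\iso \mathrm{ev}_c:\Psh{\C}\to\Set,\qquad S\mapsto S(c).
\]
Since cocontinuity is invariant under natural isomorphism, it suffices to show that $\mathrm{ev}_c$ preserves all small colimits.

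Second, we recall that $\Psh{\C}$ is cocomplete. For a small diagram $D:\J\to\Psh{\C}$, a colimit is given objectwise by $(\colim_j D_j)(c')=\colim_j D_j(c')$, computed in $\Set$. The action on morphisms of $\C$ is induced by the universal property in each variable. The colimit cocone is the family of objectwise coprojections, and one checks that any cocone in $\Psh{\C}$ factors uniquely through it, using that naturality in $c'$ follows from uniqueness of the induced maps in $\Set$. Consequently $\mathrm{ev}_c$ sends this chosen colimit cocone to a colimit cocone in $\Set$. Any other colimit of $D$ is isomorphic to the chosen one via an isomorphism compatible with the coprojections, so $\mathrm{ev}_c$ preserves every colimit of $D$.

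The only step needing care is the second one, namely that colimits in $\Psh{\C}$ are pointwise. This is routine but must be stated for arbitrary small $\J$. If we prefer to avoid it, an alternative is to note that $\mathrm{ev}_c$ is a left adjoint: its right adjoint sends a set $X$ to the presheaf $c'\mapsto \Set(\C(c,c'),X)$, the right Kan extension along $c$. Left adjoints preserve all colimits, which gives cocontinuity directly.
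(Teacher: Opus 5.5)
Your proof is correct and follows the paper's argument: you identify $\Psh{\C}(\yo_\C c,-)$ with the evaluation functor $\mathrm{ev}_c$ via the Yoneda lemma, and then use that colimits in $\Psh{\C}$ are computed pointwise. Your alternative, exhibiting $\mathrm{ev}_c$ as a left adjoint with right adjoint $X\mapsto\Set(\C(c,-),X)$, is also valid and avoids checking that colimits are pointwise.
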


\begin{proof}
For a given representable presheaf $\C(-,c)$, the functor $\Psh{\C}(\C(-,c),-):\Psh{\C}\to\Set$ is isomorphic to the functor $\textup{ev}_c:\Psh{\C}\to\Set$ given by evaluation at $c$ via the Yoneda lemma. Since colimits in $\Psh{\C}$ are computed pointwise, evaluation preserves them, and $\C(-,c)$ is atomic. This also follows from proposition \ref{repato2}.
\end{proof}

We will later see that Cauchy completion is indeed the completion with respect to absolute colimits, yet this does not hold a priori. In general, Cauchy completion is the completion with respect to absolute \textit{weighted} colimits, and only due to a quirk of $\Set$ do these notions agree. To prove this universal property of the Cauchy completion, we will briefly introduce the following.

\begin{definition}{(Weighted Colimit)}
Let $\J$, $\C$ be categories, and $F:\J\to\C$, $W:\J\op\to\Set$ functors. We call $W$ a \textit{weight}. The colimit of $F$ weighted by $W$ is an object representing the functor
\begin{align*}
\Psh{\J}(W,\C(F,-)): \C\to\Set,
\end{align*}
i.e., an object $\colim^W F$ in $\C$ together with a natural isomorphism 
\begin{align*}
\C(\colim^W F,-)\iso\Psh{\J}(W,\C(F,-)).
\end{align*}
\end{definition}

\begin{remark}
Given a functor $F:\J\to\C$, and a weight $W:\J\op\to\Set$ the weighted colimit $\colim^W F$ can be computed as an ordinary colimit in the following way. Applying the Grothendieck construction to $W$, yields its category of elements $\esh W$ together with a forgetful functor $\U:\esh W \to \J$. The colimit of $F \U$ then is canonically isomorphic to the colimit of $F$ weighted by $W$. Conversely, every ordinary colimit can be thought of as a weighted colimit by taking the weight to be the functor which maps each object to the set with one element.

This immediately implies that any functor which preserves all ordinary colimits also preserves all weighted colimits and vice versa. This means we do not need to differentiate between different notions of cocontinuity or atomicity.
\end{remark}

\begin{definition}{(Absolute Weighted Colimit)}
We call a weighted colimit \textit{absolute} if every functor preserves it. For brevity's sake, we will shorten "absolute weighted colimit" to AWC whenever appropriate. We call a weight $W:\J\op\to\Set$ \textit{absolute} if every colimit weighted by $W$ is an AWC.
\end{definition}

\begin{remark}
Applying the Grothendieck construction to an absolute weight $W:\J\op\to\Set$ yields an absolute functor $\U:\esh W \to \J$.
\end{remark}

\begin{definition}{(Completeness under AWCs)}
A category $\C$ is called \textit{complete under AWCs} if for every absolute weight $\J\op\to\Set$ and every functor $F:\J\to\C$ the colimit of $F$ weighted by $W$ exists.
\end{definition}

First, one needs to check that the Cauchy completion of a category is indeed complete under AWCs.

\begin{proposition}\label{cau1com}
For any category $\C$, its Cauchy completion $\Pshat{\C}$ is complete under AWCs.
\end{proposition}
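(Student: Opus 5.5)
We show that the full subcategory $\Pshat{\C}\subseteq\Psh{\C}$ is closed under absolute weighted colimits. Since $\Psh{\C}$ is cocomplete, those colimits then exist in $\Pshat{\C}$.

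\textbf{Setup.} Let $W:\J\op\to\Set$ be an absolute weight and $F:\J\to\Pshat{\C}$ a functor. Write $i:\Pshat{\C}\to\Psh{\C}$ for the inclusion. Because $\Psh{\C}$ is cocomplete, the weighted colimit $L=\colim^W(iF)$ exists in $\Psh{\C}$. One can compute it pointwise, or as the ordinary colimit of $iF\U$ over the category of elements $\esh W$, by the remark on the Grothendieck construction.

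\textbf{Step 1: $L$ is atomic.} We must show that $\Psh{\C}(L,-):\Psh{\C}\to\Set$ is cocontinuous. By the defining isomorphism of a weighted colimit,
\begin{align*}
\Psh{\C}(L,-)\iso\Psh{\J}\bigl(W,\Psh{\C}(iF(-),-)\bigr),
\end{align*}
naturally in the second variable. So $\Psh{\C}(L,-)$ is the limit of $j\mapsto\Psh{\C}(iFj,-)$ weighted by $W$. Each functor $\Psh{\C}(iFj,-)$ is cocontinuous, because $Fj$ is atomic.

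Now take a colimit $\colim_k G_k$ in $\Psh{\C}$. We need
\begin{align*}
\Psh{\J}\bigl(W,\Psh{\C}(iF,\colim_k G_k)\bigr)\iso\colim_k\Psh{\J}\bigl(W,\Psh{\C}(iF,G_k)\bigr).
\end{align*}
The inner term satisfies $\Psh{\C}(iF,\colim_k G_k)\iso\colim_k\Psh{\C}(iF,G_k)$, computed pointwise in $\Psh{\J\op}$. So it suffices to show that the functor $\Psh{\J}(W,-):[\J,\Set]\to\Set$ preserves colimits.

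\textbf{Main obstacle: cocontinuity of $\Psh{\J}(W,-)$.} This is where absoluteness of $W$ is used. Apply absoluteness to the Yoneda-type functor $\yo:\J\to[\J,\Set]\op$, $j\mapsto\J(j,-)$. The colimit of $\yo$ weighted by $W$ is $W$ itself, viewed in $[\J,\Set]\op$, which exists by the Yoneda lemma. Absoluteness says this colimit is preserved by every functor out of $[\J,\Set]\op$.

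In particular it is preserved by $\Set([\J,\Set](-,H),-)$-type functors. More directly, for a fixed $H$ consider the functor $[\J,\Set](-,H)$, or the enriched-Yoneda phrasing. From preservation one deduces that $W$ is a retract of a finite colimit of representables, that is, $W$ is small-projective. Then $\Psh{\J}(W,-)$ is a retract of evaluation at an object, hence cocontinuous.

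Concretely, I would try the cleaner route that only needs the paper's definitions. Absoluteness applied to the functor $[\J,\Set]\op\to\Set\op$, $H\mapsto\colim_k\ldots$, is unwieldy. Instead, use the standard fact (as in \cite{BD}, \cite{L}) that $W$ is absolute exactly when $\Psh{\J}(W,-)$ is cocontinuous: a weight is absolute if and only if it is atomic in $\Psh{\J\op}$. Prove the direction needed here by preservation of $\colim^W\yo$ under the functor
\begin{align*}
\colim_k\Psh{\J}(-,G_k)\to\Psh{\J}(-,\colim_k G_k),
\end{align*}
comparing the two sides. This identification is the step I expect to require the most care, especially with variance.

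\textbf{Step 2: conclude.} Since $\Pshat{\C}$ is a full subcategory containing $L$, the object $L$ is also the $W$-weighted colimit of $F$ in $\Pshat{\C}$. Hence $\Pshat{\C}$ is complete under AWCs.
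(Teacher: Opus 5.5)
Your Step 1 reduction is correct, and it is a genuinely different route from the paper's. The paper proves this statement by deferring to Proposition~\ref{cau2com}, which never tests atomicity of $L$ against colimits directly. Instead it uses Proposition~\ref{absato2} in \emph{both} directions: it reads each atomic $Fj$ as an absolute weight, shows $L$ is an absolute weight via the coend computation $\colim^{\colim^W F}G\simeq\colim^W\colim^F G$, and then converts back to atomicity. You instead combine the defining isomorphism $\Psh{\C}(L,-)\iso\Psh{\J}(W,\Psh{\C}(iF-,-))$, atomicity of the $Fj$ by definition, and pointwise colimits in $\Psh{\J}$. Of Proposition~\ref{absato2} you need only the direction ``absolute $\Rightarrow$ atomic'', applied to $W$. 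This avoids coends and iterated weighted colimits and is more elementary. The paper's route buys a uniform template for the bicategorical case, plus the by-product that a $W$-weighted colimit of absolute weights is again absolute.

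The paragraph you call the main obstacle, however, does not establish that lemma, and two of its claims are false.

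\textbf{First}, the $W$-weighted colimit of $j\mapsto\J(j,-)$ in $[\J,\Set]\op$ is $\Psh{\J}(W,\yo_\J(-))$. This is a covariant functor $\J\to\Set$ (the paper's $W\co$), not $W$, which does not even live in $[\J,\Set]$. The correct density statement is $W\iso\colim^W\yo_\J$ in $\Psh{\J}$.

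\textbf{Second}, a retract of a finite colimit of representables is only finitely presentable, not small-projective. For example, $\Psh{\J}(\yo_\J a\sqcup\yo_\J b,-)\iso\textup{ev}_a\times\textup{ev}_b$ does not preserve coproducts.

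Mind the variance too: you need $\Psh{\J}(W,-)$ cocontinuous on $\Psh{\J}=[\J\op,\Set]$, i.e.\ $W$ atomic in $\Psh{\J}$, not in $\Psh{\J\op}$.

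Citing the fact is legitimate, but a self-contained proof is short. Apply absoluteness of $W$ to $W\iso\colim^W\yo_\J$ and the functor $\Psh{\J}(W,-)$. Then the canonical map $\colim^W\Psh{\J}(W,\yo_\J-)\to\Psh{\J}(W,W)$, $[a,\alpha]\mapsto\hat a\circ\alpha$, is bijective, where $\hat a:\yo_\J j\to W$ is the Yoneda map of $a\in Wj$. A preimage of $\id_W$ exhibits $W$ as a retract of $\yo_\J j$. Hence $W$ is atomic by Proposition~\ref{repato} and Lemma~\ref{ret1ato}. This is the argument of Proposition~\ref{ato1ret} with absoluteness in place of atomicity.
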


\begin{proof}
Follows from proposition \ref{cau2com}.
\end{proof}

Next, one wants to show that $\Pshat{\C}$ is a completion of $\C$ in the sense that $\C$ embeds into $\Pshat{\C}$ and is equivalent to it if $\C$ was already complete under AWCs.

\begin{proposition}\label{cau1emb}
For every category $\C$, the Yoneda embedding $\yo_\C:\C\to\Psh{\C}$ takes values in $\Pshat{\C}$ and thus defines a fully faithful functor $\yo_\C:\C\to\Pshat{\C}$. If $\C$ is furthermore complete under AWCs, this functor is an equivalence.
\end{proposition}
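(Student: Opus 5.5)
Two things have to be shown: that $\yo_\C$ lands in $\Pshat{\C}$, and that when $\C$ is complete under AWCs every atomic presheaf is representable, so $\yo_\C$ is essentially surjective.

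\textbf{Step 1 (the embedding).} By Proposition \ref{repato}, every representable $\C(-,c)$ is atomic, so $\yo_\C$ factors through the full subcategory $\Pshat{\C}$. The Yoneda lemma says $\yo_\C$ is fully faithful into $\Psh{\C}$. Since $\Pshat{\C}$ is a full subcategory, the corestriction is still fully faithful.

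\textbf{Step 2 (an atomic presheaf is a weighted colimit).} Assume $\C$ is complete under AWCs and let $S$ be atomic. I would express $S$ as the colimit of $\yo_\C$ weighted by $S$ itself. In $\Psh{\C}$ the co-Yoneda lemma gives
\[
\Psh{\C}(S,-)\iso\Psh{\C}(S,\Psh{\C}(\yo_\C-,-)),
\]
so $S\iso\colim^S\yo_\C$.

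\textbf{Step 3 (the weight $S$ is absolute).} This is the main obstacle. Take any functor $F:\C\to\D$ and suppose $\colim^S F$ exists. I must show that every $G:\D\to\E$ preserves it.

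\begin{itemize}
\item First, $\colim^S F$ is computed pointwise in $\Psh{\D}$ after composing with $\yo_\D$. This gives a weighted colimit of $\yo_\D F$, namely the left Kan extension / tensor $S\otimes_\C \D(-,F-)$.
\item Next, use atomicity, i.e.\ that $\Psh{\C}(S,-)$ is cocontinuous. Write $S$ as a canonical colimit of representables. Applying $\Psh{\C}(S,-)$ shows that $\id_S$ factors through some representable $\C(-,c)$ in the diagram. Hence $S$ is a retract of a representable.
\item Concretely, there is an idempotent $e$ on $c$ with $S$ the splitting of $\yo_\C(e)$ in $\Psh{\C}$.
\item Consequently $\colim^S F$ is exactly a splitting of the idempotent $F(e)$. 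Splittings are preserved by every functor (Proposition \ref{spliscol} and the subsequent discussion), so $S$ is an absolute weight.
\end{itemize}

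Getting the retract statement cleanly is the hard part. I would first try the standard argument: compare the canonical map $\colim_{(c,x)\in\esh S}\Psh{\C}(S,\yo c)\to\Psh{\C}(S,S)$, which is a bijection by cocontinuity, and take the element hitting $\id_S$.

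\textbf{Step 4 (conclusion).} Since $\C$ is complete under AWCs, $\colim^S\id_\C$ exists in $\C$; call it $a$. A splitting is absolute, so $\yo_\C$ preserves it, giving $\yo_\C a\iso\colim^S\yo_\C\iso S$. Therefore $\yo_\C$ is essentially surjective, and hence an equivalence.

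An alternative that avoids explicit weights is available. Once $S$ is known to be the splitting of $\yo_\C(e)$, split $e$ in $\C$ (an idempotent splitting is an AWC, so it exists) and use uniqueness of splittings up to isomorphism.
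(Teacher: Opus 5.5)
Your proof is correct. Its skeleton is the paper's: the paper defers to Proposition \ref{cau2emb}, where representables are shown to be atomic, an atomic $S$ is shown to be an absolute weight, and then $\yo_\C\,\colim^S\id_\C\iso\colim^S\yo_\C\iso S$ follows by absoluteness and density, just as in your Steps 1, 2 and 4.

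Where you genuinely differ is the key step ``atomic $\Rightarrow$ absolute weight''.
\begin{itemize}
\item The paper gets it from Proposition \ref{absato2} by a formal coend computation. Atomicity lets $\Psh{\C}(S,-)$ pass through the density colimit expressing $\D(F-,d)$ as a colimit of representables. Fubini and co-Yoneda then give $G\,\colim^S F\iso\colim^S GF$. Nothing specific to $\Set$ is used, so the argument runs unchanged for $\Catic$-valued presheaves.
\item You instead use the $\Set$-specific fact that an element of a colimit of sets comes from some stage. This makes $S$ a retract of a representable, which is the paper's Proposition \ref{ato1ret}; the paper only uses it later, in Theorem \ref{kar1cau}. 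You then observe that $\colim^S F$ represents $d\mapsto\{h:Fc\to d\mid h\circ F(e)=h\}$. So it is a splitting of $F(e)$, hence absolute by Proposition \ref{spliscol}.
\end{itemize}

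Your route is more elementary and more informative. It shows directly that every $S$-weighted colimit is an idempotent splitting, so completeness under AWCs reduces to idempotent completeness (Corollary \ref{icisabc}). Your closing alternative avoids weighted colimits altogether and is essentially the essential-surjectivity argument of Theorem \ref{kar1cau}.

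The price is that your route leans on the concrete description of colimits in $\Set$ and on the already-established absoluteness of splittings. Transferring it to the bicategorical setting therefore needs the harder Proposition \ref{2retato}, which the paper proves via the explicit construction of weighted colimits in $\Catic$.

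One cleanup: drop the first bullet of your Step 3. As written it suggests $\yo_\D(\colim^S F)\iso\colim^S(\yo_\D F)$. That is not automatic, since $\yo_\D$ need not preserve colimits, and this preservation is part of what absoluteness asserts. Nothing later in your argument uses it.
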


\begin{proof}
Follows from proposition \ref{cau2emb}.
\end{proof}

Lastly, one would like to show that Cauchy completion is universal among all completions with respect to AWCs, by showing that it is the smallest among all possible completions in the sense that every functor $F:\C\to\D$ from an arbitrary category into a category complete under AWCs factors through the Yoneda embedding $\yo_\C:\C\to\Pshat{\C}$.

\begin{proposition} \label{cau1equ}
For any two categories $\C$, $\D$, with $\D$ being complete under AWCs, there is an equivalence
\begin{align*}
\Cat(\C,\D) \equiv \Cat(\Pshat{\C},\D)
\end{align*}
given by precomposition with $\yo_{\C}$. 
\end{proposition}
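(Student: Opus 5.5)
I will prove the equivalence by constructing an explicit quasi-inverse to $\yo_\C^*:\Cat(\Pshat{\C},\D)\to\Cat(\C,\D)$. The inverse is left Kan extension along $\yo_\C$. The key observation is that every atomic presheaf is an absolute weighted colimit of representables, with itself as the weight.

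\textbf{Step 1: every atomic presheaf is an AWC of representables.} Let $S$ be in $\Pshat{\C}$. By the density formula, $S\iso\colim^S\yo_\C$ in $\Psh{\C}$, with weight $S:\C\op\to\Set$. I claim this colimit is absolute. For any functor $G:\Psh{\C}\to\E$ we need $G S$ to represent $\Psh{\C}(S,\E(G\yo_\C,-))$.

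Atomicity says that $\Psh{\C}(S,-)$ preserves the colimit $S\iso\colim^S\yo_\C$. Via the Yoneda lemma this gives
\begin{align*}
\Psh{\C}(S,S)\iso\int^{c}Sc\times\C(c,-)\textup{ evaluated suitably}.
\end{align*}
In particular, $\id_S$ comes from an element $x\in Sc$ and a map $S\to\C(-,c)$. Following the classical argument (as in \cite{BD}), this exhibits $S$ as a retract of a representable. Concretely, $S$ is the splitting of an idempotent $e$ on $\yo_\C c$, i.e.\ $S$ is a colimit of the functor $\clb_1\to\Psh{\C}$ picking out $e$. That colimit is absolute, since functors preserve splittings. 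Hence $S$, being the weighted colimit of $\yo_\C$ by $S$, is an AWC in $\Pshat{\C}$.

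\textbf{Step 2: define the inverse.} Given $F:\C\to\D$ with $\D$ complete under AWCs, set $\tilde F(S)=\colim^S F$. This exists because the weight $S$ is absolute: a colimit weighted by $S$ of any diagram is an idempotent splitting of a representable weight, equivalently $\colim^S F$ is the splitting in $\D$ of $F e$. Morphisms $S\to S\p$ induce maps of weights and hence maps of weighted colimits, so $\tilde F$ is a functor. On representables, $\colim^{\C(-,c)}F\iso Fc$ by Yoneda, giving a natural isomorphism $\tilde F\yo_\C\iso F$. Thus $\yo_\C^*$ is essentially surjective.

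\textbf{Step 3: full faithfulness of $\yo_\C^*$.} Let $H,K:\Pshat{\C}\to\D$ be arbitrary functors. By Step 1, $H$ preserves the absolute colimit $S\iso\colim^S\yo_\C$, so $HS\iso\colim^S H\yo_\C$, naturally in $S$, and likewise for $K$. A natural transformation $H\yo_\C\to K\yo_\C$ therefore induces a unique compatible map $\colim^S H\yo_\C\to\colim^S K\yo_\C$ for each $S$. This gives the required bijection
\begin{align*}
\textup{Nat}(H,K)\iso\textup{Nat}(H\yo_\C,K\yo_\C),
\end{align*}
using the universal property of weighted colimits and naturality in $S$.

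\textbf{Main obstacle.} I expect Step 1 to be the hardest part: proving that atomicity, which is cocontinuity of $\Psh{\C}(S,-)$, forces $S$ to be a retract of a representable. The approach I will try first is the one above. Write $S$ as the colimit of representables over its category of elements $\esh S$, apply $\Psh{\C}(S,-)$, and use that colimits in $\Set$ are computed as quotients of coproducts. Then $\id_S$ lies in the image of $\Psh{\C}(S,\yo_\C c)$ for some element $(c,x)$, so $\id_S$ factors as $S\to\yo_\C c\xrightarrow{x}S$. Everything after that is formal.
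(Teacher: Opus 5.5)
Your proof is correct, but it takes a different route from the paper. The paper gives no 1-categorical argument here. It deduces the statement from the bicategorical Corollary~\ref{cau2equ}, which combines the Eilenberg--Watts-type Lemma~\ref{EW2} with Proposition~\ref{cau2emb}. Translated to ordinary categories, this factors the equivalence as
\[
\Cat(\Pshat{\C},\D)\simeq\Cat(\Pshat{\C},\Pshat{\D})\simeq\Cat(\C,\Pshat{\D})\simeq\Cat(\C,\D).
\]
The middle step is the coend extension $S\mapsto\int^{c}Sc\times F(c)(-)$, computed inside the cocomplete $\Psh{\D}$, so only atomicity of the result has to be checked; its inverse is restriction along $\yo_\C$. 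The outer steps use $\D\simeq\Pshat{\D}$. Absoluteness of atomic weights enters through Proposition~\ref{absato2}, which is proved by coend manipulations with no retract argument.

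You instead do three things differently:
\begin{itemize}
\item You extend directly into $\D$, getting existence of $\colim^S F$ from completeness of $\D$ under AWCs.
\item You make absoluteness concrete via Proposition~\ref{ato1ret}: every $S$-weighted colimit is a splitting of $Fe$.
\item You prove full faithfulness by an explicit absolute-density argument, whereas the paper obtains it from the natural equivalences in Lemma~\ref{EW2}.
\end{itemize}
Your version is more elementary and self-contained for ordinary categories, and it avoids the detour through $\D\simeq\Pshat{\D}$. The paper's version is uniform with the bicategorical setting. There the retract characterization (Proposition~\ref{2retato}) is costly, because it needs the explicit construction of weighted colimits in $\Catic$.

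One point to tighten in Step~1. Absoluteness of the splitting, which is a colimit of a diagram $\clb_1\to\Psh{\C}$, does not by itself give absoluteness of $\colim^S\yo_\C$. That colimit has the same apex but a different diagram and cocone. The link is the observation you only make in Step~2. Since $S$ is the retract of $\C(-,c)$ cut out by $\C(-,e)$, the set $\Psh{\C}(S,\E(G-,x))$ is the retract of $\E(Gc,x)$ consisting of those $h$ with $h\circ Ge=h$. Hence for every $G$ the $S$-weighted colimit of $G$ is the splitting of $Ge$, with cocone built from the splitting maps. Applying this to $G=\yo_\C$ and $G=H\yo_\C$ shows that $H$ sends the universal cocone into $S$ to a universal cocone into $HS$. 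That is exactly what Step~3 uses.
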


\begin{proof}
Follows from corollary \ref{cau2equ}.
\end{proof}

\begin{remark} \label{cau1adj}
Analogously to corollary \ref{kar1equ}, the above equivalence is derived from a left adjoint 2-functor $\Pshattwo:\Cat\to\Cat_{\textup{awc}}$, where $\Cat_{\textup{awc}}$ is the 2-category of categories complete under AWCs. A proof of this statement will be given later.
\end{remark}

\subsection{Their Equivalence}

We will now see that the two constructions of idempotent completion and Cauchy completion are two sides of the same coin in the sense that they yield equivalent categories. To prove this, we first need to prove some statements relating idempotents and absolute colimits.

\begin{lemma} \label{ret1ato}
A retract of an atomic object is atomic.
\end{lemma}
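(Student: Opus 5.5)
The plan is to show that if $B$ is a retract of an atomic object $A$ in a cocomplete category $\C$, then the functor $\C(B,-)$ is a retract of the functor $\C(A,-)$ in the functor category $[\C,\Set]$. After that it suffices to show that a retract of a cocontinuous functor is cocontinuous.

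First, let $f:A\to B$ and $g:B\to A$ with $f\circ g=\id_B$ and $g\circ f=p$. Precomposition gives natural transformations $f^*:\C(B,-)\to\C(A,-)$ and $g^*:\C(A,-)\to\C(B,-)$. By contravariant functoriality of $\C(-,c)$ we get $g^*\circ f^*=(f\circ g)^*=\id$. Hence $\C(B,-)$ is a splitting of the idempotent $p^*$ on $\C(A,-)$ in $[\C,\Set]$.

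Next, let $D:\J\to\C$ be a diagram with colimit $\colim D$. We must show that the canonical comparison map
\begin{align*}
\phi_B:\colim\, \C(B,D-)\to\C(B,\colim D)
\end{align*}
is a bijection. Both the source and the target of this comparison are functorial in the first variable along $\C\op$, and $\phi$ is natural in that variable. So $\phi_B$ is a retract of $\phi_A$ in the arrow category of $\Set$: we have commuting squares induced by $f^*$ and $g^*$ on both sides, and they compose to the identity on $\phi_B$. By assumption $\phi_A$ is an isomorphism. A retract of an isomorphism in the arrow category is an isomorphism: the candidate inverse $g^*\circ\phi_A^{-1}\circ f^*$ is checked to be two-sided using naturality and $g^*f^*=\id$. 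Therefore $\phi_B$ is an isomorphism and $B$ is atomic.

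An alternative route is to note that $\C(B,-)$ is the coequaliser of $\id$ and $p^*$, or equally its splitting. This is an absolute colimit (Proposition \ref{spliscol}), so it commutes with the colimit-preservation isomorphisms. The arrow-category argument is cleaner, though.

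The main obstacle is the bookkeeping: verifying that the comparison maps $\phi_A$ and $\phi_B$ really are natural in the first variable, so that the retraction squares commute. This follows because both sides are obtained by applying functors to the morphisms $f$ and $g$ of $\C$. The step is routine but it is where care is needed. It also explains why the statement works in the generality of any cocomplete $\C$, in particular for $\Psh{\C}$.
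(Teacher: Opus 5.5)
Your proof is correct and is essentially the paper's argument. The paper transports a cocone $\phi$ on $\C(B,F)$ to the cocone $\phi\circ g^*$ on $\C(A,F)$, factors it through $\C(A,\colim F)$ using atomicity of $A$, and transports the factorisation back via $f^*$; this is your ``retract of an isomorphism'' argument, with inverse $g^*\circ\phi_A^{-1}\circ f^*$ and $g^*f^*=\id$, phrased via universal cocones rather than the comparison map $\phi_B$.
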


\begin{proof}
Let $B$ be a retract of an atomic object $A$ in a category $\C$, i.e., we have morphisms $f:A\to B$ and $g:B\to A$ such that $fg=\id_B$. Let $F:\J\to \C$ be a functor with universal cone $\lambda:F\to\Delta_{\colim_{\J} F}$. We will need to show that $\lambda_*:\C(B,F)\to\Delta_{\C(B,\colim_{\J} F)}$ also defines a universal cone.

Let $\phi:\C(B,F)\to \Delta_X$ be a natural transformation, we need to show that there is a unique $\psi:\C(B,\colim_{\J} F)\to X$ such that the diagram 
\begin{center}
\begin{tikzcd}
{\C(B,F)} \arrow[d, "\lambda_*"] \arrow[r, "\phi"] & \Delta_X \\
{\Delta_{\C(B,\colim_{\J} F})} \arrow[ru, "\psi"']            &         
\end{tikzcd}
\end{center}
commutes. We can expand this diagram in the following way.
\begin{center}
\begin{tikzcd}
{\C(A,F)} \arrow[d, "\lambda_*"] \arrow[r, "g^*", bend left=15] & {\C(B,F)} \arrow[r, "\phi"] \arrow[d, "\lambda_*"] \arrow[l, "f^*"', bend left=15] & \Delta_X \\
{\Delta_{\C(A,\colim_{\J} F)}} \arrow[r, "g^*"', bend left=15]               & { \Delta_{\C(B,\colim_{\J} F)}} \arrow[ru, "\psi"'] \arrow[l, "f^*", bend left=15]            &  
\end{tikzcd}
\end{center}
Since $A$ is atomic, we know that $\lambda_*:\C(A,F)\to\Delta_{\C(A,\colim_{\J} F)}$ is a universal cone. We can now define $\tilde{\phi}=\phi \circ g^*$ and thus have a unique $\tilde{\psi}: \C(A,\colim_{\J} F)\to X$ such that $\tilde{\psi}\circ \lambda_*=\tilde{\phi}$. We now have a unique $\psi=\tilde{\psi}\circ f^*$ such that $\psi\circ\lambda_*=\phi$.
\end{proof}

\begin{proposition} \label{ato1ret}
Every atomic presheaf is a retract of a representable presheaf. 
\end{proposition}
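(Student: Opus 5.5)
We prove that every atomic presheaf $S$ on $\C$ is a retract of a representable presheaf, using the density of the Yoneda embedding. Every presheaf is canonically a colimit of representables: with $\U:\esh S\to\C$ the forgetful functor from the category of elements, we have
\begin{align*}
S\iso\colim_{(c,x)\in\esh S}\yo_\C(c),
\end{align*}
with colimit coprojections $\lambda_{(c,x)}:\C(-,c)\to S$ given by the Yoneda correspondents of the elements $x\in S(c)$.

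Since $S$ is atomic, $\Psh{\C}(S,-)$ preserves this colimit. Hence
\begin{align*}
\Psh{\C}(S,S)\iso\colim_{(c,x)\in\esh S}\Psh{\C}(S,\C(-,c))
\end{align*}
in $\Set$, the isomorphism being induced by postcomposition with the $\lambda_{(c,x)}$. Colimits in $\Set$ are quotients of disjoint unions, so every element of the left side lies in the image of some $\Psh{\C}(S,\C(-,c))$. Applying this to $\id_S$, we obtain an object $(c,x)$ of $\esh S$ and a morphism $f:S\to\C(-,c)$ with $\lambda_{(c,x)}\circ f=\id_S$.

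Putting $g=\lambda_{(c,x)}:\C(-,c)\to S$, we have $g\circ f=\id_S$. Thus $S$ is a retract of the representable presheaf $\C(-,c)$, and it splits the idempotent $f\circ g$ on $\C(-,c)$. By the Yoneda lemma, $f\circ g$ corresponds to an idempotent endomorphism of $c$ in $\C$.

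The argument has two delicate points. First, we must check that the comparison map is exactly the one induced by postcomposition with the coprojections; this is what atomicity provides, since it says that $\lambda_*$ is a universal cocone, as in lemma \ref{ret1ato}. Second, we need the elementary description of colimits in $\Set$, which guarantees that every element of a colimit is represented at some stage. The one point needing care is size: $\esh S$ is small because $\C$ is small, so the colimit is small and atomicity applies to it.
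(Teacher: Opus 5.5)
Your proof is correct and follows essentially the same route as the paper. You present $S$ as a colimit of representables (you fix the canonical diagram over the category of elements, while the paper takes an arbitrary density presentation $\yo_\C F\to\Delta_S$). You then use atomicity to identify $\Psh{\C}(S,S)$ with the colimit in $\Set$ of the sets $\Psh{\C}(S,\C(-,c))$ via postcomposition with the coprojections. Finally, you use that every element of a colimit of sets is represented at some stage, which gives $f:S\to\C(-,c)$ with $\lambda_{(c,x)}\circ f=\id_S$; the paper makes this last step explicit by writing the colimit as a quotient of the coproduct.
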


\begin{proof}
Let $S$ be an atomic presheaf on $\C$. The density theorem states that each presheaf is a colimit of representables. This means there exists a category $\J$, a functor $F:\J\to\C$ and a universal cone $\lambda:\yo_\C F\to \Delta_S$.

Since $S$ is atomic, applying $\Psh{\C}(S,-)$ yields another universal cone
\begin{align*}
\lambda_*:\Psh{\C}(S,\yo_\C F)\to\Delta_{\Psh{\C}(S,S)}.
\end{align*}
Since the functor $\Psh{\C}(S,\yo_\C F):\J\to\Set$ takes values in $\Set$, we can explicitly construct a second colimit cone. We define a relation on the set
\begin{align*}
\coprod_{j\in\Ob\J}\Psh{\C}(S,\C(-,Fj))
\end{align*}
in the following way: For each $\alpha:S\to\C(-,Fj_1)$ and $\beta:S\to\C(-,Fj_2)$, we set $\alpha \sim\beta$ if there exists a morphism $f:j_1\to j_2$ in $\J$ such that $F(f)_*\circ \alpha=\beta$. We now take the equivalence relation generated by this relation and define $\colim_{\J} \Psh{\C}(S,\yo_\C F)$ to be the set of equivalence classes under this relation. The cone 
\begin{align*}
\tilde{\lambda}:\Psh{\C}(S,\yo_\C F)\to\Delta_{\colim_{\J} \Psh{\C}(S,\yo_\C F)}
\end{align*}
is now defined via $\tilde{\lambda}_j(\alpha)=[\alpha]$, i.e., for each $j$ in $\J$, $\tilde{\lambda}_j$ maps $\alpha:S\to \C(-,F(j))$ onto its equivalence class $[\alpha]$. We now also have a unique map
\begin{align*}
\phi:\colim_{\J} \Psh{\C}(S,\yo_\C F)\to\Psh{\C}(S,S)
\end{align*}
such that $\phi\circ\tilde{\lambda}=\lambda_*$, which is defined via $\phi([\alpha])=\lambda_j\circ \alpha$.

We know that this map has to be an isomorphism, which implies that there exists a $j$ in $\J$ and an $\alpha:S\to \C(-,Fj)$ such that $\lambda_j\circ \alpha=\id_S$. Therefore $\alpha\circ \lambda_j$ is an idempotent on $\C(-,Fj)$, which splits via $S$.
\end{proof}

\begin{theorem} \label{kar1cau}
Let $\C$ be a category. The functor given by the composition
\begin{center}
\begin{tikzcd}
\Kar{\C} \arrow[r, "\yo_{\Kar{\C}}"] & \Psh{\Kar{\C}} \arrow[r, "\iota_\C^*"] & \Psh{\C}
\end{tikzcd}
\end{center}
defines an equivalence of categories $\Kar{\C}\equiv\Pshat{\C}$.
\end{theorem}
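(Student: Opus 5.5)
The plan is to compute the composite explicitly and show it is fully faithful, lands in $\Pshat{\C}$, and is essentially surjective onto $\Pshat{\C}$. For an object $A_p$ of $\Kar{\C}$, the composite sends it to the presheaf $S_{A_p}=\Kar{\C}(\iota_\C(-),A_p)$. Its value at $c$ is
\begin{align*}
S_{A_p}(c)=\{h:c\to A \mid p\circ h=h\},
\end{align*}
with action by precomposition. The first step is to identify $S_{A_p}$ as the splitting of the idempotent $p_*$ on the representable $\C(-,A)$. By Remark \ref{eobsplit}, $A_p$ is a splitting of $p$ on $A_{\id}$ in $\Kar{\C}$. Functors preserve splittings, so applying $\iota_\C^*\yo_{\Kar{\C}}$ shows that $S_{A_p}$ is a retract of $\iota_\C^*\yo_{\Kar{\C}}(A_{\id})$. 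Full faithfulness of $\iota_\C$ (Proposition \ref{kar1emb}) identifies the latter with $\yo_\C(A)$. Hence $S_{A_p}$ is a retract of a representable. By Proposition \ref{repato} and Lemma \ref{ret1ato}, it is atomic, so the composite lands in $\Pshat{\C}$.

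Next, full faithfulness. I will compute $\Psh{\C}(S_{A_p},S_{B_q})$ directly. Since $S_{A_p}$ is the retract of $\yo_\C A$ cut out by $p_*$, a natural transformation $S_{A_p}\to S_{B_q}$ corresponds to a natural transformation $\theta:\yo_\C A\to\yo_\C B$ with $q_*\theta p_*=\theta$. The correspondence sends $\theta$ to its restriction, and conversely precomposes with $p_*$. By the Yoneda lemma, $\theta=f_*$ for a unique $f:A\to B$ with $qfp=f$, which is exactly a morphism $A_p\to B_q$ in $\Kar{\C}$. I then check that this bijection agrees with the action of the composite on morphisms, which sends $f$ to postcomposition $f_*$. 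That check is routine. Alternatively, $\yo_{\Kar{\C}}$ is fully faithful, and $\iota_\C^*$ is fully faithful on presheaves that are retracts of representables; the explicit computation seems cleaner, so I would use it.

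Finally, essential surjectivity, which is the substantive step; it rests entirely on Proposition \ref{ato1ret}. Given an atomic $S$, that proposition supplies $j$ and $\alpha$ with $\lambda_j\circ\alpha=\id_S$, so $S$ splits the idempotent $e=\alpha\circ\lambda_j$ on $\yo_\C(Fj)$. By full faithfulness of $\yo_\C$, $e=p_*$ for a unique idempotent $p:Fj\to Fj$. By the first step, $S_{(Fj)_p}$ also splits $p_*$. Splittings are unique up to isomorphism, by the Corollary to Proposition \ref{spliscol}, so $S\iso S_{(Fj)_p}$.

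The main obstacle is keeping the identifications consistent, in particular checking that the morphism map of the composite matches the Yoneda bijection. No conceptual difficulty is expected beyond Proposition \ref{ato1ret}, which is already available.
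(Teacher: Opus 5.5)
Your proposal is correct and follows the paper's proof step for step. Atomicity comes from exhibiting $S_{A_p}$ as a retract of a representable and applying Lemma \ref{ret1ato}, and essential surjectivity from Proposition \ref{ato1ret} together with uniqueness of splittings. The only deviation is in full faithfulness: the paper gets that $\iota_\C^*$ is fully faithful on all presheaves from Corollary \ref{kar1equ} (with $\D=\Set\op$), while you compute $\Psh{\C}(S_{A_p},S_{B_q})$ directly by Yoneda on the retracts, which is more elementary and equally valid.
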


\begin{proof}
First, we want to show that the functor takes values in atomic objects, i.e., for every object $A_p$ in $\Kar{\C}$, the presheaf $\Kar{\C}(\iota_\C,A_p):\C\op\to\Set$ is atomic. By remark \ref{eobsplit}, $A_p$ is a splitting of the idempotent $p$ on $A_{\id}$. By absoluteness of splittings, we have that $\Kar{\C}(\iota_\C,A_p)$ is a retract of $\Kar{\C}(\iota_\C,A_{\id})=\Kar{\C}(\iota_\C,\iota_\C A)$. Since $\iota_\C$ is  fully faithful, we have $\Kar{\C}(\iota_\C,\iota_\C A)\cong \C(-,A)$. This means that $\Kar{\C}(\iota_\C,A_p)$ is a retract of a representable presheaf and by lemma \ref{ret1ato}, it therefore has to be atomic.

Next, we will show that the functor is fully faithful. Since the Yoneda embedding is fully faithful and by corollary \ref{kar1equ} with $\D=\Set\op$, precomposition with $\iota_\C$ is fully faithful, their composition must also be fully faithful.

Lastly, we need to show that the functor is essentially surjective. Let $S$ be an atomic presheaf on $\C$. By proposition \ref{ato1ret}, there exists an object $A$ in $\C$ and an idempotent $p$ on $A$ such that $S$ is a splitting of the idempotent $p_*$ on $\C(-,A)$. By remark \ref{eobsplit}, $A_p$ is a splitting of the idempotent $p$ on $A_{\id}=\iota_\C A$. By absoluteness of splittings, $\Kar{\C}(\iota_\C,A_p)$ is a splitting of the idempotent $p_*$ on $\Kar{\C}(\iota_\C,\iota_\C A)\iso\C(-,A)$. Since splittings of idempotents are unique up to isomorphisms, it follows that $S\iso \Kar{\C}(\iota_\C,A_p)$.
\end{proof}

With this equivalence, its easy to prove remark \ref{cau1adj}.

\begin{corollary}
The 2-functor $\Pshattwo:\Cat\to\Cat_{\textup{awc}}$ is left adjoint to the forgetful functor $\U:\Cat_{\textup{awc}}\to\Cat$.
\end{corollary}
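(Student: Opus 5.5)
The plan is to transport the adjunction $\Kartwo\dashv\U$ of theorem \ref{1adj} along the equivalence of theorem \ref{kar1cau}. First I identify $\Cat_{\textup{awc}}$ with $\Catic$ as full sub-2-categories of $\Cat$. Every AWC is absolute, so an AWC-complete category has all idempotent splittings, because splittings are absolute colimits. For the converse, let $\D$ be idempotent complete. Proposition \ref{kar1emb} makes $\iota_\D$ an equivalence, and theorem \ref{kar1cau} gives $\Kar{\D}\equiv\Pshat{\D}$. The latter is AWC-complete by proposition \ref{cau1com}. The property of admitting all colimits with a fixed absolute weight is invariant under equivalence, so $\D$ is complete under AWCs.

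Next I identify the two 2-functors. For each $\C$, theorem \ref{kar1cau} provides the equivalence $E_\C=\iota_\C^*\circ\yo_{\Kar{\C}}:\Kar{\C}\to\Pshat{\C}$. Precomposing with $\iota_\C$ and using the Yoneda lemma, $E_\C\circ\iota_\C\cong\yo_\C$. I check that these equivalences assemble into a pseudonatural equivalence $E:\Kartwo\Rightarrow\Pshattwo$. Here $\Pshattwo$ acts on a functor $F$ by restricting left Kan extension $\textup{Lan}_{F\op}$ to atomic presheaves. The required 2-cells come from $\textup{Lan}_{F\op}\C(-,A)\cong\D(-,FA)$, extended to splittings because Kan extension preserves retracts. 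Since $\Kartwo$ is left adjoint to $\U$ and $\Pshattwo\simeq\Kartwo$ pseudonaturally, $\Pshattwo$ is also left adjoint to $\U$. The unit $\yo$ is obtained by whiskering $\iota$ with $E$, and the counit is $\epsilon\circ E\inv$. The triangle identities follow from those of $\Kartwo\dashv\U$ together with $E\iota\cong\yo$.

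A cleaner alternative that avoids the pseudonaturality check uses the hom-equivalence of proposition \ref{cau1equ}. For $\D$ idempotent complete, and hence AWC-complete, that proposition gives $\Cat(\Pshat{\C},\D)\equiv\Cat(\C,\D)$ via $\yo_\C^*$. This equivalence is pseudonatural in $\D$, so the standard bicategorical characterisation of adjoints by representing objects (the converse of proposition \ref{2adjequiv}) produces the left biadjoint. The resulting biadjoint agrees with $\Pshattwo$ on objects, and on morphisms up to isomorphism.

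I expect the main obstacle to be the coherence bookkeeping. In the first route it is verifying that $E$ is pseudonatural, in particular that the Kan-extension isomorphisms are compatible with the splittings chosen in $\Kartwo$. I would handle this by working entirely through representables: every object of $\Kar{\C}$ is a splitting of an idempotent on some $\iota_\C A$, and splittings are unique up to unique isomorphism. All the coherence 2-cells are therefore forced, and the modification equations hold automatically.
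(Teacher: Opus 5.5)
Your proposal is correct. Your first route is exactly the paper's argument: its proof consists only of the sentence that combining theorem \ref{1adj} with theorem \ref{kar1cau} yields the result, and you supply what that sentence leaves implicit, namely that $\Cat_{\textup{awc}}$ and $\Catic$ have the same objects (the argument of corollary \ref{icisabc}, which does not depend on this corollary) and that the equivalences $E_\C$ are pseudonatural for an explicit action of $\Pshattwo$ on functors, which the paper never specifies. Your second route, via proposition \ref{cau1equ} and the pointwise characterisation of left biadjoints, is also sound, since that proposition is established independently of the present corollary.
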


\begin{proof}
Combining theorem \ref{1adj} with theorem \ref{kar1cau} yields the desired result.
\end{proof}

It now follows that the notions of idempotent completeness and completeness under absolute colimits are equivalent.

\begin{corollary} \label{icisabc}
A category is idempotent complete if and only if it is complete under AWCs.
\end{corollary}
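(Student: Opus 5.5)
We prove the two implications separately, using the comparison between $\Kar{\C}$ and $\Pshat{\C}$ from theorem \ref{kar1cau} together with the two embedding results, propositions \ref{kar1emb} and \ref{cau1emb}.

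\textbf{Completeness under AWCs implies idempotent completeness.} This direction is direct. Let $\C$ be complete under AWCs and let $p:A\to A$ be an idempotent, viewed as a functor $F:\clb_1\to\C$. Take the weight on $\clb_1$ to be the constant functor at the one-point set. Colimits weighted by it are ordinary colimits of functors out of $\clb_1$. By the discussion after proposition \ref{spliscol}, every such colimit is preserved by every functor, so this weight is absolute. Hence $\colim F$ exists, and by proposition \ref{spliscol} it is a splitting of $p$. So every idempotent splits.

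\textbf{Idempotent completeness implies completeness under AWCs.} Let $\C$ be idempotent complete. By proposition \ref{kar1emb}, $\iota_\C:\C\to\Kar{\C}$ is an equivalence. By theorem \ref{kar1cau}, the composite $\iota_\C^*\circ\yo_{\Kar{\C}}:\Kar{\C}\to\Pshat{\C}$ is also an equivalence.

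We then check that the composite
\[
\iota_\C^*\circ\yo_{\Kar{\C}}\circ\iota_\C:\C\to\Pshat{\C}
\]
is isomorphic to $\yo_\C$. This follows because $\iota_\C$ is fully faithful, which gives $\Kar{\C}(\iota_\C-,\iota_\C A)\iso\C(-,A)$ naturally in $A$. Since this composite is a composite of equivalences, $\yo_\C:\C\to\Pshat{\C}$ is an equivalence.

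By proposition \ref{cau1com}, $\Pshat{\C}$ is complete under AWCs. Completeness under AWCs is invariant under equivalence of categories: weighted colimits are defined by a representability condition, and equivalences preserve and reflect representing objects. Therefore $\C$ is complete under AWCs.

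\textbf{Main obstacle.} The only delicate point is the transport of completeness along the equivalence $\yo_\C$. Given an absolute weight $W$ and $F:\J\to\C$, we form $\colim^W(\yo_\C F)$ in $\Pshat{\C}$ and choose $c\in\C$ with $\yo_\C c$ isomorphic to it. Full faithfulness then yields natural isomorphisms
\[
\C(c,-)\iso\Pshat{\C}(\yo_\C c,\yo_\C-)\iso\Psh{\J}(W,\Pshat{\C}(\yo_\C F,\yo_\C-))\iso\Psh{\J}(W,\C(F,-)),
\]
so $c$ is $\colim^W F$. All other steps are direct citations of earlier results.
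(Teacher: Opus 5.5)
Your proof is correct. The direction ``idempotent complete $\Rightarrow$ complete under AWCs'' is the paper's argument. The paper writes $\C\simeq\Kar{\C}\simeq\Pshat{\C}$ and transports completeness along this equivalence. You additionally identify the equivalence with $\yo_\C$ and spell out the transport step, which the paper leaves implicit; any equivalence would do, so that identification is harmless but not needed.

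For the converse you take a genuinely different route. The paper argues ``analogously'': by proposition \ref{cau1emb}, $\yo_\C$ is an equivalence, so $\C\simeq\Pshat{\C}\simeq\Kar{\C}$, and $\Kar{\C}$ is idempotent complete by proposition \ref{kar1com}. You instead argue straight from the definitions. An idempotent is a functor $\clb_1\to\C$, and the terminal weight on $\clb_1$ is absolute because splittings are preserved by every functor. So the colimit exists, and by proposition \ref{spliscol} it splits the idempotent.

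Your version is more elementary and self-contained. It uses neither theorem \ref{kar1cau} nor proposition \ref{cau1emb}, whose proof in the paper is deferred to the bicategorical proposition \ref{cau2emb} and rests on the density formula and proposition \ref{absato2}. It also makes plain that this direction just says splittings are among the absolute colimits. The paper's version is more symmetric, deriving both directions from the single equivalence $\Kar{\C}\simeq\Pshat{\C}$, and it is the template reused for corollary \ref{x10}. Your direct argument would transfer there too, using that $\spd_2(\iota-,Y)$ is an absolute ic-weight.
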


\begin{proof}
Let $\C$ be an idempotent complete category. By proposition \ref{kar1emb} and theorem \ref{kar1cau}, we now have $\C\equiv\Kar{\C}\equiv \Pshat{\C}$. Since $\Pshat{\C}$ is complete under AWCs, $\C$ also must be. The opposite direction follows analogously with proposition \ref{cau1emb}.
\end{proof}

This corollary leads us to be believe that retracts are the only type of absolute colimit that actually exists. We are in fact able to prove the following, validating the statement that any absolute colimit can be expressed via absolute functors.

\begin{lemma} \label{abssplit}
Every absolute colimit is a retract of an object in the image of its defining functor.
\end{lemma}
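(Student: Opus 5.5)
Let $F:\J\to\C$ be a functor with an absolute colimit $L=\colim_\J F$ and colimit cone $\lambda:F\to\Delta_L$. The claim is that there exist $j\in\J$ and morphisms $s:L\to Fj$ with $\lambda_j\circ s=\id_L$. Then $L$ splits the idempotent $s\circ\lambda_j$ on $Fj$. I would copy the proof of proposition \ref{ato1ret}, with the Yoneda embedding replaced by the representable functor $\C(L,-):\C\to\Set$.

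First, since the colimit is absolute, $\C(L,-)$ preserves it. So $\lambda_*:\C(L,F)\to\Delta_{\C(L,L)}$ is a universal cone in $\Set$.

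Second, I would use the explicit construction of colimits in $\Set$ from the proof of proposition \ref{ato1ret}. The colimit of $\C(L,F)$ is the quotient of $\coprod_{j\in\Ob\J}\C(L,Fj)$ by the equivalence relation generated by $\alpha\sim F(f)\circ\alpha$ for $f:j_1\to j_2$. The comparison map $\phi([\alpha])=\lambda_j\circ\alpha$ into $\C(L,L)$ must be a bijection, since both sets are colimits of the same diagram. In particular $\phi$ is surjective, so $\id_L=\lambda_j\circ\alpha$ for some $j$ and some $\alpha:L\to Fj$. Taking $s=\alpha$ gives $\lambda_j\circ s=\id_L$, so $L$ is a retract of $Fj$ in the image of $F$.

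Finally, I would record that $s\circ\lambda_j$ is an idempotent on $Fj$, namely $s\lambda_j s\lambda_j=s\lambda_j$. Thus $L$ is the splitting of this idempotent, which by proposition \ref{spliscol} is itself a colimit over $\clb_1$.

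There is no serious obstacle here. The only step needing care is the first: applying $\C(L,-)$ is legitimate because absoluteness means preservation by every functor, including representables into $\Set$. One also needs the elementary fact that the canonical comparison map between two colimits of the same diagram is a bijection.
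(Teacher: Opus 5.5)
Your proof is correct and follows the paper's argument: you apply $\C(L,-)$, which preserves the colimit by absoluteness, and then, as in Proposition~\ref{ato1ret}, use the explicit description of colimits in $\Set$ and surjectivity of the comparison map to find $j$ and $\alpha:L\to Fj$ with $\lambda_j\circ\alpha=\id_L$. You also spell out the step that the paper only indicates with ``analogously.''
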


\begin{proof}
Let $F:\J\to\C$ be a functor and let $\lambda:F\to \Delta_A$ define an absolute colimit for some $A$ in $\C$. We want to show that $A$ is a retract of an object in the image of $F$. If we apply the functor $\C(A,-):\C\to\Set$ to the given colimit, we get a universal cone $\lambda_*:\C(A,F)\to \Delta_{\C(A,A)}$. Since $\C(A,F)$ is now a functor into $\Set$, there is, analogously to the proof of proposition \ref{ato1ret}, an idempotent $p:Fj\to Fj$ for a $j$ in $\J$ which splits via $A$.
\end{proof}

\begin{conjecture}
Furthermore, we conjecture that this idempotent $p:Fj\to Fj$ lies in the image of $F$ as well. 
\end{conjecture}

\section{Idempotent Completion} \label{sec4}

We will now look at the bicategorical analogue of idempotent completion. The ideas laid out in this section follow \cite{DR} and \cite{GJF}, the latter of which is phrased in the language of general $n$-categories. What we present here is a fully rigorous proof of \cite[Theorem 2.3.11.]{GJF} for bicategories. We will suppress coherence data whenever appropriate. Formally, this can be justified since every bicategory is equivalent to a 2-category.

\subsection{2-Idempotents and their Splittings}

To start, we define the bicategorical analogue of an idempotent and of its splitting. These definitions follow \cite{GJF}.

\begin{definition}{(2-Idempotent)}
Let $\bB$ be a bicategory. A \textit{2-idempotent} in $\bB$ consists of an object $A$ in $\bB$, a 1-morphism $p:A\to A$ and 2-morphisms $m:p^2\to p$ and $\Delta:p\to p^2$ such that 
	\begin{align*}
	(\id_p\circ m)\cdot(\Delta\circ \id_p) & =(m\circ \id_p)\cdot(\id_p\circ\Delta)=\Delta\cdot m \text{ and}\\ 
	m\cdot \Delta & =\id_p.
	\end{align*}
We denote a 2-idempotent by $(A,p,m,\Delta)$ and whenever it is clear from context, we will simply denote it as $A_p$.
\end{definition}

\begin{definition}{(2-Idempotent Splitting)}
Let $\bB$ be a bicategory and let $(A,p,m,\Delta)$ be a 2-idempotent in $\bB$. A \textit{splitting} of $A_p$ is given by an object $B$ in $\bB$, 1-morphisms $f:A\to B$ and $g:B\to A$, 2-morphisms $\varphi:f\circ g\to \id_B$, $\psi:\id_B \to f\circ g$ and an isomorphism $\gamma:g\circ f\to p$ such that $m=\gamma\cdot(\id_g \circ \varphi \circ \id_f)\cdot(\gamma^{-1}\circ\gamma^{-1})$ and $\Delta=(\gamma\circ\gamma)\cdot(\id_g \circ \psi \circ \id_f)\cdot\gamma^{-1}$. We say that the 2-idempotent $p$ \textit{splits} and we call $B$ a \textit{2-retract} of $A$. We will later show that in certain cases a splitting of a 2-idempotent is unique up to equivalence.
\end{definition}

\begin{definition}{(2-Idempotent Completeness)}
We call a bicategory $\bB$ \textit{2-idem\-po\-tent complete} if it is locally idempotent complete and if every 2-idempotent in $\bB$ splits.
\end{definition}

\begin{definition}{(Free Walking 2-Idempotent (Splitting))} \label{walkin}
We will define the bicategory $\spd_2$ to be the bicategory with two objects $X$ and $Y$, 1-morphisms freely generated by $f:X\to Y$ and $g:Y\to X$ and 2-morphisms freely generated by $\phi:f\circ g\to \id_Y$ and $\psi:\id_Y \to f\circ g$ satisfying the relation $\varphi\cdot\psi=\id_{\id_Y}$. We will call this bicategory $\spd_2$ the \textit{free walking 2-idempotent splitting}.

The bicategory $\clb_2$ is defined to be the full subbicategory of $\spd_2$ on the object $X$. We will call this bicategory the \textit{free walking 2-idempotent} and we have a fully faithful pseudofunctor $\iota:\clb_2\to\spd_2$.
\end{definition}

\begin{remark}\label{ExtIsSplit}
We can now also think of a 2-idempotent in $\bB$ as a pseudofunctor $F:\clb_2\to\bB$ since every such pseudofunctor determines a 2-idempotent and we can define such a pseudofunctor simply by choosing a 2-idempotent in $\bB$. Furthermore, a splitting of $F$ is then given by a functor $F\p:\spd_2\to\bB$ such that $F\p\iota\equiv F$.
\end{remark}

\subsection{Idempotent Completion of a Bicategory}

In the following chapter, we define the idempotent completion of a locally idempotent complete bicategory $\bB$ and show that it forms a 2-idempotent complete bicategory and furthermore that it is universal among all bicategories with this property.

We use the graphical calculus of 2-categories to ease calculations. We follow the convention that 1-morphisms are read from right to left and 2-morphisms from bottom to top.

\begin{definition}{(Idempotent Completion)}
Let $\bB$ be a locally idempotent complete bicategory. We define the \textit{idempotent completion} $\Kar{\bB}$ to have the following data:

\begin{itemize}
	\item Objects in $\Kar{\bB}$ are 2-idempotents in $\bB$, i.e., an object in $\Kar{\bB}$ is a collection $(A,p,m,\Delta)$ consisting of an object $A$ in $\bB$, a 1-morphism $p:A\to A$ in $\bB$ and 2-morphisms $m:p^2\to p$ and $\Delta:p\to p^2$ in $\bB$ such that
	\begin{align*}
	(\id_p\circ m)\cdot(\Delta\circ \id_p) & =(m\circ \id_p)\cdot(\id_p\circ\Delta)=\Delta\cdot m \text{ and}\\ 
	m\cdot \Delta & =\id_p.
	\end{align*}
	Graphically, this can be represented as follows:
	\begin{center}
	\begin{tikzpicture}
	\draw[dashed] (0,6) --(2,6) --(2,8) --(0,8) --(0,6);
	\draw[dashed] (3,6) --(5,6) --(5,8) --(3,8) --(3,6);
	\draw[dashed] (6,6) --(8,6) --(8,8) --(6,8) --(6,6);
	\draw[line width=0.35mm] (1,6) to (1,8);
	\draw[line width=0.35mm] (3.75,6) to (3.75,6.75) to[out=90,in=180] (4,7) to[out=0,in=90] (4.25,6.75) to (4.25,6);
	\draw[line width=0.35mm] (4,7) to (4,8);
	\draw[line width=0.35mm] (7,6) to (7,7);
	\draw[line width=0.35mm] (6.75,8) to (6.75,7.25) to[out=270,in=180] (7,7) to[out=0,in=270] (7.25,7.25) to (7.25,8);
	\node[right] at (0,7) {$A$};
	\node[left] at (2,7) {$A$};
	\node[right] at (1,7.5) {$p$};
	\node at (1,5.6) {$\id_p$};
	\node at (4,5.6) {$m$};
	\node at (7,5.6) {$\Delta$}; 

	\draw[dashed] (0,3) --(2,3) --(2,5) --(0,5) --(0,3);
	\draw[dashed] (3,3) --(5,3) --(5,5) --(3,5) --(3,3);
	\draw[dashed] (6,3) --(8,3) --(8,5) --(6,5) --(6,3);
	\draw[line width=0.35mm] (0.5,5) to (0.5,3.75) to[out=270,in=180] (0.75,3.5) to[out=0,in=270] (1,3.75) to (1,4.25) to[out=90,in=180] (1.25,4.5) to[out=0,in=90] (1.5,4.25) to (1.5,3);
	\draw[line width=0.35mm] (0.75,3) to (0.75,3.5);
	\draw[line width=0.35mm] (1.25,4.5) to (1.25,5);
	\draw[line width=0.35mm] (3.5,3) to (3.5,4.25) to[out=90,in=180] (3.75,4.5) to[out=0,in=90] (4,4.25) to (4,3.75) to[out=270,in=180] (4.25,3.5) to[out=0,in=270] (4.5,3.75) to (4.5,5);
	\draw[line width=0.35mm] (3.75,5) to (3.75,4.5);
	\draw[line width=0.35mm] (4.25,3.5) to (4.25,3);
	\draw[line width=0.35mm] (6.75,3) to (6.75,3.5) to[out=90,in=180] (7,3.75) to[out=0,in=90] (7.25,3.5) to (7.25,3);
	\draw[line width=0.35mm] (6.75,5) to (6.75,4.5) to[out=270,in=180] (7,4.25) to[out=0,in=270] (7.25,4.5) to (7.25,5);
	\draw[line width=0.35mm] (7,3.75) to (7,4.25);
	\node at (2.5,4) {$=$};
	\node at (5.5,4) {$=$};
	\node at (3.25,2.6) {$(\id_p\circ m)\cdot(\Delta\circ\id_p)=(m\circ\id_p)\cdot(\id_p\circ\Delta)=\Delta\cdot m$};

	\draw[dashed] (1.5,0) --(3.5,0) --(3.5,2) --(1.5,2) --(1.5,0);
	\draw[dashed] (4.5,0) --(6.5,0) --(6.5,2) --(4.5,2) --(4.5,0);
	\draw[line width=0.35mm] (2.75,1) to[out=90,in=0] (2.5,1.25) to[out=180,in=90] (2.25,1) to[out=270,in=180] (2.5,0.75) to[out=0,in=270] (2.75,1);
	\draw[line width=0.35mm] (2.5,0) to (2.5,0.75);
	\draw[line width=0.35mm] (2.5,1.25) to (2.5,2);
	\draw[line width=0.35mm] (5.5,0) to (5.5,2);
	\node at (4,1) {$=$};
	\node at (2.5,-0.4) {$m\cdot\Delta$};
	\node at (4,-0.4) {$=$};
	\node at (5.5,-0.4) {$\id_p$};
	\end{tikzpicture}
	\end{center}
	With these identities, we can derive the following equation:
	\begin{center}
	\begin{tikzpicture}
	\draw[dashed] (0,0) --(2,0) --(2,2) --(0,2) --(0,0);
	\draw[line width=0.35mm] (0.5,0) to (0.5,0.75) to[out=90,in=180] (0.75,1) to[out=0,in=90] (1,0.75) to (1,0);
	\draw[line width=0.35mm] (0.75,1) to[out=90,in=180] (1.125,1.375) to[out=0,in=90] (1.5,1) to (1.5,0);
	\draw[line width=0.35mm] (1.125,1.375) to (1.125,2);
	\draw[dashed] (4,0) --(6,0) --(6,2) --(4,2) --(4,0);
    \draw[line width=0.35mm] (4.5,0) to (4.5,0.83) to[out=90,in=180] (4.67,1) to[out=0,in=90] (4.83,0.83) to[out=270,in=180] (5,0.66) to[out=0,in=270] (5.17,0.83) to[out=90,in=180] (5.33,1) to[out=0,in=90] (5.5,0.83) to (5.5,0);
    \draw[line width=0.35mm] (5,0) to (5,0.66);
    \draw[line width=0.35mm] (4.67,1) to[out=90,in=180] (5,1.33) to[out=0,in=90] (5.33,1);
    \draw[line width=0.35mm] (5,1.33) to (5,2);
	\node at (3,1) {$=$};
	\node at (3.6,-0.4) {$m\cdot(m\circ\id_p)\quad =\ m\cdot(m\circ m)\cdot(\id_p\circ\Delta\circ\id_p)$};
	\end{tikzpicture}
	\end{center}
From this, we can derive associativity and analogously coassociativity.
	\begin{center}
	\begin{tikzpicture}
	\draw[dashed] (0,0) --(2,0) --(2,2) --(0,2) --(0,0);
	\draw[line width=0.35mm] (0.5,0) to (0.5,0.75) to[out=90,in=180] (0.75,1) to[out=0,in=90] (1,0.75) to (1,0);
	\draw[line width=0.35mm] (0.75,1) to[out=90,in=180] (1.125,1.375) to[out=0,in=90] (1.5,1) to (1.5,0);
	\draw[line width=0.35mm] (1.125,1.375) to (1.125,2);
	\draw[dashed] (3,0) --(5,0) --(5,2) --(3,2) --(3,0);
	\draw[line width=0.35mm] (4.5,0) to (4.5,0.75) to[out=90,in=0] (4.25,1) to[out=180,in=90] (4,0.75) to (4,0);
	\draw[line width=0.35mm] (4.25,1) to[out=90,in=0] (3.875,1.375) to[out=180,in=90] (3.5,1) to (3.5,0);
	\draw[line width=0.35mm] (3.875,1.375) to (3.875,2);
	
	\draw[dashed] (6.5,0) --(8.5,0) --(8.5,2) --(6.5,2) --(6.5,0);
	\draw[line width=0.35mm] (7,2) to (7,1.25) to[out=270,in=180] (7.25,1) to[out=0,in=270] (7.5,1.25) to (7.5,2);
	\draw[line width=0.35mm] (7.25,1) to[out=270,in=180] (7.625,0.625) to[out=0,in=270] (8,1) to (8,2);
	\draw[line width=0.35mm] (7.625,0.625) to (7.625,0);
	\draw[dashed] (9.5,0) --(11.5,0) --(11.5,2) --(9.5,2) --(9.5,0);
	\draw[line width=0.35mm] (11,2) to (11,1.25) to[out=270,in=0] (10.75,1) to[out=180,in=270] (10.5,1.25) to (10.5,2);
	\draw[line width=0.35mm] (10.75,1) to[out=270,in=0] (10.375,0.625) to[out=180,in=270] (10,1) to (10,2);
	\draw[line width=0.35mm] (10.375,0.625) to (10.375,0);
	
	\node at (2.5,1) {$=$};
	\node at (9,1) {$=$};
	\node at (2.5,-0.4) {$m\cdot(m\circ\id_p)\quad =\quad m\cdot(\id_p\circ m)$};
	\node at (9,-0.4) {$(\Delta\circ \id_p)\cdot \Delta\quad =\quad (\id_p\circ\Delta)\cdot\Delta$};
	\end{tikzpicture}
	\end{center}
	With this, the data $(A,p,m,\Delta)$ forms a special non-unital non-counital Frobenius algebra. Whenever it is clear from context, we will denote $(A,p,m,\Delta)$ as $A_p$ and $A_{\id_A}$ simply as $A_{\id}$.  
	\item For each pair of objects $A_p=(A,p,m_p,\Delta_p)$ and $B_q=(B,q,m_q,\Delta_q)$ in $\Kar{\bB}$, we have a Hom-category $\Kar{\bB}(A_p,B_q)$. A 1-morphism in $\Kar{\bB}$ is a collection ${(f,\lt,\rho,\rt,\lambda)}$ consisting of a 1-morphism $f:A\to B$ in $\bB$ , and 2-morphisms $\lt:f\circ p \to f$, $\rho:f\to f\circ p$, $\rt:q\circ f \to f$ and $\lambda: f\to q\circ f$ in $\bB$ such that
	\begin{align*}
	& (\id_f\circ m_p)\cdot(\rho\circ \id_p) =(\lt\circ \id_p)\cdot(\id_f\circ\Delta_p)=\rho\cdot \lt, \ \lt\cdot \rho=\id_f, \\
	& (\id_q\circ \rt)\cdot(\Delta_q\circ \id_f) =(m_q\circ \id_f)\cdot (\id_q\circ\lambda)=\lambda\cdot \rt, \ \rt\cdot \lambda=\id_f, \\
	& \rt\cdot(\id_q\circ\lt) =\lt\cdot(\rt\circ\id_p) \text{ and } (\lambda\circ\id_p)\cdot\rho=(\id_q\circ\rho)\cdot\lambda.
	\end{align*}		
	Whenever it is clear from context, we will denote $(f,\lt,\rho,\rt,\lambda)$ as $f:A_p\to B_q$ or even just as $f$. Graphically, this can be represented as follows:
	\tikzmath{\x1 = 0; \y1 =0; 
			\x2 = 6; \y2 =0; 
			\x3 = 0; \y3 =0;
			\x4 = 9; \y4 =0; 
			\x5 = 0; \y5 =2.5; 
			\x6 = 9; \y6 =2.5; 
			\x7 = 0; \y7 =5.5;
			\x8 = 3; \y8 =8.5;
			\x9 = 3; \y9 =11.5;}
    \begin{center}
    \begin{tikzpicture}
	\draw[dashed] (\x5+0,\y5+0) --(\x5+2,\y5+0) --(\x5+2,\y5+2) --(\x5+0,\y5+2) --(\x5+0,\y5+0);
	\draw[line width=0.35mm] (\x5+1.4,\y5+2) to (\x5+1.4,\y5+1) to[out=270,in=0] (\x5+1.2,\y5+0.8) to[out=180,in=270] (\x5+1,\y5+1) to[out=90, in=0] (\x5+0.8,\y5+1.2);
	\draw[line width=0.35mm] (\x5+1.2,\y5+0) to (\x5+1.2,\y5+0.8);
	\draw[green, line width=0.35mm] (\x5+0.8,\y5+0) to (\x5+0.8,\y5+2);
	
	\node at (\x5+2.5,\y5+1) {$=$};
	
	\draw[dashed] (\x5+3,\y5+0) --(\x5+5,\y5+0) --(\x5+5,\y5+2) --(\x5+3,\y5+2) --(\x5+3,\y5+0);
	\draw[line width=0.35mm] (\x5+4.4,\y5+0) to (\x5+4.4,\y5+1) to[out=90,in=0] (\x5+4.2,\y5+1.2) to[out=180,in=90] (\x5+4,\y5+1) to[out=270,in=0] (\x5+3.8,\y5+0.8);
	\draw[line width=0.35mm] (\x5+4.2,\y5+2) to (\x5+4.2,\y5+1.2);
	\draw[green, line width=0.35mm] (\x5+3.8,\y5+0) to (\x5+3.8,\y5+2);
	
	\node at (\x5+5.5,\y5+1) {$=$};
	
	\draw[dashed] (\x5+6,\y5+0) --(\x5+8,\y5+0) --(\x5+8,\y5+2) --(\x5+6,\y5+2) --(\x5+6,\y5+0);
	\draw[line width=0.35mm] (\x5+7.2,\y5+0) to (\x5+7.2,\y5+0.2) to[out=90,in=0] (\x5+6.8,\y5+0.6);
	\draw[line width=0.35mm] (\x5+7.2,\y5+2) to (\x5+7.2,\y5+1.8) to[out=270,in=0] (\x5+6.8,\y5+1.4);
	\draw[green, line width=0.35mm] (\x5+6.8,\y5+0) to (\x5+6.8,\y5+2);

	\draw[dashed] (\x6+0,\y6+0) --(\x6+2,\y6+0) --(\x6+2,\y6+2) --(\x6+0,\y6+2) --(\x6+0,\y6+0);
	\draw[line width=0.35mm] (\x6+0.8,\y6+0.3) to[out=0,in=270] (\x6+1.2,\y6+0.7) to (\x6+1.2,\y6+1.3) to[out=90,in=0] (\x6+0.8,\y6+1.7);
	\draw[green, line width=0.35mm] (\x6+0.8,\y6+0) to (\x6+0.8,\y6+2);
	
	\node at (\x6+2.5,\y6+1) {$=$};
	
	\draw[dashed] (\x6+3,\y6+0) --(\x6+5,\y6+0) --(\x6+5,\y6+2) --(\x6+3,\y6+2) --(\x6+3,\y6+0);
	\draw[green, line width=0.35mm] (\x6+4,\y6+0) to (\x6+4,\y6+2);

	\draw[dashed] (\x7+0,\y7+0) --(\x7+2,\y7+0) --(\x7+2,\y7+2) --(\x7+0,\y7+2) --(\x7+0,\y7+0);
	\draw[green, line width=0.35mm] (\x7+1,\y7+0) to (\x7+1,\y7+2);
	\node[right] at (\x7+1,\y7+1.5) {$f$};
	\node[right] at (\x7,\y7+1) {$B$};
	\node[left] at (\x7+2,\y7+1) {$A$};
	\node at (\x7+1,\y7-0.4) {$\id_f$};
	
	\draw[dashed] (\x7+3,\y7+0) --(\x7+5,\y7+0) --(\x7+5,\y7+2) --(\x7+3,\y7+2) --(\x7+3,\y7+0);
	\draw[line width=0.35mm] (\x7+4.2,\y7+0) to (\x7+4.2,\y7+0.6) to[out=90,in=0] (\x7+3.8,\y7+1);
	\draw[green, line width=0.35mm] (\x7+3.8,\y7+0) to (\x7+3.8,\y7+2);
	\node at (\x7+4,\y7-0.4) {$\lt$};
	
	\draw[dashed] (\x7+6,\y7+0) --(\x7+8,\y7+0) --(\x7+8,\y7+2) --(\x7+6,\y7+2) --(\x7+6,\y7+0);
	\draw[line width=0.35mm] (\x7+7.2,\y7+2) to (\x7+7.2,\y7+1.4) to[out=270,in=0] (\x7+6.8,\y7+1);
	\draw[green, line width=0.35mm] (\x7+6.8,\y7+0) to (\x7+6.8,\y7+2);
	\node at (\x7+7,\y7-0.4) {$\rho$};
	
	\draw[dashed] (\x7+9,\y7+0) --(\x7+11,\y7+0) --(\x7+11,\y7+2) --(\x7+9,\y7+2) --(\x7+9,\y7+0);
	\draw[blue, line width=0.35mm] (\x7+9.8,\y7+0) to (\x7+9.8,\y7+0.6) to[out=90,in=180] (\x7+10.2,\y7+1);
	\draw[green, line width=0.35mm] (\x7+10.2,\y7+0) to (\x7+10.2,\y7+2);
	\node at (\x7+10,\y7-0.4) {$\rt$};
	
	\draw[dashed] (\x7+12,\y7+0) --(\x7+14,\y7+0) --(\x7+14,\y7+2) --(\x7+12,\y7+2) --(\x7+12,\y7+0);
	\draw[blue, line width=0.35mm] (\x7+12.8,\y7+2) to (\x7+12.8,\y7+1.4) to[out=270,in=180] (\x7+13.2,\y7+1);
	\draw[green, line width=0.35mm] (\x7+13.2,\y7+0) to (\x7+13.2,\y7+2);
	\node at (\x7+13,\y7-0.4) {$\lambda$};

	\draw[dashed] (\x8+0,\y8+0) --(\x8+2,\y8+0) --(\x8+2,\y8+2) --(\x8+0,\y8+2) --(\x8+0,\y8+0);
	\draw[blue, line width=0.35mm] (\x8+1,\y8+0) to (\x8+1,\y8+2);
	\node[right] at (\x8+1,\y8+1.5) {$q$};
	\node[right] at (\x8,\y8+1) {$B$};
	\node[left] at (\x8+2,\y8+1) {$B$};
	\node at (\x8+1,\y8-0.4) {$\id_q$};
	
	\draw[dashed] (\x8+3,\y8+0) --(\x8+5,\y8+0) --(\x8+5,\y8+2) --(\x8+3,\y8+2) --(\x8+3,\y8+0);
	\draw[blue, line width=0.35mm] (\x8+3.75,\y8+0) to (\x8+3.75,\y8+0.75) to[out=90,in=180] (\x8+4,\y8+1) to[out=0,in=90] (\x8+4.25,\y8+0.75) to (\x8+4.25,\y8+0);
	\draw[blue, line width=0.35mm] (\x8+4,\y8+1) to (\x8+4,\y8+2);
	\node at (\x8+4,\y8-0.4) {$m_q$};
	
	\draw[dashed] (\x8+6,\y8+0) --(\x8+8,\y8+0) --(\x8+8,\y8+2) --(\x8+6,\y8+2) --(\x8+6,\y8+0);
	\draw[blue, line width=0.35mm] (\x8+7,\y8+0) to (\x8+7,\y8+1);
	\draw[blue, line width=0.35mm] (\x8+6.75,\y8+2) to (\x8+6.75,\y8+1.25) to[out=270,in=180] (\x8+7,\y8+1) to[out=0,in=270] (\x8+7.25,\y8+1.25) to (\x8+7.25,\y8+2);
	\node at (\x8+7,\y8-0.4) {$\Delta_q$};

    \draw[dashed] (\x9+0,\y9+0) --(\x9+2,\y9+0) --(\x9+2,\y9+2) --(\x9+0,\y9+2) --(\x9+0,\y9+0);
	\draw[line width=0.35mm] (\x9+1,\y9+0) to (\x9+1,\y9+2);
	\node[right] at (\x9+1,\y9+1.5) {$p$};
	\node[right] at (\x9,\y9+1) {$A$};
	\node[left] at (\x9+2,\y9+1) {$A$};
	\node at (\x9+1,\y9-0.4) {$\id_p$};
	
	\draw[dashed] (\x9+3,\y9+0) --(\x9+5,\y9+0) --(\x9+5,\y9+2) --(\x9+3,\y9+2) --(\x9+3,\y9+0);
	\draw[line width=0.35mm] (\x9+3.75,\y9+0) to (\x9+3.75,\y9+0.75) to[out=90,in=180] (\x9+4,\y9+1) to[out=0,in=90] (\x9+4.25,\y9+0.75) to (\x9+4.25,\y9+0);
	\draw[line width=0.35mm] (\x9+4,\y9+1) to (\x9+4,\y9+2);
	\node at (\x9+4,\y9-0.4) {$m_p$};
	
	\draw[dashed] (\x9+6,\y9+0) --(\x9+8,\y9+0) --(\x9+8,\y9+2) --(\x9+6,\y9+2) --(\x9+6,\y9+0);
	\draw[line width=0.35mm] (\x9+7,\y9+0) to (\x9+7,\y9+1);
	\draw[line width=0.35mm] (\x9+6.75,\y9+2) to (\x9+6.75,\y9+1.25) to[out=270,in=180] (\x9+7,\y9+1) to[out=0,in=270] (\x9+7.25,\y9+1.25) to (\x9+7.25,\y9+2);
	\node at (\x9+7,\y9-0.4) {$\Delta_p$};    
    \end{tikzpicture}
    \end{center}
	\begin{center}
	\begin{tikzpicture}
	\draw[dashed] (\x3+0,\y3+0) --(\x3+2,\y3+0) --(\x3+2,\y3+2) --(\x3+0,\y3+2) --(\x3+0,\y3+0);
	\draw[blue, line width=0.35mm] (\x3+0.6,\y3+2) to (\x3+0.6,\y3+1) to[out=270,in=180] (\x3+0.8,\y3+0.8) to[out=0,in=270] (\x3+1,\y3+1) to[out=90, in=180] (\x3+1.2,\y3+1.2);
	\draw[blue, line width=0.35mm] (\x3+0.8,\y3+0) to (\x3+0.8,\y3+0.8);
	\draw[green, line width=0.35mm] (\x3+1.2,\y3+0) to (\x3+1.2,\y3+2);
	
	\node at (\x3+2.5,\y3+1) {$=$};
	
	\draw[dashed] (\x3+3,\y3+0) --(\x3+5,\y3+0) --(\x3+5,\y3+2) --(\x3+3,\y3+2) --(\x3+3,\y3+0);
	\draw[blue, line width=0.35mm] (\x3+3.6,\y3+0) to (\x3+3.6,\y3+1) to[out=90,in=180] (\x3+3.8,\y3+1.2) to[out=0,in=90] (\x3+4,\y3+1) to[out=270,in=180] (\x3+4.2,\y3+0.8);
	\draw[blue, line width=0.35mm] (\x3+3.8,\y3+2) to (\x3+3.8,\y3+1.2);
	\draw[green, line width=0.35mm] (\x3+4.2,\y3+0) to (\x3+4.2,\y3+2);
	
	\node at (\x3+5.5,\y3+1) {$=$};
	
	\draw[dashed] (\x3+6,\y3+0) --(\x3+8,\y3+0) --(\x3+8,\y3+2) --(\x3+6,\y3+2) --(\x3+6,\y3+0);
	\draw[blue, line width=0.35mm] (\x3+6.8,\y3+0) to (\x3+6.8,\y3+0.2) to[out=90,in=180] (\x3+7.2,\y3+0.6);
	\draw[blue, line width=0.35mm] (\x3+6.8,\y3+2) to (\x3+6.8,\y3+1.8) to[out=270,in=180] (\x3+7.2,\y3+1.4);
	\draw[green, line width=0.35mm] (\x3+7.2,\y3+0) to (\x3+7.2,\y3+2);

	\draw[dashed] (\x4+0,\y4+0) --(\x4+2,\y4+0) --(\x4+2,\y4+2) --(\x4+0,\y4+2) --(\x4+0,\y4+0);
	\draw[blue, line width=0.35mm] (\x4+1.2,\y4+0.3) to[out=180,in=270] (\x4+0.8,\y4+0.7) to (\x4+0.8,\y4+1.3) to[out=90,in=180] (\x4+1.2,\y4+1.7);
	\draw[green, line width=0.35mm] (\x4+1.2,\y4+0) to (\x4+1.2,\y4+2);
	
	\node at (\x4+2.5,\y4+1) {$=$};
	
	\draw[dashed] (\x4+3,\y4+0) --(\x4+5,\y4+0) --(\x4+5,\y4+2) --(\x4+3,\y4+2) --(\x4+3,\y4+0);
	\draw[green, line width=0.35mm] (\x4+4,\y4+0) to (\x4+4,\y4+2);
	\end{tikzpicture}
	\end{center}
	\begin{center}
	\begin{tikzpicture}
	\draw[dashed] (\x1+0,\y1+0) --(\x1+2,\y1+0) --(\x1+2,\y1+2) --(\x1+0,\y1+2) --(\x1+0,\y1+0);
	\draw[blue, line width=0.35mm]  (\x1+0.7,\y1+0) to (\x1+0.7,\y1+0.4) to[out=90,in=180] (\x1+1,\y1+0.7);
	\draw[line width=0.35mm] (\x1+1.3,\y1+0) to (\x1+1.3,\y1+1) to[out=90,in=0] (\x1+1,\y1+1.3);
	\draw[green, line width=0.35mm] (\x1+1,\y1+0) to (\x1+1,\y1+2);
	
	\node at (\x1+2.5,\y1+1) {$=$};
	
	\draw[dashed] (\x1+3,\y1+0) --(\x1+5,\y1+0) --(\x1+5,\y1+2) --(\x1+3,\y1+2) --(\x1+3,\y1+0);
	\draw[blue, line width=0.35mm] (\x1+3.7,\y1+0) to (\x1+3.7,\y1+1) to[out=90,in=180] (\x1+4,\y1+1.3);
	\draw[line width=0.35mm] (\x1+4.3,\y1+0) to (\x1+4.3,\y1+0.3) to[out=90,in=0] (\x1+4,\y1+0.6);
	\draw[green, line width=0.35mm] (\x1+4,\y1+0) to (\x1+4,\y1+2);

	\draw[dashed] (\x2+0,\y2+0) --(\x2+2,\y2+0) --(\x2+2,\y2+2) --(\x2+0,\y2+2) --(\x2+0,\y2+0);
	\draw[blue, line width=0.35mm] (\x2+0.7,\y2+2) to (\x2+0.7,\y2+1.6) to[out=270,in=180] (\x2+1,\y2+1.3);
	\draw[line width=0.35mm] (\x2+1.3,\y2+2) to (\x2+1.3,\y2+1) to[out=270,in=0] (\x2+1,\y2+0.7);
	\draw[green, line width=0.35mm] (\x2+1,\y2+0) --(\x2+1,\y2+2);
	
	\node at (\x2+2.5,\y2+1) {$=$};
	
	\draw[dashed] (\x2+3,\y2+0) --(\x2+5,\y2+0) --(\x2+5,\y2+2) --(\x2+3,\y2+2) --(\x2+3,\y2+0);
	\draw[blue, line width=0.35mm] (\x2+3.7,\y2+2) to (\x2+3.7,\y2+1) to[out=270,in=180] (\x2+4,\y2+0.7);
	\draw[line width=0.35mm] (\x2+4.3,\y2+2) to (\x2+4.3,\y2+1.6) to[out=270,in=0] (\x2+4,\y2+1.3);
	\draw[green, line width=0.35mm] (\x2+4,\y2+0) to (\x2+4,\y2+2);
	\end{tikzpicture}
	\end{center}
	We can also think of a 1-morphism in $\Kar{\bB}$ as a bimodule over special non-unital non-counital Frobenius algebras with $\lt$ and $\rho$ being the the right (co)action and $\rt$ and $\lambda$ being the left (co)action.
	\item A 2-morphism between $(f,\lt_f,\rho_f,\rt_f,\lambda_f)$ and $(g,\lt_g,\rho_g,\rt_g,\lambda_g)$ is a 2-morphism $\varphi:f\to g$ in $\bB$ such that
	\begin{align*}
	& \varphi\cdot\lt_f=\lt_g\cdot(\varphi\circ\id_p), \ (\varphi\circ\id_p)\cdot\rho_f=\rho_g\cdot\varphi, \\
	& \varphi\cdot\rt_f=\rt_g\cdot(\id_q\circ \varphi) \text{ and } (\id_q\circ\varphi)\cdot\lambda_f=\lambda_g\cdot \varphi.
	\end{align*}
	This can be graphically represented in the following way:
	\tikzmath{\x1 = 0; \y1 =5.5; 
		\x2 = 1.5; \y2 =2.5; 
		\x3 = 1.5; \y3 =0; }
	\begin{center}
	\begin{tikzpicture}
	\draw[dashed] (\x1+0,\y1+0) --(\x1+2,\y1+0) --(\x1+2,\y1+2) --(\x1+0,\y1+2) --(\x1+0,\y1+0);
	\draw[line width=0.35mm] (\x1+1,\y1+0) to (\x1+1,\y1+2);
	\node[right] at (\x1+1,\y1+1.5) {$p$};
	\node[right] at (\x1,\y1+1) {$A$};
	\node[left] at (\x1+2,\y1+1) {$A$};
	\node at (\x1+1,\y1-0.4) {$\id_p$};
	
	\draw[dashed] (\x1+3,\y1+0) --(\x1+5,\y1+0) --(\x1+5,\y1+2) --(\x1+3,\y1+2) --(\x1+3,\y1+0);
	\draw[blue, line width=0.35mm] (\x1+4,\y1+0) to (\x1+4,\y1+2);
	\node[right] at (\x1+4,\y1+1.5) {$q$};
	\node[right] at (\x1+3,\y1+1) {$B$};
	\node[left] at (\x1+5,\y1+1) {$B$};
	\node at (\x1+4,\y1-0.4) {$\id_q$};
	
	\draw[dashed] (\x1+6,\y1+0) --(\x1+8,\y1+0) --(\x1+8,\y1+2) --(\x1+6,\y1+2) --(\x1+6,\y1+0);
	\draw[green, line width=0.35mm] (\x1+7,\y1+0) to (\x1+7,\y1+2);
	\node[right] at (\x1+7,\y1+1.5) {$f$};
	\node[right] at (\x1+6,\y1+1) {$B$};
	\node[left] at (\x1+8,\y1+1) {$A$};
	\node at (\x1+7,\y1-0.4) {$\id_f$};
	
	\draw[dashed] (\x1+9,\y1+0) --(\x1+11,\y1+0) --(\x1+11,\y1+2) --(\x1+9,\y1+2) --(\x1+9,\y1+0);
	\draw[orange, line width=0.35mm] (\x1+10,\y1+0) to (\x1+10,\y1+2);
	\node[right] at (\x1+10,\y1+1.5) {$g$};
	\node[right] at (\x1+9,\y1+1) {$B$};
	\node[left] at (\x1+11,\y1+1) {$A$};
	\node at (\x1+10,\y1-0.4) {$\id_g$};
	
	\draw[dashed] (\x1+12,\y1+0) --(\x1+14,\y1+0) --(\x1+14,\y1+2) --(\x1+12,\y1+2) --(\x1+12,\y1+0);
	\draw[green, line width=0.35mm] (\x1+13,\y1+0) to (\x1+13,\y1+1);
	\draw[orange, line width=0.35mm] (\x1+13,\y1+1) to (\x1+13,\y1+2);
	\node[right] at (\x1+13,\y1+1.6) {$g$};
	\node[right] at (\x1+13,\y1+0.4) {$f$};
	\node[right] at (\x1+12,\y1+1) {$B$};
	\node[left] at (\x1+14,\y1+1) {$A$};
	\node at (\x1+13,\y1-0.4) {$\phi$};

	\draw[dashed] (\x2+0,\y2+0) --(\x2+2,\y2+0) --(\x2+2,\y2+2) --(\x2+0,\y2+2) --(\x2+0,\y2+0);
	\draw[line width=0.35mm] (\x2+1.3,\y2+0) to (\x2+1.3,\y2+0.4) to[out=90, in=0] (\x2+1,\y2+0.7);
	\draw[green, line width=0.35mm] (\x2+1,\y2+0) to (\x2+1,\y2+1);
	\draw[orange, line width=0.35mm] (\x2+1,\y2+1) to (\x2+1,\y2+2);
	
	\node at (\x2+2.5,\y2+1) {$=$};
	
	\draw[dashed] (\x2+3,\y2+0) --(\x2+5,\y2+0) --(\x2+5,\y2+2) --(\x2+3,\y2+2) --(\x2+3,\y2+0);
	\draw[line width=0.35mm] (\x2+4.3,\y2+0) to (\x2+4.3,\y2+1) to[out=90, in=0] (\x2+4,\y2+1.3);
	\draw[green, line width=0.35mm] (\x2+4,\y2+0) to (\x2+4,\y2+1);
	\draw[orange, line width=0.35mm] (\x2+4,\y2+1) to (\x2+4,\y2+2);
	
	\draw[dashed] (\x2+6,\y2+0) --(\x2+8,\y2+0) --(\x2+8,\y2+2) --(\x2+6,\y2+2) --(\x2+6,\y2+0);
	\draw[line width=0.35mm] (\x2+7.3,\y2+2) to (\x2+7.3,\y2+1) to[out=270, in=0] (\x2+7,\y2+0.7);
	\draw[green, line width=0.35mm] (\x2+7,\y2+0) to (\x2+7,\y2+1);
	\draw[orange, line width=0.35mm] (\x2+7,\y2+1) to (\x2+7,\y2+2);

	\node at (\x2+8.5,\y2+1) {$=$};

	\draw[dashed] (\x2+9,\y2+0) --(\x2+11,\y2+0) --(\x2+11,\y2+2) --(\x2+9,\y2+2) --(\x2+9,\y2+0);
	\draw[line width=0.35mm] (\x2+10.3,\y2+2) to (\x2+10.3,\y2+1.6) to[out=270, in=0] (\x2+10,\y2+1.3);
	\draw[green, line width=0.35mm] (\x2+10,\y2+0) to (\x2+10,\y2+1);
	\draw[orange, line width=0.35mm] (\x2+10,\y2+1) to (\x2+10,\y2+2);

	\draw[dashed] (\x3+0,\y3+0) --(\x3+2,\y3+0) --(\x3+2,\y3+2) --(\x3+0,\y3+2) --(\x3+0,\y3+0);
	\draw[blue, line width=0.35mm] (\x3+0.7,\y3+0) to (\x3+0.7,\y3+0.4) to[out=90, in=180] (\x3+1,\y3+0.7);
	\draw[green, line width=0.35mm] (\x3+1,\y3+0) to (\x3+1,\y3+1);
	\draw[orange, line width=0.35mm] (\x3+1,\y3+1) to (\x3+1,\y3+2);
	
	\node at (\x3+2.5,\y3+1) {$=$};
	
	\draw[dashed] (\x3+3,\y3+0) --(\x3+5,\y3+0) --(\x3+5,\y3+2) --(\x3+3,\y3+2) --(\x3+3,\y3+0);
	\draw[blue, line width=0.35mm] (\x3+3.7,\y3+0) to (\x3+3.7,\y3+1) to[out=90, in=180] (\x3+4,\y3+1.3);
	\draw[green, line width=0.35mm] (\x3+4,\y3+0) to (\x3+4,\y3+1);
	\draw[orange, line width=0.35mm] (\x3+4,\y3+1) to (\x3+4,\y3+2);
	
	\draw[dashed] (\x3+6,\y3+0) --(\x3+8,\y3+0) --(\x3+8,\y3+2) --(\x3+6,\y3+2) --(\x3+6,\y3+0);
	\draw[blue, line width=0.35mm] (\x3+6.7,\y3+2) to (\x3+6.7,\y3+1) to[out=270, in=180] (\x3+7,\y3+0.7);
	\draw[green, line width=0.35mm] (\x3+7,\y3+0) to (\x3+7,\y3+1);
	\draw[orange, line width=0.35mm] (\x3+7,\y3+1) to (\x3+7,\y3+2);

	\node at (\x3+8.5,\y3+1) {$=$};

	\draw[dashed] (\x3+9,\y3+0) --(\x3+11,\y3+0) --(\x3+11,\y3+2) --(\x3+9,\y3+2) --(\x3+9,\y3+0);
	\draw[blue, line width=0.35mm] (\x3+9.7,\y3+2) to (\x3+9.7,\y3+1.6) to[out=270, in=180] (\x3+10,\y3+1.3);
	\draw[green, line width=0.35mm] (\x3+10,\y3+0) to (\x3+10,\y3+1);
	\draw[orange, line width=0.35mm] (\x3+10,\y3+1) to (\x3+10,\y3+2);
	\end{tikzpicture}
	\end{center}
	\item Vertical composition, i.e., composition of 2-morphisms is simply given by the composition of 2-morphisms in $\bB$. It follows that identity 2-morphisms are given by the identity 2-morphisms in $\bB$. This turns $\Kar{\bB}(A_p,B_q)$ into a category.
	\item Composition of 1-morphisms (or horizontal composition) is defined as follows. Let $f:A_p\to B_q$ and $g:B_q\to C_r$ be 1-morphisms in $\Kar{\bB}$. We can now form the idempotent
	\begin{align*}
	\kappa_{g\circ f}=(\id_g\circ \rt_f)\cdot(\rho_g\circ\id_f)=(\lt_g\circ \id_f)\cdot(\id_g\circ \lambda_f):g\circ f\to g\circ f.
	\end{align*}		
	We define the composition of $f$ and $g$ as the splitting of this idempotent. We can graphically check that these two ways of writing this morphism do in fact coincide and that it forms an idempotent.
	\tikzmath{\x1 = 3; \y1 =5; 
		\x2 = 0; \y2 =2.5; 
		\x3 = 0; \y3 =0; 
		\x4 = -1.5; \y4 = -2.5; }
	\begin{center}
	\begin{tikzpicture}
	\draw[dashed] (\x1-6,\y1+0) --(\x1-4,\y1+0) --(\x1-4,\y1+2) --(\x1-6,\y1+2) --(\x1-6,\y1+0);
	\draw[line width=0.35mm] (\x1-5,\y1+0) to (\x1-5,\y1+2);
	\node[right] at (\x1-5,\y1+1.5) {$p$};
	\node[right] at (\x1-6,\y1+1) {$A$};
	\node[left] at (\x1-4,\y1+1) {$A$};	
	
	\draw[dashed] (\x1-3,\y1+0) --(\x1-1,\y1+0) --(\x1-1,\y1+2) --(\x1-3,\y1+2) --(\x1-3,\y1+0);
	\draw[blue, line width=0.35mm] (\x1-2,\y1+0) to (\x1-2,\y1+2);
	\node[right] at (\x1-2,\y1+1.5) {$q$};
	\node[right] at (\x1-3,\y1+1) {$B$};
	\node[left] at (\x1-1,\y1+1) {$B$};	
	
	\draw[dashed] (\x1+0,\y1+0) --(\x1+2,\y1+0) --(\x1+2,\y1+2) --(\x1+0,\y1+2) --(\x1+0,\y1+0);
	\draw[red, line width=0.35mm] (\x1+1,\y1+0) to (\x1+1,\y1+2);
	\node[right] at (\x1+1,\y1+1.5) {$r$};
	\node[right] at (\x1,\y1+1) {$C$};
	\node[left] at (\x1+2,\y1+1) {$C$};
	
	\draw[dashed] (\x1+3,\y1+0) --(\x1+5,\y1+0) --(\x1+5,\y1+2) --(\x1+3,\y1+2) --(\x1+3,\y1+0);
	\draw[green, line width=0.35mm] (\x1+4,\y1+0) to (\x1+4,\y1+2);
	\node[right] at (\x1+4,\y1+1.5) {$f$};
	\node[right] at (\x1+3,\y1+1) {$B$};
	\node[left] at (\x1+5,\y1+1) {$A$};
	
	\draw[dashed] (\x1+6,\y1+0) --(\x1+8,\y1+0) --(\x1+8,\y1+2) --(\x1+6,\y1+2) --(\x1+6,\y1+0);
	\draw[orange, line width=0.35mm] (\x1+7,\y1+0) to (\x1+7,\y1+2);
	\node[right] at (\x1+7,\y1+1.5) {$g$};
	\node[right] at (\x1+6,\y1+1) {$C$};
	\node[left] at (\x1+8,\y1+1) {$B$};

	\draw[dashed] (\x2+0,\y2+0) --(\x2+2,\y2+0) --(\x2+2,\y2+2) --(\x2+0,\y2+2) --(\x2+0,\y2+0);
	\draw[blue, line width=0.35mm] (\x2+0.5,\y2+0.3) to[out=0,in=270] (\x2+1,\y2+0.8) to (\x2+1,\y2+1.2) to[out=90,in=180] (\x2+1.5,\y2+1.7);
	\draw[orange, line width=0.35mm] (\x2+0.5,\y2+0) to (\x2+0.5,\y2+2);
	\draw[green, line width=0.35mm] (\x2+1.5,\y2+0) to (\x2+1.5,\y2+2);

	\node at (\x2+2.5,\y2+1) {$=$};
	
	\draw[dashed] (\x2+3,\y2+0) --(\x2+5,\y2+0) --(\x2+5,\y2+2) --(\x2+3,\y2+2) --(\x2+3,\y2+0);
	\draw[blue, line width=0.35mm] (\x2+3.5,\y2+0.3) to[out=0,in=270] (\x2+4,\y2+0.8) to (\x2+4,\y2+1.2) to[out=90,in=180] (\x2+4.5,\y2+1.7);
	\draw[blue, line width=0.35mm] (\x2+3.5,\y2+0.8) to[out=0,in=270] (\x2+3.8,\y2+1.1) to (\x2+3.8,\y2+1.4) to[out=90,in=0] (\x2+3.5,\y2+1.7);
	\draw[blue, line width=0.35mm] (\x2+4.5,\y2+0.3) to[out=180,in=270] (\x2+4.2,\y2+0.6) to (\x2+4.2,\y2+0.9)  to[out=90,in=180] (\x2+4.5,\y2+1.2);
	\draw[orange, line width=0.35mm] (\x2+3.5,\y2+0) to (\x2+3.5,\y2+2);
	\draw[green, line width=0.35mm] (\x2+4.5,\y2+0) to (\x2+4.5,\y2+2);

	\node at (\x2+5.5,\y2+1) {$=$};
	
	\draw[dashed] (\x2+6,\y2+0) --(\x2+8,\y2+0) --(\x2+8,\y2+2) --(\x2+6,\y2+2) --(\x2+6,\y2+0);
	\draw[blue, line width=0.35mm] (\x2+6.5,\y2+0.3) to[out=0, in=270] (\x2+6.7,\y2+0.5) to (\x2+6.7,\y2+1.5) to[out=90,in=0] (\x2+6.5,\y2+1.7);
	\draw[blue, line width=0.35mm] (\x2+7.5,\y2+0.3) to[out=180, in=270] (\x2+7.3,\y2+0.5) to (\x2+7.3,\y2+1.5) to[out=90,in=180] (\x2+7.5,\y2+1.7);
	\draw[blue, line width=0.35mm] (\x2+6.7,\y2+0.6) to[out=0,in=270] (\x2+7,\y2+0.9) to (\x2+7,\y2+1.1) to[out=90,in=180] (\x2+7.3,\y2+1.4);
	\draw[orange, line width=0.35mm] (\x2+6.5,\y2+0) to (\x2+6.5,\y2+2);
	\draw[green, line width=0.35mm] (\x2+7.5,\y2+0) to (\x2+7.5,\y2+2);

	\node at (\x3-0.5,\y3+1) {$=$};
	
	\draw[dashed] (\x3+0,\y3+0) --(\x3+2,\y3+0) --(\x3+2,\y3+2) --(\x3+0,\y3+2) --(\x3+0,\y3+0);
	\draw[blue, line width=0.35mm] (\x3+0.5,\y3+0.3) to[out=0, in=270] (\x3+0.7,\y3+0.5) to (\x3+0.7,\y3+1.5) to[out=90,in=0] (\x3+0.5,\y3+1.7);
	\draw[blue, line width=0.35mm] (\x3+1.5,\y3+0.3) to[out=180, in=270] (\x3+1.3,\y3+0.5) to (\x3+1.3,\y3+1.5) to[out=90,in=180] (\x3+1.5,\y3+1.7);
	\draw[blue, line width=0.35mm] (\x3+0.7,\y3+1.4) to[out=0,in=90] (\x3+1,\y3+1.1) to (\x3+1,\y3+0.9) to[out=270,in=180] (\x3+1.3,\y3+0.6);
	\draw[orange, line width=0.35mm] (\x3+0.5,\y3+0) to (\x3+0.5,\y3+2);
	\draw[green, line width=0.35mm] (\x3+1.5,\y3+0) to (\x3+1.5,\y3+2);

	\node at (\x3+2.5,\y3+1) {$=$};
	
	\draw[dashed] (\x3+3,\y3+0) --(\x3+5,\y3+0) --(\x3+5,\y3+2) --(\x3+3,\y3+2) --(\x3+3,\y3+0);
	\draw[blue, line width=0.35mm] (\x3+3.5,\y3+1.7) to[out=0,in=90] (\x3+4,\y3+1.2) to (\x3+4,\y3+0.8) to[out=270,in=180] (\x3+4.5,\y3+0.3);
	\draw[blue, line width=0.35mm] (\x3+3.5,\y3+0.3) to[out=0,in=270] (\x3+3.8,\y3+0.6) to (\x3+3.8,\y3+0.9) to[out=90,in=0] (\x3+3.5,\y3+1.2);
	\draw[blue, line width=0.35mm] (\x3+4.5,\y3+1.7) to[out=180,in=90] (\x3+4.2,\y3+1.4) to (\x3+4.2,\y3+1.1)  to[out=270,in=180] (\x3+4.5,\y3+0.8);
	\draw[orange, line width=0.35mm] (\x3+3.5,\y3+0) to (\x3+3.5,\y3+2);
	\draw[green, line width=0.35mm] (\x3+4.5,\y3+0) to (\x3+4.5,\y3+2);

	\node at (\x3+5.5,\y3+1) {$=$};
	
	\draw[dashed] (\x3+6,\y3+0) --(\x3+8,\y3+0) --(\x3+8,\y3+2) --(\x3+6,\y3+2) --(\x3+6,\y3+0);
	\draw[blue, line width=0.35mm] (\x3+6.5,\y3+1.7) to[out=0,in=90] (\x3+7,\y3+1.2) to (\x3+7,\y3+0.8) to[out=270,in=180] (\x3+7.5,\y3+0.3);
	\draw[orange, line width=0.35mm] (\x3+6.5,\y3+0) to (\x3+6.5,\y3+2);
	\draw[green, line width=0.35mm] (\x3+7.5,\y3+0) to (\x3+7.5,\y3+2);

	\draw[dashed] (\x4+0,\y4+0) --(\x4+2,\y4+0) --(\x4+2,\y4+2) --(\x4+0,\y4+2) --(\x4+0,\y4+0);
	\draw[blue, line width=0.35mm] (\x4+0.7,\y4+0.2) to[out=0,in=270] (\x4+1,\y4+0.5) to (\x4+1,\y4+0.6) to[out=90,in=180] (\x4+1.3,\y4+0.9);
	\draw[blue, line width=0.35mm] (\x4+0.7,\y4+1.1) to[out=0,in=270] (\x4+1,\y4+1.4) to (\x4+1,\y4+1.5) to[out=90,in=180] (\x4+1.3,\y4+1.8);
	\draw[orange, line width=0.35mm] (\x4+0.7,\y4+0) to (\x4+0.7,\y4+2);
	\draw[green, line width=0.35mm] (\x4+1.3,\y4+0) to (\x4+1.3,\y4+2);

	\node at (\x4+2.5,\y4+1) {$=$};
	
	\draw[dashed] (\x4+3,\y4+0) --(\x4+5,\y4+0) --(\x4+5,\y4+2) --(\x4+3,\y4+2) --(\x4+3,\y4+0);
	\draw[blue, line width=0.35mm] (\x4+4.3,\y4+1.8) to[out=180,in=90] (\x4+4,\y4+1.5) to (\x4+4,\y4+1.15);
	\draw[blue, line width=0.35mm] (\x4+3.7,\y4+0.5) to[out=0,in=270] (\x4+3.85,\y4+0.65) to (\x4+3.85,\y4+1) to[out=90,in=180] (\x4+4,\y4+1.15) to[out=0,in=90] (\x4+4.15,\y4+1) to (\x4+4.15,\y4+0.65) to[out=270,in=0] (\x4+3.7,\y4+0.2);
	\draw[orange, line width=0.35mm] (\x4+3.7,\y4+0) to (\x4+3.7,\y4+2);
	\draw[green, line width=0.35mm] (\x4+4.3,\y4+0) to (\x4+4.3,\y4+2);

	\node at (\x4+5.5,\y4+1) {$=$};
	
	\draw[dashed] (\x4+6,\y4+0) --(\x4+8,\y4+0) --(\x4+8,\y4+2) --(\x4+6,\y4+2) --(\x4+6,\y4+0);
	\draw[blue, line width=0.35mm] (\x4+7.3,\y4+1.8) to[out=180,in=90] (\x4+7,\y4+1.5) to (\x4+7,\y4+1.15);
	\draw[blue, line width=0.35mm] (\x4+6.7,\y4+0.2) to[out=0,in=270] (\x4+7,\y4+0.5) to (\x4+7,\y4+0.85);
	\draw[blue, line width=0.35mm] (\x4+7,\y4+0.85) to[out=0,in=270] (\x4+7.15,\y4+1) to[out=90,in=0] (\x4+7,\y4+1.15) to[out=180,in=90] (\x4+6.85,\y4+1) to[out=270,in=180] (\x4+7,\y4+0.85);
	\draw[orange, line width=0.35mm] (\x4+6.7,\y4+0) to (\x4+6.7,\y4+2);
	\draw[green, line width=0.35mm] (\x4+7.3,\y4+0) to (\x4+7.3,\y4+2);
	
	\node at (\x4+8.5,\y4+1) {$=$};
	
	\draw[dashed] (\x4+9,\y4+0) --(\x4+11,\y4+0) --(\x4+11,\y4+2) --(\x4+9,\y4+2) --(\x4+9,\y4+0);
	\draw[blue, line width=0.35mm] (\x4+9.7,\y4+0.2) to[out=0,in=270] (\x4+10,\y4+0.5) to (\x4+10,\y4+1.5) to[out=90,in=180] (\x4+10.3,\y4+1.8);
	\draw[orange, line width=0.35mm] (\x4+9.7,\y4+0) to (\x4+9.7,\y4+2);
	\draw[green, line width=0.35mm] (\x4+10.3,\y4+0) to (\x4+10.3,\y4+2);
	\end{tikzpicture}
	\end{center}
	Since $\bB$ is locally idempotent complete, we know that the idempotent $\kappa_{g\circ f}$ admits a splitting and thus we can choose a 1-morphism $g\otimes_q f:A\to C$ and 2-morphisms $\varphi:g\circ f\to g\otimes_q f$ and $\psi:g\otimes_q f\to g\circ f$ such that $\varphi\cdot\psi=\id_{g\otimes_q f}$ and $\psi\cdot\varphi=\kappa_{g\circ f}$, which we write graphically in the following way:
	\tikzmath{\x1 = 1.5; \y1 =3; 
		\x2 = 0; \y2 =0; }
	\begin{center}
	\begin{tikzpicture}
	\draw[dashed] (\x1+0,\y1+0) --(\x1+2,\y1+0) --(\x1+2,\y1+2) --(\x1+0,\y1+2) --(\x1+0,\y1+0);
	\fill[blue, opacity = 0.2] (\x1+0.8,\y1+0) rectangle (\x1+1.2,\y1+2);
	\draw[orange, line width=0.35mm] (\x1+0.8,\y1+0) to (\x1+0.8,\y1+2);
	\draw[green, line width=0.35mm] (\x1+1.2,\y1+0) to (\x1+1.2,\y1+2);
	\node[right] at (\x1+1.1,\y1+1.65) {$_{g\otimes_p f}$};
	\node[right] at (\x1,\y1+1) {$A$};
	\node[left] at (\x1+2,\y1+1) {$C$};
	\node at (\x1+1,\y1-0.4) {$\id_{g \otimes_p f}$};
	
	\draw[dashed] (\x1+3,\y1+0) --(\x1+5,\y1+0) --(\x1+5,\y1+2) --(\x1+3,\y1+2) --(\x1+3,\y1+0);
	\fill[blue, opacity = 0.2] (\x1+3.8,\y1+1) rectangle (\x1+4.2,\y1+2);
	\draw[blue, line width=0.35mm] (\x1+3.8,\y1+1) to (\x1+4.2,\y1+1);
	\draw[orange, line width=0.35mm] (\x1+3.8,\y1+0) to (\x1+3.8,\y1+2);
	\draw[green, line width=0.35mm] (\x1+4.2,\y1+0) to (\x1+4.2,\y1+2);
	\node at (\x1+4,\y1-0.4) {$\phi$};
	
	\draw[dashed] (\x1+6,\y1+0) --(\x1+8,\y1+0) --(\x1+8,\y1+2) --(\x1+6,\y1+2) --(\x1+6,\y1+0);
	\fill[blue, opacity = 0.2] (\x1+6.8,\y1+0) rectangle (\x1+7.2,\y1+1);
	\draw[blue, line width=0.35mm] (\x1+6.8,\y1+1) to (\x1+7.2,\y1+1);
	\draw[orange, line width=0.35mm] (\x1+6.8,\y1+0) to (\x1+6.8,\y1+2);
	\draw[green, line width=0.35mm] (\x1+7.2,\y1+0) to (\x1+7.2,\y1+2);
	\node at (\x1+7,\y1-0.4) {$\psi$};

	\draw[dashed] (\x2+0,\y2+0) --(\x2+2,\y2+0) --(\x2+2,\y2+2) --(\x2+0,\y2+2) --(\x2+0,\y2+0);
	\fill[blue, opacity = 0.2] (\x2+0.8,\y2+0) rectangle (\x2+1.2,\y2+0.5);
	\fill[blue, opacity = 0.2] (\x2+0.8,\y2+1.5) rectangle (\x2+1.2,\y2+2);
	\draw[blue, line width=0.35mm] (\x2+0.8,\y2+0.5) to (\x2+1.2,\y2+0.5);
	\draw[blue, line width=0.35mm] (\x2+0.8,\y2+1.5) to (\x2+1.2,\y2+1.5);
	\draw[orange, line width=0.35mm] (\x2+0.8,\y2+0) to (\x2+0.8,\y2+2);
	\draw[green, line width=0.35mm] (\x2+1.2,\y2+0) to (\x2+1.2,\y2+2);

	\node at (\x2+2.5,\y2+1) {$=$};
	
	\draw[dashed] (\x2+3,\y2+0) --(\x2+5,\y2+0) --(\x2+5,\y2+2) --(\x2+3,\y2+2) --(\x2+3,\y2+0);
	\fill[blue, opacity = 0.2] (\x2+3.8,\y2+0) rectangle (\x2+4.2,\y2+2);
	\draw[orange, line width=0.35mm] (\x2+3.8,\y2+0) to (\x2+3.8,\y2+2);
	\draw[green, line width=0.35mm] (\x2+4.2,\y2+0) to (\x2+4.2,\y2+2);
	
	\draw[dashed] (\x2+6,\y2+0) --(\x2+8,\y2+0) --(\x2+8,\y2+2) --(\x2+6,\y2+2) --(\x2+6,\y2+0);
	\fill[blue, opacity = 0.2] (\x2+6.8,\y2+0.5) rectangle (\x2+7.2,\y2+1.5);
	\draw[blue, line width=0.35mm] (\x2+6.8,\y2+0.5) to (\x2+7.2,\y2+0.5);
	\draw[blue, line width=0.35mm] (\x2+6.8,\y2+1.5) to (\x2+7.2,\y2+1.5);
	\draw[orange, line width=0.35mm] (\x2+6.8,\y2+0) to (\x2+6.8,\y2+2);
	\draw[green, line width=0.35mm] (\x2+7.2,\y2+0) to (\x2+7.2,\y2+2);
	
	\node at (\x2+8.5,\y2+1) {$=$};
	
	\draw[dashed] (\x2+9,\y2+0) --(\x2+11,\y2+0) --(\x2+11,\y2+2) --(\x2+9,\y2+2) --(\x2+9,\y2+0);
	\draw[blue, line width=0.35mm] (\x2+9.8,\y2+0.5) to[out=0,in=270] (\x2+10,\y2+0.7) to (\x2+10,\y2+1.3) to[out=90,in=180] (\x2+10.2,\y2+1.5);
	\draw[orange, line width=0.35mm] (\x2+9.8,\y2+0) to (\x2+9.8,\y2+2);
	\draw[green, line width=0.35mm] (\x2+10.2,\y2+0) to (\x2+10.2,\y2+2);
	\end{tikzpicture}
	\end{center}
	We can now define right-$p$ (co)actions, and left-$r$ (co)actions on $g\otimes_q f$ as follows:
	\tikzmath{\x1 = 0; \y1 =2.5; 
		\x2 = 6; \y2 =2.5; 
		\x3 = 0; \y3 =0; 
		\x4 = 6; \y4 = 0; }
	\begin{center}
	\begin{tikzpicture}
	\draw[dashed] (\x1+0,\y1+0) --(\x1+2,\y1+0) --(\x1+2,\y1+2) --(\x1+0,\y1+2) --(\x1+0,\y1+0);
	\fill[blue, opacity = 0.2] (\x1+0.8,\y1+0) rectangle (\x1+1.2,\y1+2);
	\draw[line width=0.35mm] (\x1+1.5,\y1+0) to (\x1+1.5,\y1+0.7) to[out=90,in=0] (\x1+1.2,\y1+1);
	\draw[orange, line width=0.35mm] (\x1+0.8,\y1+0) to (\x1+0.8,\y1+2);
	\draw[green, line width=0.35mm] (\x1+1.2,\y1+0) to (\x1+1.2,\y1+2);
	
	\node at (\x1+2.5,\y1+1) {$\coloneqq$};
	
	\draw[dashed] (\x1+3,\y1+0) --(\x1+5,\y1+0) --(\x1+5,\y1+2) --(\x1+3,\y1+2) --(\x1+3,\y1+0);
	\fill[blue, opacity = 0.2] (\x1+3.8,\y1+0) rectangle (\x1+4.2,\y1+0.5);
	\fill[blue, opacity = 0.2] (\x1+3.8,\y1+1.5) rectangle (\x1+4.2,\y1+2);
	\draw[blue, line width=0.35mm] (\x1+3.8,\y1+0.5) to (\x1+4.2,\y1+0.5);
	\draw[blue, line width=0.35mm] (\x1+3.8,\y1+1.5) to (\x1+4.2,\y1+1.5);
	\draw[line width=0.35mm] (\x1+4.5,\y1+0) to (\x1+4.5,\y1+0.7) to[out=90,in=0] (\x1+4.2,\y1+1);
	\draw[orange, line width=0.35mm] (\x1+3.8,\y1+0) to (\x1+3.8,\y1+2);
	\draw[green, line width=0.35mm] (\x1+4.2,\y1+0) to (\x1+4.2,\y1+2);

	\draw[dashed] (\x2+0,\y2+0) --(\x2+2,\y2+0) --(\x2+2,\y2+2) --(\x2+0,\y2+2) --(\x2+0,\y2+0);
	\fill[blue, opacity = 0.2] (\x2+0.8,\y2+0) rectangle (\x2+1.2,\y2+2);
	\draw[line width=0.35mm] (\x2+1.5,\y2+2) to (\x2+1.5,\y2+1.3) to[out=270,in=0] (\x2+1.2,\y2+1);
	\draw[orange, line width=0.35mm] (\x2+0.8,\y2+0) to (\x2+0.8,\y2+2);
	\draw[green, line width=0.35mm] (\x2+1.2,\y2+0) to (\x2+1.2,\y2+2);
	
	\node at (\x2+2.5,\y2+1) {$\coloneqq$};
	
	\draw[dashed] (\x2+3,\y2+0) --(\x2+5,\y2+0) --(\x2+5,\y2+2) --(\x2+3,\y2+2) --(\x2+3,\y2+0);
	\fill[blue, opacity = 0.2] (\x2+3.8,\y2+0) rectangle (\x2+4.2,\y2+0.5);
	\fill[blue, opacity = 0.2] (\x2+3.8,\y2+1.5) rectangle (\x2+4.2,\y2+2);
	\draw[blue, line width=0.35mm] (\x2+3.8,\y2+0.5) to (\x2+4.2,\y2+0.5);
	\draw[blue, line width=0.35mm] (\x2+3.8,\y2+1.5) to (\x2+4.2,\y2+1.5);
	\draw[line width=0.35mm] (\x2+4.5,\y2+2) to (\x2+4.5,\y2+1.3) to[out=270,in=0] (\x2+4.2,\y2+1);
	\draw[orange, line width=0.35mm] (\x2+3.8,\y2+0) to (\x2+3.8,\y2+2);
	\draw[green, line width=0.35mm] (\x2+4.2,\y2+0) to (\x2+4.2,\y2+2);

	\draw[dashed] (\x3+0,\y3+0) --(\x3+2,\y3+0) --(\x3+2,\y3+2) --(\x3+0,\y3+2) --(\x3+0,\y3+0);
	\fill[blue, opacity = 0.2] (\x3+0.8,\y3+0) rectangle (\x3+1.2,\y3+2);
	\draw[red, line width=0.35mm] (\x3+0.5,\y3+0) to (\x3+0.5,\y3+0.7) to[out=90,in=180] (\x3+0.8,\y3+1);
	\draw[orange, line width=0.35mm] (\x3+0.8,\y3+0) to (\x3+0.8,\y3+2);
	\draw[green, line width=0.35mm] (\x3+1.2,\y3+0) to (\x3+1.2,\y3+2);
	
	\node at (\x3+2.5,\y3+1) {$\coloneqq$};
	
	\draw[dashed] (\x3+3,\y3+0) --(\x3+5,\y3+0) --(\x3+5,\y3+2) --(\x3+3,\y3+2) --(\x3+3,\y3+0);
	\fill[blue, opacity = 0.2] (\x3+3.8,\y3+0) rectangle (\x3+4.2,\y3+0.5);
	\fill[blue, opacity = 0.2] (\x3+3.8,\y3+1.5) rectangle (\x3+4.2,\y3+2);
	\draw[blue, line width=0.35mm] (\x3+3.8,\y3+0.5) to (\x3+4.2,\y3+0.5);
	\draw[blue, line width=0.35mm] (\x3+3.8,\y3+1.5) to (\x3+4.2,\y3+1.5);
	\draw[red, line width=0.35mm] (\x3+3.5,\y3+0) to (\x3+3.5,\y3+0.7) to[out=90,in=180] (\x3+3.8,\y3+1);
	\draw[orange, line width=0.35mm] (\x3+3.8,\y3+0) to (\x3+3.8,\y3+2);
	\draw[green, line width=0.35mm] (\x3+4.2,\y3+0) to (\x3+4.2,\y3+2);

	\draw[dashed] (\x4+0,\y4+0) --(\x4+2,\y4+0) --(\x4+2,\y4+2) --(\x4+0,\y4+2) --(\x4+0,\y4+0);
	\fill[blue, opacity = 0.2] (\x4+0.8,\y4+0) rectangle (\x4+1.2,\y4+2);
	\draw[red, line width=0.35mm] (\x4+0.5,\y4+2) to (\x4+0.5,\y4+1.3) to[out=270,in=180] (\x4+0.8,\y4+1);
	\draw[orange, line width=0.35mm] (\x4+0.8,\y4+0) to (\x4+0.8,\y4+2);
	\draw[green, line width=0.35mm] (\x4+1.2,\y4+0) to (\x4+1.2,\y4+2);
	
	\node at (\x4+2.5,\y4+1) {$\coloneqq$};
	
	\draw[dashed] (\x4+3,\y4+0) --(\x4+5,\y4+0) --(\x4+5,\y4+2) --(\x4+3,\y4+2) --(\x4+3,\y4+0);
	\fill[blue, opacity = 0.2] (\x4+3.8,\y4+0) rectangle (\x4+4.2,\y4+0.5);
	\fill[blue, opacity = 0.2] (\x4+3.8,\y4+1.5) rectangle (\x4+4.2,\y4+2);
	\draw[blue, line width=0.35mm] (\x4+3.8,\y4+0.5) to (\x4+4.2,\y4+0.5);
	\draw[blue, line width=0.35mm] (\x4+3.8,\y4+1.5) to (\x4+4.2,\y4+1.5);
	\draw[red, line width=0.35mm] (\x4+3.5,\y4+2) to (\x4+3.5,\y4+1.3) to[out=270,in=180] (\x4+3.8,\y4+1);
	\draw[orange, line width=0.35mm] (\x4+3.8,\y4+0) to (\x4+3.8,\y4+2);
	\draw[green, line width=0.35mm] (\x4+4.2,\y4+0) to (\x4+4.2,\y4+2);
	\end{tikzpicture}
	\end{center}
	This in fact turns it into a morphism $g\otimes_q f:A_p\to C_r$. Horizontal composition of 2-morphisms is defined in the following way:
	\begin{center}
	\tikzmath{\x1 = 0; \y1 =2.5; 
		\x2 = 0; \y2 =0;}
	\begin{tikzpicture}
	\draw[dashed] (\x1+0,\y1+0) --(\x1+2,\y1+0) --(\x1+2,\y1+2) --(\x1+0,\y1+2) --(\x1+0,\y1+0);
	\draw[green, line width=0.35mm] (\x1+1,\y1+0) to (\x1+1,\y1+2);
	\node[right] at (\x1+1,\y1+1.5) {$f$};
	\node[right] at (\x1,\y1+1) {$B$};
	\node[left] at (\x1+2,\y1+1) {$A$};
	
	\draw[dashed] (\x1+3,\y1+0) --(\x1+5,\y1+0) --(\x1+5,\y1+2) --(\x1+3,\y1+2) --(\x1+3,\y1+0);
	\draw[magenta, line width=0.35mm] (\x1+4,\y1+0) to (\x1+4,\y1+2);
	\node[right] at (\x1+4,\y1+1.5) {$f\p$};
	\node[right] at (\x1+3,\y1+1) {$B$};
	\node[left] at (\x1+5,\y1+1) {$A$};
	
	\draw[dashed] (\x1+6,\y1+0) --(\x1+8,\y1+0) --(\x1+8,\y1+2) --(\x1+6,\y1+2) --(\x1+6,\y1+0);
	\draw[orange, line width=0.35mm] (\x1+7,\y1+0) to (\x1+7,\y1+2);
	\node[right] at (\x1+7,\y1+1.5) {$g$};
	\node[right] at (\x1+6,\y1+1) {$C$};
	\node[left] at (\x1+8,\y1+1) {$B$};
	
	\draw[dashed] (\x1+9,\y1+0) --(\x1+11,\y1+0) --(\x1+11,\y1+2) --(\x1+9,\y1+2) --(\x1+9,\y1+0);
	\draw[purple, line width=0.35mm] (\x1+10,\y1+0) to (\x1+10,\y1+2);
	\node[right] at (\x1+10,\y1+1.5) {$g\p$};
	\node[right] at (\x1+9,\y1+1) {$C$};
	\node[left] at (\x1+11,\y1+1) {$B$};

	\draw[dashed] (\x2+0,\y2+0) --(\x2+2,\y2+0) --(\x2+2,\y2+2) --(\x2+0,\y2+2) --(\x2+0,\y2+0);
	\draw[green, line width=0.35mm] (\x2+1,\y2+0) to (\x2+1,\y2+1);
	\draw[magenta, line width=0.35mm] (\x2+1,\y2+1) to (\x2+1,\y2+2);
	\node at (\x2+1,\y2-0.4) {$\alpha$};
	
	\draw[dashed] (\x2+3,\y2+0) --(\x2+5,\y2+0) --(\x2+5,\y2+2) --(\x2+3,\y2+2) --(\x2+3,\y2+0);
	\draw[orange, line width=0.35mm] (\x2+4,\y2+0) to (\x2+4,\y2+1);
	\draw[purple, line width=0.35mm] (\x2+4,\y2+1) to (\x2+4,\y2+2);
	\node at (\x2+4,\y2-0.4) {$\beta$};
	
	\draw[dashed] (\x2+6,\y2+0) --(\x2+8,\y2+0) --(\x2+8,\y2+2) --(\x2+6,\y2+2) --(\x2+6,\y2+0);
	\fill[blue, opacity = 0.2] (\x2+6.8,\y2+0) rectangle (\x2+7.2,\y2+2);
	\draw[green, line width=0.35mm] (\x2+7.2,\y2+0) to (\x2+7.2,\y2+1);
	\draw[magenta, line width=0.35mm] (\x2+7.2,\y2+1) to (\x2+7.2,\y2+2);
	\draw[orange, line width=0.35mm] (\x2+6.8,\y2+0) to (\x2+6.8,\y2+1);
	\draw[purple, line width=0.35mm] (\x2+6.8,\y2+1) to (\x2+6.8,\y2+2);
	\node at (\x2+7,\y2-0.4) {$\beta \otimes_p \alpha$};	
	
	\node at (\x2+8.5,\y2+1) {$\coloneqq$};
	
	\draw[dashed] (\x2+9,\y2+0) --(\x2+11,\y2+0) --(\x2+11,\y2+2) --(\x2+9,\y2+2) --(\x2+9,\y2+0);
	\fill[blue, opacity = 0.2] (\x2+9.8,\y2+0) rectangle (\x2+10.2,\y2+0.5);
	\fill[blue, opacity = 0.2] (\x2+9.8,\y2+1.5) rectangle (\x2+10.2,\y2+2);
	\draw[blue, line width=0.35mm] (\x2+9.8,\y2+0.5) to (\x2+10.2,\y2+0.5);
	\draw[blue, line width=0.35mm] (\x2+9.8,\y2+1.5) to (\x2+10.2,\y2+1.5);
	\draw[green, line width=0.35mm] (\x2+10.2,\y2+0) to (\x2+10.2,\y2+1);
	\draw[magenta, line width=0.35mm] (\x2+10.2,\y2+1) to (\x2+10.2,\y2+2);
	\draw[orange, line width=0.35mm] (\x2+9.8,\y2+0) to (\x2+9.8,\y2+1);
	\draw[purple, line width=0.35mm] (\x2+9.8,\y2+1) to (\x2+9.8,\y2+2);
	\node at (\x2+7,\y2-0.4) {$\beta \otimes_p \alpha$};	
	\end{tikzpicture}
	\end{center}
	This turns composition into a functor
	\begin{align*}
	c_{A_p,B_q,C_r}:\Kar{\bB}(B_q,C_r)\times\Kar{\bB}(A_p,B_q)\to \Kar{\bB}(A_p,C_r).
	\end{align*}	
	It is important to note here that when defining the idempotent completion of a given bicategory $\bB$, we have to make a choice for each pair of morphisms to be able to define this functor. All possible choices will lead to equivalent bicategories, but we have to make these choices none the less. 
	
	Let $D_s$ be another object in $\Kar{\bB}$ and $h:C_r\to D_s$ another 1-morphism. The 1-morphisms $(h\otimes_r g)\otimes_q f:A_p\to D_s$ and  $h\otimes_r (g\otimes_q f):A_p\to D_s$ are now both splittings of the idempotent given by
	\begin{center}
	\begin{tikzpicture}
	\draw[dashed] (0,0) --(2,0) --(2,2) --(0,2) --(0,0);
	\draw[magenta, line width=0.35mm] (1,0) to (1,2);
	\node[right] at (1,1.5) {$h$};
	\node[right] at (0,1) {$D$};
	\node[left] at (2,1) {$C$};
	
	\draw[dashed] (3,0) --(5,0) --(5,2) --(3,2) --(3,0);
	\draw[red, line width=0.35mm] (3.6,0.5) to[out=0,in=270] (3.8,0.7) to (3.8,1.3) to[out=90,in=180] (4,1.5);
	\draw[blue, line width=0.35mm] (4,0.5) to[out=0,in=270] (4.2,0.7) to (4.2,1.3) to[out=90,in=180] (4.4,1.5);
	\draw[magenta, line width=0.35mm] (3.6,0) to (3.6,2);
	\draw[orange, line width=0.35mm] (4,0) to (4,2);
	\draw[green, line width=0.35mm] (4.4,0) to (4.4,2);
	\end{tikzpicture}
	\end{center}
	and thus we have a unique isomorphism $a_{h,g,f}:(h\otimes_r g)\otimes_q f\to h\otimes_r (g\otimes_q f)$ and in general we get a natural isomorphism
	\begin{align*}
	a_{A_p,B_q,C_r,D_s} : & c_{A_p,B_q,D_s}(c_{B_q,C_r,D_s}\times \id_{\Kar{\bB}(A_p,B_q)}) \\
	& \to c_{A_p,C_r,D_s}(\id_{\Kar{\bB}(C_r,D_s)}\times c_{A_p,B_q,C_r}).
	\end{align*}
	\item The identity 1-morphism on $A_p$ is given by $(p,m_p,\Delta_p,m_p,\Delta_p)$. Let $f:A_p\to B_q$ and $g:C_r\to A_p$ be 1-morphism. We now have the following:
	\tikzmath{\x1 = 1.5; \y1 =2.5; 
		\x2 = 0; \y2 =0;}
	\begin{center}
	\begin{tikzpicture}
	\draw[dashed] (\x1+0,\y1+0) --(\x1+2,\y1+0) --(\x1+2,\y1+2) --(\x1+0,\y1+2) --(\x1+0,\y1+0);
	\draw[line width=0.35mm] (\x1+1,\y1+0) to (\x1+1,\y1+2);
	\node[right] at (\x1+1,\y1+1.5) {$p$};
	\node[right] at (\x1,\y1+1) {$A$};
	\node[left] at (\x1+2,\y1+1) {$A$};
	
	\draw[dashed] (\x1+3,\y1+0) --(\x1+5,\y1+0) --(\x1+5,\y1+2) --(\x1+3,\y1+2) --(\x1+3,\y1+0);
	\draw[green, line width=0.35mm] (\x1+4,\y1+0) to (\x1+4,\y1+2);
	\node[right] at (\x1+4,\y1+1.5) {$f$};
	\node[right] at (\x1+3,\y1+1) {$B$};
	\node[left] at (\x1+5,\y1+1) {$A$};
	
	\draw[dashed] (\x1+6,\y1+0) --(\x1+8,\y1+0) --(\x1+8,\y1+2) --(\x1+6,\y1+2) --(\x1+6,\y1+0);
	\draw[orange, line width=0.35mm] (\x1+7,\y1+0) to (\x1+7,\y1+2);
	\node[right] at (\x1+7,\y1+1.5) {$g$};
	\node[right] at (\x1+6,\y1+1) {$A$};
	\node[left] at (\x1+8,\y1+1) {$C$};

	\draw[dashed] (\x2+0,\y2+0) --(\x2+2,\y2+0) --(\x2+2,\y2+2) --(\x2+0,\y2+2) --(\x2+0,\y2+0);
	\draw[line width=0.35mm] (\x2+0.8,\y2+0.5) to[out=0,in=270] (\x2+1,\y2+0.7) to (\x2+1,\y2+1.3) to[out=90,in=180] (\x2+1.2,\y2+1.5);
	\draw[green, line width=0.35mm] (\x2+0.8,\y2+0) to (\x2+0.8,\y2+2);
	\draw[line width=0.35mm] (\x2+1.2,\y2+0) to (\x2+1.2,\y2+2);
	
	\node at (\x2+2.5,\y2+1) {$=$};
	
	\draw[dashed] (\x2+3,\y2+0) --(\x2+5,\y2+0) --(\x2+5,\y2+2) --(\x2+3,\y2+2) --(\x2+3,\y2+0);
	\draw[line width=0.35mm] (\x2+4.2,\y2+0) to (\x2+4.2,\y2+0.1) to[out=90,in=0] (\x2+3.8,\y2+0.5);
	\draw[line width=0.35mm] (\x2+4.2,\y2+2) to (\x2+4.2,\y2+1.9) to[out=270,in=0] (\x2+3.8,\y2+1.5);
	\draw[green, line width=0.35mm] (\x2+3.8,\y2+0) to (\x2+3.8,\y2+2);
	
	\draw[dashed] (\x2+6,\y2+0) --(\x2+8,\y2+0) --(\x2+8,\y2+2) --(\x2+6,\y2+2) --(\x2+6,\y2+0);
	\draw[line width=0.35mm] (\x2+6.8,\y2+0.5) to[out=0,in=270] (\x2+7,\y2+0.7) to (\x2+7,\y2+1.3) to[out=90,in=180] (\x2+7.2,\y2+1.5);
	\draw[line width=0.35mm] (\x2+6.8,\y2+0) to (\x2+6.8,\y2+2);
	\draw[orange, line width=0.35mm] (\x2+7.2,\y2+0) to (\x2+7.2,\y2+2);
	
	\node at (\x2+8.5,\y2+1) {$=$};
	
	\draw[dashed] (\x2+9,\y2+0) --(\x2+11,\y2+0) --(\x2+11,\y2+2) --(\x2+9,\y2+2) --(\x2+9,\y2+0);
	\draw[line width=0.35mm] (\x2+9.8,\y2+0) to (\x2+9.8,\y2+0.1) to[out=90,in=180] (\x2+10.2,\y2+0.5);
	\draw[line width=0.35mm] (\x2+9.8,\y2+2) to (\x2+9.8,\y2+1.9) to[out=270,in=180] (\x2+10.2,\y2+1.5);
	\draw[orange, line width=0.35mm] (\x2+10.2,\y2+0) to (\x2+10.2,\y2+2);
	\end{tikzpicture}
	\end{center}
	Since splittings are unique up to unique isomorphism, we have isomorphisms $l_g: p\otimes_p g\to g$ and $r_f:f\otimes_p p \to f$. These form natural isomorphism
	\begin{align*}
	& l_{C_r,A_p}:c_{C_r,A_p,A_p}(p,-)\to \id_{\Kar{\bB}(C_r,A_p)} \text{ and} \\
	& r_{A_p,B_q}:c_{A_p,A_p,B_q}(-,p)\to \id_{\Kar{\bB}(A_p,B_q)}.
	\end{align*}
	\item Since both the associator and left and right unitors are given by unique isomorphisms between colimits, it follows immediately that the triangle and pentagon axiom both have to hold.
\end{itemize}
\end{definition}

For $\Kar{\bB}$ to be the completion of $\bB$ with respect to splitting 2-idempotents, we want $\Kar{\bB}$ itself to be 2-idempotent complete, which we will show in the following.

\begin{proposition} \label{kar2com}
For every locally idempotent complete bicategory $\bB$, the bicategory $\Kar{\bB}$ is 2-idempotent complete.
\end{proposition}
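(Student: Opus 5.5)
The plan is to check the two conditions in the definition of 2-idempotent completeness separately. First, every Hom-category $\Kar{\bB}(A_p,B_q)$ is idempotent complete. Second, every 2-idempotent in $\Kar{\bB}$ splits. In both cases I would imitate the proof of Proposition \ref{kar1com}: use the idempotent itself as the underlying data of the splitting object, and check the axioms with the graphical calculus.

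\emph{Local idempotent completeness.} Let $e:f\to f$ be an idempotent 2-morphism in $\Kar{\bB}(A_p,B_q)$. Then $e$ is an idempotent 2-morphism in $\bB(A,B)$ that commutes with $\lt_f,\rho_f,\rt_f,\lambda_f$.
\begin{itemize}
\item Since $\bB$ is locally idempotent complete, $e$ splits in $\bB(A,B)$ as $\varphi:f\to h$ and $\psi:h\to f$ with $\varphi\cdot\psi=\id_h$ and $\psi\cdot\varphi=e$.
\item I would put actions on $h$ by conjugation, for instance $\lt_h=\varphi\cdot\lt_f\cdot(\psi\circ\id_p)$ and $\rho_h=(\varphi\circ\id_p)\cdot\rho_f\cdot\psi$, and analogously $\rt_h$ and $\lambda_h$.
\item Each axiom for $h$ reduces to the corresponding axiom for $f$. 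The point is that every inner $\psi\cdot\varphi=e$ can be slid past the actions of $f$ and absorbed by the outer $\varphi$ or $\psi$, because $e$ commutes with the actions and $\varphi\cdot e=\varphi$, $e\cdot\psi=\psi$.
\item By construction $\varphi$ and $\psi$ are then 2-morphisms of $\Kar{\bB}$, and they split $e$ there.
\end{itemize}

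\emph{Splitting 2-idempotents.} Let $(A_p,e,\mu,\delta)$ be a 2-idempotent in $\Kar{\bB}$. Here $e:A_p\to A_p$ is a 1-morphism of $\Kar{\bB}$, and $\mu:e\otimes_p e\to e$ and $\delta:e\to e\otimes_p e$ are 2-morphisms of $\Kar{\bB}$. Write $\varphi:e\circ e\to e\otimes_p e$ and $\psi:e\otimes_p e\to e\circ e$ for the chosen splitting of $\kappa_{e\circ e}$.
\begin{itemize}
\item As in the 1-categorical case, I would take the new object to be $A_e:=(A,e,m_e,\Delta_e)$ with $m_e=\mu\cdot\varphi$ and $\Delta_e=\psi\cdot\delta$.
\item The axioms for a 2-idempotent in $\bB$ follow from those of $(e,\mu,\delta)$ in $\Kar{\bB}$. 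The key fact is that $\psi\cdot\varphi=\kappa_{e\circ e}$ is absorbed by $\mu$ and $\delta$, since these are balanced, i.e.\ compatible with the $p$-actions.
\item For the splitting morphisms I would take $f:=e:A_p\to A_e$ and $g:=e:A_e\to A_p$. Each carries the $p$-actions of $e$ on one side and the actions $m_e$, $\Delta_e$ on the other; checking the bimodule axioms again reduces to those of $\mu$, $\delta$ and $e$.
\item The composite $f\circ g$ in $\Kar{\bB}$, through $A_p$, is $e\otimes_p e$, regarded as a 1-morphism $A_e\to A_e$. For the splitting data I set $\varphi':=\mu:e\otimes_p e\to e=\id_{A_e}$ and $\psi':=\delta$. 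Then $\varphi'\cdot\psi'=\id$ holds by speciality of $(e,\mu,\delta)$.
\item The composite $g\circ f$, through $A_e$, is the splitting of the idempotent $(\id_e\circ\rt)\cdot(\rho\circ\id_e)$ on $e\circ e$ built from $m_e$ and $\Delta_e$. This idempotent equals $\Delta_e\cdot m_e$. Since $m_e\cdot\Delta_e=\mu\cdot\kappa_{e\circ e}\cdot\delta=\mu\cdot\delta=\id_e$, the pair $(m_e,\Delta_e)$ exhibits $e$ itself as a splitting of it. Uniqueness of splittings up to unique isomorphism then gives the isomorphism $\gamma:g\circ f\to e$.
\end{itemize}

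The last step is to verify the two compatibility equations expressing $\mu$ and $\delta$ through $\gamma$, $\varphi'$ and $\psi'$. I expect this to be the main obstacle. Both sides live on different chosen splittings ($e\otimes_p e$, $(g\otimes f)\otimes(g\otimes f)$, and so on), so the equations have to be compared after composing with the various $\varphi$/$\psi$ maps and unwinding the unique comparison isomorphisms.

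I would attack this by pre- and post-composing both sides with the relevant splitting maps down to $e\circ e\circ e\circ e$ in $\bB$. There everything is expressed through $\mu$, $\delta$, the actions and $\kappa$. Both sides should then reduce to the same diagram by (co)associativity and the Frobenius relations. Since the splitting maps are split mono and split epi, equality downstairs implies equality upstairs.
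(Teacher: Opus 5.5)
Your proposal follows the paper's proof essentially step for step: conjugated actions on the splitting of $e$ give local idempotent completeness, then $A_e=(A,e,\mu\cdot\varphi,\psi\cdot\delta)$ is split by the two copies of $e$ as 1-morphisms and by $\mu$, $\delta$ as 2-morphisms, and your extra check of the compatibility equations for $\gamma$, which the paper leaves implicit, does go through by reducing to $\bB$ as you describe. One small slip: $m_e\cdot\Delta_e=\mu\cdot\varphi\cdot\psi\cdot\delta=\mu\cdot\delta$ uses $\varphi\cdot\psi=\id_{e\otimes_p e}$, not $\kappa_{e\circ e}=\psi\cdot\varphi$, which is an endomorphism of $e\circ e$ and cannot sit between $\mu$ and $\delta$.
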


\begin{proof}
We first need to show that $\Kar{\bB}$ is locally idempotent complete. Let $A_p$ and $B_q$ be objects in $\Kar{\bB}$, $(h,\lt_h,\rho_h,\rt_h,\lambda_h):A_p\to B_q$ a 1-morphism and $e:h\to h$ an idempotent 2-morphism. Since $\bB$ is locally idempotent complete, we know that the idempotent $e$ splits into 2-morphisms $f:h\to s$ and $g:s\to h$ such that $gf=e$ and $fg=\id_s$ where $s:A\to B$ is a 1-morphism in $\bB$. Note that, a priori, $s$ is not a morphism in $\Kar{\bB}$ but we can turn $s$ into a morphism $s:A_p\to B_q$ with the following left and right (co)action:

\tikzmath{\x1 = 1.5; \y1 =8; 
	\x2 = 0; \y2 =5.5;
	\x3 = 0; \y3 =2.5;
	\x4 = 0; \y4 =0;}
\begin{center}
\begin{tikzpicture}
\draw[dashed] (\x1+0,\y1+0) --(\x1+2,\y1+0) --(\x1+2,\y1+2) --(\x1+0,\y1+2) --(\x1+0,\y1+0);
\draw[line width=0.35mm] (\x1+1,\y1+0) to (\x1+1,\y1+2);
\node[right] at (\x1+1,\y1+1.5) {$p$};
\node[right] at (\x1,\y1+1) {$A$};
\node[left] at (\x1+2,\y1+1) {$A$};
	
\draw[dashed] (\x1+3,\y1+0) --(\x1+5,\y1+0) --(\x1+5,\y1+2) --(\x1+3,\y1+2) --(\x1+3,\y1+0);
\draw[blue, line width=0.35mm] (\x1+4,\y1+0) to (\x1+4,\y1+2);
\node[right] at (\x1+4,\y1+1.5) {$q$};
\node[right] at (\x1+3,\y1+1) {$B$};
\node[left] at (\x1+5,\y1+1) {$B$};
	
\draw[dashed] (\x1+6,\y1+0) --(\x1+8,\y1+0) --(\x1+8,\y1+2) --(\x1+6,\y1+2) --(\x1+6,\y1+0);
\draw[green, line width=0.35mm] (\x1+7,\y1+0) to (\x1+7,\y1+2);
\node[right] at (\x1+7,\y1+1.5) {$h$};
\node[right] at (\x1+6,\y1+1) {$B$};
\node[left] at (\x1+8,\y1+1) {$A$};

\draw[dashed] (\x2+0,\y2+0) --(\x2+2,\y2+0) --(\x2+2,\y2+2) --(\x2+0,\y2+2) --(\x2+0,\y2+0);
\draw[green, line width=0.35mm] (\x2+1,\y2+0) to (\x2+1,\y2+2);
\fill[green] (\x2+1,\y2+1) circle (0.1);
\node at (\x2+1,\y2-0.4) {$e$};
	
\draw[dashed] (\x2+3,\y2+0) --(\x2+5,\y2+0) --(\x2+5,\y2+2) --(\x2+3,\y2+2) --(\x2+3,\y2+0);
\draw[orange, line width=0.35mm] (\x2+4,\y2+0) to (\x2+4,\y2+2);
\node[right] at (\x2+4,\y2+1.5) {$s$};
\node[right] at (\x2+3,\y2+1) {$B$};
\node[left] at (\x2+5,\y2+1) {$A$};
	
\draw[dashed] (\x2+6,\y2+0) --(\x2+8,\y2+0) --(\x2+8,\y2+2) --(\x2+6,\y2+2) --(\x2+6,\y2+0);
\draw[green, line width=0.35mm] (\x2+7,\y2+0) to (\x2+7,\y2+1);
\draw[orange, line width=0.35mm] (\x2+7,\y2+1) to (\x2+7,\y2+2);
\node at (\x2+7,\y2-0.4) {$f$};

\draw[dashed] (\x2+9,\y2+0) --(\x2+11,\y2+0) --(\x2+11,\y2+2) --(\x2+9,\y2+2) --(\x2+9,\y2+0);
\draw[orange, line width=0.35mm] (\x2+10,\y2+0) to (\x2+10,\y2+1);
\draw[green, line width=0.35mm] (\x2+10,\y2+1) to (\x2+10,\y2+2);
\node at (\x2+10,\y2-0.4) {$g$};

\draw[dashed] (\x3+0,\y3+0) --(\x3+2,\y3+0) --(\x3+2,\y3+2) --(\x3+0,\y3+2) --(\x3+0,\y3+0);
\draw[green, line width=0.35mm] (\x3+1,\y3+0) to (\x3+1,\y3+0.5);
\draw[orange, line width=0.35mm] (\x3+1,\y3+0.5) to (\x3+1,\y3+1.5);
\draw[green, line width=0.35mm] (\x3+1,\y3+1.5) to (\x3+1,\y3+2);

\node at (\x3+2.5,\y3+1) {$=$};
	
\draw[dashed] (\x3+3,\y3+0) --(\x3+5,\y3+0) --(\x3+5,\y3+2) --(\x3+3,\y3+2) --(\x3+3,\y3+0);
\draw[green, line width=0.35mm] (\x3+4,\y3+0) to (\x3+4,\y3+2);
\fill[green] (\x3+4,\y3+1) circle (0.1);
	
\draw[dashed] (\x3+6,\y3+0) --(\x3+8,\y3+0) --(\x3+8,\y3+2) --(\x3+6,\y3+2) --(\x3+6,\y3+0);
\draw[orange, line width=0.35mm] (\x3+7,\y3+0) to (\x3+7,\y3+0.5);
\draw[green, line width=0.35mm] (\x3+7,\y3+0.5) to (\x3+7,\y3+1.5);
\draw[orange, line width=0.35mm] (\x3+7,\y3+1.5) to (\x3+7,\y3+2);

\node at (\x3+8.5,\y3+1) {$=$};

\draw[dashed] (\x3+9,\y3+0) --(\x3+11,\y3+0) --(\x3+11,\y3+2) --(\x3+9,\y3+2) --(\x3+9,\y3+0);
\draw[orange, line width=0.35mm] (\x3+10,\y3+0) to (\x3+10,\y3+2);

\draw[dashed] (\x4+0,\y4+0) --(\x4+2,\y4+0) --(\x4+2,\y4+2) --(\x4+0,\y4+2) --(\x4+0,\y4+0);
\draw[line width=0.35mm] (\x4+1.3,\y4+0) to (\x4+1.3,\y4+0.7) to[out=90,in=0] (\x4+1,\y4+1);
\draw[orange, line width=0.35mm] (\x4+1,\y4+0) to (\x4+1,\y4+0.5);
\draw[green, line width=0.35mm] (\x4+1,\y4+0.5) to (\x4+1,\y4+1.5);
\draw[orange, line width=0.35mm] (\x4+1,\y4+1.5) to (\x4+1,\y4+2);
\node at (\x4+1,\y4-0.4) {$\lt_s$};

\draw[dashed] (\x4+3,\y4+0) --(\x4+5,\y4+0) --(\x4+5,\y4+2) --(\x4+3,\y4+2) --(\x4+3,\y4+0);
\draw[line width=0.35mm] (\x4+4.3,\y4+2) to (\x4+4.3,\y4+1.3) to[out=270,in=0] (\x4+4,\y4+1);
\draw[orange, line width=0.35mm] (\x4+4,\y4+0) to (\x4+4,\y4+0.5);
\draw[green, line width=0.35mm] (\x4+4,\y4+0.5) to (\x4+4,\y4+1.5);
\draw[orange, line width=0.35mm] (\x4+4,\y4+1.5) to (\x4+4,\y4+2);
\node at (\x4+4,\y4-0.4) {$\rho_s$};
	
\draw[dashed] (\x4+6,\y4+0) --(\x4+8,\y4+0) --(\x4+8,\y4+2) --(\x4+6,\y4+2) --(\x4+6,\y4+0);
\draw[blue, line width=0.35mm] (\x4+6.7,\y4+0) to (\x4+6.7,\y4+0.7) to[out=90,in=180] (\x4+7,\y4+1);
\draw[orange, line width=0.35mm] (\x4+7,\y4+0) to (\x4+7,\y4+0.5);
\draw[green, line width=0.35mm] (\x4+7,\y4+0.5) to (\x4+7,\y4+1.5);
\draw[orange, line width=0.35mm] (\x4+7,\y4+1.5) to (\x4+7,\y4+2);
\node at (\x4+7,\y4-0.4) {$\rt_s$};

\draw[dashed] (\x4+9,\y4+0) --(\x4+11,\y4+0) --(\x4+11,\y4+2) --(\x4+9,\y4+2) --(\x4+9,\y4+0);
\draw[blue, line width=0.35mm] (\x4+9.7,\y4+2) to (\x4+9.7,\y4+1.3) to[out=270,in=180] (\x4+10,\y4+1);
\draw[orange, line width=0.35mm] (\x4+10,\y4+0) to (\x4+10,\y4+0.5);
\draw[green, line width=0.35mm] (\x4+10,\y4+0.5) to (\x4+10,\y4+1.5);
\draw[orange, line width=0.35mm] (\x4+10,\y4+1.5) to (\x4+10,\y4+2);
\node at (\x4+10,\y4-0.4) {$\lambda_s$};
\end{tikzpicture}
\end{center}

With this definition of $s$, $f$ and $g$ also become 2-morphisms in $\Kar{\bB}$ and therefore split the idempotent $e$. Thus $\Kar{\bB}$ is locally idempotent complete.

Now, let $(A,p,m,\Delta)$ be an object in $\Kar{\bB}$ and $(e,\lt,\rho,\rt,\lambda)$ a 2-idempotent on $A_p$, which means we have morphisms $m^\otimes_e:e\otimes_p e\to e$ and $\Delta^\otimes_e:e\to e\otimes_p e$ such that
\begin{align*}
(\id_e\otimes_p m^\otimes_e)\cdot(\Delta^\otimes_e\otimes_p \id_e) & =(m^\otimes_e\otimes_p \id_e)\cdot(\id_e\otimes_p\Delta^\otimes_e)=\Delta^\otimes_e\cdot m^\otimes_e \text{ and} \\
m^\otimes_e\cdot\Delta^\otimes_e & =\id_e.
\end{align*}
We want to show that this 2-idempotent splits. From the definition of $e\otimes_p e$ we have 2-morphisms $\varphi:e\circ e\to e\otimes_p e$ and $\psi:e\otimes_p e\to e\circ e$ such that $\varphi\cdot\psi=\id_{e\otimes_p e}$ and $\psi\cdot\varphi=\kappa_{e\circ e}$.

Combining these we get morphisms $m_e=m_e^\otimes\cdot\varphi:e\circ e\to e$ and $\Delta_e=\psi\cdot\Delta_e^\otimes:e\to e\circ e$. Graphically, we can represent these in the following way:
\tikzmath{\x1 = 0; \y1 =3; 
	\x2 = 0; \y2 =0;}
\begin{center}
\begin{tikzpicture}
\draw[dashed] (\x1+0,\y1+0) --(\x1+2,\y1+0) --(\x1+2,\y1+2) --(\x1+0,\y1+2) --(\x1+0,\y1+0);
\draw[line width=0.35mm] (\x1+1,\y1+0) to (\x1+1,\y1+2);
\node[right] at (\x1+1,\y1+1.5) {$p$};
\node[right] at (\x1,\y1+1) {$A$};
\node[left] at (\x1+2,\y1+1) {$A$};
	
\draw[dashed] (\x1+3,\y1+0) --(\x1+5,\y1+0) --(\x1+5,\y1+2) --(\x1+3,\y1+2) --(\x1+3,\y1+0);
\draw[red, line width=0.35mm] (\x1+4,\y1+0) to (\x1+4,\y1+2);
\node[right] at (\x1+4,\y1+1.5) {$e$};
\node[right] at (\x1+3,\y1+1) {$A$};
\node[left] at (\x1+5,\y1+1) {$A$};
	
\draw[dashed] (\x1+6,\y1+0) --(\x1+8,\y1+0) --(\x1+8,\y1+2) --(\x1+6,\y1+2) --(\x1+6,\y1+0);
\filldraw[red, line width=0.35mm, fill = black, fill opacity = 0.2] (\x1+6.75,\y1+0) to (\x1+6.75,\y1+0.75) to[out=90,in=180] (\x1+7,\y1+1) to[out=0,in=90] (\x1+7.25,\y1+0.75) to (\x1+7.25,\y1+0);
\draw[red, line width=0.35mm] (\x1+7,\y1+1) to (\x1+7,\y1+2);
\node at (\x1+7,\y1-0.4) {$m_e^\otimes$};
	
\draw[dashed] (\x1+9,\y1+0) --(\x1+11,\y1+0) --(\x1+11,\y1+2) --(\x1+9,\y1+2) --(\x1+9,\y1+0);
\filldraw[red, line width=0.35mm, fill = black, fill opacity = 0.2] (\x1+9.75,\y1+2) to (\x1+9.75,\y1+1.25) to[out=270,in=180] (\x1+10,\y1+1) to[out=0,in=270] (\x1+10.25,\y1+1.25) to (\x1+10.25,\y1+2);
\draw[red, line width=0.35mm] (\x1+10,\y1+0) to (\x1+10,\y1+1);
\node at (\x1+10,\y1-0.4) {$\Delta_e^\otimes$};
\end{tikzpicture}
\end{center}

\begin{center}
\begin{tikzpicture}
\draw[dashed] (\x2+0,\y2+0) --(\x2+2,\y2+0) --(\x2+2,\y2+2) --(\x2+0,\y2+2) --(\x2+0,\y2+0);
\draw[line width=0.35mm] (\x2+0.75,\y2+1) to (\x2+1.25,\y2+1);
\fill[black, opacity=0.2] (\x2+0.75,\y2+1) rectangle (\x2+1.25,\y2+2);
\draw[red, line width=0.35mm] (\x2+0.75,\y2+0) to (\x2+0.75,\y2+2);
\draw[red, line width=0.35mm] (\x2+1.25,\y2+0) to (\x2+1.25,\y2+2);
\node at (\x2+1,\y2-0.4) {$\phi$};
	
\draw[dashed] (\x2+3,\y2+0) --(\x2+5,\y2+0) --(\x2+5,\y2+2) --(\x2+3,\y2+2) --(\x2+3,\y2+0);
\draw[line width=0.35mm] (\x2+3.75,\y2+1) to (\x2+4.25,\y2+1);
\fill[black, opacity=0.2] (\x2+3.75,\y2+0) rectangle (\x2+4.25,\y2+1);
\draw[red, line width=0.35mm] (\x2+3.75,\y2+0) to (\x2+3.75,\y2+2);
\draw[red, line width=0.35mm] (\x2+4.25,\y2+0) to (\x2+4.25,\y2+2);
\node at (\x2+4,\y2-0.4) {$\psi$};
	
\draw[dashed] (\x2+6,\y2+0) --(\x2+8,\y2+0) --(\x2+8,\y2+2) --(\x2+6,\y2+2) --(\x2+6,\y2+0);
\draw[line width=0.35mm] (\x2+6.75,\y2+0.5) to (\x2+7.25,\y2+0.5);
\filldraw[red, line width=0.35mm, fill = black, fill opacity = 0.2] (\x2+6.75,\y2+0.5) to (\x2+6.75,\y2+0.75) to[out=90,in=180] (\x2+7,\y2+1) to[out=0,in=90] (\x2+7.25,\y2+0.75) to (\x2+7.25,\y2+0.5);
\draw[red, line width=0.35mm] (\x2+6.75,\y2+0) to (\x2+6.75,\y2+0.5);
\draw[red, line width=0.35mm] (\x2+7.25,\y2+0) to (\x2+7.25,\y2+0.5);
\draw[red, line width=0.35mm] (\x2+7,\y2+1) to (\x2+7,\y2+2);
\node at (\x2+7,\y2-0.4) {$m_e$};
	
\draw[dashed] (\x2+9,\y2+0) --(\x2+11,\y2+0) --(\x2+11,\y2+2) --(\x2+9,\y2+2) --(\x2+9,\y2+0);
\draw[line width=0.35mm] (\x2+9.75,\y2+1.5) to (\x2+10.25,\y2+1.5);
\filldraw[red, line width=0.35mm, fill = black, fill opacity = 0.2] (\x2+9.75,\y2+1.5) to (\x2+9.75,\y2+1.25) to[out=270,in=180] (\x2+10,\y2+1) to[out=0,in=270] (\x2+10.25,\y2+1.25) to (\x2+10.25,\y2+1.5);
\draw[red, line width=0.35mm] (\x2+9.75,\y2+1.5) to (\x2+9.75,\y2+2);
\draw[red, line width=0.35mm] (\x2+10.25,\y2+1.5) to (\x2+10.25,\y2+2);
\draw[red, line width=0.35mm] (\x2+10,\y2+0) to (\x2+10,\y2+1);
\node at (\x2+10,\y2-0.4) {$\Delta_e$};
\end{tikzpicture}
\end{center}

This defines a 2-idempotent in $\bB$ as we will show graphically:
\tikzmath{\x1 = 0; \y1 =2.5; 
	\x2 = 3; \y2 =0;}
\begin{center}
\begin{tikzpicture}
\draw[dashed] (\x1+0,\y1+0) --(\x1+2,\y1+0) --(\x1+2,\y1+2) --(\x1+0,\y1+2) --(\x1+0,\y1+0);
\draw[line width=0.35mm] (\x1+0.5,\y1+1.25) to (\x1+1,\y1+1.25);
\draw[line width=0.35mm] (\x1+1,\y1+0.75) to (\x1+1.5,\y1+0.75);
\filldraw[red, line width=0.35mm, fill = black, fill opacity = 0.2] (\x1+0.5,\y1+1.25) to (\x1+0.5,\y1+0.75) to[out=270,in=180] (\x1+0.75,\y1+0.5) to[out=0,in=270] (\x1+1,\y1+0.75) to (\x1+1,\y1+1.25);
\filldraw[red, line width=0.35mm, fill = black, fill opacity = 0.2] (\x1+1,\y1+0.75) to (\x1+1,\y1+1.25) to[out=90,in=180] (\x1+1.25,\y1+1.5) to[out=0,in=90] (\x1+1.5,\y1+1.25) to (\x1+1.5,\y1+0.75);
\draw[red, line width=0.35mm] (\x1+0.5,\y1+2) to (\x1+0.5,\y1+1.25);
\draw[red, line width=0.35mm] (\x1+0.75,\y1+0) to (\x1+0.75,\y1+0.5);
\draw[red, line width=0.35mm] (\x1+1.25,\y1+2) to (\x1+1.25,\y1+1.5);
\draw[red, line width=0.35mm] (\x1+1.5,\y1+0) to (\x1+1.5,\y1+0.75);

\node at (\x1+2.5,\y1+1) {$=$};
	
\draw[dashed] (\x1+3,\y1+0) --(\x1+5,\y1+0) --(\x1+5,\y1+2) --(\x1+3,\y1+2) --(\x1+3,\y1+0);
\draw[line width=0.35mm] (\x1+3.5,\y1+1.75) to (\x1+4.25,\y1+1.75);
\draw[line width=0.35mm] (\x1+3.75,\y1+0.25) to (\x1+4.5,\y1+0.25);
\filldraw[red, line width=0.35mm, fill = black, fill opacity = 0.2] (\x1+3.5,\y1+1.75) to (\x1+3.5,\y1+0.75) to[out=270,in=180] (\x1+3.75,\y1+0.5) to[out=0,in=270] (\x1+4,\y1+0.75) to (\x1+4,\y1+1.25) to[out=90,in=180] (\x1+4.25,\y1+1.5) to (\x1+4.25,\y1+1.75);
\filldraw[red, line width=0.35mm, fill = black, fill opacity = 0.2] (\x1+3.75,\y1+0.25) to (\x1+3.75,\y1+0.5) to[out=0,in=270] (\x1+4,\y1+0.75) to (\x1+4,\y1+1.25) to[out=90,in=180] (\x1+4.25,\y1+1.5) to[out=0,in=90] (\x1+4.5,\y1+1.25) to (\x1+4.5,\y1+0.25);
\draw[red, line width=0.35mm] (\x1+3.75,\y1+0) to (\x1+3.75,\y1+0.25);
\draw[red, line width=0.35mm] (\x1+4.5,\y1+0) to (\x1+4.5,\y1+0.25);
\draw[red, line width=0.35mm] (\x1+3.5,\y1+2) to (\x1+3.5,\y1+1.75);
\draw[red, line width=0.35mm] (\x1+4.25,\y1+2) to (\x1+4.25,\y1+1.75);

\node at (\x1+5.5,\y1+1) {$=$};
	
\draw[dashed] (\x1+6,\y1+0) --(\x1+8,\y1+0) --(\x1+8,\y1+2) --(\x1+6,\y1+2) --(\x1+6,\y1+0);
\draw[line width=0.35mm] (\x1+6.75,\y1+0.25) to (\x1+7.25,\y1+0.25);
\draw[line width=0.35mm] (\x1+6.75,\y1+1.75) to (\x1+7.25,\y1+1.75);
\filldraw[red, line width=0.35mm, fill = black, fill opacity = 0.2] (\x1+6.75,\y1+0.25) to (\x1+6.75,\y1+0.5) to[out=90,in=180] (\x1+7,\y1+0.75) to[out=0,in=90] (\x1+7.25,\y1+0.5) to (\x1+7.25,\y1+0.25);
\filldraw[red, line width=0.35mm, fill = black, fill opacity = 0.2] (\x1+6.75,\y1+1.75) to (\x1+6.75,\y1+1.5) to[out=270,in=180] (\x1+7,\y1+1.25) to[out=0,in=270] (\x1+7.25,\y1+1.5) to (\x1+7.25,\y1+1.75);
\draw[red, line width=0.35mm] (\x1+6.75,\y1+0) to (\x1+6.75,\y1+0.25);
\draw[red, line width=0.35mm] (\x1+7.25,\y1+0) to (\x1+7.25,\y1+0.25);
\draw[red, line width=0.35mm] (\x1+6.75,\y1+1.75) to (\x1+6.75,\y1+2);
\draw[red, line width=0.35mm] (\x1+7.25,\y1+1.75) to (\x1+7.25,\y1+2);
\draw[red, line width=0.35mm] (\x1+7,\y1+0.75) to (\x1+7,\y1+1.25);

\node at (\x1+8.5,\y1+1) {$=$};

\draw[dashed] (\x1+9,\y1+0) --(\x1+11,\y1+0) --(\x1+11,\y1+2) --(\x1+9,\y1+2) --(\x1+9,\y1+0);
\draw[line width=0.35mm] (\x1+10.5,\y1+1.75) to (\x1+9.75,\y1+1.75);
\draw[line width=0.35mm] (\x1+10.25,\y1+0.25) to (\x1+9.5,\y1+0.25);
\filldraw[red, line width=0.35mm, fill = black, fill opacity = 0.2] (\x1+10.5,\y1+1.75) to (\x1+10.5,\y1+0.75) to[out=270,in=0] (\x1+10.25,\y1+0.5) to[out=180,in=270] (\x1+10,\y1+0.75) to (\x1+10,\y1+1.25) to[out=90,in=0] (\x1+9.75,\y1+1.5) to (\x1+9.75,\y1+1.75);
\filldraw[red, line width=0.35mm, fill = black, fill opacity = 0.2] (\x1+10.25,\y1+0.25) to (\x1+10.25,\y1+0.5) to[out=180,in=270] (\x1+10,\y1+0.75) to (\x1+10,\y1+1.25) to[out=90,in=0] (\x1+9.75,\y1+1.5) to[out=180,in=90] (\x1+9.5,\y1+1.25) to (\x1+9.5,\y1+0.25);
\draw[red, line width=0.35mm] (\x1+10.25,\y1+0) to (\x1+10.25,\y1+0.25);
\draw[red, line width=0.35mm] (\x1+9.5,\y1+0) to (\x1+9.5,\y1+0.25);
\draw[red, line width=0.35mm] (\x1+10.5,\y1+2) to (\x1+10.5,\y1+1.75);
\draw[red, line width=0.35mm] (\x1+9.75,\y1+2) to (\x1+9.75,\y1+1.75);

\node at (\x1+11.5,\y1+1) {$=$};

\draw[dashed] (\x1+12,\y1+0) --(\x1+14,\y1+0) --(\x1+14,\y1+2) --(\x1+12,\y1+2) --(\x1+12,\y1+0);
\draw[line width=0.35mm] (\x1+13.5,\y1+1.25) to (\x1+13,\y1+1.25);
\draw[line width=0.35mm] (\x1+13,\y1+0.75) to (\x1+12.5,\y1+0.75);
\filldraw[red, line width=0.35mm, fill = black, fill opacity = 0.2] (\x1+13.5,\y1+1.25) to (\x1+13.5,\y1+0.75) to[out=270,in=0] (\x1+13.25,\y1+0.5) to[out=180,in=270] (\x1+13,\y1+0.75) to (\x1+13,\y1+1.25);
\filldraw[red, line width=0.35mm, fill = black, fill opacity = 0.2] (\x1+13,\y1+0.75) to (\x1+13,\y1+1.25) to[out=90,in=0] (\x1+12.75,\y1+1.5) to[out=180,in=90] (\x1+12.5,\y1+1.25) to (\x1+12.5,\y1+0.75);
\draw[red, line width=0.35mm] (\x1+13.5,\y1+2) to (\x1+13.5,\y1+1.25);
\draw[red, line width=0.35mm] (\x1+13.25,\y1+0) to (\x1+13.25,\y1+0.5);
\draw[red, line width=0.35mm] (\x1+12.75,\y1+2) to (\x1+12.75,\y1+1.5);
\draw[red, line width=0.35mm] (\x1+12.5,\y1+0) to (\x1+12.5,\y1+0.75);

\draw[dashed] (\x2+0,\y2+0) --(\x2+2,\y2+0) --(\x2+2,\y2+2) --(\x2+0,\y2+2) --(\x2+0,\y2+0);
\draw[line width=0.35mm] (\x2+0.75,\y2+0.75) to (\x2+1.25,\y2+0.75);
\draw[line width=0.35mm] (\x2+0.75,\y2+1.25) to (\x2+1.25,\y2+1.25);
\filldraw[red, line width=0.35mm, fill = black, fill opacity = 0.2] (\x2+0.75,\y2+0.75) to (\x2+0.75,\y2+0.5) to[out=270,in=180] (\x2+1,\y2+0.25) to[out=0,in=270] (\x2+1.25,\y2+0.5) to (\x2+1.25,\y2+0.75);
\filldraw[red, line width=0.35mm, fill = black, fill opacity = 0.2] (\x2+0.75,\y2+1.25) to (\x2+0.75,\y2+1.5) to[out=90,in=180] (\x2+1,\y2+1.75) to[out=0,in=90] (\x2+1.25,\y2+1.5) to (\x2+1.25,\y2+1.25);
\draw[red, line width=0.35mm] (\x2+1,\y2+0) to (\x2+1,\y2+0.25);
\draw[red, line width=0.35mm] (\x2+0.75,\y2+0.75) to (\x2+0.75,\y2+1.25);
\draw[red, line width=0.35mm] (\x2+1.25,\y2+0.75) to (\x2+1.25,\y2+1.25);
\draw[red, line width=0.35mm] (\x2+1,\y2+1.75) to (\x2+1,\y2+2);

\node at (\x2+2.5,\y2+1) {$=$};
	
\draw[dashed] (\x2+3,\y2+0) --(\x2+5,\y2+0) --(\x2+5,\y2+2) --(\x2+3,\y2+2) --(\x2+3,\y2+0);
\filldraw[red, line width=0.35mm, fill = black, fill opacity = 0.2] (\x2+3.75,\y2+0.5) to[out=270,in=180] (\x2+4,\y2+0.25) to[out=0,in=270] (\x2+4.25,\y2+0.5) to (\x2+4.25,\y2+1.5) to[out=90,in=0] (\x2+4,\y2+1.75) to[out=180,in=90] (\x2+3.75,\y2+1.5) to (\x2+3.75,\y2+0.5);
\draw[red, line width=0.35mm] (\x2+4,\y2+0) to (\x2+4,\y2+0.25);
\draw[red, line width=0.35mm] (\x2+4,\y2+1.75) to (\x2+4,\y2+2);

\node at (\x2+5.5,\y2+1) {$=$};
	
\draw[dashed] (\x2+6,\y2+0) --(\x2+8,\y2+0) --(\x2+8,\y2+2) --(\x2+6,\y2+2) --(\x2+6,\y2+0);
\draw[red, line width=0.35mm] (\x2+7,\y2+0) to (\x2+7,\y2+2);
\end{tikzpicture}
\end{center}

Therefore $(A,e,m_e,\Delta_e)$ is an object in $\Kar{\bB}$ and furthermore we can now define morphisms $(e,\lt,\rho,m_e,\Delta_e):A_p\to A_e$ and $(e,m_e\Delta_e,\rt,\lambda):A_e\to A_p$. Composing them in $\Kar{\bB}$, we get $e\otimes_e e\iso e:A_p\to A_p$ and $e\otimes_p e:A_e\to A_e$. As we have morphisms $m_e^\otimes: e\otimes_p e \to e=\id_{A_e}$ and $\Delta_e^\otimes: \id_{A_e}=e\to e\otimes_p e$, we have shown that $(e,\lt,\rho,\rt,\lambda)$ splits in $\Kar{\bB}$ and thus $\Kar{\bB}$ is 2-idempotent complete.
\end{proof}

Since any 2-idempotent $(A,p,m,\Delta)$ in $\bB$ defines a 2-idempotent $(A_{\id},p,m,\Delta)$ in $\Kar{\bB}$, we also get the following statement.

\begin{remark} \label{x3}
For any object $A_p$ in $\Kar{\bB}$, $A_p$ is a splitting of the 2-idempotent $p$ on $A_{\id}$.
\end{remark}

We will now show that $\Kar{\bB}$ is a completion of $\bB$ in the sense that $\bB$ embeds into $\Kar{\bB}$ and is equivalent to it if $\bB$ was already 2-idempotent complete.

\begin{proposition} \label{kar2emb}
For every locally idempotent complete bicategory $\bB$, there exists a fully faithful pseudofunctor $\iota_\bB:\bB\to\Kar{\bB}$. If $\bB$ is furthermore 2-idempotent complete, this functor is an equivalence.
\end{proposition}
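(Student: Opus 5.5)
We mirror the proof of Proposition \ref{kar1emb}. On objects, $\iota_\bB$ sends $A$ to the trivial 2-idempotent $A_{\id}=(A,\id_A,\ell,\ell^{-1})$, where $\ell:\id_A\circ\id_A\to\id_A$ is the unitor. A 1-morphism $f:A\to B$ goes to $(f,r_f,r_f^{-1},l_f,l_f^{-1})$ built from the unitors of $\bB$. A 2-morphism goes to itself. The bimodule axioms and the compatibility of 2-morphisms with the actions are exactly naturality and coherence of the unitors, so these are routine checks.

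For the pseudofunctor structure, note that for $f:A_{\id}\to B_{\id}$ and $g:B_{\id}\to C_{\id}$ the idempotent $\kappa_{g\circ f}$ is the identity of $g\circ f$ up to unitors. Hence $g\circ f$, with $\varphi=\psi=\id$, is itself a splitting of $\kappa_{g\circ f}$. Uniqueness of splittings up to unique isomorphism then gives a canonical invertible 2-morphism $g\otimes_{\id}f\cong g\circ f$, and we take its inverse as the compositor. The unitor of $\iota_\bB$ is the identity, since the identity 1-morphism on $A_{\id}$ is $(\id_A,\ldots)=\iota_\bB(\id_A)$. Because all structure maps are the unique comparison isomorphisms between splittings of the same idempotent, the associativity and unit coherence axioms hold by the same uniqueness argument used for the triangle and pentagon axioms of $\Kar{\bB}$.

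For full faithfulness, recall that in the bicategorical setting this means each functor $\bB(A,B)\to\Kar{\bB}(A_{\id},B_{\id})$ is an equivalence of categories. It is faithful because 2-morphisms are not altered. It is full because any 2-morphism between images automatically commutes with unitor-actions, by naturality. For essential surjectivity, let $(f,\lt,\rho,\rt,\lambda)$ be a 1-morphism $A_{\id}\to B_{\id}$. The axiom $\rho\cdot\lt=(\lt\circ\id)\cdot(\id_f\circ\Delta)$ with $\Delta=\ell^{-1}$ shows that $\rho\cdot\lt$ is, up to unitor, $\lt$ whiskered with the identity. From $\lt\cdot\rho=\id_f$ one deduces that $\lt$ equals the unitor $r_f$ and similarly $\rt=l_f$. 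This step is a short diagrammatic computation, and I expect it to be the only fiddly part. It shows every 1-morphism lies in the image on the nose.

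Now suppose $\bB$ is 2-idempotent complete. It remains to show that $\iota_\bB$ is essentially surjective on objects up to equivalence. Take an object $A_p$ of $\Kar{\bB}$. The 2-idempotent $(A,p,m,\Delta)$ splits in $\bB$ via some $B$, $f$, $g$, $\varphi$, $\psi$, $\gamma$. By pseudofunctoriality, $\iota_\bB$ carries this splitting to a splitting of $p$ on $A_{\id}$ in $\Kar{\bB}$. Splittings are defined equationally, so they are preserved once the compositor identifications above are used. By Remark \ref{x3}, $A_p$ is also a splitting of $p$ on $A_{\id}$. The main obstacle is that uniqueness of 2-idempotent splittings up to equivalence is only promised later in the paper. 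I would prove the needed case directly. Given two splittings $(B,f,g)$ and $(B',f',g')$, form $f'\otimes g:B\to B'$ and $f\otimes g':B'\to B$. Their composites, for instance $(f\otimes g')\otimes(f'\otimes g)$, reduce via $\gamma,\gamma'$ and the associator to $f\otimes p\otimes g$. The splitting 2-morphisms $\varphi,\psi$ then yield a 2-isomorphism of this with $f\otimes g\cong\id_B$. Here one uses that $m$ and $\Delta$ absorb $p$, that is, $f\otimes_p p\cong f$ as in the unitor discussion of $\Kar{\bB}$. This gives an equivalence $\iota_\bB B\simeq A_p$, so $\iota_\bB$ is a biequivalence.
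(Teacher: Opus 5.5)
Your definition of $\iota_\bB$ and your proof of full faithfulness match the paper's argument. The key observation, that every 1-morphism $A_{\id}\to B_{\id}$ of $\Kar{\bB}$ already has $\lt=r_f$, $\rho=r_f^{-1}$, $\rt=l_f$, $\lambda=l_f^{-1}$, is the same; you simply make the unitors and compositor explicit where the paper suppresses them.

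For essential surjectivity you take a different route. The paper never uses uniqueness of splittings here. It turns the splitting maps themselves into 1-morphisms $f:A_p\to B_{\id}$ and $g:B_{\id}\to A_p$ of $\Kar{\bB}$, with actions such as $\lt_f=(\varphi\circ\id_f)\cdot(\id_f\circ\gamma^{-1})$. It then reads off $g\otimes_{\id_B}f\cong g\circ f\cong p=\id_{A_p}$ and $f\otimes_p g\cong\id_B$; the idempotents being split are the identity and $\psi\cdot\varphi$ respectively.

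You instead mirror Proposition \ref{kar1emb}: you observe that $A_p$ and $\iota_\bB B$ both split $\iota_\bB(p)$ on $A_{\id}$, and prove uniqueness of splittings by hand. That works, but you should make precise that every $\otimes$ in your lemma is a relative tensor product over $p$ formed inside $\Kar{\bB}$. It cannot be the composition of $\Kar{\bB}$, which is a tensor over the trivial idempotent on $A_{\id}$. With plain composites you land on $f\circ g\circ f\circ g$, and $\id_B$ is only a retract of $f\circ g$, not equivalent to it. So you need to do four things:
\begin{itemize}
\item equip $f,f'$ and $g,g'$ with $p$-module structures built from $\varphi,\psi,\gamma$ (exactly the paper's formulas);
\item use that $\Kar{\bB}$ is locally idempotent complete (Proposition \ref{kar2com}) to form $f'\otimes_p g$ and $f\otimes_p g'$;
\item check an associativity isomorphism for iterated relative tensors;
\item verify that $\gamma'$ is a bimodule isomorphism $g'\circ f'\cong p$, and that $\kappa_{f\circ g}=\psi\cdot\varphi$, so that $f\otimes_p g\cong\id_B$.
\end{itemize}
Unwound for these two particular splittings, your equivalence $A_p\to\iota_\bB B$ turns out to be essentially the paper's $f$.

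The paper's route is a single computation with no second layer of splittings. Yours costs more, but yields as a by-product an elementary proof of Corollary \ref{x4} in any locally idempotent complete bicategory. That proof is independent of the weighted-colimit argument the paper uses for it later.
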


\begin{proof}
We define $\iota_\bB$ to map the object $A$ in $\bB$ onto $(A,\id_A,\id_{\id_A},\id_{\id_A})$ in $\Kar{\bB}$. It maps 1-morphisms $f:A\to B$ in $\bB$ onto the the morphism $(f,\id_f,\id_f,\id_f,\id_f):A_{\id}\to B_{\id}$ and 2-morphisms $\phi:f\to g$ in $\bB$ onto $\phi:f\to g$ in $\Kar{\bB}$.

We now note that, for objects $A$, $B$ in $\bB$, every morphism in $\Kar{\bB}$ between $A_{\id}$ and $B_{\id}$ is of the form $(f,\id_f,\id_f,\id_f,\id_f)$ where $f$ is an arbitrary morphism $f:A\to B$ in $\bB$. Since a 2-morphism between 1-morphisms of the form  $(f,\id_f,\id_f,\id_f,\id_f)$ and $(g,\id_g,\id_g,\id_g,\id_g)$ is also just an arbitrary 2-morphism $\varphi:f\to g$, we see that $\iota_\bB$ is fully faithful.

Now assume that $\bB$ is 2-idempotent complete and let $(A,p,m,\Delta)$ be an object in $\Kar{\bB}$. We want to show that $\iota_\bB$ is essentially surjective, i.e., there exists some $B$ in $\bB$ such that $A_p\equiv \iota_\bB B$. We know that the 2-idempotent $(A,p,m,\Delta)$ splits in $\bB$, which means there exists an object $B$ in $\bB$, 1-morphisms $f:A\to B$ and $g:B\to A$, 2-morphisms $\varphi:f\circ g\to \id_B$, $\psi:\id_B \to f\circ g$ and an isomorphism $\gamma:g\circ f\to p$ such that $m=\gamma\cdot(\id_g \circ \varphi \circ \id_f)\cdot(\gamma^{-1}\circ\gamma^{-1})$ and $\Delta=(\gamma\circ\gamma)\cdot(\id_g \circ \psi \circ \id_f)\cdot\gamma^{-1}$.

We can now define the morphisms
\begin{align*}
& (f,(\phi\circ\id_f)\cdot(\id_f\circ \gamma^{-1}),(\id_f\circ\gamma)\cdot(\psi\circ\id_f),\id_f,\id_f):A_p\to B_{\id} \text{ and} \\
& (g,\id_g,\id_g,(\id_g\circ\phi)\cdot(\gamma^{-1}\circ\id_g),(\gamma\circ\id_g)\cdot(\id_g\circ\psi)):B_{\id}\to A_p
\end{align*}
and, with these, we now have $g\otimes_{\id_B} f\iso g\circ f\iso p=\id_{A_p}$ and $f\otimes_p g\iso\id_B$. Thus it follows that $A_p\equiv B_{\id}$ and $\iota_\bB:\bB\to\Kar{\bB}$ is an equivalence of bicategories.
\end{proof}

Lastly, we want to show that the idempotent completion $\Kar{\bB}$ is universal among all possible idempotent completions of $\bB$, which is why we are able to call it \textit{the} idempotent completion of $\bB$. What we mean by universal is that for each pseudofunctor $F$ from $\bB$ into an arbitrary 2-idempotent complete bicategory $\bC$, there is a pseudofunctor $F^\prime:\Kar{\bB}\to\bC$ such that $F^\prime\iota_\bB\cong F$.

\begin{theorem} \label{lscol}
For each locally idempotent complete bicategory $\bB$ and 2-idempotent complete bicategory $\bC$, we have an equivalence of bicategories
\begin{align*}
\Bicat(\Kar{\bB},\bC)\simeq\Bicat(\bB,\bC)
\end{align*}
induced by precomposing with $\iota_\bB$.
\end{theorem}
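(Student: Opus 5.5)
The plan is to show that precomposition $\iota_\bB^*:\Bicat(\Kar{\bB},\bC)\to\Bicat(\bB,\bC)$ is essentially surjective on objects and a local equivalence, i.e.\ essentially surjective and fully faithful on Hom-categories. This mirrors the 1-categorical argument in theorem \ref{1adj}, with absoluteness of splittings doing the work. Throughout, two facts are essential. First, splittings of 2-idempotents are preserved by every pseudofunctor, since they are defined equationally via $\spd_2$ (remark \ref{ExtIsSplit}). Second, by remark \ref{x3}, every object $A_p$ of $\Kar{\bB}$ is a splitting of the 2-idempotent $\iota_\bB(A,p,m,\Delta)$ on $A_{\id}$. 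Dually, every 1-morphism $f:A_p\to B_q$ of $\Kar{\bB}$ is a retract, i.e.\ a splitting of a 1-idempotent in a Hom-category, of $\iota_\bB f$ composed with the structure maps.

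\textbf{Essential surjectivity.} Given a pseudofunctor $F:\bB\to\bC$, I would construct an extension $\tilde F:\Kar{\bB}\to\bC$ as follows. For each object $A_p$, choose a splitting $(\tilde F(A_p),f_{A_p},g_{A_p},\varphi,\psi,\gamma)$ in $\bC$ of the 2-idempotent $(FA,Fp,Fm,F\Delta)$. For $A_{\id}$, choose the trivial splitting by $FA$.

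For a 1-morphism $h:A_p\to B_q$, the bimodule structure $(\lt,\rho,\rt,\lambda)$ makes $\kappa=F(\lt)F(\rt)$-type composites into a 1-idempotent on $f_{B_q}\circ Fh\circ g_{A_p}$. Since $\bC$ is locally idempotent complete, I set $\tilde F(h)$ to be a chosen splitting of this idempotent; the cleanest check is that the data of a bimodule exactly yields such an idempotent. On 2-morphisms, a 2-morphism of $\Kar{\bB}$ commutes with the actions and hence induces a map between these splittings. The compositors of $\tilde F$ are obtained from those of $F$ together with the fact that $g\otimes_q f$ is a splitting of $\kappa_{g\circ f}$ and $F$ preserves splittings, so both sides are splittings of the same idempotent and are uniquely isomorphic. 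Coherence then follows because all comparison cells are unique isomorphisms between splittings. Finally, $\tilde F\iota_\bB\cong F$ holds since identity idempotents split trivially, exactly as in theorem \ref{1adj}.

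\textbf{Local equivalence.} Let $G,H:\Kar{\bB}\to\bC$. Given a strong transformation $\alpha:G\iota_\bB\to H\iota_\bB$, define $\tilde\alpha_{A_p}$ as the composite
\[
GA_p\xrightarrow{Gg}GA_{\id}\xrightarrow{\alpha_A}HA_{\id}\xrightarrow{Hf}HA_p,
\]
where $f$ and $g$ are the splitting maps of remark \ref{x3}. The naturality 2-cells of $\tilde\alpha$ are built from those of $\alpha$ together with the (co)actions. I would then show that $\tilde\alpha\iota_\bB\cong\alpha$, using that $Gf\circ Gg\cong\id$ up to the idempotent splitting, possibly after splitting one further 1-idempotent in $\bC(GA_p,HA_p)$. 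For full faithfulness on modifications, note that a modification $\Gamma:\tilde\alpha\to\tilde\beta$ is determined by its components on the $A_{\id}$, since $\tilde\alpha_{A_p}$ is a retract of $Hf\circ\tilde\alpha_{A_{\id}}\circ Gg$. Modifications are compatible with these retractions, which gives injectivity; for surjectivity, one extends $\Gamma_A$ by whiskering with $Hf$ and $Gg$ and checks compatibility.

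\textbf{Main obstacle.} I expect the hardest step to be the coherence bookkeeping for $\tilde F$: verifying that the compositors satisfy the associativity and unit axioms, and that $\tilde\alpha$ is a genuine strong transformation. The strategy I would try first is to reduce every such axiom to the statement that two composites of 2-cells are both the canonical comparison between two splittings of the same idempotent, and hence are equal by uniqueness of splittings in the idempotent complete Hom-categories. The graphical calculus should make each such identification a short diagram manipulation.
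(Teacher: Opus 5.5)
Your essential-surjectivity step is a valid, more hands-on variant of the paper's argument. The paper sets $F'=\iota_\bC^{-1}\Kar{F}$, so all coherence comes from $\Kar{F}$ and from Proposition~\ref{kar2emb}. You build that composite by hand (your splitting of the idempotent on $f_{B_q}\circ Fh\circ g_{A_p}$ is what $\iota_\bC^{-1}$ does to 1-morphisms), which is where the bookkeeping you fear comes from. Your retract argument for modifications is essentially the paper's construction of $\Gamma'$, and it works for arbitrary $\alpha,\beta:G\to H$, which is what full faithfulness needs.

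The genuine gap is the definition $\tilde\alpha_{A_p}=Hf\circ\alpha_A\circ Gg$: in general this is not the component of any strong transformation. In the splitting of Remark~\ref{x3}, $f\circ g$ is $p\circ p$ as an $A_p$--$A_p$ bimodule, and $\id_{A_p}=p$ is only a retract of it (via $m_p,\Delta_p$), not isomorphic to it. Concretely, take $\bC=\Kar{\bB}$, $G=H=\id_{\Kar{\bB}}$ and $\alpha=\id_{\iota_\bB}$. Then $\tilde\alpha_{A_p}\cong p\circ p$ and $\tilde\alpha_{B_{\id}}\cong\id$. So for $h:A_p\to B_{\id}$, an invertible naturality cell would force $h\otimes_p(p\circ p)\cong h\circ p$ to be isomorphic to $h$. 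Now take $\bB=\B\mathrm{Vect}$ (finite-dimensional vector spaces), $p=k^2$ with $m(e_i\otimes e_j)=\delta_{ij}e_i$ and $\Delta(e_i)=e_i\otimes e_i$, and $h=k$ with $k^2$ acting through the first projection. Then these two 1-morphisms have dimensions $2$ and $1$. So the naturality cells you plan to build ``from those of $\alpha$ together with the (co)actions'' cannot be invertible. The problem is not in $\tilde\alpha\iota_\bB\cong\alpha$, where you placed your hedge: on $A_{\id}$ one has $f=g=\id$, and the unsplit formula already works there.

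The missing idea, which is what the paper does, is to split a further idempotent in $\bC(GA_p,HA_p)$ before defining the transformation. On $Hf\circ\alpha_A\circ Gg$ consider
\[
\gamma_{A_p}=\bigl(\id\circ G(m_p)\bigr)\cdot\bigl(\id\circ\alpha_p\circ\id\bigr)\cdot\bigl(H(\Delta_p)\circ\id\bigr).
\]
Here $\Delta_p:f\Rightarrow f\circ\iota_\bB p$ and $m_p:\iota_\bB p\circ g\Rightarrow g$ are viewed as 2-morphisms of $\Kar{\bB}$, $\alpha_p$ is the naturality cell of $\alpha$ at $p$, and compositors are suppressed. It is idempotent by naturality of $\alpha$ in 2-cells and in composition, (co)associativity, and $m_p\cdot\Delta_p=\id_p$.

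Take $\tilde\alpha_{A_p}$ to be its splitting. Then let $\tilde\alpha_h:H(h)\circ\tilde\alpha_{A_p}\Rightarrow\tilde\alpha_{B_q}\circ G(h)$ be $\alpha_h$, whiskered with $H(\lambda_h)$ and $G(\lt_h)$ and conjugated by the splitting maps of $\gamma_{A_p}$ and $\gamma_{B_q}$.

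In the example above, $\gamma_{A_p}$ becomes $(\id_p\circ m_p)\cdot(\Delta_p\circ\id_p)=\Delta_p\cdot m_p$ on $p\circ p$, whose splitting is $p=\id_{A_p}$, as it must be. Likewise $\tilde\alpha_h$ reduces to $\rt_h\cdot\lambda_h\cdot\lt_h\cdot\rho_h=\id_h$. Moreover $\gamma_{A_{\id}}$ is trivial, so $\tilde\alpha_{A_{\id}}\cong\alpha_A$ still holds.
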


\begin{proof}
First, we want to show that for every pseudofunctor $F:\bB\to\bC$, there exists a pseudofunctor $F\p:\Kar{\bB}\to\bC$ such that $F\p\iota_\bB\equiv F$, i.e., precomposition with $\iota_\bB$ is essentially surjective. For a given pseudofunctor $F:\bB\to \bC$ we can define the pseu\-do\-functor $\Kar{F}:\Kar{\bB}\to\Kar{\bC}$ which maps a 2-idempotent $(A,p,m,\Delta)$ onto the 2-idempotent $(F(A),F(p),F(m),F(\Delta))$ and acts analogously on 1-morphisms and 2-morphisms. The following diagram now commutes up to equivalence
\begin{center}
\begin{tikzcd}
\bB \arrow[r, "F"] \arrow[d, "\iota_\bB"] & \bC \arrow[d, "\iota_\bC"] \\
\Kar{\bB} \arrow[r, "\Kar{F}"]            & \Kar{\bC}
\end{tikzcd}
\end{center}
Since $\bC$ is a 2-idempotent complete bicategory, $\iota_\bC$ is an equivalence and we can choose an inverse $\iota_\bC\inv$. We now define $F\p=\iota_\bC\inv\Kar{F}$ and have 
\begin{align*}
F\p\iota_\bB=\iota_\bC\inv\Kar{F}\iota_\bB\equiv \iota_\bC\inv\iota_\bC F\equiv F.
\end{align*}

Secondly, we need to show that precomposition with $\iota_\bB$ is essentially surjective on Hom-categories. Let $F,G:\Kar{\bB}\to\bC$ be pseudofunctors. We want to show that for each strong transformation $\phi:F\iota_\bB\to G\iota_\bB$, we can find a strong transformation $\phi\p:F\to G$ such that $\phi\p\iota_\bB\iso \phi$. For a given $\phi:F\iota_\bB\to G\iota_\bB$, we define such a $\phi\p$ by defining an idempotent and then setting $\phi\p_{A_p}$ to be its splitting. This idempotent $\gamma_{A_p}:G(p)\phi_A F(p)\to G(p)\phi_A F(p)$ is given via
\begin{center}
\begin{tikzpicture}
\node at (0,0) {$F(A_{\id})$};
\node at (4,0) {$F(A_{\id})$};
\node at (2,2) {$F(A_p)$};
\node at (0,-3) {$G(A_{\id})$};
\node at (4,-3) {$G(A_{\id})$};
\node at (2,-5) {$G(A_p)$};
\node at (-0.2,1.5) {$F(p)$};
\node at (4.2,1.5) {$F(p)$};
\node at (2,-0.3) {$F(p)$};
\node at (-0.3,-1.5) {$\phi_A$};
\node at (4.3,-1.5) {$\phi_A$};
\node at (2,-2.7) {$G(p)$};
\node at (-0.2,-4.5) {$G(p)$};
\node at (4.2,-4.5) {$G(p)$};
\node at (2.6,0.65) {$F(m_p)$};
\node at (1.7,-1.3) {$\phi_p$};
\node at (1.4,-3.75) {$G(\Delta_p)$};
\draw[->, out=180, in=90] (1.25,2) to (0,0.25);
\draw[->, out=0, in=90] (2.75,2) to (4,0.25);
\draw[->] (0.75,0) to (3.25,0);
\draw[->] (0,-0.25) to (0,-2.75);
\draw[->] (4,-0.25) to (4,-2.75);
\draw[->] (0.75,-3) to (3.25,-3);
\draw[->, out=-90, in=180] (0,-3.25) to (1.25,-5);
\draw[->, out=-90, in=0] (4,-3.25) to (2.75,-5);
\draw[implies-, double equal sign distance] (2.5,1.25) to (1.5,0.75);
\draw[implies-, double equal sign distance] (2.5,-1) to (1.5,-2);
\draw[implies-, double equal sign distance] (2.5,-3.75) to (1.5,-4.25);
\end{tikzpicture}
\end{center}
and we now have a splitting consisting of $\phi\p_{A_p}:F(A_p)\to G(A_p)$ along with morphisms $\alpha_{A_p}:G(p)\phi_A F(p)\to \phi\p_{A_p}$ and $\beta_{A_p}:\phi\p_{A_p}\to G(p)\phi_A F(p)$ which satisfy $\alpha_{A_p} \beta_{A_p} =\id_{\phi\p_{A_p}}$ and $\beta_{A_p} \alpha_{A_p}=\gamma_{A_p}$. We define $\phi\p_f$ on a morphism $f:A_p\to B_q$ via
\begin{center}
\begin{tikzpicture}
\node at (0,0) {$F(A_p)$};
\node at (3,0) {$F(B_q)$};
\node at (0,-2) {$F(A_{\id})$};
\node at (3,-2) {$F(B_{\id})$};
\node at (0,-4) {$G(A_{\id})$};
\node at (3,-4) {$G(B_{\id})$};
\node at (0,-6) {$G(A_p)$};
\node at (3,-6) {$G(B_q)$};
\node at (1.5,0.3) {$F(f)$};
\node at (1.5,-1.7) {$F(f)$};
\node at (1.5,-3.7) {$G(f)$};
\node at (1.5,-5.7) {$G(f)$};
\node at (0.5,-1) {$F(p)$};
\node at (0.4,-3) {$\phi_A$};
\node at (0.5,-5) {$G(p)$};
\node at (2.5,-1) {$F(q)$};
\node at (2.6,-3) {$\phi_B$};
\node at (2.5,-5) {$G(q)$};
\node at (-1.7,-3) {$\phi\p_{A_p}$};
\node at (4.7,-3) {$\phi\p_{B_q}$};
\node at (-0.6,-3) {$\beta_{A_p}$};
\node at (3.6,-3) {$\alpha_{B_q}$};
\node at (1.3,-2.7) {$\phi_f$};
\draw[->] (0.6,0) to (2.4,0);
\draw[->] (0.75,-2) to (2.25,-2);
\draw[->] (0.75,-4) to (2.25,-4);
\draw[->] (0.6,-6) to (2.4,-6);
\draw[->] (0,-0.25) to (0,-1.75);
\draw[->] (3,-0.25) to (3,-1.75);
\draw[->] (0,-2.25) to (0,-3.75);
\draw[->] (3,-2.25) to (3,-3.75);
\draw[->] (0,-4.25) to (0,-5.75);
\draw[->] (3,-4.25) to (3,-5.75);
\draw[->, out=-120, in =120] (-0.5,-0.25) to (-0.5,-5.75);
\draw[->, out=-60, in =60] (3.5,-0.25) to (3.5,-5.75);
\draw[-implies, double equal sign distance] (-1,-3.4) to (-0.3,-3.4);
\draw[-implies, double equal sign distance] (3.3,-3.4) to (4,-3.4);
\draw[-implies, double equal sign distance] (1,-1.25) to (2,-0.75);
\draw[-implies, double equal sign distance] (1,-3.25) to (2,-2.75);
\draw[-implies, double equal sign distance] (1,-5.25) to (2,-4.75);
\end{tikzpicture}
\end{center}
where the top morphism is given by the image of
\begin{center}
\begin{tikzpicture}
\draw[dashed] (0,0) --(2,0) --(2,2) --(0,2) --(0,0);
\draw[line width=0.35mm] (1,0) to (1,2);
\node[right] at (1,1.5) {$p$};
\node[right] at (0,1) {$A$};
\node[left] at (2,1) {$A$};
	
\draw[dashed] (3,0) --(5,0) --(5,2) --(3,2) --(3,0);
\draw[blue, line width=0.35mm] (4,0) to (4,2);
\node[right] at (4,1.5) {$q$};
\node[right] at (3,1) {$B$};
\node[left] at (5,1) {$B$};
	
\draw[dashed] (6,0) --(8,0) --(8,2) --(6,2) --(6,0);
\draw[green, line width=0.35mm] (7,0) to (7,2);
\node[right] at (7,1.5) {$f$};
\node[right] at (6,1) {$B$};
\node[left] at (8,1) {$A$};

\draw[dashed] (9,0) --(11,0) --(11,2) --(9,2) --(9,0);
\filldraw[blue, line width=0.35mm, fill = blue, fill opacity = 0.2] (9.75,2) to (9.75,1.5) to[out=270,in=180] (10,1.25) to (10,2);
\draw[line width=0.35mm] (10,0.75) to[out=0,in=90] (10.25,0.5) to (10.25,0);
\draw[green, line width=0.35mm] (10,0) to (10,2);
\end{tikzpicture}
\end{center}
under $F$ and the bottom morphism is given by the image of
\begin{center}
\begin{tikzpicture}
\draw[dashed] (0,0) --(2,0) --(2,2) --(0,2) --(0,0);
\filldraw[black, line width=0.35mm, fill = black, fill opacity = 0.2] (1,0) to (1,0.75) to[out=0,in=90] (1.25,0.5) to (1.25,0);
\draw[blue, line width=0.35mm] (0.75,2) to (0.75,1.5) to[out=270,in=180] (1,1.25);
\draw[green, line width=0.35mm] (1,0) to (1,2);
\end{tikzpicture}
\end{center}
under $G$. This turns $\phi\p:G\to F$ into a strong transformation. We now need to show that $\phi\p\iota_\bB\iso\phi$, i.e., $\phi\p_{A_{\id}}\iso \phi_A$. If we look at the idempotent $\gamma_{A_{\id}}$ we see that it splits via $\phi_A$ and thus $\phi\p_{A_{\id}}\iso \phi_A$.

Lastly, we need to show that precomposition with $\iota_\bB$ is fully faithful on Hom-categories. Let $F,G:\Kar{\bB}\to\bC$ be pseudofunctors and $\phi,\psi:F\to G$ strong transformations. We want to show that we have an isomorphism
\begin{align*}
\Bicat(\Kar{\bB},\bC)(F,G)(\phi,\psi)\iso \Bicat(\bB,\bC)(F\iota_\bB,G\iota_\bB)(\phi\iota_\bB,\psi\iota_\bB).
\end{align*}
Let $\Gamma:\phi\iota_\bB\to \psi\iota_\bB$ be a modification. We can construct a modification $\Gamma\p:\phi\to\psi$ with components $\Gamma\p_{A_p}:\phi_{A_p}\to\psi_{A_p}$ via the following.
\begin{center}
\begin{tikzpicture}
\node at (0,0) {$F(A_p)$};
\node at (0,-2) {$F(A_{\id})$};
\node at (0,-4) {$G(A_{\id})$};
\node at (0,-6) {$G(A_p)$};
\node at (-3,-3) {$G(A_p)$};
\node at (3,-3) {$F(A_p)$};
\node at (-5,-3) {$\phi_{A_p}$};
\node at (5,-3) {$\psi_{A_p}$};
\node at (-2.1,-1.2) {$\phi_{A_p}$};
\node at (-2.1,-4.8) {$G(p)$};
\node at (2.1,-1.2) {$F(p)$};
\node at (2.1,-4.8) {$\psi_{A_p}$};
\node at (-0.5,-1) {$F(p)$};
\node at (0.5,-5) {$G(p)$};
\node at (1.8,-2.2) {$F(p)$};
\node at (-1.8,-3.8) {$G(p)$};
\node at (-1.3,-3) {$\phi_{A_{\id}}$};
\node at (1.3,-3) {$\psi_{A_{\id}}$};
\node at (0,-2.9) {$\Gamma_A$};
\node at (-1.3,-2.1) {$\phi_p$};
\node at (1.3,-3.8) {$\psi_p$};
\node at (-0.7,-4.7) {$G(\Delta_p)$};
\node at (0.7,-1.2) {$F(m_p)$};
\node at (-3.7,-2.1) {$l\inv$};
\node at (3.7,-3.6) {$r$};
\draw[->] (-0.6,0) to [out=180, in=90] (-4.5,-3)
	to [->, out=-90, in=180] (-0.6,-6);
\draw[->] (-0.5,-0.3) to (-3,-2.7);
\draw[->] (-3,-3.3) to (-0.5,-5.7);
\draw[->] (-2.4,-3.2) to (-0.8,-3.8);
\draw[->] (0,-0.3) to (0,-1.7);
\draw[->, out=-120, in=120] (-0.5,-2.3) to (-0.5,-3.7);
\draw[->, out=-60, in=60] (0.5,-2.3) to (0.5,-3.7);
\draw[->] (0,-4.3) to (0,-5.7);
\draw[->] (0.8,-2.2) to (2.4,-2.8);
\draw[->] (0.5,-0.3) to (3,-2.7);
\draw[->] (3,-3.3) to (0.5,-5.7);
\draw[->] (0.6,0) to [out=0,in=90] (4.5,-3)
	to [out=-90,in=0] (0.6,-6);
\draw[-implies, double equal sign distance] (-0.4,-3.3) to (0.4,-3.3);
\draw[-implies, double equal sign distance] (-1.6,-2.4) to (-1,-2.4);
\draw[-implies, double equal sign distance] (1,-4.2) to (1.6,-4.2);
\draw[-implies, double equal sign distance] (-1.1,-5) to (-0.2,-5);
\draw[-implies, double equal sign distance] (0.2,-1.5) to (1.2,-1.5);
\draw[-implies, double equal sign distance] (-4,-2.5) to (-3.4,-2.5);
\draw[-implies, double equal sign distance] (3.4,-3.9) to (4,-3.9);
\end{tikzpicture}
\end{center}
This defines a modification $\Gamma\p:F\to G$ and we can see that this construction defines an inverse on the level of modifications. Thus precomposition with $\iota_\bB$ is fully faithful on Hom-categories and we have an equivalence
\begin{align*}
\Bicat(\Kar{\bB},\bC)\equiv \Bicat(\bB,\bC).
\end{align*} 
\end{proof}

\begin{remark}
Without going into technical details, theorem \ref{lscol} is a consequence of the fact that idempotent completion should form a left adjoint trifunctor $\Kartwo:\Bicatlic\to\Bicatic$ from the tricategory of locally idempotent complete bicategories, pseudofunctors, strong transformations and modifications into the full subtricategory of 2-idempotent complete bicategories, analogously to theorem \ref{1adj}.
\end{remark}

\section{Cauchy Completion} \label{sec5}

We are now going to define the Cauchy completion of a bicategory, which is its completion with respect to absolute weighted colimits. We will see, that it is universal among all bicategories that have this property. Before we get to this, we will define a bicategorical version of weighted colimits and show that a 2-idempotent splitting in a locally idempotent complete bicategory is a weighted colimit. Since 2-idempotent splittings are preserved by every pseudofunctor, they in fact define absolute weighted colimits.

\subsection{Weighted Colimits in Bicategories}

In a general enriched context, weighted colimits were already described in \cite{Kel2}. Since one can think of bicategories as categories \textit{weakly} enriched in $\Cat$, there are some choices one can make when defining weighted colimits.

\begin{definition}{(Weighted Colimit)}
Let $\bJ$ and $\bB$ be bicategories. Given a presheaf $W:\bJ\op\to\Cat$, which we will call \textit{weight}, and another pseudofunctor $F:\bJ\to\bB$, the \textit{colimit of $F$ weighted by $W$} is given in the following way: We can form the pseudofunctor
\begin{align*}
\Psh{\bJ}(W,\bB(F,-)):\bB\to\Cat.
\end{align*}
If this pseudofunctor is representable, i.e., there exists an object $\colim^W F$ and an equivalence of pseudofunctors $\phi:\bB(\colim^W F,-)\to\Psh{\bJ}(W,\bB(F,-))$, we call $\colim^W F$ together with $\phi$ the \textit{colimit of $F$ weighted by $W$} or just the \textit{weighted colimit of $F$} and we call $\phi$ a \textit{universal} strong transformation.

By the Yoneda Lemma, the data of this strong transformation is equivalent to the object $\phi_{\colim^W F}(\id_{\colim^W F})$ in $\Psh{\bJ}(W,\bB(F-,\colim^W F))$, i.e., a strong transformation $\lambda:W\to \bB(F-,\colim^W F)$ such that precomposition with $\lambda$ defines an equivalence
\begin{align*}
\lambda^*:\bB(\colim^W F,-)\to\Psh{\bJ}(W,\bB(F,-)).
\end{align*}
\end{definition}

\begin{definition}{(Cocomplete Bicategory)}
A bicategory $\bB$ is called \textit{cocomplete} if for every weight $W$ and every pseudofunctor $F$ into $\bB$ the weighted colimit of $F$ by $W$ exists.
\end{definition}

Since we want to study absolute weighted colimits, we need a precise definition of what it means for a weighted  colimit to be preserved by a pseudofunctor.

\begin{definition}{(Preservation of Weighted Colimits)}
Let $\bC$ be another bicategory and $G:\bB\to\bC$ a pseudofunctor. We say $G$ \textit{preserves} a colimit if precomposition with $G\lambda:W\to \bC(GF-,G\colim^W F)$ defines an equivalence
\begin{align*}
(G\lambda)^*:\bC(G\colim^W F,-)\to\Psh{\bJ}(W,\bC(GF,-)).
\end{align*}
\end{definition}

\begin{definition}{(Absolute Weighted Colimit)}
A weighted colimit is called \textit{absolute} if it is preserved by every pseudofunctor.
\end{definition}

\begin{definition}{(Absolute Weight)}
A weight $W:\bJ\op\to\Cat$ is called \textit{absolute} if for every bicategory $\bB$ and every pseudofunctor $F:\bJ\to\bB$, the colimit of $F$ weighted by $W$ is absolute if it exists.
\end{definition}

The theory of weighted colimits still holds true in the context of categories weakly enriched over categories satisfying a property $P$.

\begin{definition}{(Weighted Colimits in Locally $P$ Bicategories)} \label{lPcolim}
Let $\bB$ be a locally $P$ bicategory, $\bJ$ an arbitrary bicategory and $F:\bJ\to\bB$ a pseudofunctor. A \textit{$P$-weight} is a presheaf $W:\bJ\op\to\Cat_P$. The \textit{$P$-weighted colimit} of $F$ weighted by $W$ now consists of a strong transformation $\lambda:W\to\bB(F-,\colim_P^W F)$ such that
\begin{align*}
\lambda^*:\bB(\colim_P^W F,b)\to\PshP{\bJ}(W,\bB(F,b))
\end{align*}
defines an equivalence of categories for all objects $b$ in $\bB$. A locally $P$ bicategory is called \textit{$P$-cocomplete} if every $P$-weighted colimit exists.
\end{definition}

In general every $P$-weighted colimit in a locally $P$ bicategory can be regarded as a weighted colimit in an ordinary bicategory by simply regarding the $P$-weight $W:\bJ\op\to\Cat_P$ as an ordinary weight $W:\bJ\op\to\Cat$. But by restricting ourselves to locally $P$ bicategories and $P$-weights, we will be able to find $P$-weights which are absolute and which would not be absolute as ordinary weights. When it is clear from context, we will simply refer to $P$-weights, $P$-weighted colimits, and $P$-cocompleteness as weights, weighted colimits, and cocompleteness. 

In many ordinary categories we can give explicit formulas for calculating certain colimits. By explicitly calculating a colimit we usually also gain explicit formulas for the universal property of a colimit which can be used to understand how the colimit interacts with other objects. In the case of the 2-category $\Cat$, we also find an explicit formula for a colimit analogous to the one in the 1-category $\Set$. By explicitly constructing a weighted colimit for an arbitrary bicategory $\bJ$, weight $W:\bJ\op\to\Cat$ and pseudofunctor $F:\bJ\to\Cat$, we also show that $\Cat$ is cocomplete.

\begin{theorem}
Let $\bB$ be a bicategory, the 2-categories $\Cat$ and $\Psh{\bB}$ are cocomplete.
\end{theorem}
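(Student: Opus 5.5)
We will construct the weighted colimit $\colim^W F$ explicitly for an arbitrary bicategory $\bJ$, weight $W:\bJ\op\to\Cat$ and pseudofunctor $F:\bJ\to\Cat$, by mimicking the $\Set$ construction used in the proof of proposition \ref{ato1ret}. The natural candidate is the bicategorical coend $\int^{j} W(j)\times F(j)$. We realise it as a localisation of a Grothendieck-type category $\mathcal{G}$. Objects of $\mathcal{G}$ are triples $(j,w,x)$ with $j\in\bJ$, $w\in W(j)$ and $x\in F(j)$. Morphisms are generated by the pairs of morphisms in $W(j)\times F(j)$, together with formal arrows $(j,W(u)w,x)\to(j',w,F(u)x)$ for each 1-morphism $u:j\to j'$ in $\bJ$. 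These formal arrows are natural in $w$ and $x$, and are compatible with the coherence isomorphisms of $W$ and $F$ and with 2-morphisms $u\Rightarrow u'$. We then invert the formal arrows. The result is a category $C$, and the canonical functors $W(j)\times F(j)\to C$ curry to functors $\lambda_j:W(j)\to\Cat(F(j),C)$. The formal arrows and the imposed relations supply the pseudonaturality 2-cells of $\lambda:W\to\Cat(F-,C)$.

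Next we verify that $\lambda^*:\Cat(C,D)\to\Psh{\bJ}(W,\Cat(F,D))$ is an equivalence for every category $D$. This is the key step. Given a strong transformation $\mu:W\to\Cat(F-,D)$, uncurrying gives functors $W(j)\times F(j)\to D$ and invertible 2-cells for each $u$. The universal property of the presentation then yields a functor $C\to D$: it is determined on generators, and the relations hold precisely by the axioms of a strong transformation. This argument establishes essential surjectivity, and in fact gives a strict lift. Full faithfulness follows the same way: a modification between such $\mu$'s is a family of natural transformations that commutes with the generating formal arrows, and this is exactly a natural transformation between the induced functors on the localisation. Since every functor out of a localisation is determined by its restriction to generators, the lift is unique.

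For $\Psh{\bB}$ we compute pointwise. For $F:\bJ\to\Psh{\bB}$ we set $(\colim^W F)(b)=\colim^W F(-)(b)$, using the construction above for each $b$. Functoriality in $b$ comes from the functoriality of the construction in $F$; a 1-morphism $b'\to b$ gives a strong transformation $F(-)(b)\to F(-)(b')$, which induces a functor between the colimits by the universal property, with coherence isomorphisms coming from uniqueness up to unique isomorphism. For universality, we reduce an equivalence $\Psh{\bB}(\colim^W F,S)\simeq\Psh{\bJ}(W,\Psh{\bB}(F,S))$ to the pointwise equivalences in $\Cat$, using the bicategorical Fubini and coend calculus of \cite[Chapter 7.1]{Lor}. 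Equivalently, we view strong transformations into $S$ as families indexed by $b$ and naturality data, and apply the $\Cat$ case objectwise.

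The main obstacle is bookkeeping rather than conceptual. First, the relations in $\mathcal{G}$ must encode all coherence data of the weak functors $W$ and $F$ (unitors, compositors, 2-cell naturality), so that they match precisely the axioms of a strong transformation. Second, in the $\Psh{\bB}$ case the pointwise colimits must assemble into a pseudofunctor whose coherences satisfy the axioms. The first point is what we would write out most carefully. We would first treat the case where $\bJ$ is a 2-category and $W$, $F$ are 2-functors, and then transport along a strictification equivalence.
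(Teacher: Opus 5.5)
Your proposal is correct and follows essentially the same route as the paper. Construction~\ref{const} likewise starts from $\coprod_j Wj\times Fj$ and freely adjoins isomorphisms $\gamma_{a,f,x}:(a,F(f)x)\to(W(f)a,x)$ (the inverses of your formal arrows), natural in $a,f,x$ and subject to the compatibility conditions with $F^2,W^2,F^0,W^0$; it then shows by uncurrying that $\lambda^*$ is even an isomorphism of categories, and handles $\Psh{\bB}$ pointwise via the universal property exactly as in your third paragraph, so the strictification detour and the coend/Fubini alternative are unnecessary.
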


The explicit construction and detailed proofs can be found in \ref{app1}. Abstractly, this was already proven in \cite{Kel} but by taking the colimit of a two-sided bar construction as outlined in \citep[12.]{S}, we can derive an explicit formula for weighted colimits in $\Cat$. An equivalent formula is also given in \cite{Lam} but derived differently. The cocompleteness of $\Psh{\bB}$ follows by point-wise computing weighted colimits.

\subsection{2-Idempotent Splittings as Absolute Weighted Colimits}

We briefly recall definition \ref{walkin} of the free walking 2-idempotent splitting $\spd_2$. It has two objects $X$ and $Y$ and a 2-idempotent on $X$ which splits via $Y$ given by morphisms $f:X\to Y$, $g:Y\to X$, $\phi:f g\to \id_Y$ and $\psi:\id_Y\to f g$ such that $\phi\psi=\id_{\id_Y}$. The free walking 2-idempotent $\clb_2$ is the full subbicategory of $\spd_2$ on $X$ and has a fully faithful embedding $\iota:\clb_2\to\spd_2$.

\begin{lemma} \label{spd2idcomp}
The free walking 2-idempotent splitting is locally idempotent complete.
\end{lemma}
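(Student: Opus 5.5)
The plan is to describe each Hom-category of $\spd_2$ explicitly, as a combinatorial category, and then check directly that idempotents split there. Since the 1-morphisms of $\spd_2$ are freely generated by $f$ and $g$, every 1-morphism is an alternating word in $f$ and $g$. The only generating 2-morphisms, $\phi:f\circ g\to \id_Y$ and $\psi:\id_Y\to f\circ g$, both act on an occurrence of the subword $f\circ g$.

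Every word between fixed endpoints has a unique form determined by an integer $n\geq 0$:
\begin{itemize}
\item $(fg)^n$ in $\spd_2(Y,Y)$;
\item $g(fg)^n f$ in $\spd_2(X,X)$, where we also have $(gf)^{n+1}=g(fg)^nf$;
\item $(fg)^n f$ in $\spd_2(X,Y)$;
\item $g(fg)^n$ in $\spd_2(Y,X)$.
\end{itemize}
Whiskering with the fixed outer letters is then an isomorphism of categories. So each Hom-category is isomorphic to the full subcategory on the objects $x^{\otimes n}$ of the free monoidal category $\mathcal M$ generated by one object $x=fg$ and morphisms $\phi:x\to \mathbbm{1}$, $\psi:\mathbbm{1}\to x$ with $\phi\psi=\id_{\mathbbm{1}}$. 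It therefore suffices to show that $\mathcal M$ is idempotent complete.

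To do this, I would construct a strict monoidal functor from $\mathcal M$ to the category $\mathcal P$ whose objects are the finite ordinals $[n]$ and whose morphisms $[n]\to[m]$ are order-preserving partial injections. The monoidal product on $\mathcal P$ is ordinal sum. The functor sends $\phi$ to the empty map $[1]\to[0]$ and $\psi$ to the empty map $[0]\to[1]$; the relation $\phi\psi=\id$ holds in $\mathcal P$. I would then prove this functor is an isomorphism by a planar string-diagram normal form. Every 2-morphism $x^{\otimes n}\to x^{\otimes m}$ can be rewritten, using only interchange and $\phi\psi=\id$, as a layer of $\phi$'s on some input strands, followed by identity strands, followed by a layer of $\psi$'s creating the remaining output strands. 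Such a normal form is determined exactly by which inputs survive and where they go, that is, by an order-preserving partial injection. Existence of the normal form comes from sliding every $\phi$ down and every $\psi$ up, cancelling each $\phi$ directly above a $\psi$. Uniqueness comes from the fact that the functor to $\mathcal P$ separates distinct normal forms.

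Idempotents in $\mathcal P$ then split immediately. If $e:[n]\to[n]$ is a partial injection with $e\circ e=e$, then $e$ is the identity on its domain $S$, and $S$ equals its image. Set $k=|S|$. The unique order-preserving bijections $[n]\supseteq S\to [k]$ and $[k]\to S\subseteq[n]$, viewed as partial injections $r:[n]\to[k]$ and $s:[k]\to[n]$, satisfy $r\circ s=\id_{[k]}$ and $s\circ r=e$. Transporting back gives the required splittings in every Hom-category of $\spd_2$.

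The main obstacle is the faithfulness half of the identification $\mathcal M\cong\mathcal P$: showing that no hidden relations beyond $\phi\psi=\id$ and the interchange law collapse distinct normal forms. Fullness is easy, since every partial injection is visibly realised by a diagram. For faithfulness, I would first try to define the functor on generators, note that it respects the relations, and show the rewriting to normal form is confluent. Alternatively one can argue semantically: $\mathcal P$ satisfies the defining relations, so the functor exists, and the normal forms are already pairwise distinct in $\mathcal P$.
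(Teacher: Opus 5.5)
Your strategy works, and for the core case it is more careful than the paper's, but there is one concrete slip. In your enumeration you list the 1-morphisms of $\spd_2(X,X)$ as $g(fg)^nf=(gf)^{n+1}$ with $n\ge 0$, which omits the empty word $\id_X$. So your claim that every Hom-category is isomorphic to $\mathcal M$ is false for $\spd_2(X,X)$: whiskering $g\circ-\circ f$ identifies $\mathcal M$ only with the full subcategory on the nonempty words.

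The repair is short, and it is exactly the extra case the paper treats separately. Any whiskered generator $u\circ\phi\circ v$ or $u\circ\psi\circ v$ with source and target in $\spd_2(X,X)$ has $v\colon X\to Y$ and $u\colon Y\to X$, both nonempty. Hence neither its source nor its target is $\id_X$. So $\id_X$ carries only its identity 2-morphism and has no 2-morphisms to or from other words. Thus $\spd_2(X,X)$ is the coproduct of $\mathcal M$ with the terminal category, and a coproduct of idempotent complete categories is idempotent complete.

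Apart from this, your route differs from the paper's mainly in $\spd_2(Y,Y)$. The paper only inspects the object $fg$. It notes that the only non-identity idempotent there is $\psi\phi$, which splits through $\id_Y$ via $\phi$ and $\psi$, and leaves implicit why idempotents on $(fg)^n$ reduce to this. For the remaining Hom-categories it uses the same whiskering reduction you do, and, like you, asserts its full faithfulness without detailed justification.

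Your identification of $\spd_2(Y,Y)$ with order-preserving partial injections classifies every 2-morphism. It shows that the idempotents on $(fg)^n$ are exactly the identities on subsets $S\subseteq[n]$, i.e. tensor products of $\id_{fg}$ and $\psi\phi$, and it exhibits their splittings through $(fg)^{|S|}$. This supplies the step the paper skips, at the cost of a normal-form argument.

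That argument is less of an obstacle than you suggest, because your semantic version is already complete. Reduction to normal form uses only interchange and $\phi\psi=\id$. The strict monoidal functor to $\mathcal P$ exists because $\mathcal P$ satisfies $\phi\psi=\id$. Distinct normal forms go to distinct partial injections, so faithfulness follows and no confluence argument is needed.
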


\begin{proof}
We have to show that all Hom-categories of $\spd_2$ are idempotent complete. Since there are only 4 of them, we can check this by hand. The objects of $\spd_2(Y,Y)$ are generated by $fg:Y\to Y$. The only non-identity idempotent on $fg$ is $\psi\phi$ which splits trivially via $\phi$ and $\psi$. This shows that $\spd_2(Y,Y)$ is idempotent complete.

We have an equivalence of categories $f^*:\spd_2(Y,Y)\to\spd_2(X,Y)$, since all 1-morphisms $h:X\to Y$ can be written as $h\p \circ f$ for some $h\p:Y\to Y$, and all 2-morphisms $\alpha:h\p \circ f\to k\p \circ f$ are of the form $\alpha\p \circ \id_f$, for some $\alpha\p:h\p \to k\p:Y\to Y$. This implies that $\spd_2(X,Y)$ is idempotent complete. We get an analogous equivalence and result for $\spd_2(Y,X)$.

For $\spd_2(X,X)$ we do not get an equivalence but we note that all 1-morphisms $h:X\to X$ except $\id_X$ can be written as $g\circ h\p\circ f$ for some $h\p:Y\to Y$. Since the only 2-morphism on $\id_X$ is the identity, and there are no 2-morphisms between $\id_X$ and any other 1-morphism, idempotent completeness of $\spd_2(Y,Y)$ still implies the idempotent completeness of $\spd_2(X,X)$. Thus $\spd_2$ is locally idempotent complete.
\end{proof}

\begin{proposition}
Let $\bB$ be a locally idempotent complete bicategory and $(A,p,m,\Delta)$ a 2-idempotent which corresponds to a pseudofunctor $F:\clb_2\to\bB$. The colimit of $F$ weighted by $\spd_2(\iota-,Y):\clb_2\op\to\Catic$ determines a splitting of $A_p$ and every splitting determines a colimit of $F$ weighted by $\spd_2(\iota-,Y)$.
\end{proposition}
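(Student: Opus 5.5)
The strategy is to make the weighted cones explicit. I will identify the category $\Pshic{\clb_2}(\spd_2(\iota-,Y),\bB(F-,b))$ with the category of ``right $p$-modules'' with codomain $b$, namely 1-morphisms $A_p\to b_{\id}$ in $\Kar{\bB}$. Once this identification is in place, both directions of the proposition reduce to statements about such modules, and those can be handled with the calculus of section \ref{sec4}.

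\emph{Step 1 (modules).} The weight $W=\spd_2(\iota-,Y)$ has a single value $W(X)=\spd_2(X,Y)$. By the proof of lemma \ref{spd2idcomp}, this category is equivalent via $f^*$ to $\spd_2(Y,Y)$, whose objects are generated by $f$ under precomposition with $p=gf$.

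A strong transformation $\lambda:W\to\bB(F-,b)$ consists of a functor $\lambda_X:\spd_2(X,Y)\to\bB(A,b)$ together with coherent isomorphisms $\lambda_X(h\circ k)\iso\lambda_X(h)\circ F(k)$ for $k\in\clb_2(X,X)$. I plan to show that such a $\lambda$ is determined up to unique isomorphism by the following data:
\begin{itemize}
\item the 1-morphism $\tilde f=\lambda_X(f):A\to b$;
\item the 2-morphisms $\lt=\lambda_X(\phi\circ\id_f)$ and $\rho=\lambda_X(\psi\circ\id_f)$, composed with the structure isomorphism $\lambda_X(fgf)\iso\tilde f\circ p$.
\end{itemize}
The relation $\phi\psi=\id$ and the definition of $m,\Delta$ as the images of $\id_g\circ\phi\circ\id_f$ and $\id_g\circ\psi\circ\id_f$ translate exactly into the module axioms for $(\tilde f,\lt,\rho)$. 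Conversely, every right $p$-module extends to such a $\lambda$: map $h\circ f$ to the splitting of the evident idempotent on $\lambda(h)\circ\tilde f$. This splitting exists because $\bB$ is locally idempotent complete, and this is precisely where we need $\Catic$-valued weights rather than arbitrary $\Cat$-valued ones. The same argument on 2-cells identifies modifications with module maps. This gives an equivalence, pseudonatural in $b$,
\begin{align*}
\Pshic{\clb_2}(W,\bB(F-,b))\equiv\Kar{\bB}(A_p,b_{\id}).
\end{align*}
I expect this step to be the main obstacle. The delicate part is the coherence bookkeeping for the strong transformation, and checking that the extension of a module is well defined and unique up to unique isomorphism. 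If the direct approach becomes unwieldy, I would instead realise $W$ as the restriction of the representable $\spd_2(-,Y)$ and apply the bicategorical Yoneda lemma together with the extension results of appendix \ref{app3}.

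\emph{Step 2 (splitting $\Rightarrow$ colimit).} Given a splitting $(B,f,g,\varphi,\psi,\gamma)$, I take the module $f:A_p\to B_{\id}$ constructed in the proof of proposition \ref{kar2emb}. Under step 1, $\lambda^*$ becomes the functor
\begin{align*}
\bB(B,b)\to\Kar{\bB}(A_p,b_{\id}),\qquad h\mapsto h\circ f .
\end{align*}
Its quasi-inverse sends a module $k$ to a splitting of the idempotent on $k\circ g$ obtained from $\lt_k$ and $\gamma$. One unit is $h\circ f\circ g\to h$, which is an isomorphism after splitting, using $\varphi$ and $\psi$. The other unit is $k\circ g\circ f\iso k\circ p$, which after splitting becomes $k$ via $\lt_k$ and $\rho_k$. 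This is the computation already made in proposition \ref{kar2emb} showing that $A_p\equiv B_{\id}$.

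\emph{Step 3 (colimit $\Rightarrow$ splitting).} Let $C=\colim^W F$, and let $\lambda$ correspond under step 1 to a module $\tilde f:A\to C$. The 1-morphism $p$, regarded as a module $A_p\to A_{\id}$ via $(m,\Delta)$, corresponds to an object of $\Pshic{\clb_2}(W,\bB(F-,A))$. By essential surjectivity of $\lambda^*$ at $b=A$, there is $g:C\to A$ with an isomorphism of modules $g\circ\tilde f\iso p$; this isomorphism is our $\gamma$. Next, at $b=C$, the module $\tilde f\circ g\circ\tilde f\iso\tilde f\circ p$ is a retract of the module $\tilde f$, via $\lt$ and $\rho$. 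Full faithfulness of $\lambda^*$ lifts this retraction to 2-morphisms $\varphi:\tilde f\circ g\to\id_C$ and $\psi:\id_C\to\tilde f\circ g$ with $\varphi\psi=\id$. Finally, the identities expressing $m$ and $\Delta$ through $\gamma$, $\varphi$ and $\psi$ follow by applying the faithful functor $\lambda^*$ at $b=A$ and comparing with the module structure of $p$.
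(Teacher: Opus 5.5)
Your proposal is correct. Steps 1 and 2 are essentially the paper's argument. The paper also reduces cones $\spd_2(\iota-,Y)\to\bB(F-,b)$ to right $p$-modules, via $\Bicat(\clb_2\op,\Catic)\equiv\rAct{\clb_2(X,X)}$. It then proves universality of the cone built from a splitting by showing that $\kappa\mapsto\kappa_X(f)\otimes_p g$ is quasi-inverse to $\lambda^*$.

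The genuine difference is the direction from colimit to splitting. The paper does not use modules there. It notes that $\colim^{\spd_2(\iota-,X)}F\equiv FX$ by Yoneda, and feeds the two weighted colimits into Proposition \ref{ext} to get $F\p:\spd_2\to\bB$ with $F\p\iota\equiv F$. It then reads off a splitting via Remark \ref{ExtIsSplit}.

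You instead extract the splitting data directly from the universal property:
\begin{itemize}
\item $g$ and $\gamma$ come from essential surjectivity of $\lambda^*$ at $b=A$;
\item $\varphi$ and $\psi$ come from full faithfulness at $b=C$;
\item the formulas for $m$ and $\Delta$ follow from $\gamma$ being a module isomorphism, e.g.\ $\gamma\cdot(\id_g\circ\varphi\circ\id_{\tilde f})\cdot(\gamma\inv\circ\gamma\inv)=\gamma\cdot(\id_g\circ\lt)\cdot(\gamma\inv\circ\id_p)=m$. So faithfulness is needed only for $\varphi\psi=\id_{\id_C}$.
\end{itemize}

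The paper's route is shorter and reuses a general tool. However, it leaves implicit both the coherence of $F\p$ and how $F\p$ and $F\p\iota\equiv F$ translate into explicit splitting data. Your route is self-contained and produces the data together with the required identities. It also shows that the $f$-part of the splitting is the colimit cone $\lambda_X(f)$ itself, so the two directions are visibly inverse. The price is that everything rests on the cone/module dictionary of Step 1, which the paper also only sketches.

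A few small points:
\begin{itemize}
\item The retraction goes the other way from how you phrase it: $\tilde f$ is a retract of $\tilde f\circ g\circ\tilde f\iso\tilde f\circ p$. Your $\varphi\psi=\id$ is consistent with this.
\item That $\lt$ and $\rho$ are module maps uses (co)associativity of the action. This follows from the module axioms exactly as associativity of $m$ does.
\item In Step 1 no splitting is needed to extend a module to a cone. Up to isomorphism every object of $\spd_2(X,Y)$ is $f\circ(gf)^n$, so you can set $\lambda_X(f\circ(gf)^n)=\tilde f\circ p^n$ and send $\phi\circ\id_f$, $\psi\circ\id_f$ to $\lt$, $\rho$. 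Local idempotent completeness of $\bB$ is really used in Step 2, to form $k\otimes_p g$.
\item Step 3 does not need that half of Step 1 at all. The cone corresponding to the module $(p,m,\Delta)$ is simply $g_*:\spd_2(\iota-,Y)\to\spd_2(\iota-,X)=\clb_2(-,X)$ followed by $F:\clb_2(-,X)\to\bB(F-,A)$.
\end{itemize}
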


\begin{proof}
First we note that from this point onward we are interested in weighted colimits in locally idempotent complete bicategories as laid out in definition \ref{lPcolim}. We will shorten "idempotent complete" to ic, whenever appropriate when talking about weighted colimits in locally idempotent complete bicategories.

We note that lemma \ref{spd2idcomp} implies that the weight $\spd_2(\iota-,Y):\clb_2\op\to\Catic$ takes values in idempotent complete categories and defines an ic-weight. Assume that the colimit of $F$ weighted by $\spd_2(\iota-,Y)$ exists. The colimit of $F$ weighted by $\spd_2(\iota-,X)\equiv \clb_2(-,X)$ exists trivially and is equivalent to $FX$ by the Yoneda lemma. Proposition \ref{ext} tells us that the existence of these weighted colimits guarantees the existence of an extension $F\p:\spd_2\to\bB$ of $F$ along $\iota:\clb_2\to \spd_2$ such that $F\p\iota \equiv F$. By remark \ref{ExtIsSplit}, this shows that the colimit of $F$ weighted by $\spd_2(\iota-,Y)$ determines a splitting of $A_p$.

Before we can finish this proof, we briefly examine the bicategory $\Bicat(\clb_2\op,\Catic)$ to better understand colimits weighted by $\spd_2(\iota-,Y)$. For a given monoidal category $\C$, we have an equivalence
\begin{align*}
\Bicat((\B\C)\op,\Cat)\equiv\rAct{\C}
\end{align*}
where $\B\C$ is the delooping of $\C$ and $\rAct{\C}$ is the bicategory of categories with a right $\C$-action,
given by evaluating at the one object of $\B\C$. This is a categorification of the fact that, for a monoid $M$, there is an equivalence between $\Cat(\B M\op, \Set)$ and the category of sets with a right $M$-action. Analogously, its proof consists of correctly de- and reassembling the data of the pseudofunctors in question. Further details on actions of monoidal categories on categories can be found in \cite{CG}.

Since $\clb_2$ is a one-object bicategory, we have an equivalence
\begin{align*}
\Bicat(\clb_2\op,\Catic)\equiv\rAct{\clb_2(X,X)}.
\end{align*}
Thus, a strong transformation $\kappa:\spd_2(\iota -, Y)\to \bB(F-,C):\clb_2\op\to\Catic$ corresponds to a functor $\kappa_X:\spd_2(X, Y)\to \bB(A,C)$ preserving the action of $\clb_2(X,X)$. We can now show that $\kappa$ is uniquely up to unique isomorphism determined by $\kappa_X(f)$ together with a right $p$-module structure on it. The following diagram commutes up to isomorphism.
\begin{center}
\begin{tikzcd}
{\spd_2(X,Y)\times \clb_2(X,X)} \arrow[r, "\kappa_X\times\id"] \arrow[d, "\id\times\iota"] & {\bB(A,C)\times\clb_2(X,X)} \arrow[d, "\id\times F"] \\
{\spd_2(X,Y)\times \spd_2(X,X)} \arrow[d, "\circ"]                                         & {\bB(A,C)\times\bB(A,A)} \arrow[d, "\circ"]          \\
{\spd_2(X,Y)} \arrow[r, "\kappa_X"]                                                        & {\bB(A,C)}                                          
\end{tikzcd}
\end{center}
We therefore have $\kappa_X(f\circ h)\iso\kappa_X(f) \circ F(h)$ and $\kappa_X(\id_f\circ \alpha)\iso \id_{\kappa_X(f)}\circ F(\alpha)$ for $h$ a 1-morphism and $\alpha$ a 2-morphism in $\bB(A,A)$. The 2-morphisms in $\spd_2(X,Y)$ which cannot be written as $\id_f \circ \alpha$ are all generated by $\phi^{\circ n}\circ \id_f$ and $\psi^{\circ n}\circ \id_f$. Though, we note that
\begin{align*}
\kappa_X(\phi^{\circ 2}\circ \id_f)\iso \kappa_X((\phi\circ \id_f)\cdot (\id_f\circ\id_g\circ\phi\circ\id_f))\iso \kappa_X(\phi\circ \id_f)\cdot \left(\id_{\kappa_X(f)}\circ F(\id_g\circ \phi\circ \id_f)\right),
\end{align*}
i.e., all of these are up to isomorphism determined by $\kappa_X(\phi\circ \id_f):f\circ p\to f$ and $\kappa_X(\psi\circ \id_f):f\to f\circ p$ which need to satisfy $\kappa_X(\phi\circ \id_f)\cdot\kappa_X(\psi\circ \id_f)=\id_{\kappa_X(f)}$, i.e., a right $p$-action on $\kappa_X(f)$.

Now let $(A,B,f,g,\phi,\psi,\gamma)$ be a splitting of $A_p$, determining an extension $F\p$ of $F$. We can define a strong transformation
\begin{align*}
\lambda:\spd_2(\iota-,Y)\to \bB(F-,B)
\end{align*}
by defining $\lambda_X:\spd_2(X,Y)\to \bB(A,B)$ to be $F\p$. We now need to show that it is universal, i.e.,
\begin{align*}
\lambda^*:\bB(B,b)\to \Bicat(\clb_2,\Catic)(\spd_2(\iota-,Y),\bB(F-,b))
\end{align*}
defines an equivalence of categories for each object $b$ in $\bB$. Let $\kappa:\spd_2(\iota-,Y)\to \bB(F-,b)$ be another strong transformation which is determined by the right $p$-module $\kappa_X(f):A\to b$. We can now form the morphism $\kappa_X(f)\otimes_p g:B\to b$ and will now show that this defines an inverse to $\lambda^*$. We have
\begin{align*}
(\kappa_X(f)\otimes_p g)\lambda_X(f)=(\kappa_X(f)\otimes_p g)f \iso \kappa_X(f)\otimes_p (gf)\iso \kappa_X(f)\otimes_p p\iso \kappa_X(f)
\end{align*}
and since the composition with $\lambda_X(f)$ is a composite of $\id_B$-modules, this isomorphism also preserves the right $p$-module structure and $(\kappa_X(f)\otimes_p g)\lambda \iso \kappa$. Furthermore, for a morphism $h:B\to b$, we have
\begin{align*}
\left(h\lambda_X(f)\right) \otimes_p g\iso (hf)\otimes_p g \iso h(f\otimes_p g) \iso h\id_B\iso h.
\end{align*}
Thus $\lambda$ is universal.
\end{proof}

\begin{corollary} \label{x4}
Any two splittings of the same 2-idempotent are equivalent in a locally idempotent complete bicategory.
\end{corollary}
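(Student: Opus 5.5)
The plan is to deduce the corollary from the preceding proposition together with the uniqueness of representing objects. Let $(A,p,m,\Delta)$ be a 2-idempotent in a locally idempotent complete bicategory $\bB$, with corresponding pseudofunctor $F:\clb_2\to\bB$. Suppose we are given two splittings $(A,B,f,g,\phi,\psi,\gamma)$ and $(A,B\p,f\p,g\p,\phi\p,\psi\p,\gamma\p)$. By the preceding proposition, each of them determines a colimit of $F$ weighted by the ic-weight $\spd_2(\iota-,Y)$. Concretely, we get strong transformations $\lambda:\spd_2(\iota-,Y)\to\bB(F-,B)$ and $\lambda\p:\spd_2(\iota-,Y)\to\bB(F-,B\p)$. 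For every object $b$ of $\bB$, precomposition with these gives equivalences of categories
\begin{align*}
\bB(B,b)\simeq \Bicat(\clb_2\op,\Catic)(\spd_2(\iota-,Y),\bB(F-,b))\simeq \bB(B\p,b).
\end{align*}

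Next I would check that these equivalences are pseudonatural in $b$. Postcomposition with a 1-morphism $h:b\to b\p$ commutes with precomposition by $\lambda$ up to the associator, so $\bB(B,-)$ and $\bB(B\p,-)$ are both equivalent, as pseudofunctors $\bB\to\Catic$, to $\Psh{\clb_2}(\spd_2(\iota-,Y),\bB(F,-))$. Composing one equivalence with a pseudo-inverse of the other yields an equivalence $\bB(B,-)\simeq\bB(B\p,-)$ in $\Bicat(\bB,\Cat)$. The bicategorical Yoneda lemma, applied to $\bB\op$, says that the Yoneda embedding is a local equivalence. It therefore reflects equivalences, and we conclude $B\simeq B\p$ in $\bB$.

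The argument can also be made explicit, which also shows that the data are compatible. Using the inverse constructed in the proof of the proposition, the equivalence is given by the 1-morphisms $f\p\otimes_p g:B\to B\p$ and $f\otimes_p g\p:B\p\to B$. Here $f$ and $f\p$ carry the right $p$-actions coming from $\phi,\psi,\gamma$. The composite $(f\otimes_p g\p)(f\p\otimes_p g)$ should be isomorphic to $f\otimes_p(g\p f\p)\otimes_p g\iso f\otimes_p p\otimes_p g\iso f\otimes_p g\iso\id_B$, and symmetrically for the other composite.

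The main obstacle is the pseudonaturality check in $b$ (equivalently, making the explicit isomorphisms like $f\otimes_p g\iso\id_B$ rigorous). Once the universal transformation is seen as a representation of the pseudofunctor $\Psh{\clb_2}(\spd_2(\iota-,Y),\bB(F,-))$, this is the standard fact that representing objects are unique up to equivalence. I expect it to reduce to coherence of the associator.
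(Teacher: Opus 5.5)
Your proposal is correct and is essentially the argument the paper intends. The paper states the corollary without proof as an immediate consequence of the preceding proposition: both splittings are colimits of $F$ weighted by $\spd_2(\iota-,Y)$, and representing objects are unique up to equivalence. Your explicit equivalence $f'\otimes_p g$, $f\otimes_p g'$ is exactly the inverse to $\lambda^*$ constructed in that proof, and the pseudonaturality in $b$ you flag as the main obstacle is routine (it only involves the associator), so nothing is missing.
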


\begin{corollary}
$\spd_2(\iota-,Y):\clb_2\op\to\Catic$ is an absolute ic-weight.
\end{corollary}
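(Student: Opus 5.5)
We have to show that for every locally idempotent complete bicategory $\bB$ and every pseudofunctor $F:\clb_2\to\bB$, a colimit of $F$ weighted by $\spd_2(\iota-,Y)$, whenever it exists, is preserved by every pseudofunctor $G:\bB\to\bC$ into a locally idempotent complete bicategory $\bC$. The plan is to reduce to the preceding proposition twice: once in $\bB$ and once in $\bC$.

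First, since the weighted colimit exists, the preceding proposition says it determines a splitting $(A,B,f,g,\phi,\psi,\gamma)$ of the 2-idempotent $A_p$ corresponding to $F$. Its universal transformation $\lambda$ may be taken, up to equivalence, to be the one built from this splitting, namely $\lambda_X=F\p$ for the extension $F\p:\spd_2\to\bB$. This is legitimate because any two colimits are equivalent, compatibly with their universal transformations. The extension is available by Proposition~\ref{ext} and Remark~\ref{ExtIsSplit}.

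Second, we show that $G$ carries the splitting to a splitting. The defining data are 1-morphisms together with the equations $\phi\cdot\psi=\id_{\id_B}$, $m=\gamma\cdot(\id_g\circ\phi\circ\id_f)\cdot(\gamma\inv\circ\gamma\inv)$ and the analogous formula for $\Delta$. Applying $G$ and conjugating by its coherence isomorphisms $G(g)\circ G(f)\iso G(g\circ f)$ and $\id_{GB}\iso G(\id_B)$, we obtain
\begin{align*}
(GA,GB,Gf,Gg,\tilde\phi,\tilde\psi,\tilde\gamma),
\end{align*}
which is a splitting of the 2-idempotent $(GA,Gp,\tilde m,\tilde\Delta)$ given by $GF$ (with the induced structure 2-morphisms). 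Equivalently, $GF\p:\spd_2\to\bC$ is an extension of $GF$ along $\iota$. Checking that the transported equations hold is the one step requiring care: every coherence cell of $G$ is invertible, so the constraints cancel in pairs, and the check reduces to the pseudofunctor axioms. This is routine but tedious.

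Third, we apply the "every splitting determines a colimit" half of the preceding proposition in $\bC$, which is locally idempotent complete. It gives that the strong transformation $\spd_2(\iota-,Y)\to\bC(GF-,GB)$ with $X$-component $GF\p$ is universal. This transformation is exactly $G\lambda$, up to the canonical isomorphism coming from $G$'s coherence. Hence $(G\lambda)^*$ is an equivalence for each object of $\bC$, which is precisely the statement that $G$ preserves the colimit. Since $\bB$, $F$ and $G$ were arbitrary, the weight is absolute.

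The main obstacle is the identification in the third step of $G\lambda$ with the universal transformation built from the transported splitting. Both are determined, up to unique isomorphism, by their value on $f$ together with the induced right $p$-module structure, as shown in the proof of the preceding proposition. So it suffices to compare $G(\lambda_X(f))=Gf$ and its $Gp$-action on both sides, and these agree by construction.
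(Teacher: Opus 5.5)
Your proposal is correct and matches the paper's intended argument. The paper gives no separate proof: it treats the corollary as immediate from the preceding proposition together with the remark at the start of Section~\ref{sec5} that 2-idempotent splittings are defined equationally and are therefore preserved by every pseudofunctor. Your version additionally spells out what the paper leaves implicit, namely the identification of $G\lambda$ with the universal transformation built from the transported splitting, and the need for the target $\bC$ to be locally idempotent complete so that the ``splitting gives colimit'' direction applies there.
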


\begin{corollary}
Let $\bB$ be a bicategory. The bicategories $\Catic$, $\Catic\op$ and $\Pshic{\bB}$ are 2-idempotent complete.
\end{corollary}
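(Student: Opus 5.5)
The corollary asks for three things: $\Catic$ is 2-idempotent complete, $\Catic\op$ is 2-idempotent complete, and $\Pshic{\bB}$ is 2-idempotent complete. Local idempotent completeness is immediate in each case. Functor categories $\Catic(\C,\D)$ with $\D$ idempotent complete are idempotent complete, since idempotent natural transformations split componentwise and the splittings assemble into functors and natural transformations. Reversing 1-morphisms does not change hom-categories, which settles $\Catic\op$. For $\Pshic{\bB}$, a hom-category consists of strong transformations between $\Catic$-valued presheaves. An idempotent modification splits componentwise in $\Catic$, and the components of the splitting inherit the coherence isomorphisms by conjugating with the retraction data, so these hom-categories are idempotent complete as well.

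For splitting 2-idempotents, the plan is to use the preceding proposition: in a locally idempotent complete bicategory, a 2-idempotent $F:\clb_2\to\bB$ splits as soon as the ic-weighted colimit of $F$ weighted by $\spd_2(\iota-,Y)$ exists. This weight takes values in $\Catic$ by Lemma \ref{spd2idcomp}, so it is an ic-weight. It therefore suffices to show that $\Catic$ and $\Pshic{\bB}$ are ic-cocomplete.

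\begin{itemize}
\item For $\Catic$, I would use the explicit construction of weighted colimits in $\Cat$ from the appendix and then apply $\Kartwo$. Theorem \ref{1adj} says $\Kartwo$ is left adjoint to the inclusion, so it preserves weighted colimits: mapping out of $\Kar{\colim^W F}$ into an idempotent complete category is the same as mapping out of $\colim^W F$. This gives the ic-weighted colimit in $\Catic$.
\item For $\Pshic{\bB}$, the colimit is computed pointwise.
\end{itemize}

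Alternatively, one could avoid colimits entirely and split a 2-idempotent $(\C,p,m,\Delta)$ in $\Catic$ directly. Take the category of $p$-modules, i.e. objects $c$ with $p(c)\to c$ satisfying the evident axioms, and idempotent complete it if necessary. I would keep this as a backup.

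For $\Catic\op$, reversing 1-morphisms turns a 2-idempotent $(A,p,m,\Delta)$ into the 2-idempotent $(A,p,m,\Delta)$ in the opposite bicategory; the roles of $f$ and $g$ in a splitting are simply exchanged. So a splitting in $\Catic\op$ corresponds to a splitting of the same 2-idempotent in $\Catic$, with $f$ and $g$ swapped. More conceptually, splittings are absolute and self-dual: the definition of a splitting is symmetric under swapping $f$ and $g$, so this case reduces to the first.

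The main obstacle is the ic-cocompleteness of $\Catic$. One has to check that $\Kartwo$ genuinely preserves bicategorical weighted colimits, meaning the equivalence $\Catic(\Kar{\colim^W F},\D)\simeq\Psh{\bJ}(W,\Catic(F,\D))$ is natural in $\D$. I would get this by composing Corollary \ref{kar1equ} with the universal property in $\Cat$, using that $\Catic$ is a full sub-2-category.
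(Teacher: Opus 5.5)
Your proposal is correct and follows essentially the same route as the paper: local idempotent completeness via the appendix lemmas on functor categories and on $\Bicat(\bB,\bC)$, ic-cocompleteness of $\Catic$ (explicit colimit in $\Cat$ followed by $\Kartwo$) and of $\Pshic{\bB}$ (pointwise), and then the proposition that a colimit weighted by $\spd_2(\iota-,Y)$ determines a splitting. For $\Catic\op$, the paper likewise observes that 2-idempotents and their splittings in $\Catic\op$ are the same data as in $\Catic$, with $f$ and $g$ exchanged, so your $p$-module backup is not needed.
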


\begin{proof}
Corollaries \ref{Catlic} and \ref{Pshlic} tell us that $\Catic$, $\Catic\op$ and $\Pshic{\bB}$ are all locally idempotent complete. By theorem \ref{x2} and corollary \ref{Pshicc} we know that $\Catic$ and $\Pshic{\bB}$ are ic-cocomplete. Since splittings of a 2-idempotents are given by an ic-weighted colimit, they are also 2-idempotent complete.

One can show, that the data of a 2-idempotent in $\Catic\op$ is the same data as that of a 2-idempotent in $\Catic$ and furthermore a splitting of this idempotent in $\Catic\op$ defines a splitting of it in $\Catic$ and vice versa. This implies that $\Catic\op$ is also 2-idempotent complete.
\end{proof}

\subsection{Cauchy Completion of a Bicategory}

Finally, we can define the bicategorical analogue of Cauchy completion.

\begin{definition}{(Cauchy Completion)}
We call an object $A$ in a locally idempotent complete bicategory $\bB$ \textit{atomic} if the pseudofunctor $\bB(A,-):\bB\to\Catic$ is ic-cocontinuous, i.e., preserves all ic-weighted colimits.

Let $\bB$ be a locally idempotent complete bicategory. The \textit{Cauchy completion} of $\bB$ is defined to be the full subbicategory of atomic objects in $\Pshic{\bB}$, i.e., an object in this bicategory is a pseudofunctor $S:\bB\op\to\Catic$ such that $\Pshic{\bB}(S,-):\Pshic{\bB}\to\Catic$ is ic-cocontinuous. We will denote this bicategory as $\Pshicat{\bB}$.
\end{definition}

We first want to show that this Cauchy completion is complete under absolute ic-weighted colimits, then that any locally idempotent complete bicategory $\bB$ embeds into its Cauchy completion and that this embedding is an equivalence if $\bB$ was already complete under absolute ic-weighted colimits and finally that the Cauchy completion is universal under all completions with respect to absolute ic-weighted colimits. For sake of brevity, we shorten "absolute ic-weighted colimit" to "icAWC". To prove these statements, we will make great use of bicategorical coends as written in \cite[7.1]{Lor}. 

\begin{remark}
We will frequently use the formula
\begin{align*}
\colimic^W F \equiv \int^{j:\bJ} Wj \times Fj
\end{align*}
to calculate weighted colimits using coends. This equation is not explicitly written anywhere in \cite{Lor} for bicategorical coends but it follows just the same as its 1-categorical counterpart.
\end{remark}

\begin{proposition} \label{repato2}
Representable presheaves are atomic.
\end{proposition}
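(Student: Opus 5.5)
We follow the 1-categorical argument of proposition \ref{repato}. Let $\bB$ be a locally idempotent complete bicategory and $b$ an object of $\bB$. We must show that
\begin{align*}
\Pshic{\bB}(\yo_\bB b,-):\Pshic{\bB}\to\Catic
\end{align*}
preserves all ic-weighted colimits. The first step is the bicategorical Yoneda lemma (see \cite{JY}). It gives a pseudonatural equivalence $\Pshic{\bB}(\yo_\bB b,S)\equiv Sb$, natural in $S$, so $\Pshic{\bB}(\yo_\bB b,-)$ is equivalent as a pseudofunctor to the evaluation pseudofunctor $\textup{ev}_b:\Pshic{\bB}\to\Catic$. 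Preservation of a weighted colimit is invariant under equivalence of pseudofunctors: if $G\equiv G\p$, then $(G\lambda)^*$ and $(G\p\lambda)^*$ differ by composition with equivalences. It therefore suffices to show that $\textup{ev}_b$ is ic-cocontinuous.

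The second step uses the description of ic-weighted colimits in $\Pshic{\bB}$ (corollary \ref{Pshicc} and the appendix). They are computed pointwise: for an ic-weight $W:\bJ\op\to\Catic$ and $F:\bJ\to\Pshic{\bB}$, the presheaf $(\colimic^W F)(c)$ is $\colimic^W (F-)(c)$ in $\Catic$. The universal strong transformation $\lambda:W\to\Pshic{\bB}(F-,\colimic^W F)$ has as its component at $c$, after evaluation, a universal transformation $W\to\Catic((F-)(c),(\colimic^W F)(c))$. Concretely, the coend formula $\colimic^W F\equiv\int^{j}Wj\times Fj$ is formed objectwise. One must still check that any other choice of colimit is equivalent to this pointwise one, which holds by the usual uniqueness of representing objects up to equivalence. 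Granting this, $\textup{ev}_b\lambda$ is exactly the universal cocone exhibiting $(\colimic^WF)(b)$ as $\colimic^W(F-)(b)$, so $(\textup{ev}_b\lambda)^*$ is an equivalence and $\textup{ev}_b$ preserves the colimit.

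The main obstacle is the second step. One has to make sure that the pointwise construction from the appendix really produces the colimit in $\Pshic{\bB}$. In particular, one has to check that the pointwise colimits assemble into a pseudofunctor $\bB\op\to\Catic$ and that the universal property holds in $\Pshic{\bB}$, not just objectwise. It also has to be compatible with the idempotent completion step that is needed to land in $\Catic$ rather than $\Cat$. If that appendix statement is available as stated, the rest is formal. As an alternative, we could argue via coends directly: Yoneda gives $\Pshic{\bB}(\yo_\bB b,\int^j Wj\times Fj)\equiv(\int^jWj\times Fj)(b)\equiv\int^jWj\times Fj(b)\equiv\int^j Wj\times\Pshic{\bB}(\yo_\bB b,Fj)$. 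This exhibits the required preservation once we check that the composite equivalence is the one induced by $\lambda$.
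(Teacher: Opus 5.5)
Your proposal is correct and takes the same approach as the paper. You use the bicategorical Yoneda lemma to identify $\Pshic{\bB}(\yo_\bB b,-)$ with evaluation at $b$, and then use that ic-weighted colimits in $\Pshic{\bB}$ are computed pointwise (corollary \ref{Pshicc}), so the evaluated cocone $\lambda_b$ is universal; your remark that an arbitrary choice of colimit is equivalent to the pointwise one makes explicit a step the paper leaves implicit.
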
 

\begin{proof}
Let $b$ be an object in $\bB$, we will now show that $\bB(-,b)$ is atomic. Let $F:\bJ\to\Pshic{\bB}$ be a pseudofunctor and $W:\bJ\op\to\Catic$ a weight with colimit $\colimic^W F$, i.e., we have a universal strong transformation
\begin{align*}
\lambda:W\to \Pshic{\bB}(F-,\colimic^W F).
\end{align*}
and we will now see that $\Pshic{\bB}(\bB(-,b),-)$ preserves this colimit. Applying this functor to $\lambda$ yields a strong transformation
\begin{align*}
\lambda_* :W \to \Catic(\Pshic{\bB}(\bB(-,b),F-),\Pshic{\bB}(\bB(-,b),\colimic^W F)).
\end{align*}
Via the Yoneda lemma, the strong transformation $\lambda_*$ corresponds to the strong transformation $\lambda_b:W\to \Catic(F(-)(b),\colimic^W F(b))$. Since weighted colimits in $\Pshic{\bB}$ are computed point-wise, we know that $\lambda_b$ has to define a weighted colimit which implies that $\lambda_*$ is universal as well. Thus the pseudofunctor $\Pshic{\bB}(\bB(-,b),-)$ preserves colimits and $\bB(b,-)$ is atomic.
\end{proof}

\begin{proposition}\label{absato2}
Let $\bJ$ be a locally idempotent complete bicategory. An ic-weight $W:\bJ\op\to\Catic$ is absolute if and only if is atomic as an object in $\Pshic{\bJ}$.
\end{proposition}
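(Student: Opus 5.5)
The plan is to prove both directions by comparing, for a pseudofunctor $F:\bJ\to\bB$ into a locally idempotent complete bicategory, the colimit $\colimic^W F$ with the canonical comparison morphisms that any pseudofunctor $G:\bB\to\bC$ induces. Throughout I will use the coend formula $\colimic^W F\equiv\int^{j:\bJ}Wj\times Fj$. I will also use the bicategorical Yoneda lemma in the form $\Pshic{\bJ}(\bJ(-,j),V)\equiv Vj$.

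\emph{Absolute implies atomic.} Assume $W$ is absolute. Consider the Yoneda embedding $\yo_\bJ:\bJ\to\Pshic{\bJ}$, regarded as a diagram in the ic-cocomplete bicategory $\Pshic{\bJ}$ (using corollary \ref{Pshicc}). By the pointwise computation of colimits together with the co-Yoneda/density formula,
\begin{align*}
\colimic^W \yo_\bJ\equiv\int^{j}Wj\times\bJ(-,j)\equiv W .
\end{align*}
So $W$ is the $W$-weighted colimit of representables. Absoluteness says that every pseudofunctor preserves this colimit, in particular $\Pshic{\bJ}(W,-):\Pshic{\bJ}\to\Catic$. This is not yet atomicity. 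To get atomicity I would show that $\Pshic{\bJ}(W,-)\equiv\colimic^W\Pshic{\bJ}(\yo_\bJ-,-)$, i.e.\ $\Pshic{\bJ}(W,V)\equiv\int^j Wj\times Vj$ naturally in $V$. Preservation gives a universal transformation $W\to\Catic(\Pshic{\bJ}(\yo_\bJ-,W),\Pshic{\bJ}(W,W))$, which says exactly that $\Pshic{\bJ}(W,W)\equiv\int^j Wj\times Wj$ via the canonical comparison. Transporting $\id_W$ through this equivalence produces a "dual basis" element of $\int^j Wj\times\Pshic{\bJ}(W,\bJ(-,j))$. That element exhibits the canonical map $\int^j Wj\times V j\to\Pshic{\bJ}(W,V)$ as an equivalence for all $V$. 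Since the left side is built from evaluations $V\mapsto Vj$, which are ic-cocontinuous by proposition \ref{repato2}, and coends commute with colimits, $\Pshic{\bJ}(W,-)$ is ic-cocontinuous.

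\emph{Atomic implies absolute.} Assume $W$ is atomic and let $\colimic^W F$ exist in $\bB$ with universal $\lambda$. Let $G:\bB\to\bC$ be any pseudofunctor. I need to check that $(G\lambda)^*:\bC(G\colimic^W F,c)\to\Pshic{\bJ}(W,\bC(GF-,c))$ is an equivalence for each $c$. Passing through the Yoneda embedding of $\bB$, the object $\colimic^W F$ represents $\Pshic{\bJ}(W,\bB(F,-))$. Applying atomicity of $W$ to the presheaf-valued diagram $j\mapsto \bB(F-,-)$ shows that the colimit is computed by the coend formula, i.e.\ $\bB(\colimic^W F,b)\equiv\int^j Wj\,\pitchfork\ldots$. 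I expect it is cleaner to use the dual-basis element from the first direction instead. Atomicity gives $\Pshic{\bJ}(W,-)\equiv\int^j Wj\times(-)_j$, so $\id_W$ yields data $(x_j\in Wj,\ \xi_j:W\to\bJ(-,j))$ in the coend. Pushing this data through $\lambda$ and $F$ produces an explicit inverse to $\lambda^*$ that is expressed only via compositions in $\bB$. Such an expression is preserved by $G$, exactly as splitting equations are in the 1-categorical case and in the proof that $\spd_2(\iota-,Y)$ is absolute. So $G$ preserves the colimit.

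The main obstacle is the bicategorical version of the dual-basis argument: extracting a usable representative from an object of a coend in $\Catic$. The coend involves an idempotent completion step (see \ref{app2}), so elements are only retracts of formal composites. I would first handle this through the explicit construction of ic-weighted colimits in \ref{app2}, showing that the identity of $W$ lifts to a finite such representative up to splitting. I would then verify the resulting triangle-type identities using the graphical calculus.
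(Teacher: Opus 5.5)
Your overall strategy can be made to work, but as written the central formula is wrong and the genuinely bicategorical step is missing. Since $W$ and $V$ are both contravariant in $j$, the coend $\int^j Wj\times Vj$ is not defined. $\Pshic{\bJ}(W,V)$ is tautologically the $W$-weighted \emph{limit} of $j\mapsto Vj$. What atomicity (or absoluteness) buys is that this limit can be rewritten as a colimit weighted by the dual weight,
\[
\Pshic{\bJ}(W,V)\equiv\int^{j}Vj\times W\co(j),\qquad W\co=\Pshic{\bJ}(W,\yo_\bJ-),
\]
which is exactly what the paper extracts from atomicity in its second direction. Similarly, applying $\Pshic{\bJ}(W,-)$ to $W\equiv\colimic^W\yo_\bJ$ gives a universal transformation $W\to\Catic(\Pshic{\bJ}(W,\yo_\bJ-),\Pshic{\bJ}(W,W))$. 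Hence $\Pshic{\bJ}(W,W)\equiv\int^j Wj\times W\co(j)$, not $\int^j Wj\times Wj$.

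More importantly, an object of such a coend in $\Catic$ is only a retract of (the image of) a single pair $(a,\alpha)$ with $a\in Wj$ and $\alpha:W\to\bJ(-,j)$. So your ``dual basis'' is not an equation $\hat a\circ\alpha\iso\id_W$, where $\hat a:\bJ(-,j)\to W$ is the Yoneda transformation of $a$. It only says that $\id_W$ is a retract of $\hat a\circ\alpha$, i.e.\ that $W$ is a 2-retract of $\bJ(-,j)$. This is precisely the argument of Proposition \ref{2retato}. It only uses that $\Pshic{\bJ}(W,-)$ preserves $W\equiv\colimic^W\yo_\bJ$, which follows from either hypothesis. With a mere retract, neither ``the dual basis exhibits the canonical map as an equivalence'' nor ``an explicit inverse to $\lambda^*$ built only from compositions'' goes through verbatim. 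The inverse also needs splittings of 2-cell idempotents, as in the $\kappa_X(f)\otimes_p g$ construction for $\spd_2(\iota-,Y)$. The clean way to finish is Lemma \ref{splitequiv}. Atomicity then follows from Corollary \ref{x7} and Proposition \ref{repato2}. For absoluteness, regard the comparison maps $(G\lambda)^*$ for the weights $\bJ(-,j)$ and $W$ as components of a strong transformation indexed by the splitting $\spd_2\to\Pshic{\bJ}$ (using $\spd_2\simeq\spd_2\op$ as in Corollary \ref{x7}). It is an equivalence at $\bJ(-,j)$ by Yoneda, hence at $W$.

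Once repaired, your proof is genuinely different from the paper's, which never extracts representatives. For ``absolute $\Rightarrow$ atomic'' the paper combines density with atomicity of representables. It then reads the resulting $W$-weighted limit as a $W$-weighted colimit in $\Catic\op$ and uses absoluteness of $W$ to commute it past $\colimic^V$. For ``atomic $\Rightarrow$ absolute'' it uses atomicity only to obtain $\bB(\colimic^W F,b)\equiv\int^j\bB(Fj,b)\times W\co(j)$; Fubini and co-Yoneda then give $\bC(G\colimic^W F,c)\equiv\bC(\colimic^W GF,c)$. That argument is short and formal, though it does not track that these equivalences are the canonical comparison maps. Your route funnels both directions through the single property that $\Pshic{\bJ}(W,-)$ preserves $\colimic^W\yo_\bJ$. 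It yields the intrinsic characterization ``$W$ is a 2-retract of a representable'' along the way. It also makes the reason for absoluteness transparent: equational splitting data is preserved by every pseudofunctor. The price is the idempotent-completion bookkeeping in $\Catic$, which is why Proposition \ref{2retato} and Lemma \ref{splitequiv} have to replace a one-line dual-basis identity.
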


\begin{proof}
To start, let $W:\bJ\op\to\Catic$ be an absolute weight. To show that $W$ is atomic, let $V:\bI\op\to\Catic$ be another weight and $F:\bI\to\Pshic{\bJ}$ a pseudofunctor with colimit $\colimic^V F$. We have
\begin{align*}
\Pshic{\bJ}(W,\colimic^V F) &\equiv \Pshic{\bJ}(\colimic^W \yo_{\bJ}, \colimic^V F) \\
&\equiv \colimic^W \Pshic{\bJ}(\yo_{\bJ}, \colimic^V F) \\
&\equiv \colimic^W \colimic^V \Pshic{\bJ}(\yo_{\bJ},F) \\
&\equiv \colimic^V \colimic^W \Pshic{\bJ}(\yo_{\bJ},F) \\
&\equiv \colimic^V \Pshic{\bJ}(W,F).
\end{align*}
For the first equivalence, we use the density formula, for the second the absoluteness of $W$, for the third we use Proposition \ref{repato2}, for the fourth we use the fact that $\colimic^V:\Bicatlic(\bI,\Catic)\to\Catic$ defines a functor, and lastly the absoluteness of $W$ again. This shows that $\Pshic{\bJ}(W,-):\Pshic{\bJ}\to\Catic$ is ic-cocontinuous and therefore $W$ is atomic.

We note that by moving $\colim^W$ to the outside, it defines a weighted colimit in $\Catic\op$, i.e., a weighted limit in $\Catic$ since the left argument of the Hom-functor is contravariant.

Now let $W$ be atomic. Let $F:\bJ\to\bB$ be a functor. The colimit $\colimic^W F$ satisfies the following:
\begin{align*}
\bB(\colimic^W F, b) &\equiv \Pshic{\bJ}(W,\bB(F-,b)) \equiv \Pshic{\bJ}(W,\colimic^{\bB(F-,b)}\yo) \\
&\equiv \colimic^{\bB(F-,b)}\Pshic{\bJ}(W,\yo),
\end{align*}  
for an arbitrary $b$ in $\bB$, where we used the atomicity of $W$ to pull out the colimit. For brevity, we use $W\co$ to denote $\Pshic{\bJ}(W,\yo):\bJ\to\Catic$. Now let $G:\bB\to\bC$ be an arbitrary functor, and $c$ an object in $\bC$. Using coends, we have
\begin{align*}
\bC(G\colimic^W F, c) &\equiv \int^{b:\bB} \bC(Gb,c)\times \bB(\colimic^W F,b) \\
&\equiv \int^{b:\bB} \bC(Gb,c)\times \int^{j:\bJ} \bB(Fj,b) \times W\co (j) \\
&\equiv \int^{b:\bB} \int^{j:\bJ} \bC(Gb,c)\times \bB(Fj,b) \times W\co (j) \\
&\equiv \int^{j:\bJ} \int^{b:\bB} \bC(Gb,c)\times \bB(Fj,b) \times W\co (j) \\
&\equiv \int^{j:\bJ} \bC(GFj,c) \times W\co (j) \equiv \bC(\colimic^W GF, c)
\end{align*}
where we used the Yoneda lemma as stated in \cite[7.1.2 (7.28)]{Lor} for the first and fifth equivalences. By the Yoneda lemma, we now have $G\colimic^W F\equiv \colimic^W GF$. This shows that colimits weighted by $W$ are preserved by arbitrary functors, thus $W$ is an absolute weight. 
\end{proof}

\begin{proposition} \label{cau2com}
Let $\bB$ be a locally idempotent complete bicategory. Its Cauchy completion $\Pshicat{\bB}$ is complete under icAWCs.
\end{proposition}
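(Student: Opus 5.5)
The plan is to show that $\Pshicat{\bB}$ is closed in $\Pshic{\bB}$ under ic-colimits with absolute weights. Existence then comes from $\Pshic{\bB}$, which is ic-cocomplete by corollary \ref{Pshicc}.

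Fix an absolute ic-weight $W:\bJ\op\to\Catic$ and a pseudofunctor $F:\bJ\to\Pshicat{\bB}$. Write $\bar F$ for the composite with the full inclusion into $\Pshic{\bB}$, and form $L=\colimic^W\bar F$ in $\Pshic{\bB}$. Because the inclusion is fully faithful on Hom-categories, any ic-weighted colimit of $\bar F$ in $\Pshic{\bB}$ whose vertex lies in $\Pshicat{\bB}$ is also a colimit of $F$ in $\Pshicat{\bB}$. Indeed, the universal property only asks that $\lambda^*:\Pshicat{\bB}(L,S)\to\Pshic{\bJ}(W,\Pshicat{\bB}(F,S))$ be an equivalence, and both sides agree with the corresponding Hom-categories in $\Pshic{\bB}$. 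So the whole proof reduces to showing that $L$ is atomic.

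To see this, let $V:\bI\op\to\Catic$ be any ic-weight and $G:\bI\to\Pshic{\bB}$ a pseudofunctor. Then I would compute
\begin{align*}
\Pshic{\bB}(L,\colimic^V G)&\equiv \Pshic{\bJ}\bigl(W,\Pshic{\bB}(\bar F-,\colimic^V G)\bigr)\\
&\equiv \Pshic{\bJ}\bigl(W,\colimic^V \Pshic{\bB}(\bar F-,G)\bigr)\\
&\equiv \colimic^V \Pshic{\bJ}\bigl(W,\Pshic{\bB}(\bar F-,G)\bigr)\\
&\equiv \colimic^V \Pshic{\bB}(L,G).
\end{align*}
The first line is the universal property of $L$. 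The second uses that each $\bar F j$ is atomic, and that colimits in $\Pshic{\bJ}$ are computed pointwise, so the colimit can be pulled out componentwise in $j$. The third uses proposition \ref{absato2}: $W$ is absolute, hence atomic in $\Pshic{\bJ}$, so $\Pshic{\bJ}(W,-)$ preserves $V$-weighted colimits. The last line is the universal property of $L$ again.

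The main obstacle is that this chain only produces an abstract equivalence of categories. What must be checked is that this equivalence is the comparison map induced by the canonical cocone, i.e.\ that $\Pshic{\bB}(L,-)$ preserves the colimit in the sense of the paper's definition of preservation. I would handle this by noting that each equivalence is natural in the cocone data: the first and last are induced by precomposing with the universal $\lambda$, and the middle two by postcomposing with the universal cocone of $\colimic^V G$. Composing them therefore yields the map $(\lambda_*)^*$, up to the coherent isomorphisms supplied by the pseudonaturality of the Yoneda equivalences.

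A secondary point to verify is that pointwise pulling out in step two is legitimate. The pseudofunctor $j\mapsto\Pshic{\bB}(\bar Fj,\colimic^V G)$ is equivalent, as an object of $\Pshic{\bJ}$, to the $V$-weighted colimit of $j\mapsto\Pshic{\bB}(\bar Fj,G-)$ there. This follows because the atomicity equivalences for the individual $\bar F j$ are pseudonatural in $j$, being induced by the same universal cocone.
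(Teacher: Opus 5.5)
Your argument is correct, but it takes a different route from the paper. You verify atomicity of $L=\colimic^W F$ straight from the definition. Your interchange chain runs through the universal property of $L$, atomicity of each $Fj$, atomicity of $W$, and the universal property of $L$ again; this is the classical proof that atomic objects are closed under absolute colimits.

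The paper instead shows that $L$ is an absolute weight and then concludes with Proposition~\ref{absato2}. For $G:\bB\to\bC$ it rewrites $\colimic^{L}G$ by coend calculus as $\colimic^W\colimic^F G$. It then pushes an arbitrary $H$ through, using absoluteness of $W$ and of each $Fj$; the latter is the atomic$\Rightarrow$absolute half of Proposition~\ref{absato2}. The same computation is reused in Lemma~\ref{EW2}.

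What your version buys:
\begin{itemize}
\item Every colimit you manipulate lives in the ic-cocomplete bicategories $\Pshic{\bB}$, $\Pshic{\bJ}$ and $\Catic$. The paper's chain tacitly uses tensors and the colimits $\colimic^{Fj}G$ in an arbitrary $\bC$, where they need not exist.
\item You check that the composite equivalence is the canonical comparison functor, which the paper does not.
\item You use only the absolute$\Rightarrow$atomic half of Proposition~\ref{absato2}.
\end{itemize}

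What the paper's version buys is that it uses only absoluteness of $W$, so it places no condition on $\bJ$. Proposition~\ref{absato2}, by contrast, is stated for locally idempotent complete $\bJ$. In your third step you should therefore do one of two things. Either restrict to such $\bJ$, or first replace $\bJ$ by its hom-wise Karoubi envelope. That replacement leaves $\Pshic{\bJ}$, pseudofunctors into $\Pshic{\bB}$, and hence the colimit in question unchanged up to equivalence.
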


\begin{proof}
Let $W:\bJ\op\to\Catic$ be an absolute weight and $F:\bJ\to\Pshicat{\bB}$ a pseudofunctor. A priori, the colimit of $F$ weighted by $W$ does not need to exist in $\Pshicat{\bB}$ but since $\Pshic{\bB}$ is ic-cocomplete, it will exist there. We now need to show that this colimit defines an atomic object. For this we will use Proposition \ref{absato2}, i.e., we will show that this colimit is an absolute weight. Let $G:\bB\to\bC$ be a pseudofunctor such that its colimit weighted by $\colimic^W F$ exists. We have the following equivalence: 
\begin{align*}
\colimic^{\colimic^W F} G &\equiv \int^{b:\bB} \colimic^W Fb \times Gb \equiv \int^{b:\bB} \int^{j:\bJ} Wj \times F(j)(b) \times Gb \\
&\equiv \int^{j:\bJ} Wj \times \int^{b : \bB} F(j)(b) \times Gb \equiv \int^{j:\bJ} Wj \times \colimic^{Fj} G  \\
&\equiv \colimic^W \colimic^F G
\end{align*}
Now let $H:\bC\to\bD$ be an arbitrary functor. Since $W$ is absolute and $F$ takes values in absolute weights, we have
\begin{align*}
H\left(\colimic^W \colimic^F G\right) \equiv  \colimic^W H\left(\colimic^F G\right) \equiv \colimic^W \colimic^F HG.
\end{align*}
Together, this yields
\begin{align*}
H\left(\colimic^{\colimic^W F} G\right) \iso \colimic^{\colimic^W F} HG.
\end{align*}
Therefore, $\colimic^W F$ defines an absolute weight showing that $\Pshicat{\C}$ is complete under icAWCs.
\end{proof}

\begin{proposition} \label{cau2emb}
For every locally idempotent bicategory $\bB$, the Yoneda embedding $\yo_\bB:\bB\to\Pshic{\bB}$ takes values in $\Pshicat{\bB}$ and thus defines a fully faithful pseudofunctor $\yo_\bB:\bB\to\Pshicat{\bB}$. If $\bB$ is furthermore complete under icAWCs, this pseudofunctor is an equivalence.
\end{proposition}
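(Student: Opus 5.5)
The proof has two parts. The first claim follows from Proposition \ref{repato2} together with the bicategorical Yoneda lemma. The second identifies an arbitrary atomic presheaf as an icAWC of representables.

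\textbf{First claim.} By Proposition \ref{repato2}, every representable $\bB(-,b)$ is atomic, so $\yo_\bB$ lands in $\Pshicat{\bB}$. The bicategorical Yoneda lemma (see \cite{JY}) says that $\yo_\bB:\bB\to\Pshic{\bB}$ induces equivalences
\begin{align*}
\bB(a,b)\equiv\Pshic{\bB}(\bB(-,a),\bB(-,b)).
\end{align*}
Here we use that $\bB$ is locally idempotent complete, so $\yo_\bB$ really takes values in $\Pshic{\bB}$. Since $\Pshicat{\bB}$ is a full subbicategory, the corestriction is still fully faithful.

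\textbf{Second claim.} Assume $\bB$ is complete under icAWCs. By full faithfulness, it suffices to show $\yo_\bB$ is essentially surjective onto $\Pshicat{\bB}$. Let $S$ be an atomic presheaf. By Proposition \ref{absato2}, applied with $\bJ=\bB$, $S:\bB\op\to\Catic$ is an absolute ic-weight. Take $F=\id_\bB$. Since $\bB$ is complete under icAWCs, the weighted colimit $c:=\colimic^S\id_\bB$ exists in $\bB$. Because $S$ is absolute, this colimit is preserved by every pseudofunctor, in particular by $\yo_\bB$. This gives
\begin{align*}
\bB(-,c)=\yo_\bB\colimic^S\id_\bB\equiv\colimic^S\yo_\bB\equiv S,
\end{align*}
where the last step is the (bicategorical) density formula, already used in the proof of Proposition \ref{absato2}. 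Equivalently, it is the coend computation $\int^{b}S(b)\times\bB(-,b)\equiv S$, which is the co-Yoneda lemma \cite[7.1]{Lor}. Hence $S\equiv\yo_\bB c$, so $\yo_\bB$ is essentially surjective and, being fully faithful, a biequivalence.

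\textbf{Main obstacle.} The delicate point is the colimit taken in $\Pshicat{\bB}$ versus in $\Pshic{\bB}$. Absoluteness gives preservation by $\yo_\bB:\bB\to\Pshic{\bB}$, and the density formula is a statement in $\Pshic{\bB}$. Since $\Pshicat{\bB}$ is full and both $\bB(-,c)$ and $S$ lie in it, the resulting equivalence in $\Pshic{\bB}$ is an equivalence in $\Pshicat{\bB}$. I would also check that the density formula holds for ic-weighted colimits. This reduces to the pointwise computation of colimits in $\Pshic{\bB}$ (Corollary \ref{Pshicc}) together with the Yoneda lemma evaluated at each object.
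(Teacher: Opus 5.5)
Your proposal is correct and follows the paper's proof essentially step by step. You get atomicity of representables from Proposition \ref{repato2}. Then, for an atomic $S$, you use Proposition \ref{absato2} to see that $S$ is an absolute weight, form $\colimic^S\id_\bB$, and combine absoluteness with the density formula to conclude $\yo_\bB\colimic^S\id_\bB\equiv S$. Your extra remarks on the bicategorical Yoneda lemma and on comparing the colimit in $\Pshic{\bB}$ with the one in $\Pshicat{\bB}$ spell out details the paper leaves implicit.
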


\begin{proof}
Let $b$ be an object in $\bB$.  It follows from proposition \ref{repato2} that $\yo_\bB(b)=\bB(-,b)$ is atomic. Thus $\yo_\bB:\bB\to\Pshicat{\bB}$ defines a fully faithful pseudofunctor.

Now assume that $\bB$ is complete under icAWCs. Let $S$ be an atomic presheaf on $\bB$. By Proposition \ref{absato2}, $S$ is an absolute weight. By our assumption, the colimit of $\id_{\bB}:\bB\to\bB$ weighted by $S:\bB\op\to\Catic$ exists in $\bB$. We now have that
\begin{align*}
\yo_{\bB} \, \colimic^S \id_{\bB} \equiv \colimic^S \yo_{\bB} \equiv S
\end{align*}
where the first equivalence follows by absoluteness and the second by the density formula. Therefore the pseudofunctor $\yo_{\bB}:\bB\to\Pshicat{\bB}$ is essentially surjective and thus an equivalence of bicategories.
\end{proof}

To prove the universal property of the Cauchy completion, we first show the following lemma.

\begin{lemma} \label{EW2}
For any two locally idempotent complete bicategories $\bB$, $\bC$, we have an equivalence of categories
\begin{align*}
\Bicat(\bB,\Pshicat{\bC}) \equiv \Bicat(\Pshicat{\bB},\Pshicat{\bC})
\end{align*}
\end{lemma}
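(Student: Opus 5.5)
The equivalence will be induced by precomposition with the Yoneda embedding $\yo_\bB:\bB\to\Pshicat{\bB}$ of proposition \ref{cau2emb}. Concretely, I will show that
\begin{align*}
\yo_\bB^*:\Bicat(\Pshicat{\bB},\Pshicat{\bC})\to\Bicat(\bB,\Pshicat{\bC})
\end{align*}
is essentially surjective on objects and an equivalence on Hom-categories. I follow the pattern of the proof of theorem \ref{lscol}, but replace splittings of idempotents by weighted colimits.

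\textbf{Step 1: extending a pseudofunctor.} Given $F:\bB\to\Pshicat{\bC}$, define its extension by
\begin{align*}
\tilde F(S)=\colimic^S F
\end{align*}
for an atomic presheaf $S:\bB\op\to\Catic$. By proposition \ref{absato2}, $S$ is an absolute ic-weight. By proposition \ref{cau2com}, $\Pshicat{\bC}$ is complete under icAWCs, so $\tilde F(S)$ exists in $\Pshicat{\bC}$. Functoriality of $\colimic^{(-)}F$ in the weight (contravariant on $\Pshic{\bB}$ in the appropriate sense, via the coend formula $\int^{b}S(b)\times Fb$ computed in $\Pshic{\bC}$) makes $\tilde F$ a pseudofunctor. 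For representables, $\tilde F(\bB(-,b))\equiv\colimic^{\bB(-,b)}F\equiv Fb$ by the Yoneda lemma (coend form, \cite[7.1.2]{Lor}). Hence $\tilde F\yo_\bB\equiv F$, which gives essential surjectivity.

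\textbf{Step 2: reduction to density.} For full faithfulness on Hom-categories, I will show that every pseudofunctor $G:\Pshicat{\bB}\to\Pshicat{\bC}$ satisfies
\begin{align*}
G\equiv\widetilde{G\yo_\bB}.
\end{align*}
By the density formula, $S\equiv\colimic^S\yo_\bB$ in $\Pshic{\bB}$. This colimit lies in $\Pshicat{\bB}$, since its objects are atomic and it is computed in a full subbicategory. Because $S$ is an absolute weight, $G$ preserves the colimit, so
\begin{align*}
GS\equiv\colimic^S G\yo_\bB.
\end{align*}
Thus every pseudofunctor out of $\Pshicat{\bB}$ is, up to equivalence, determined by its restriction along $\yo_\bB$. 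Using the universal property of weighted colimits, I then get
\begin{align*}
\Bicat(\Pshicat{\bB},\Pshicat{\bC})(G,H)&\equiv\Bicat(\Pshicat{\bB},\Pshicat{\bC})(\widetilde{G\yo_\bB},H)\\
&\equiv\Bicat(\bB,\Pshicat{\bC})(G\yo_\bB,H\yo_\bB).
\end{align*}
The second equivalence comes from writing strong transformations out of a pointwise weighted colimit as an end, $\int_{S}\Pshic{\bB}(S,\Pshicat{\bC}(G\yo_\bB-,HS))$, and applying Yoneda together with $HS\equiv\colimic^S H\yo_\bB$. One must check this end computation is compatible with restriction along $\yo_\bB$, i.e. that the composite equivalence is the one induced by $\yo_\bB^*$.

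\textbf{Main obstacle.} The hardest step is making the end/coend manipulation rigorous at the level of strong transformations and modifications, including coherence, rather than only on objects. The same issue arises in upgrading $\tilde F$ to a genuine pseudofunctor with coherent structure cells. My first approach is to avoid direct coherence bookkeeping by arguing representably. I identify $\Bicat(\bB,\Pshicat{\bC})$ with the full sub-bicategory of $\Bicat(\bB\times\bC\op,\Catic)$ on pseudofunctors whose components are atomic. Under this identification, $\tilde F$ is left Kan extension along $\yo_\bB$, and full faithfulness of $\yo_\bB$ gives $\tilde F\yo_\bB\equiv F$. The two required statements then reduce to the pointwise density and absoluteness facts established above.
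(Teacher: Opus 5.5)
Your proposal is correct and follows essentially the paper's proof: the paper defines $\Phi_F(S)(c)=\int^{b:\bB}Sb\times F(b)(c)$, which is your $\colimic^S F$ computed pointwise, checks its atomicity by repeating the computation behind Proposition~\ref{cau2com} (which you simply cite), and then obtains $\Phi_F\circ\yo_\bB\equiv F$ from the coend Yoneda lemma and $\Phi_{G\yo_\bB}\equiv G$ from density together with absoluteness of atomic $S$, exactly as in your Steps 1 and 2. Your end/left-Kan-extension argument on Hom-categories only makes explicit what the paper compresses into ``since both these equivalences are natural''; one small slip is that $\colimic^S F$ is covariant, not contravariant, in the weight $S$.
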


\begin{remark}
At its core, this proposition is a variant of the Eilenberg-Watts theorem named after the near simultaneous results of Eilenberg \cite{Eil} and  Watts \cite{Wat}. A more common version states that there is an equivalence of categories
\begin{align*}
\textup{Prof}(\C,\D) \equiv \textup{Func}^{\textup{coc}}(\Psh{\C},\Psh{\D})
\end{align*}
between the category of profunctors from $\C$ to $\D$, i.e., functors of the form $\D\op\times\C\to\Set$, and the category of cocontinuous functors from $\Psh{\C}$ to $\Psh{\D}$, for categories $\C$ and $\D$.
\end{remark}

\begin{proof}
To begin, we construct the pseudofunctor  
\begin{align*}
\Phi : \Bicat(\bB,\Pshicat{\bC}) \to \Bicat(\Pshicat{\bB},\Pshicat{\bC})
\end{align*}
which maps a pseudofunctor $F:\bB\to\Pshicat{\bC}$ to the pseudofunctor $\Phi_F:\Pshicat{\bB}\to\Pshicat{\bC}$ which is defines as
\begin{align*}
\Phi_F(S)(c)=\int^{b:\bB} Sb \times F(b)(c). 
\end{align*}
It directly follows that this construction is functorial in $F$, $S$ and $c$, i.e., that $\Phi$, $\Phi_F$ and $\Phi_F(S)$ define pseudofunctors but we still need to verify that $\Phi_F(S)$ is an atomic presheaf. We do this via proposition \ref{absato2}, by checking that it is an absolute weight. Let $G:\bC\to\bD$ be an arbitrary pseudofunctor such that the colimit $\colimic^{\Phi_F(S)} G$ exists. We now have
\begin{align*}
\colimic^{\Phi_F(S)} G &\equiv \int^{c:\bC} \Phi_F(S)(c) \times Gc \equiv \int^{c:\bC} \int^{b:\bB} Sb \times F(b)(c) \times Gc \\
&\equiv \int^{b:\bB} Sb \times \int^{c:\bC} F(b)(c) \times Gc \equiv \int^{b:\bB} Sb \times \colimic^{Fb} G \\
&\equiv \colimic^S \colimic^F G. 
\end{align*}
It now follows, that for an arbitrary pseudofunctor $H$, we have
\begin{align*}
H\left(\colimic^{\Phi_F(S)} G\right) &\equiv H\left(\colimic^S \colimic^F G\right) \equiv \colimic^S H\left(\colimic^F G\right) \\
&\equiv \colimic^S \colimic^F HG \equiv \colimic^{\Phi_F(S)} HG
\end{align*}
since $S$ is an absolute weight and $F$ takes values in absolute weights, showing that $\Phi_F(S)$ is an atomic presheaf.

Next, the functor
\begin{align*}
F: \Bicat(\Pshicat{\bB},\Pshat{\bC}) \to \Bicat(\bB,\Pshat{\bC})
\end{align*}
which maps a pseudofunctor $\Phi:\Pshicat{\bB}\to\Pshicat{\bC}$ to a pseudofunctor $F_\Phi:\bB\to\Pshat{\bC}$, is defined by precomposing with $\yo_{\bB}:\bB\to\Pshicat{\bB}$.

We can now show that for any given $F:\bB\to\Pshat{\bC}$ we have $F_{\Phi_F}\equiv F$ and for any given $\Phi:\Pshat{\bB}\to\Pshat{\bC}$ we have $\Phi_{F_{\Phi}}\equiv \Phi$. We will start with the first. Let $b\p$ in $\bB$, and $c$ in $\bC$ be objects, we have
\begin{align*}
F_{\Phi_F}(b\p)(c) &= \Phi_F(\yo_{\bB}(b\p))(c) = \int^{b:\bB} \yo_{\bB}(b\p)(b) \times F(b)(c) \\
&= \int^{b:\bB} \bB(b,b\p) \times F(b)(c) \equiv F(b\p)(c)  
\end{align*}
where in the last step we used the Yoneda lemma as stated in \cite[7.1.2 (7.28)]{Lor}. Now let $S:\bB\op\to\Catic$ be an atomic presheaf and $c$ in $\bC$ an object. We have
\begin{align*}
\Phi_{F_{\Phi}}(S)(c) &= \int^{b:\bB} Sb \times F_{\Phi}(b)(c) = \int^{b:\bB} Sb \times \Phi(\yo_{\bB}(b))(c) \\
&\equiv (\colimic^S \Phi(\yo_{\bB}))(c) \equiv \Phi(\colimic^S\yo_{\bB})(c) \iso \Phi(S)(c)
\end{align*}
where we used that $S$ is an absolute weight.

Since both these equivalences are natural, this proves the desired equivalence 
\begin{align*}
\Bicat(\bB,\Pshicat{\bC}) \equiv \Bicat(\Pshicat{\bB},\Pshicat{\bC}).
\end{align*}
\end{proof}

Finally we can show that the Cauchy completion is universal among all completions with respect to absolute colimits in the sense that every functor $F:\bB\to\bC$ from an arbitrary locally idempotent complete bicategory into a bicategory complete under icAWCs factors through the Yoneda embedding $\yo_{\bB}:\bB\to\Pshicat{\bB}$.

\begin{corollary} \label{cau2equ}
For every locally idempotent bicategory $\bB$ and 2-idempotent complete bicategory $\bC$, there is an equivalence
\begin{align*}
\Bicat(\Pshicat{\bB},\bC)\equiv \Bicat(\bB,\bC)
\end{align*} 
given by precomposing with $\yo_\bB$.
\end{corollary}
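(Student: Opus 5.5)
The plan is to obtain the equivalence as a composite of three equivalences:
\begin{align*}
\Bicat(\Pshicat{\bB},\bC) \equiv \Bicat(\Pshicat{\bB},\Pshicat{\bC}) \equiv \Bicat(\bB,\Pshicat{\bC}) \equiv \Bicat(\bB,\bC).
\end{align*}
After that, I check that the composite is, up to isomorphism, precomposition with $\yo_\bB$.

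The first and last equivalences are given by postcomposition with $\yo_\bC:\bC\to\Pshicat{\bC}$. This requires $\yo_\bC$ to be an equivalence of bicategories. By proposition \ref{cau2emb}, that holds as soon as $\bC$ is complete under icAWCs. Here I read the hypothesis on $\bC$ as completeness under icAWCs, which is what the preceding paragraph announces. If one insists on literal 2-idempotent completeness, one first needs that 2-idempotent completeness implies completeness under icAWCs. That is precisely the content of the main comparison result later in the paper, so it cannot be assumed at this point without circularity. Given an equivalence $\yo_\bC$ with chosen pseudo-inverse $\yo_\bC\inv$, postcomposition $(\yo_\bC)_*$ is an equivalence $\Bicat(\bD,\bC)\equiv\Bicat(\bD,\Pshicat{\bC})$ for any $\bD$, with inverse $(\yo_\bC\inv)_*$. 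This is a formal consequence of the 2-functoriality of $\Bicat(\bD,-)$.

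The middle equivalence is lemma \ref{EW2}. Its inverse direction is precomposition with $\yo_\bB:\bB\to\Pshicat{\bB}$, and the lemma shows that this has pseudo-inverse $F\mapsto\Phi_F$, where $\Phi_F(S)(c)=\int^{b:\bB} Sb\times F(b)(c)$. Composing the three, the functor $\Bicat(\Pshicat{\bB},\bC)\to\Bicat(\bB,\bC)$ sends $G$ to $\yo_\bC\inv\circ\yo_\bC\circ G\circ\yo_\bB$. The unit isomorphism $\yo_\bC\inv\yo_\bC\iso\id_\bC$ identifies this with $G\circ\yo_\bB$. Since pre- and postcomposition commute up to the canonical associativity isomorphisms, the composite equivalence is isomorphic to $\yo_\bB^*$, and so $\yo_\bB^*$ is itself an equivalence.

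The main obstacle I expect is that lemma \ref{EW2} is stated, and essentially proved, only as an equivalence of categories on objects. The equivalences $F_{\Phi_F}\equiv F$ and $\Phi_{F_\Phi}\equiv\Phi$ are asserted to be natural, but they are not checked at the level of strong transformations and modifications. To get an equivalence of bicategories, I would first verify that $\Phi$ extends to strong transformations and modifications via the functoriality of the coend in $F$. I would then check that the two comparison equivalences, coming from the coend Yoneda lemma and the density formula $\colimic^S\yo_\bB\equiv S$, are strong transformations whose naturality data are supplied by the universal property of coends. Once this upgrade is in place, the remaining steps above are purely formal.
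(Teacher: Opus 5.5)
Your proposal is correct and is essentially the paper's own argument: the paper derives the corollary directly from lemma \ref{EW2} together with proposition \ref{cau2emb}, so it also implicitly uses completeness of $\bC$ under icAWCs, and it likewise leaves unchecked the bicategorical upgrade of lemma \ref{EW2} that you flag. One small correction: invoking corollary \ref{x10} to cover the literal hypothesis that $\bC$ is 2-idempotent complete would be a forward reference but not circular, since neither theorem \ref{x8} nor corollary \ref{x10} uses this corollary in its proof.
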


\begin{proof}
This immediately follows from lemma \ref{EW2} together with proposition \ref{cau2emb}. 
\end{proof}

\section{Their Equivalence} \label{sec6}

Finally we can show that these two constructions yield equivalent bicategories. To prove the bicategorical analogue of lemma \ref{ret1ato} we first need to prove the following lemmas.

\begin{lemma} \label{splitequiv}
A 2-retract of an equivalences is an equivalence, i.e., given two split 2-idempotents, i.e., pseudofunctors $F,G:\spd_2\to\bB$ and a strong transformation $\alpha:F\to G$, if $\alpha_X$ is an equivalence then so is $\alpha_Y$.
\end{lemma}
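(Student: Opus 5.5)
We construct an explicit pseudo-inverse of $\alpha_Y$ by conjugating a chosen inverse of $\alpha_X$ with the splitting data. Write $F(f)=f_F$, $F(g)=g_F$, $F(\phi)=\phi_F$, $F(\psi)=\psi_F$, and similarly for $G$. The strong transformation $\alpha$ provides invertible 2-cells
\begin{align*}
\alpha_f:\alpha_Y\circ f_F\to f_G\circ\alpha_X, \qquad \alpha_g:\alpha_X\circ g_F\to g_G\circ\alpha_Y,
\end{align*}
satisfying the naturality conditions with respect to $\phi$ and $\psi$. Choose an adjoint equivalence $(\alpha_X,\beta_X,\eta,\epsilon)$ and define
\begin{align*}
\beta_Y \coloneqq f_F\circ\beta_X\circ g_G : GY\to FY.
\end{align*}
The claim is that $\beta_Y$ is a pseudo-inverse of $\alpha_Y$. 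This is the 2-dimensional analogue of the 1-categorical fact that a retract of an isomorphism is an isomorphism, where the inverse is $f\beta g$.

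For the composite $\alpha_Y\circ\beta_Y$, we rewrite step by step:
\begin{align*}
\alpha_Y f_F\beta_X g_G
\;\cong\; f_G\alpha_X\beta_X g_G
\;\cong\; f_G g_G .
\end{align*}
The first isomorphism uses $\alpha_f$ and the second uses $\epsilon$. So this composite is isomorphic to $f_Gg_G$, which is only a 2-retract of $\id_{GY}$ via $\phi_G,\psi_G$ and not isomorphic to it on the nose. Similarly, using $\alpha_g$ and $\eta$, we obtain
\begin{align*}
\beta_Y\alpha_Y = f_F\beta_X g_G\alpha_Y \;\cong\; f_F\beta_X\alpha_X g_F \;\cong\; f_Fg_F .
\end{align*}

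The main obstacle is precisely this failure: $\beta_Y$ is only a ``retract-inverse'' rather than an inverse. To fix it, I would exploit the fact that $\phi_G$ and $\psi_G$ exhibit $\id_{GY}$ as a splitting of the idempotent 2-cell $\psi_G\phi_G$ on $f_Gg_G$. The goal is then to show that, transported along the isomorphisms above, this idempotent is the identity. The naturality of $\alpha$ with respect to $\phi$ gives
\begin{align*}
\phi_G\cdot(\alpha_f\circ\alpha_g\text{-type composite}) = \alpha_Y\phi_F ,
\end{align*}
and similarly for $\psi$. Hence the relevant idempotent on $\alpha_Yf_F\beta_Xg_G$ is identified with one built from $\phi_F\psi_F$ whiskered inside.

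If this idempotent does not trivialize, I would instead replace $\beta_Y$ by a modified morphism. The first attempt is
\begin{align*}
\beta_Y' = f_F\beta_X g_G \text{ with its canonical } (g_G f_G)\text{-module structure,}
\end{align*}
and then form the splitting of the induced idempotent on it. This uses local idempotent completeness of $\bB$, as in the proof of the splitting-as-colimit proposition, where the inverse $\kappa_X(f)\otimes_p g$ was built in exactly this way. One then checks the two triangle-type isomorphisms $\alpha_Y\beta'_Y\cong\id$ and $\beta'_Y\alpha_Y\cong\id$ by the same calculation with $\otimes$ in place of $\circ$, using $f\otimes_p g\cong\id$ from that proof.

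Alternatively, and perhaps more cleanly, there is a Yoneda route. By Corollary~\ref{x4} and the preceding proposition, $FY$ and $GY$ are weighted colimits, namely
\begin{align*}
FY \simeq \colimic^{\spd_2(\iota-,Y)}F\iota, \qquad GY \simeq \colimic^{\spd_2(\iota-,Y)}G\iota .
\end{align*}
The restriction $\alpha\iota:F\iota\to G\iota$ is an equivalence in $\Bicat(\clb_2,\bB)$, since its only component $\alpha_X$ is an equivalence. Weighted colimits are functorial in the diagram, and this functoriality sends equivalences to equivalences. The induced map on colimits is $\alpha_Y$, by the universal property via $\lambda^*$, so $\alpha_Y$ is an equivalence.

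I would present this colimit argument, checking that the induced comparison is indeed $\alpha_Y$ up to isomorphism; that identification is the step requiring care.
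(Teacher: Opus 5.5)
Your proposal is correct, and the route you commit to differs from the paper's.

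\textbf{The paper's route.} The paper argues by hand. It makes $F(f)\alpha_X^{-1}$ into a right $G(p)$-module by conjugating $\alpha_p$ through $\alpha_X^{-1}$ and takes $F(f)\alpha_X^{-1}\otimes_{G(p)}G(g)$ as the inverse. It then checks both composites using $\alpha_f$, $\alpha_g$, transport of module structures along $\alpha_X$, and $f\otimes_p g\iso\id_Y$. This is exactly your ``modified $\beta_Y'$'' fallback, so pursuing that option would have reproduced the paper's proof.

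\textbf{Your route.} You instead deduce the lemma from three ingredients:
\begin{itemize}
\item the Section 5 proposition that a pseudofunctor $\spd_2\to\bB$ exhibits $FY$ as $\colimic^{\spd_2(\iota-,Y)}F\iota$ (the relevant input is its second half, not Corollary~\ref{x4});
\item the standard fact that a strong transformation with equivalence components is an equivalence in $\Bicat(\clb_2,\bB)$;
\item pseudofunctoriality of weighted colimits in the diagram.
\end{itemize}

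\textbf{The identification you flag.} This step is routine. The invertible modification $(\alpha_Y)_*\lambda^F\iso(\alpha\iota)^*\lambda^G$ has components $\alpha_w$ for $w\in\spd_2(X,Y)$. Its naturality is the 2-cell axiom of $\alpha$. Its modification axiom with respect to the $\clb_2(X,X)$-action is the composition axiom of $\alpha$.

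\textbf{Comparison.} Your route absorbs the module-theoretic bookkeeping into a universal property already established. The paper partly leaves that bookkeeping implicit, e.g.\ the claim that $\alpha_X$ ``freely translates'' module structures. Your route also shows the lemma is a special case of the fact that componentwise-equivalent diagrams have equivalent colimits. Unwinding the inverse that the universal property produces, via the proof of that proposition, gives back exactly the paper's formula $F(f)\alpha_X^{-1}\otimes_{G(p)}G(g)$. The paper's route, in turn, gives that formula directly without passing through colimits.

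Both arguments need $\bB$ locally idempotent complete: the paper's for $\otimes_{G(p)}$, yours for the colimit proposition. This is implicit in the statement and holds in the application in Corollary~\ref{x7}.
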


\begin{proof}
Let $\alpha_X^{-1}$ be an inverse to $\alpha_X$ up to isomorphism. The inverse of $\alpha_Y$ will be defined as $F(f)\alpha_X^{-1}\otimes_{G(p)}G(g)$. For this, we need to define a right $G(p)$-module structure on $F(f)\alpha_X^{-1}$ where we use $p$ to denote $gf:X\to X$ in $\spd_2$.

The isomorphism $\alpha_p:G(p)\alpha_X\to\alpha_X F(p)$ yields an isomorphism $\alpha_X^{-1}G(p)\iso F(p)\alpha_X^{-1}$. With this, we now have a right $G(p)$-action
\begin{align*}
F(f)\alpha_X^{-1}G(p)\iso F(f)F(p)\alpha_X^{-1}\iso F(fp)\alpha_X^{-1}\to F(f)\alpha_X^{-1}
\end{align*}
where the last arrow is given by $F(\phi\circ\id_f)\circ\id_{\alpha_X^{-1}}$. Analogously, we can define a right $G(p)$-coaction. It follows from the naturality of $\alpha$ that this defines a right $G(p)$-module structure.

We can now define $F(f)\alpha_X^{-1}\otimes_{G(p)}G(g)$ and show that it is inverse to $\alpha_Y$. We have
\begin{align*}
\alpha_Y F(f)\alpha_X^{-1}\otimes_{G(p)}G(g) \iso G(f)\alpha_X \alpha_X^{-1}\otimes_{G(p)}G(g) & \iso G(f)\otimes_{G(p)}G(g)\iso\id_{GY} \text{  and} \\ 
F(f)\alpha_X^{-1}\otimes_{G(p)}G(g)\alpha_Y \iso F(f)\alpha_X^{-1}\otimes_{G(p)}\alpha_X F(g) & \iso F(f)\alpha_X^{-1}\alpha_X\otimes_{F(p)}F(g) \\
& \iso F(f)\otimes_{F(p)}F(g)\iso \id_{FY}
\end{align*}
where we used the fact that $f\otimes_p g\iso \id_Y$ at the end of both calculations as well as the fact that $\alpha_X$ freely translates between $F(p)$- and $G(p)$-module structures. This proves that $\alpha_Y$ is an equivalence.
\end{proof}

\begin{lemma} \label{univnat}
Let $F:\bJ\to\bB$ be a pseudofunctor and $W:\bJ\op\to\Catic$ a weight such that $\colim^W F$ exists. We then have a canonical strong transformation
\begin{align*}
u:\colim^W \bB(-,F) \to \bB(-,\colim^W F):\bB\op\to\Catic.
\end{align*}
\end{lemma}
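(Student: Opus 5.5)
The plan is to obtain $u$ from the universal property of the weighted colimit $\colim^W\bB(-,F)$ in $\Pshic{\bB}$, computed pointwise. Here $\bB(-,F):\bJ\to\Pshic{\bB}$ denotes $j\mapsto \bB(-,Fj)=\yo_\bB(Fj)$, that is, $\yo_\bB\circ F$. So the claim is that the Yoneda embedding comes with a canonical comparison 1-morphism $\colim^W \yo_\bB F\to \yo_\bB\colim^W F$, the usual comparison for any pseudofunctor applied to a weighted colimit. By the definition of weighted colimits, giving such a strong transformation is, up to the equivalence
\begin{align*}
\lambda'^*:\Pshic{\bB}(\colim^W \yo_\bB F,\yo_\bB\colim^W F)\to \Pshic{\bJ}(W,\Pshic{\bB}(\yo_\bB F-,\yo_\bB\colim^W F)),
\end{align*}
the same as giving a strong transformation $W\to\Pshic{\bB}(\yo_\bB F-,\yo_\bB\colim^W F)$. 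Here $\lambda'$ is the universal transformation of the colimit in $\Pshic{\bB}$, which exists by the ic-cocompleteness of $\Pshic{\bB}$ (corollary \ref{Pshicc}).

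For the second datum, take the universal transformation $\lambda:W\to\bB(F-,\colim^W F)$ of the given colimit and postcompose it with the action of $\yo_\bB$ on Hom-categories, $\bB(Fj,\colim^W F)\to\Pshic{\bB}(\yo_\bB Fj,\yo_\bB\colim^W F)$. This action is a pseudonatural equivalence in $j$ by the bicategorical Yoneda lemma, so $(\yo_\bB)_*\lambda$ is a strong transformation of the required type. We then choose a pseudo-inverse of $\lambda'^*$ (or invoke essential surjectivity of $\lambda'^*$) and define $u$ as the preimage of $(\yo_\bB)_*\lambda$. This $u$ is determined up to unique invertible modification, which is the sense in which it is canonical.

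Two points need checking. First, the target $\bB(-,\colim^W F)$ takes values in $\Catic$ because $\bB$ is locally idempotent complete, so it is indeed an object of $\Pshic{\bB}$. Second, the composite $(\yo_\bB)_*\lambda$ is pseudonatural: its coherence cells come from the pseudofunctoriality of $\yo_\bB$ and from $\lambda$. It is also useful to describe $u$ pointwise. Since colimits in $\Pshic{\bB}$ are computed pointwise, at an object $b$ the component $u_b:\colim^W\bB(b,F-)\to\bB(b,\colim^W F)$ is induced, via the universal property in $\Catic$, by $(\lambda_j)_*:\bB(b,Fj)\to\bB(b,\colim^W F)$, that is, by postcomposition with the colimit cocone. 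This matches the explicit formula in \ref{app2}.

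The main obstacle is bookkeeping, namely verifying that the pointwise description assembles into a strong transformation in $b$. Naturality in $b$ comes from precomposition commuting with postcomposition up to associators. Then the uniqueness-up-to-isomorphism clause in the pointwise universal properties supplies the coherence isomorphisms and forces the modification axioms. The abstract Yoneda route above avoids this by construction, so I would present that route and only remark on the pointwise formula.
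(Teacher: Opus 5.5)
Your proposal is correct. It arrives at the same $u$ as the paper by a more global route.

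\textbf{The paper's route.} The paper works one object $b$ at a time. It postcomposes the universal transformation $\lambda$ with $\bB(b,-)$ to get a transformation $W\to\Catic(\bB(b,F-),\bB(b,\colim^W F))$. It then uses the universal property of the colimit in $\Catic$ to produce $u_b$, and simply asserts that the $u_b$ assemble into a strong transformation, with coherence cells ``given by universality''.

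\textbf{Your route.} You invoke the universal property of $\colim^W \yo_\bB F$ in $\Pshic{\bB}$ once, applied to $(\yo_\bB)_*\lambda$. This exhibits $u$ as the standard comparison 1-cell $\colim^W \yo_\bB F\to\yo_\bB\colim^W F$ for the pseudofunctor $\yo_\bB$.

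\textbf{How they relate.} Colimits in $\Pshic{\bB}$ are built pointwise (Theorem \ref{PshCocomp}, Corollary \ref{Pshicc}), so your $u$ has exactly the paper's components $u_b$, as you observe. The assembly step the paper leaves implicit is the general assembly already carried out in the proof of Theorem \ref{PshCocomp}. Your argument cites that step instead of redoing it ad hoc. It also makes the sense of ``canonical'' precise: $u$ is unique up to unique invertible modification. The paper's pointwise description is still what gets used later, in Corollary \ref{x7}, where one needs that $u_A$ being an equivalence means $\bB(A,-)$ preserves the colimit. Your closing remark on the pointwise formula covers this.

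\textbf{Two small corrections.}
\begin{itemize}
\item You only need the local functors of $\yo_\bB$ to be pseudonatural in $j$. This comes directly from $\yo_\bB$ being a pseudofunctor, whiskered with $F$. That they are equivalences, from the Yoneda lemma, plays no role in the construction.
\item The map inducing $u_b$ is $a\mapsto\lambda_j(a)_*$ for $a$ in $Wj$, not $(\lambda_j)_*$.
\end{itemize}
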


\begin{proof}
First, we note that $\colim^W \bB(-,F)$ exists since $\Pshic{\bB}$ is cocomplete.
 
On objects we can define $u$ in the following way. For any $b$ in $\bB$, we can define a strong transformation $W\to\Catic(\bB(b,F),\bB(b,\colim^W F))$ given by postcomposing with the universal strong transformation $\lambda:W\to \bB(F,\colim^W F)$. This yields a functor $u_b: \colim^W \bB(b,F) \to \bB(b,\colim^W F)$ by universality. These functors assemble into the desired strong transformation $u$ with its component 2-isomorphisms given by universality.
\end{proof}

\begin{corollary} \label{x7}
A 2-retract of an atomic object is atomic.
\end{corollary}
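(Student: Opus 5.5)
The plan is to imitate Lemma \ref{ret1ato}, with Lemma \ref{splitequiv} and Lemma \ref{univnat} replacing the 1-categorical diagram chase. Let $A$ be atomic in a locally idempotent complete bicategory $\bB$, and let $B$ be a 2-retract of $A$. So we have a split 2-idempotent, i.e.\ a pseudofunctor $E:\spd_2\to\bB$ with $EX=A$ and $EY=B$. We fix an ic-weight $W:\bJ\op\to\Catic$ and a pseudofunctor $F:\bJ\to\bB$ whose colimit $\colimic^W F$ exists, with universal $\lambda$. We must show that the comparison functor
\begin{align*}
u_b:\colimic^W \bB(b,F)\to\bB(b,\colimic^W F)
\end{align*}
of Lemma \ref{univnat} is an equivalence for $b=B$. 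This is the same as saying that $\bB(B,-)$ sends $\lambda$ to a universal transformation, i.e.\ preserves the colimit. By the atomicity of $A$, the functor $u_A$ is an equivalence.

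The key step is to show that $u$ is a strong transformation in the $b$-variable. This is the content of Lemma \ref{univnat}, which gives $u$ as a strong transformation of presheaves $\bB\op\to\Catic$, and hence of pseudofunctors $\bB\to\Catic\op$. Restricting along $E\op$, we obtain two pseudofunctors
\begin{align*}
\colimic^W \bB(E-,F),\ \bB(E-,\colimic^W F):\spd_2\op\to\Catic,
\end{align*}
and a strong transformation $u E$ between them. Since $\spd_2\op$ is again a free walking 2-idempotent splitting (with the roles of $f$ and $g$ exchanged), both sides are split 2-idempotents in $\Catic\op$ (equivalently in $\Catic$). The component of $uE$ at the object $X$ is $u_A$, which is an equivalence. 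Lemma \ref{splitequiv}, applied in the locally idempotent complete bicategory $\Catic\op$, then shows that the component at $Y$, namely $u_B$, is an equivalence. Hence $\bB(B,-)$ preserves every ic-weighted colimit, and $B$ is atomic.

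The main obstacle is justifying that Lemma \ref{splitequiv} applies. Its proof uses the relative tensor products $\otimes_{G(p)}$, so it needs local idempotent completeness of the ambient bicategory. We therefore have to check that $\Catic\op$ is locally idempotent complete, which the corollary on $\Catic$ and $\Catic\op$ provides. We also have to check the variance: a splitting in $\spd_2\op$ corresponds to one in $\Catic\op$ via the remark that 2-idempotent data and splittings agree in $\Catic$ and $\Catic\op$.

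A second subtle point is that $u$ really is pseudonatural in $b$. Here I would rely on the universality construction in Lemma \ref{univnat}: the component 2-isomorphisms come from the uniqueness of factorisations through the colimit $\colimic^W\bB(b,F)$, which is computed pointwise in $\Pshic{\bB}$.
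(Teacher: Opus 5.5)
Your proposal is correct and follows essentially the same route as the paper: whisker the comparison transformation $u$ of Lemma~\ref{univnat} with the splitting pseudofunctor (using $\spd_2\op\simeq\spd_2$), note that $u_A$ is an equivalence by atomicity, and conclude from Lemma~\ref{splitequiv} that $u_B$ is one too. Your explicit treatment of the variance, and of the local idempotent completeness of $\Catic$ that Lemma~\ref{splitequiv} needs for its relative tensor products, fills in details the paper leaves implicit.
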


\begin{proof}
Let $A$ be an atomic object in $\bB$ and $B$ a retract of $A$, i.e., we have a pseu\-do\-functor $F:\spd_2\to\bB$ such that $FX\equiv A$ and $FY \equiv B$. It follows from its definition that $\spd_2$ is equivalent to its opposite $\spd_2\op$. This implies that we can think of $F$ as a pseudofunctor $\spd_2\to\bB\op$.

Given a weight $W:\bJ\to\Catic$ and pseudofunctor $G:\bJ\to\bB$ such that $\colim^W F$ exists, by lemma \ref{univnat}, we have a strong transformation $u:\colim^W \bB(-,G) \to \bB(-,\colim^W G):\bB\op\to\Catic$. By precomposing with $F$ we then get a strong transformation $u_F:\colim^W \bB(F-,G) \to \bB(F-,\colim^W G):\spd_2\to\Catic$.

Since $A$ is assumed to be atomic, $u_{FX}\equiv u_A$ is an equivalence. By lemma \ref{splitequiv}, this then implies that $u_{FY}\equiv u_B$ is an equivalence as well. This tells us that $\bB(B,-):\bB\to\Catic$ preserves arbitrary colimits and thus $B$ is atomic as well.
\end{proof}

\begin{proposition} \label{2retato}
Every atomic presheaf is a 2-retract of a representable presheaf.
\end{proposition}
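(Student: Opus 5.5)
We follow the proof of Proposition \ref{ato1ret}, one categorical level up. Let $S:\bB\op\to\Catic$ be atomic. By the density formula we have $S\equiv\colimic^S\yo_\bB$, with universal strong transformation $\lambda:S\to\Pshic{\bB}(\yo_\bB-,S)$. By Yoneda, its component at $b$ is the canonical equivalence $Sb\equiv\Pshic{\bB}(\bB(-,b),S)$. Since $S$ is atomic, applying $\Pshic{\bB}(S,-)$ preserves this colimit, so $\lambda_*$ induces an equivalence
\begin{align*}
\colimic^S \Pshic{\bB}(S,\yo_\bB-) \equiv \Pshic{\bB}(S,S)
\end{align*}
in $\Catic$. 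Using the coend formula, the left-hand side is $\int^{b:\bB} Sb\times\Pshic{\bB}(S,\bB(-,b))$, computed in $\Catic$.

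The key step is to use the explicit construction of ic-weighted colimits in $\Catic$ from Appendix \ref{app2}. That construction is the idempotent completion of the colimit in $\Cat$, which in turn is a quotient of a bar construction. In it, every object is a retract of an object of the form $(x,\alpha)$ with $x\in Sb$ and $\alpha:S\to\bB(-,b)$ for a single $b$. Transporting $\id_S$ across the equivalence, we obtain some $b$, some $x\in Sb$ and some $\alpha:S\to\yo_\bB b$ such that $\id_S$ is a retract, in $\Pshic{\bB}(S,S)$, of the composite $\lambda_b(x)\circ\alpha$. Here $\lambda_b(x):\yo_\bB b\to S$ is the transformation corresponding to $x$ under Yoneda. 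Write $f=\alpha$ and $g=\lambda_b(x)$. This gives 2-morphisms $\phi:g\circ f\to\id_S$ and $\psi:\id_S\to g\circ f$ with $\phi\cdot\psi=\id$. Note the direction: $S\to\yo b\to S$ is the composite that retracts onto $\id_S$. This is exactly the $\spd_2$-shaped data with $Y\mapsto S$ and $X\mapsto\yo_\bB b$.

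Next we set $p=f\circ g:\yo_\bB b\to\yo_\bB b$. We equip it with $m=\id_f\circ\psi\es\ldots$, or more precisely $m=\id_f\circ\phi\circ\id_g:p^2\to p$ and $\Delta=\id_f\circ\psi\circ\id_g:p\to p^2$. The 2-idempotent axioms then follow directly from $\phi\cdot\psi=\id_{\id_S}$ by interchange. By construction, $(S,g,f,\phi,\psi,\gamma=\id)$ is a splitting of this 2-idempotent in the sense of the definition, with the roles of $f$ and $g$ named so that $S$ is the 2-retract. Since $\Pshic{\bB}$ is locally idempotent complete, Corollary \ref{x4} makes this splitting essentially unique, so $S$ is a 2-retract of the representable $\yo_\bB b$.

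The main obstacle is the key step: showing that an object of the ic-weighted colimit in $\Catic$ which is equivalent to $\id_S$ actually comes from, or is a retract of, a single summand. In $\Set$ this was just surjectivity onto equivalence classes. Here the colimit involves an idempotent completion, and objects are only identified up to the bar-construction 1-cells. I would first show that every object of the $\Cat$-colimit is isomorphic to the image of some $(x,\alpha)$, since the bar construction is generated on objects by such pairs. Then an object of its idempotent completion is a retract of one of these, and a retract of an isomorph of $(x,\alpha)$ yields exactly the 2-morphisms $\phi$ and $\psi$ above after mapping back along the equivalence.
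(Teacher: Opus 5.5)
Your proposal is correct and follows the paper's proof essentially step for step. You present $S$ as $\colimic^S\yo_\bB$, use atomicity to get $\colimic^S\Pshic{\bB}(S,\yo_\bB-)\equiv\Pshic{\bB}(S,S)$, compare this with the explicit model (Theorem \ref{x1} followed by idempotent completion as in Theorem \ref{x2}), and pull $\id_S$ back along the comparison equivalence to obtain $\phi:\lambda_b(x)\circ\alpha\to\id_S$ and $\psi$ with $\phi\cdot\psi=\id_{\id_S}$. The obstacle you anticipate is in fact immediate: the $\Cat$-colimit of Construction \ref{const} only freely adjoins isomorphisms to $\coprod_b Sb\times\Pshic{\bB}(S,\yo_\bB b)$, so its objects are literally pairs $(x,\alpha)$, and Remark \ref{eobsplit} then finishes the argument. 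Your explicit choice $m=\id_f\circ\phi\circ\id_g$, $\Delta=\id_f\circ\psi\circ\id_g$ spells out the 2-idempotent structure that the paper leaves implicit.
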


\begin{proof}
We can express $S$ as the colimit of $\yo_\bB:\bB\to\Pshic{\bB}$ weighted by $S:\bB\op\to\Catic$ with a strong transformation $\lambda:S\to \Pshic{\bB}(\yo_\bB-,S)$ given by the Yoneda lemma. Since $S$ is atomic we can now apply the functor $\Pshic{\bB}(S,-)$ to this colimit and we have universal strong transformation
\begin{align*}
\lambda_*:S\to \Catic(\Pshic{\bB}(S,\yo_\bB-),\Pshic{\bB}(S,S))
\end{align*}
for the colimit of $\Pshic{\bB}(S,\yo_\bB-):\bB \to\Catic$ weighted by $S:\bB\op\to\Catic$. Since this is now a weighted colimit in $\Catic$ we can explicitely define another colimit using theorem \ref{x1}. We get a universal strong transformation
\begin{align*}
\kappa: \, & S\to \Cat(\Pshic{\bB}(S,\yo_\bB-),\colim^S \Pshic{\bB}(S,\yo_\bB-)) \\
\kappa_b(a): \, & \Pshic{\bB}(S,\bB(-,b))\to \colim^S \Pshic{\bB}(S,\yo_\bB-) \\
 & \kappa_b(a)(\alpha)=(a,\alpha)
\end{align*}
for the colimit in $\Cat$. Using theorem \ref{x2}, we can now form the colimit in $\Catic$ by taking the idempotent completion. Let $\K$ denote the idempotent completion of $\colim^S \Pshic{\bB}(S,\yo_\bB-)$ with embedding $\iota:\colim^S \Pshic{\bB}(S,\yo_\bB-)\to \K$. We now have a colimit strong transformation
\begin{align*}
\tilde{\kappa}:S\to \Catic(\Pshic{\bB}(S,\yo_\bB-),\K)
\end{align*}
given by $\tilde{\kappa}=\iota_*\kappa$. Since both $\K$ and $\Pshic{\bB}(S,S)$ are colimits of $\Pshic{\bB}(S,\yo_\bB-)$ weighted by $S$, we have an equivalence of categories $\Phi:\K\to\Pshic{\bB}(S,S)$ such that $\Phi_*\tilde{\kappa}\iso \lambda_*$. $\Phi$ is defined by the following: Let $(a,\alpha)$ with $a$ in $Sb$ and $\alpha:S\to\bB(-,b)$ be an object in $\colim^S \Pshic{\bB}(S,\yo_\bB-)$. We now have
\begin{align*}
\Phi\iota(a,\alpha)=\lambda_b(a)\alpha:S\to S.
\end{align*}
This defines $\Phi$ up to isomorphism since by corollary \ref{kar1equ} for any category $\C$ and idempotent complete category $\D$, precomposition with $\iota_\C:\C\to\Kar{\C}$ defines an equivalence
\begin{align*}
\iota_\C^*:\Cat(\Kar{\C},\D)\to\Cat(\C,\D).
\end{align*}
$\Phi$ now has the desired property since
\begin{align*}
(\Phi_*\tilde{\kappa})_b(a)(\alpha)=\Phi\iota\kappa_b(a)(\alpha)=\Phi\iota(a,\alpha)=\lambda_b(a)\alpha=\left(\lambda_*\right)_b(a)(\alpha).
\end{align*}
We know that $\Phi$ has to be an equivalence, so there exists an object $k$ in $\K$ such that $\Phi k\iso\id_S$ via an isomorphism $\gamma:\Phi k \to\id_S$. Since $k$ lives in the idempotent completion of $\colim^S \Pshic{\bB}(S,\yo_\bB-)$, we know due to remark \ref{eobsplit} that there exists an object $(a,\alpha)$ and an idempotent $p:(a,\alpha)\to (a,\alpha)$ in $\colim^S \Pshic{\bB}(S,\yo_\bB-)$ and morphisms $\tilde{\phi}:\iota(a,\alpha)\to k$ and $\tilde{\psi}:k\to \iota(a,\alpha)$ in $\K$ such that $\tilde{\phi}\tilde{\psi}=\id_k$ and $\tilde{\psi}\tilde{\phi}=\iota(p)$. We now have
\begin{align*}
\phi& =\gamma\Phi(\tilde{\phi}):\lambda_b(a)\alpha\to \id_S \text{ and} \\
\psi& =\Phi(\tilde{\psi})\gamma\inv:\id_S\to \lambda_b(a)\alpha
\end{align*}
such that $\phi\psi=\id_{\id_S}$. Therefore $S$ is a 2-idempotent splitting of the 2-idempotent $\alpha\lambda_b(a):\bB(-,b)\to\bB(-,b)$.
\end{proof}

Finally, we can now rigorously prove that the idempotent completion of a locally idempotent bicategory is equivalent to its Cauchy completion.

\begin{theorem} \label{x8}
Let $\bB$ be a locally idempotent complete category. The peudofunctor given by the composition
\begin{center}
\begin{tikzcd}
\Kar{\bB} \arrow[r, "\yo_{\Kar{\bB}}"] & \Pshic{\Kar{\bB}} \arrow[r, "\iota_\bB^*"] & \Pshic{\bB}
\end{tikzcd}
\end{center}
defines an equivalence of bicategories $\Kar{\bB}\equiv \Pshicat{\bB}$.
\end{theorem}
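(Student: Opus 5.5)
We mirror the proof of Theorem \ref{kar1cau}, splitting the argument into three steps: the composite lands in atomic presheaves, it is fully faithful, and it is essentially surjective. Write $K=\iota_\bB^*\circ\yo_{\Kar{\bB}}$, so that $K(A_p)=\Kar{\bB}(\iota_\bB-,A_p):\bB\op\to\Catic$. Note first that this takes values in $\Catic$, since $\Kar{\bB}$ is locally idempotent complete by Proposition \ref{kar2com}.

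\emph{Atomicity.} By Remark \ref{x3}, $A_p$ is a splitting of the 2-idempotent $p$ on $A_{\id}=\iota_\bB A$, i.e.\ there is a pseudofunctor $\spd_2\to\Kar{\bB}$ with $X\mapsto A_{\id}$ and $Y\mapsto A_p$. Splittings are defined equationally, so they are preserved by every pseudofunctor. Composing with $K$ therefore exhibits $K(A_p)$ as a 2-retract of $K(A_{\id})$. Because $\iota_\bB$ is fully faithful (Proposition \ref{kar2emb}), we have $K(A_{\id})\equiv\bB(-,A)$, which is representable and hence atomic by Proposition \ref{repato2}. Corollary \ref{x7} then shows that $K(A_p)$ is atomic.

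\emph{Full faithfulness.} The Yoneda embedding $\yo_{\Kar{\bB}}$ is fully faithful. For $\iota_\bB^*$ we apply Theorem \ref{lscol} with $\bC=\Catic\op$, which is 2-idempotent complete by the corollary above. This gives that $\iota_\bB^*:\Bicat(\Kar{\bB}\op,\Catic)\to\Bicat(\bB\op,\Catic)$ is an equivalence, in particular locally an equivalence. Here one has to be careful that Theorem \ref{lscol} is applied to $\Kar{\bB}\op\simeq\Kar{\bB\op}$; this should follow from the evident symmetry of the definition, which swaps the left and right (co)actions. The composite $K$ is therefore fully faithful.

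\emph{Essential surjectivity.} Let $S$ be atomic. By Proposition \ref{2retato}, $S$ is a splitting of a 2-idempotent $\alpha\lambda_b(a)$ on $\bB(-,b)$. Under Yoneda this endomorphism corresponds to a 1-morphism $p:b\to b$, and the 2-idempotent structure transports through the fully faithful $\yo_\bB$ to $m,\Delta$, giving a 2-idempotent $b_p$ in $\bB$. By Remark \ref{x3}, $b_p$ splits $p$ on $b_{\id}$ in $\Kar{\bB}$, so by preservation of splittings $K(b_p)$ splits $K(p)$ on $K(b_{\id})\equiv\bB(-,b)$, where $K(p)$ corresponds to the original 2-idempotent. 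Both $S$ and $K(b_p)$ then split the same 2-idempotent in $\Pshic{\bB}$, which is locally idempotent complete by Corollary \ref{Pshlic}. Corollary \ref{x4} gives $S\equiv K(b_p)$.

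The main obstacle is the essential surjectivity step. One must check that the transported idempotent really matches $K(p)$ up to the coherence isomorphisms of $K(b_{\id})\equiv\bB(-,b)$, so that Corollary \ref{x4} applies to literally the same 2-idempotent. I would handle this by observing that a 2-idempotent transported along an equivalence has equivalent splittings, so it suffices to compare them up to isomorphism of 2-idempotents.
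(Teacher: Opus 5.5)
Your proposal is correct and follows the paper's proof essentially step for step. Atomicity comes from Remark~\ref{x3}, absoluteness of splittings, $\Kar{\bB}(\iota_\bB-,A_{\id})\equiv\bB(-,A)$ and Corollary~\ref{x7}; full faithfulness from Yoneda and Theorem~\ref{lscol} with $\bC=\Catic\op$; and essential surjectivity from Proposition~\ref{2retato} together with uniqueness of splittings (Corollary~\ref{x4}).

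Your explicit transport of the 2-idempotent $\alpha\lambda_b(a)$ along $\yo_\bB$ to a 2-idempotent $b_p$ in $\bB$ fills in a step the paper only asserts. Your caveat about $\Kar{\bB}\op\simeq\Kar{\bB\op}$ is unnecessary: choosing $\bC=\Catic\op$ already identifies presheaves on $\Kar{\bB}$ with pseudofunctors $\Kar{\bB}\to\Catic\op$ (with transformations reversed), compatibly with precomposition by $\iota_\bB$.
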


\begin{proof}
First, we want to show that the pseudofunctor takes values in atomic objects, i.e., for every object $A_p$ in $\Kar{\bB}$, the presheaf $\Kar{\bB}(\iota_\bB,A_p):\bB\op\to\Catic$ is atomic. Remark \ref{x3} states that $A_p$ is a splitting of the 2-idempotent $p$ on $A_{\id}$. By absoluteness of 2-splittings, we have that $\Kar{\bB}(\iota_\bB,A_p)$ is a splitting of an 2-idempotent on $\Kar{\bB}(\iota_\bB,A_{\id})=\Kar{\bB}(\iota_\bB,\iota_\bB A)$. Since $\iota_\bB$ is fully faithful, we have that $\Kar{\bB}(\iota_\bB,\iota_\bB A)\equiv\bB(-,A)$. This means that $\Kar{\bB}(\iota_\bB,A_p)$ is a retract of a representable presheaf and by lemma \ref{x7}, it therefore is atomic.

Next, we will show that the pseudofunctor is fully faithful. Since the Yoneda embedding is fully faithful and by theorem \ref{lscol} with $\bC=\Catic\op$, precomposition with $\iota_\bB$ is fully faithful, their composition must also be fully faithful.

Lastly, we check that the pseudofunctor is essentially surjective. Let $S$ be an atomic idempotent complete presheaf on $\bB$. By proposition \ref{2retato}, there exists an object $A$ in $\bB$ and a 2-idempotent $p$ on $A$ such that $S$ is a splitting of the 2-idempotent $p^*$ on $\bB(-,A)$. Remark \ref{x3} states that $A_p$ is a splitting of the 2-idempotent $p$ on $A_{\id}=\iota_\bB A$. By absoluteness of splittings $\Kar{\bB}(\iota_\bB,A_p)$ is a splitting of the 2-idempotent $p^*$ on $\Kar{\bB}(\iota_\bB,\iota_\bB A)\equiv \bB(-,A)$. Since splittings of 2-idempotents are unique up to equivalence, it follows that $S\equiv \Kar{\bB}(\iota_\bB,A_p)$.
\end{proof}

We can now also see that 2-idempotent completeness and completeness under absolute weighted colimits are the same concept in a locally idempotent complete bicategory.

\begin{corollary} \label{x10}
A locally idempotent complete bicategory is 2-idempotent complete if and only if it is complete under icAWCs.
\end{corollary}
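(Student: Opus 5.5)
The plan is to follow the proof of corollary \ref{icisabc} verbatim, replacing the 1-categorical inputs by their bicategorical counterparts. Let $\bB$ be a locally idempotent complete bicategory. The two key facts are already available. First, $\Kar{\bB}$ is 2-idempotent complete by proposition \ref{kar2com}. Second, $\Pshicat{\bB}$ is complete under icAWCs by proposition \ref{cau2com}. By theorem \ref{x8}, these two bicategories are equivalent via the composite $\iota_\bB^*\yo_{\Kar{\bB}}$.

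For the forward direction, suppose $\bB$ is 2-idempotent complete. Proposition \ref{kar2emb} gives that $\iota_\bB:\bB\to\Kar{\bB}$ is an equivalence, and theorem \ref{x8} gives $\Kar{\bB}\equiv\Pshicat{\bB}$. Hence $\bB\equiv\Pshicat{\bB}$. The composite equivalence is, up to isomorphism, the Yoneda embedding $\yo_\bB$, since $\iota_\bB^*\yo_{\Kar{\bB}}\iota_\bB\iso\yo_\bB$ by full faithfulness of $\iota_\bB$. It then remains to transport icAWCs along an equivalence of bicategories. Given an absolute ic-weight $W$ and $F:\bJ\to\bB$, I would form $\colimic^W(\yo_\bB F)$ in $\Pshicat{\bB}$ and pull it back along a chosen inverse equivalence. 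Equivalences preserve and reflect weighted colimits, because the defining condition is the representability of $\Psh{\bJ}(W,\bB(F,-))$ up to equivalence, and this condition is invariant under equivalences.

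For the converse, suppose $\bB$ is complete under icAWCs. Proposition \ref{cau2emb} gives $\bB\equiv\Pshicat{\bB}\equiv\Kar{\bB}$. Being 2-idempotent complete is invariant under equivalence of bicategories: local idempotent completeness transfers because equivalences are equivalences on Hom-categories, and splittings of 2-idempotents transfer because they are defined equationally, so any pseudofunctor preserves them. A 2-idempotent in $\bB$ maps to one in $\Kar{\bB}$, splits there, and the splitting is pulled back along the inverse up to the coherence isomorphisms. One could instead argue directly: a splitting is the colimit weighted by $\spd_2(\iota-,Y)$, which is an absolute ic-weight by the corollary following lemma \ref{spd2idcomp}, so completeness under icAWCs yields the splitting immediately. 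That route is even shorter, and I would use it for this direction.

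The main obstacle is the invariance of "complete under icAWCs" under equivalence in the forward direction. Concretely, one must check that the colimit in $\Pshicat{\bB}$ of $\yo_\bB F$ yields a colimit of $F$ in $\bB$. I would handle this by noting that $\yo_\bB$ is an equivalence onto $\Pshicat{\bB}$, so that $\bB(b,b')\equiv\Pshicat{\bB}(\yo b,\yo b')$ naturally. The representing object $\colimic^W\yo_\bB F\equiv\yo_\bB c$ for some $c$, by essential surjectivity, then represents $\Psh{\bJ}(W,\bB(F,-))$ as well.
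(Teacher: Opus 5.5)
Your proof is correct. The forward direction is the paper's argument: $\bB\equiv\Kar{\bB}\equiv\Pshicat{\bB}$ by Proposition \ref{kar2emb} and Theorem \ref{x8}, with icAWCs transported from $\Pshicat{\bB}$ using Proposition \ref{cau2com}. The paper simply asserts this transport. You spell it out by identifying the composite equivalence with $\yo_\bB$ and pulling the representing object back along essential surjectivity.

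Where you genuinely diverge is the converse. The paper runs it through the same chain, $\bB\equiv\Pshicat{\bB}\equiv\Kar{\bB}$, via Proposition \ref{cau2emb} and Theorem \ref{x8}. It then uses Proposition \ref{kar2com} and the (implicit) invariance of 2-idempotent completeness under equivalence. Your chosen route instead applies completeness under icAWCs to the single weight $\spd_2(\iota-,Y)\colon\clb_2\op\to\Catic$. This is an ic-weight by Lemma \ref{spd2idcomp}, and it is absolute by the corollary following the splitting proposition. You then invoke that proposition, which turns the resulting colimit into a splitting via the extension of Proposition \ref{ext}. Local idempotent completeness is already part of the hypothesis.

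Your route is more elementary. It bypasses Theorem \ref{x8}, and with it Proposition \ref{2retato}, Corollary \ref{x7} and the explicit colimits in $\Catic$, as well as the density argument behind Proposition \ref{cau2emb}. It also shows that only colimits weighted by $\spd_2(\iota-,Y)$ are needed for this direction. The paper's route, by contrast, is symmetric and presents the corollary as a purely formal consequence of the main equivalence, exactly mirroring Corollary \ref{icisabc}.
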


\begin{proof}
Let $\bB$ be a 2-idempotent complete bicategory. By proposition \ref{kar2emb} and theorem \ref{x8}, we now have $\bB\equiv \Kar{\bB}\equiv \Pshicat{\bB}$. Since $\Pshicat{\bB}$ is complete under icAWCs, $\bB$ must also be. The opposite direction follows analogously by proposition \ref{cau2emb}.
\end{proof}
\appendix
\section{Appendix}

\subsection{Adjoint Pseudofunctors} \label{2adj}

There are many different ways of lifting the concept of adjoint functors into the realm of bicategories, many of which can be found in \cite[Chapter I,7]{G}. Our definition differs slightly from any of those listed there, \cite[Theorem 4.3.11.]{RV} shows that within our setting these notions are equivalent.

\begin{definition}{(Adjoint Pseudofunctors)}
Let $\bB$ and $\bC$ be bicategories. Two pseu\-do\-functors $F:\bB\to\bC$ and $G:\bC\to\bB$ are called \textit{adjoint} if there exist strong transformations $\eta:\id_\bB\to GF$, $\epsilon:FG\to\id_\bC$ and invertible modifications $\Gamma:\id_F \to (\epsilon F)\circ (F\eta)$ and $\Lambda:(G\epsilon)\circ (\eta G)\to \id_G$. We say that $F$ is \textit{left adjoint} to $G$, $G$ is \textit{right adjoint} to $F$ and the pseudofunctors $F$ and $G$ form an adjunction. We write $F\dashv G$.
\end{definition}

\begin{proposition} \label{2adjequiv}
Adjoint functors induce an equivalence of categories
\begin{align*}
\bC(Fb,c)\equiv \bB(b,Gc)
\end{align*}
for each pair of objects $b$ in $\bB$ and $c$ in $\bC$.
\end{proposition}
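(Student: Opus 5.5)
The plan is to build the equivalence by hand from the unit, counit and the two invertible modifications. It follows the 1-categorical argument that $\bC(Fb,c)\cong\bB(b,Gc)$ via $h\mapsto G(h)\eta_b$ and $k\mapsto \epsilon_c F(k)$.

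First define functors
\begin{align*}
\Phi_{b,c}:\bC(Fb,c)\to\bB(b,Gc),\quad h\mapsto G(h)\circ\eta_b,
\end{align*}
acting on 2-morphisms by $\alpha\mapsto G(\alpha)\circ\id_{\eta_b}$, and
\begin{align*}
\Psi_{b,c}:\bB(b,Gc)\to\bC(Fb,c),\quad k\mapsto \epsilon_c\circ F(k),
\end{align*}
acting on 2-morphisms by $\beta\mapsto \id_{\epsilon_c}\circ F(\beta)$. Functoriality is immediate from the functoriality of $G$ and $F$ on Hom-categories together with functoriality of whiskering. I will suppress associators and unitors throughout, working strictly as the paper does.

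Next, construct natural isomorphisms $\Psi\Phi\cong\id$ and $\Phi\Psi\cong\id$. For $h:Fb\to c$ we have
\begin{align*}
\Psi\Phi(h)=\epsilon_c\, FG(h)\, F(\eta_b)\cong h\,\epsilon_{Fb}\,F(\eta_b)\cong h.
\end{align*}
The first isomorphism is the naturality 2-cell $\epsilon_h$ of the strong transformation $\epsilon$ (whiskered by $F(\eta_b)$, and using $F$'s compositor $FG(h)F(\eta_b)\cong F(G(h)\eta_b)$). The second is $\id_h\circ\Gamma_b^{-1}$, using the $b$-component of $\Gamma:\id_F\to(\epsilon F)\circ(F\eta)$. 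Dually, for $k:b\to Gc$ we have
\begin{align*}
\Phi\Psi(k)=G(\epsilon_c)\,GF(k)\,\eta_b\cong G(\epsilon_c)\,\eta_{Gc}\,k\cong k.
\end{align*}
Here the first isomorphism is $\eta_k$ (combined with $G$'s compositor) and the second is $\Lambda_c\circ\id_k$.

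Naturality of these isomorphisms in $h$ (resp. $k$) follows from the fact that the naturality cells $\epsilon_h,\eta_k$ of strong transformations are themselves natural in 2-morphisms. It also uses that the compositors of $F$ and $G$ are natural, and that whiskering a fixed modification component is natural. This gives the equivalence of categories.

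The main obstacle is purely bookkeeping. One must check that the composite isomorphism is natural with respect to 2-morphisms $\alpha:h\Rightarrow h'$, where the compositor of $F$, the naturality cell of $\epsilon$, and the component of $\Gamma$ must commute past $\alpha$. I would handle this with the string-diagram calculus used earlier, sliding $\alpha$ through each cell one at a time. Note that no triangle-type coherence between $\Gamma$ and $\Lambda$ is needed, since we only want an equivalence, not an adjoint equivalence. If one also wants naturality in $b$ and $c$, this follows similarly from the modification axioms for $\Gamma$ and $\Lambda$, but that is not required for the statement as given.
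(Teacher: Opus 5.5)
Your proposal is correct and follows the paper's proof essentially step for step. The paper uses the same functors $f\mapsto Gf\circ\eta_b$ and $k\mapsto\epsilon_c\circ Fk$, builds the isomorphism $f\cong(f^\flat)^\sharp$ from $\Gamma_b$ and the naturality cell $\epsilon_f$ (and dually from $\eta_k$ and $\Lambda_c$), and checks naturality in 2-morphisms by the same sliding argument, drawn as an equality of two pasting diagrams; the only cosmetic difference is that you explicitly track the compositors of $F$ and $G$, which the paper suppresses.
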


\begin{proof}
Let $f:Fb\to c$ be a 1-morphism in $\bB$. We can define a morphism $f\es:b\to Gc$ via
\begin{center}
\begin{tikzcd}
b \arrow[r, "\eta_b"] & GFb \arrow[r, "Gf"] & Gc.
\end{tikzcd}
\end{center}
Let $g:Fb\to c$ be another 1-morphism and $\theta:f\to g$ a 2-morphism. We can define a 2-morphism $\theta\es:f\es\to g\es$ via
\begin{center}
\begin{tikzpicture}
\node at (0,0) {$b$};
\node at (2,0) {$GFb$};
\node at (5,0) {$Gc.$};
\node at (0.8,0.2) {$\eta_b$};
\node at (3.5,0.8) {$Gf$};
\node at (3.5,-0.8) {$Gg$};
\node at (3.7,0) {$G\theta$};
\draw[->] (0.25,0) to (1.4,0);
\draw[->, out=30, in=150] (2.4,0.2) to (4.7,0.2);
\draw[->, out=-30, in=-150] (2.4,-0.2) to (4.7,-0.2);
\draw[-implies, double equal sign distance] (3.2,-0.3) to (3.2,0.3);
\end{tikzpicture}
\end{center}
One can check that this defines a functor $(-)\es:\bC(Fb,c)\to \bB(b,Gc)$. For a 1-morphism $f:b\to Gc$, we can analogously define a morphism $f\is:Fb\to c$ via
\begin{center}
\begin{tikzcd}
Fb \arrow[r, "Ff"] & FGc \arrow[r, "\epsilon_C"] & c.
\end{tikzcd}
\end{center}
This also defines a functor $(-)\is:\bB(b,Gc)\to \bC(Fb,c)$. We will now show that these two functors form an equivalence of categories. We can construct a natural transformation $\id_{\bC(Fb,c)} \to ((-)\es)\is$ with components given by the diagram
\begin{center}
\begin{tikzpicture}
\node at (0,0) {$Fb$};
\node at (3,0) {$FGFb$};
\node at (6,0) {$FGc$};
\node at (3,-3) {$Fb$};
\node at (6,-3) {$c,$};
\node at (1.5,0.3) {$F\eta_b$};
\node at (4.5,0.3) {$FGf$};
\node at (1.3,-1.7) {$\id_{Fb}$};
\node at (3.4,-1.5) {$\epsilon_{Fb}$};
\node at (6.3,-1.5) {$\epsilon_c$};
\node at (4.5,-3.3) {$f$};
\node at (1.9,-0.6) {$\Gamma_b$};
\node at (4.8,-1.7) {$\epsilon_f$};
\draw[->] (0.3,0) to (2.4,0);
\draw[->] (3.6,0) to (5.5,0);
\draw[->] (0.3,-0.2) to (2.7,-2.8);
\draw[->] (3,-0.2) to (3,-2.8);
\draw[->] (6,-0.2) to (6,-2.8);
\draw[->] (3.3,-3) to (5.8,-3);
\draw[-implies, double equal sign distance] (1.8,-1.2) to (2.5,-0.5);
\draw[-implies, double equal sign distance] (4.2,-1.8) to (5,-1);
\end{tikzpicture}
\end{center}
i.e., we have an isomorphism $f\to (f\es)\is$ given by
\begin{center}
\begin{tikzcd}
f \arrow[r, "r\inv"] & f\circ \id_{Fb} \arrow[r, "f\circ \Gamma_b"] & f\circ \epsilon_{Fb} \circ F\eta_B \arrow[r, "\epsilon_f\circ F\eta_b"] & \epsilon_C\circ FGf\circ F\eta_b=(f\es)\is.
\end{tikzcd}
\end{center}
That these isomorphism form a natural isomorphism, follows from the equality of the following two diagrams.
\begin{center}
\begin{tikzpicture}
\node at (0,0) {$Fb$};
\node at (3,0) {$FGFb$};
\node at (6,0) {$FGc$};
\node at (3,-3) {$Fb$};
\node at (6,-3) {$c$};
\node at (1.5,0.3) {$F\eta_b$};
\node at (4.5,0.7) {$FGg$};
\node at (1.3,-1.7) {$\id_{Fb}$};
\node at (3.4,-1.5) {$\epsilon_{Fb}$};
\node at (6.3,-1.5) {$\epsilon_c$};
\node at (4.5,-2.2) {$g$};
\node at (4.5,-3.8) {$f$};
\node at (1.9,-0.6) {$\Gamma_b$};
\node at (4.8,-1.2) {$\epsilon_f$};
\node at (4.65,-3) {$\theta$};
\draw[->] (0.3,0) to (2.4,0);
\draw[->, out=30, in=150] (3.6,0.2) to (5.5,0.2);
\draw[->] (0.3,-0.2) to (2.7,-2.8);
\draw[->] (3,-0.2) to (3,-2.8);
\draw[->] (6,-0.2) to (6,-2.8);
\draw[->, out=30, in=150] (3.3,-2.8) to (5.8,-2.8);
\draw[->, out=-30, in=-150] (3.3,-3.2) to (5.8,-3.2);
\draw[-implies, double equal sign distance] (1.8,-1.2) to (2.5,-0.5);
\draw[-implies, double equal sign distance] (4.2,-1.3) to (5,-0.5);
\draw[-implies, double equal sign distance] (4.35,-3.3) to (4.35,-2.7);
\node at (7.5,-1.5) {$=$};
\node at (8,0) {$Fb$};
\node at (11,0) {$FGFb$};
\node at (14,0) {$FGc$};
\node at (11,-3) {$Fb$};
\node at (14,-3) {$c$};
\node at (9.5,0.3) {$F\eta_b$};
\node at (12.5,-0.7) {$FGf$};
\node at (12.5,0.7) {$FGg$};
\node at (9.3,-1.7) {$\id_{Fb}$};
\node at (11.4,-1.5) {$\epsilon_{Fb}$};
\node at (14.3,-1.5) {$\epsilon_c$};
\node at (12.5,-3.8) {$f$};
\node at (9.9,-0.6) {$\Gamma_b$};
\node at (12.8,-2.2) {$\epsilon_f$};
\node at (12.7,0) {$FG\theta$};
\draw[->] (8.3,0) to (10.4,0);
\draw[->, out=30, in=150] (11.6,0.2) to (13.5,0.2);
\draw[->, out=-30, in=-150] (11.6,-0.2) to (13.5,-0.2);
\draw[->] (8.3,-0.2) to (10.7,-2.8);
\draw[->] (11,-0.2) to (11,-2.8);
\draw[->] (14,-0.2) to (14,-2.8);
\draw[->, out=-30, in=-150] (11.3,-3.2) to (13.8,-3.2);
\draw[-implies, double equal sign distance] (9.8,-1.2) to (10.5,-0.5);
\draw[-implies, double equal sign distance] (12.2,-2.3) to (13,-1.5);
\draw[-implies, double equal sign distance] (12.15,-0.3) to (12.15,0.3);
\end{tikzpicture}
\end{center}
We thus have a natural isomorphism $\id_{\bC(Fb,c)} \iso ((-)\es)\is$ and analogously also a natural isomorphism $((-)\is)\es\iso \id_{\bB(b,Gc)}$. Therefore they form an equivalence of categories.
\end{proof}

The statement that left adjoint functors preserve colimits also holds true in the bicategorical case.

\begin{proposition} \label{adjpres}
Left adjoint pseudofunctors preserve weighted colimits.
\end{proposition}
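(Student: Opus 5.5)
The plan is to mirror the 1-categorical argument. A left adjoint commutes with hom-categories up to equivalence, and weighted colimits are defined purely through hom-categories. Let $F:\bB\to\bC$ be left adjoint to $G:\bC\to\bB$, with unit $\eta$ and counit $\epsilon$ as in the definition of adjoint pseudofunctors. Let $W:\bJ\op\to\Cat$ be a weight and $H:\bJ\to\bB$ a pseudofunctor whose weighted colimit $\colim^W H$ exists, with universal strong transformation $\lambda:W\to\bB(H-,\colim^W H)$. We must show that for every object $c$ of $\bC$ the functor
\begin{align*}
(F\lambda)^*:\bC(F\colim^W H,c)\to\Psh{\bJ}(W,\bC(FH-,c))
\end{align*}
is an equivalence of categories.

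First, by Proposition \ref{2adjequiv} we have equivalences $(-)\es:\bC(Fb,c)\to\bB(b,Gc)$, given by $f\mapsto Gf\circ\eta_b$. These are pseudonatural in $b$: the naturality isomorphisms come from the component 2-isomorphisms $\eta_h$ of $\eta$ and the pseudofunctoriality constraints of $G$. This gives an equivalence of pseudofunctors $\bC(F-,c)\equiv\bB(-,Gc)$ of type $\bB\op\to\Cat$. Whiskering with $H$ yields an equivalence $\bC(FH-,c)\equiv\bB(H-,Gc)$ in $\Psh{\bJ}$. Postcomposition then gives an equivalence $\Psh{\bJ}(W,\bC(FH-,c))\equiv\Psh{\bJ}(W,\bB(H-,Gc))$, since $\Psh{\bJ}(W,-)$ is a pseudofunctor and so preserves equivalences.

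Next, we assemble the square
\begin{align*}
\bC(F\colim^W H,c)\xrightarrow{(F\lambda)^*}\Psh{\bJ}(W,\bC(FH-,c))\xrightarrow{\ \simeq\ }\Psh{\bJ}(W,\bB(H-,Gc)),
\end{align*}
and compare it with $\bC(F\colim^W H,c)\xrightarrow{(-)\es}\bB(\colim^W H,Gc)\xrightarrow{\lambda^*}\Psh{\bJ}(W,\bB(H-,Gc))$. The second route is a composite of two equivalences: the first by Proposition \ref{2adjequiv}, the second by universality of $\lambda$. Once we show the square commutes up to invertible modification, two-out-of-three for equivalences of categories implies that $(F\lambda)^*$ is an equivalence.

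The main obstacle is the commutativity check. Take $f:F\colim^W H\to c$ and an object $w\in Wj$. One route gives $G(f\circ F\lambda_j(w))\circ\eta_{Hj}$. The other gives $Gf\circ\eta_{\colim^W H}\circ\lambda_j(w)$. The comparison isomorphism is the composite of the pseudofunctoriality constraint of $G$, $G(f\circ F\lambda_j(w))\iso Gf\circ GF\lambda_j(w)$, with the naturality 2-cell $\eta_{\lambda_j(w)}:GF\lambda_j(w)\circ\eta_{Hj}\iso\eta_{\colim^W H}\circ\lambda_j(w)$ of the strong transformation $\eta$. We then have to verify three things: that these components are natural in $w$; that they are compatible with the coherence 2-cells of $\lambda$ with respect to 1-morphisms of $\bJ$, so that they form a modification; and that they are natural in $f$. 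Each of these reduces to the coherence axioms of $\eta$ as a strong transformation and of $G$ as a pseudofunctor. Following the paper's convention, we would suppress associators and unitors by passing to a strictification, which makes these diagram chases routine though tedious. The triangle modifications $\Gamma,\Lambda$ are not needed here, since they only enter through Proposition \ref{2adjequiv}.

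For the locally $P$ variant (ic-weights and $\Catic$ valued homs) the argument is verbatim the same. All categories involved are idempotent complete, and the equivalences above restrict to $\Pshic{\bJ}$.
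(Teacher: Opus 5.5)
Your proposal is correct and follows the paper's own proof. The paper uses the same square comparing $(F\lambda)^*$ with $\lambda^*\circ(-)^\flat$, with the transposition equivalences of Proposition \ref{2adjequiv} as vertical arrows, and the same comparison 2-cell built from $\eta_{\lambda_j(w)}$ (and the compositor of the right adjoint). It concludes by the same two-out-of-three argument. You are, if anything, more explicit than the paper about why the right-hand transposition on strong transformations is an equivalence, namely the pseudonaturality of $(-)^\flat$ in $b$, and about which coherence checks remain.
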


\begin{proof}
Let $\bJ,\bB$ be bicategories, $W:\bJ\op\to\Cat$ a weight, $F:\bJ\to\bB$ pseudofunctor and $\lambda:W\to \bB(F-,\colim^W F)$ a universal strong transformation. Now let $\bC$ be another bicategory and $G:\bB\to\bC$ and $H:\bC\to\bB$ adjoint pseudofunctors with strong transformations $\eta:\id_\bB\to HG$ and $\epsilon:GH\to \id_{\bC}$. We will show that the strong transformation $G\lambda:W\to \bC(GF-,G\colim^W F)$ is universal by showing that the following diagram commutes up to isomorphism for all objects $c$ in $\bC$
\begin{center}
\begin{tikzcd}
{\bC(G\colim^W F,c)} \arrow[r, "(G\lambda)^*"] \arrow[d, "(-)\es"] & {\Bicat(\bJ\op,\Cat)(W,\bC(GF-,c))} \arrow[d, "(-)\es"] \\
{\bB(\colim^W F,Hc)} \arrow[r, "\lambda^*"]                        & {\Bicat(\bJ\op,\Cat)(W,\bB(F-,Hc))}                    
\end{tikzcd}
\end{center}
where $(-)\es$ is defined pointwise on strong transformation. All of the functors except $(G\lambda)^*$ are known to be equivalences of categories, so we only need to show that it commutes up to isomorphism. Let $f:G\colim^W F\to c$ be a 1-morphism in $\bC$. We have an isomorphism $(f_*G\lambda)\es\iso f\es_*\lambda$ given by the diagram
\begin{center}
\begin{tikzpicture}
\node at (0,0) {$Fj$};
\node at (4,0) {$\colim^W F$};
\node at (0,-3) {$HGFj$};
\node at (4,-3) {$HG\colim^W F$};
\node at (8,-3) {$Hc$};
\node at (2,0.3) {$\lambda_j(a)$};
\node at (0.35,-1.5) {$\eta_{Fj}$};
\node at (4.8,-1.5) {$\eta_{\colim^W F}$};
\node at (1.7,-3.3) {$HG\lambda_j(a)$};
\node at (6.3,-3.3) {$Hf$};
\node at (6.5,-1.3) {$f\es$};
\node at (2.5,-1.7) {$\eta_{\lambda_j(a)}$};
\draw[->] (0.3,0) to (3.1,0);
\draw[->] (0,-0.3) to (0,-2.75);
\draw[->] (4,-0.3) to (4,-2.75);
\draw[->] (0.7,-3) to (2.8,-3);
\draw[->] (4.8,-0.2) to (7.7,-2.8);
\draw[->] (5.25,-3) to (7.7,-3);
\draw[-implies, double equal sign distance] (1.5,-1.8) to (2.3,-1);
\end{tikzpicture}
\end{center}
where $j$ is an object in $\bJ$ and $a$ an object in $Wj$. We can also read off of this diagram that this isomorphism is natural. Thus $G$ preserves the weighted colimit.
\end{proof}

\subsection{Weighted Colimits in \textbf{Cat}} \label{app1}

\begin{construction} \label{const}
Let $\bJ$ be a bicategory, $W:\bJ\op\to\Cat$ a weight and $F:\bJ\to\Cat$ a pseudofunctor. First we choose a set of objects $\Ob\bJ$ for $\bJ$. We look at the diagram
\begin{center}
\vspace*{-\baselineskip}
\hspace*{-24pt}
\begin{tikzcd}
{\coprod_{j,j\p,j\pp\in\Ob\bJ}Wj\pp\times \bJ(j\p,j\pp)\times\bJ(j,j\p)\times Fj} \arrow[r, "d^0_2"{yshift=-2}, shift left=3] \arrow[r, "d^1_2"{yshift=-2}, shift left =-1] \arrow[r, "d^2_2"{yshift=-2}, shift left=-5] & {\coprod_{j,j\p\in\Ob\bJ}Wj\p\times\bJ(j,j\p)\times Fj} \arrow[r, "d^0_1"{yshift=-2}, shift left=3] \arrow[r, "d^1_1"{yshift=-2}, shift left=-5] & \coprod_{j \in \Ob\bJ}Wj\times Fj \arrow[l, "s^0_0"'{yshift=-2}, shift left=1]
\end{tikzcd}
\end{center}
where the maps are defined in the following way
\begin{align*}
d_2^0: & (a,g,f,x) \mapsto (a,g,F(f)x), \\
d_2^1: & (a,g,f,x) \mapsto (a,gf,x), \\
d_2^2: & (a,g,f,x) \mapsto (W(g)a,f,x), \\
d_1^0: & (a,f,x) \mapsto (a,F(f)x), \\
d_1^1: & (a,f,x) \mapsto (W(f)a,x), \\
s_0^0: & (a,x) \mapsto (a,\id_j,x).
\end{align*}

We now define the category $\colim^W F$ by taking the category $\coprod_{j \in \Ob\bJ}Wj\times Fj$ and freely adding isomorphism determined by $d^0_1$ and $d^1_1$ that are subject to relations given by $s^0_0$, $d^0_2$, $d^1_2$ and $d^2_2$.

For each pair of objects $j,j\p$ in $\bJ$ and each object $(a,f,x)$ in $Wj\p\times\bJ(j,j\p)\times Fj$ we freely add an isomorphism 
\begin{align*}
\gamma_{a,f,x}: (a,F(f)x)\to (W(f)a,x)
\end{align*}
which assemble into a natural transformation
\begin{align*}
\gamma_{-,-,-}: (-,F(-)-) \to (W(-)-,-): Wj\p\times \bJ(j,j\p) \times Fj \to \colim^W F.
\end{align*}

These natural transformations have to satisfy the following two conditions. For all $j,j\p,j\pp\in\Ob\bJ$, $x$ in $Fj$, $f:j\to j\p$, $g:j\p\to j\pp$ and $a$ in $Wj\pp$, the diagram
\begin{center}
\begin{tikzcd}
{(a,F(g)F(f)x)} \arrow[d, "{(\id_a,F^2\id_x)}"] \arrow[r, "{\gamma_{a,g,F(f)x}}"] & {(W(g)a,F(f)x)} \arrow[r, "{\gamma_{W(g)a,f,x}}"] & {(W(f)W(g)a,x)} \arrow[d, "{(W^2\id_a,\id_x)}"] \\
{(a,F(gf)x)} \arrow[rr, "{\gamma_{a,gf,x}}"]                               &                                                   & {(W(gf)a,x)}                            
\end{tikzcd}
\end{center}
has to commute, and for all $j\in\Ob\bJ$, $x$ in $Fj$ and $a$ in $Wj$, the diagram
\begin{center}
\begin{tikzcd}
{(a,\id_{Fj}x)} \arrow[r, "="] \arrow[d, "{(\id_a,F^0\id_x)}"] & {(a,x)} \arrow[r, "="] & {(\id_{Wj}a,x)} \arrow[d, "{(W^0\id_a,\id_x)}"] \\
{(a,F(\id_j)x)} \arrow[rr, "{\gamma_{a,\id_j,x}}"]      &                        & {(W(\id_j)a,x)}                         
\end{tikzcd}
\end{center}
has to commute. $F^2$ and $F^0$ denote the coherence data of the pseudofunctor $F$ and analogously for $W$. We call these two diagrams the naturality and unitality conditions of $\gamma$. $\colim^W F$ is now the category obtained by adding these ismorphism subject to the given relations freely to $\coprod_{j \in \Ob\bJ}Wj\times Fj$. We now also define the functors
\begin{align*}
\gamma_j:W(j)\times F(j) \to \colim^W F
\end{align*}
which are given by inclusions since $W(j)\times F(j)$ is a subcategory of $\colim^W F$.

We can now define a strong transformation
\begin{align*}
\lambda:W\to \Cat(F-,\colim^W F):\bJ\op\to\Cat.
\end{align*}
For each object $j$ in $\bJ$, we have a 1-morphism $\lambda_j:Wj\to \Cat(Fj,\colim^W F)$ given by $\lambda_j(a)(x)=\gamma_j(a,x)=(a,x)$
for each $a$ in $Wj$ and $x$ in $Fj$. Since $\gamma_j$ is a functor, both $\lambda_j$ and $\lambda_j(a)$ also define functors.

For each pair of objects $j,j\p$ in $\bJ$, we also have a natural transformation
\begin{align*}
\lambda: \lambda_{j\p}^*\Cat(F-,\colim^W F)\to (\lambda_j)_*W:\bJ(j,j\p)\to \Cat(Wj\p,\Cat(Fj,\colim^W F))
\end{align*}
with component morphisms  $\lambda_f: F(f)^*\lambda_{j\p} \to \lambda_j W(f)$ which are defined by
\begin{align*}
\lambda_{f,a,x} =\gamma_{a,f,x}: \left(F(f)^*\lambda_{j\p}(a)\right)(x) & = \lambda_{j\p}(a)(F(f)x) = (a,F(f)x) \\
& \to (W(f)a,x)=\lambda_j(W(f)a)(x)
\end{align*}
for each $a$ in $Wj\p$ and $x$ in $Fj$. The naturality of $\lambda$ and $\lambda_f$ and $\lambda_{f,a}$ all follow from the naturality of $\gamma$. Lastly we need to check that $\lambda$ satisfies the naturality and unitality conditions, this follows since $\gamma$ needs to satisfy its own naturality and unitality conditions.
\end{construction}

\begin{theorem} \label{x1}
Given a bicategory $\bJ$, a weight $W:\bJ\op\to\Cat$ and a pseudofunctor $F:\bJ\to\Cat$, construction \ref{const} defines a colimit of $F$ weighted by $W$.
\end{theorem}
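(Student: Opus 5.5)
We have to show that the strong transformation $\lambda:W\to\Cat(F-,\colim^W F)$ of construction \ref{const} is universal. That is, for every category $\D$ the functor
\begin{align*}
\lambda^*:\Cat(\colim^W F,\D)\to \Psh{\bJ}(W,\Cat(F-,\D))
\end{align*}
must be an equivalence of categories. The strategy is to identify both sides with a common category of ``compatible families'' and to show that $\lambda^*$ is essentially the identity under this identification. This categorifies the standard argument for $\Set$, where a map out of a quotient of a coproduct is the same as a family of maps respecting the relations.

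\textbf{Step 1: describe functors out of $\colim^W F$.} Since $\colim^W F$ is presented by generators and relations, its universal property as a presented category says the following. A functor $H:\colim^W F\to\D$ is the same as:
\begin{itemize}
\item[(a)] a family of functors $H_j:Wj\times Fj\to\D$, namely the restrictions $H\gamma_j$;
\item[(b)] for each $j,j\p$, a natural isomorphism $H(\gamma_{a,f,x}):H_{j\p}(a,F(f)x)\to H_j(W(f)a,x)$, natural in $(a,f,x)\in Wj\p\times\bJ(j,j\p)\times Fj$;
\item[(c)] the requirement that these isomorphisms satisfy the images under $H$ of the naturality and unitality diagrams of $\gamma$.
\end{itemize}
A natural transformation $H\Rightarrow H\p$ is in turn the same as a family of natural transformations $H_j\Rightarrow H\p_j$ compatible with the isomorphisms in (b). This holds because every object of $\colim^W F$ lies in some $Wj\times Fj$, and every morphism is a composite of morphisms of the summands and the $\gamma^{\pm1}$.

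\textbf{Step 2: unpack the right-hand side.} Currying gives $\Cat(Wj,\Cat(Fj,\D))\iso\Cat(Wj\times Fj,\D)$, so a strong transformation $\kappa:W\to\Cat(F-,\D)$ consists of the following data:
\begin{itemize}
\item[(a)] functors $\kappa_j:Wj\times Fj\to\D$;
\item[(b)] invertible natural components $\kappa_{f,a,x}:\kappa_{j\p}(a,F(f)x)\to\kappa_j(W(f)a,x)$, natural in $f$;
\item[(c)] the requirement that these satisfy the composition and unit axioms of a strong transformation, which involve $F^2,F^0,W^2,W^0$.
\end{itemize}
A modification is likewise a family of natural transformations commuting with the components in (b). Comparing with Step 1, the data and axioms coincide term by term: the naturality and unitality diagrams of $\gamma$ in construction \ref{const} were written precisely to be the strong transformation axioms.

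\textbf{Step 3: conclude.} Under these identifications, $\lambda^*(H)=H_*\lambda$ has components $H\gamma_j$ and $H(\gamma_{a,f,x})$, because $\lambda_{f,a,x}=\gamma_{a,f,x}$. Hence $\lambda^*$ corresponds to the identity on compatible families, and so it is an isomorphism of categories, in particular an equivalence.

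I expect the main obstacle to be the rigorous justification of Step 1. One must spell out that ``freely adjoining isomorphisms subject to relations'' to $\coprod_j Wj\times Fj$ produces a category with the stated universal property. The cleanest route is to realise $\colim^W F$ as a pushout in $\Cat$: take the free category on the coproduct together with formal arrows $\gamma$ and formal inverses $\gamma^{-1}$, then quotient by the congruence generated by the categorical relations in the summands, the inverse laws, the naturality squares of $\gamma$ (in $a$, $f$ and $x$), and the two coherence diagrams. The universal property of quotients by congruences then yields (a)–(c) on objects and morphisms. For 2-cells, one checks that a transformation is determined by its components on the generating objects, all of which lie in the summands, and that naturality need only be verified on generating morphisms. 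A second, smaller, subtlety is keeping track of the coherence isomorphisms of $\Cat(F-,\D)$ as a pseudofunctor when matching axiom (c) in Steps 1 and 2. These contain $F^2$ and $F^0$ and match the vertical arrows $(\id_a,F^2\id_x)$ and $(\id_a,F^0\id_x)$ in the diagrams of construction \ref{const}.
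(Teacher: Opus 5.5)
Your proposal is correct and follows essentially the same route as the paper. Both arguments use currying to identify functors $\colim^W F\to\D$ and strong transformations $W\to\Cat(F-,\D)$ (and likewise natural transformations and modifications) with the same compatible-family data, and conclude that $\lambda^*$ is an isomorphism of categories. Your extra remarks make explicit two points the paper leaves implicit: realising $\colim^W F$ as a quotient of a free category by a congruence, and matching the $F^2,F^0$ coherence data.
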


\begin{proof}
We next need to check that precomposition with $\lambda: W \to \Cat(F-,\colim^W F)$ defines an equivalence of pseudofunctors
\begin{align*}
\Cat(\colim^W F,-)\to \Psh{\bJ}(W,\Cat(F,-)).
\end{align*}
For this it suffices to check that it defines an equivalence of categories
\begin{align*}
\Cat(\colim^W F,\C)\to \Psh{\bJ}(W,\Cat(F-,\C))
\end{align*}
for each category $\C$. As it turns out, a functor $\colim^W F\to \C$ is determined by the same data as a strong transformation $W\to \Cat(F-,\C)$ just presented in different ways. This will lead to an isomorphism of categories.

A strong transformation $\epsilon:W\to\Cat(F-,\C)$ is given by a $\bJ$-indexed family of functors $\epsilon_j:Wj\to\Cat(Fj,\C)$ and for each morphism $f:j\to j\p$ in $\bJ$ a natural isomorphism $\epsilon_f:F(f)^*\epsilon_{j\p}\to\epsilon_j W(f):Wj\p\to\Cat(Fj,\C)$ such that the $\epsilon_f$ are natural in $f$ and satisfy the naturality and unitality conditions.

Using the fact that for categories $\C_1,\C_2,\D$ we have an isomorphism of categories
\begin{align*}
\Cat(\C_1\times\C_2,\D)\iso\Cat(\C_1,\Cat(\C_2,\D))
\end{align*} 
given by currying, know that the family $(\epsilon_j)_{j\in\Ob\bJ}$ is equivalent to a $\bJ$-indexed family of functors $\tilde{\epsilon}_j:Wj\times Fj\to \C$ and the $\epsilon_f$ turn into natural transformations 
\begin{align*}
\tilde{\epsilon}_f:\tilde{\epsilon}_{j\p}(-,F(f)-)\to \tilde{\epsilon}_j(W(f)-,-):Wj\p\times Fj\to\C
\end{align*}
Naturality in $f$ now means that they assemble into a natural transformation
\begin{align*}
\tilde{\epsilon}:\tilde{\epsilon}_{j\p}(-,F(-)-)\to \tilde{\epsilon}_j(W(-)-,-):Wj\p\times \bJ(j,j\p)\times Fj\to\C.
\end{align*}
The naturality condition tranlates into the following commutative diagram
\begin{center}
\begin{tikzcd}
{\tilde{\epsilon}_{j\pp}(a,F(g)F(f)x)} \arrow[d, "{\tilde{\epsilon}_{j\pp}(\id_a,F^2\id_x)}"] \arrow[r, "{\tilde{\epsilon}_{a,g,F(f)x}}"] & {\tilde{\epsilon}_{j\p}(W(g)a,F(f)x)} \arrow[r, "{\tilde{\epsilon}_{W(g)a,f,x}}"] & {\tilde{\epsilon}_j(W(f)W(g)a,x)} \arrow[d, "{\tilde{\epsilon}_j(W^2\id_a,\id_x)}"] \\
{\tilde{\epsilon}_{j\pp}(a,F(gf)x)} \arrow[rr, "{\tilde{\epsilon}_{a,gf,x}}"]                               &                                                   & {\tilde{\epsilon}_j(W(gf)a,x)}                            
\end{tikzcd}
\end{center}
which has to hold for all objects $j,j\p,j\pp$ in $\bJ$, morphisms $f:j\to j\p$ and $g:j\p\to j\pp$ and objects $a$ in $Wj\pp$ and $x$ in $Fj$. The unitality condition tranlates into the diagram
\begin{center}
\begin{tikzcd}
{\tilde{\epsilon}_j(a,\id_{Fj}x)} \arrow[r, "="] \arrow[d, "{\tilde{\epsilon}_j(\id_a,F^0\id_x)}"] & {\tilde{\epsilon}_j(a,x)} \arrow[r, "="] & {\tilde{\epsilon}_j(\id_{Wj}a,x)} \arrow[d, "{\tilde{\epsilon}_j(W^0\id_a,\id_x)}"] \\
{\tilde{\epsilon}_j(a,F(\id_j)x)} \arrow[rr, "{\tilde{\epsilon}_{a,\id_j,x}}"]      &                        & {\tilde{\epsilon}_j(W(\id_j)a,x)}                         
\end{tikzcd}
\end{center}
which has to commute for all objects $j$ in $\bJ$, $a$ in $Wj$ and $x$ in $Fj$. We can already see that these are exactly the conditions we asked of $\gamma$ when constructing $\colim^W F$. We will now look at the data that a functor $G:\colim^W F\to \C$ consists of. Since $\colim^W F$ was constructed by adding isomorphisms to the category $\coprod_{j \in \Ob\bJ}Wj\times Fj$, the functor $G$ consists a family of functors $G_j:Wj\times Fj\to \C$ that also has to map the components $\gamma$ on isomorphisms in $\C$ that have to satisfy the same properties as $\gamma$. So we have a natural transformation 
\begin{align*}
G(\gamma_{-,-,-}):G(-,F(-)-)\to G(W(-)-,-):Wj\p\times \bJ(j,j\p)\times Fj\to\C
\end{align*}
and the components of this natural transformation also have to satisfy the unitality and naturality conditions. So we indeed see that a functor $\colim^W F\to \C$ consists of the same data as a strong transformation $W\to\Cat(F-,\C)$ and the presentation of this data only differs by currying. This currying is also induced by precomposition with $\lambda$ since given a functor $G:\colim^W F\to \C$, the strong transformation $G_*\lambda:W\to\Cat(F-,\C)$ is defined by
\begin{align*}
(G_*\lambda)_j(a)(x)=(G_*\lambda_j)(a)(x)=(G\lambda_j(a))(x)=G(\lambda_j(a)(x))=G(a,x)
\end{align*}
for objects $j$ in $\bJ$, $a$ in $Wj$ and $x$ in $Fj$, and by
\begin{align*}
(G_*\lambda)_{f,a,x}=G\lambda_{f,a,x}=G\gamma_{a,f,x}
\end{align*}
for objects $j,j\p$ in $\bJ$, a morphism $f:j\to j\p$ and objects $a$ in $Wj\p$ and $x$ in $Fj$. To show that this induces an isomorphism of categories
\begin{align*}
\Cat(\colim^W F,\C)\iso \Psh{\bJ}(W,\Cat(F-,\C))
\end{align*}
we also have to show that a natural transformation between such functors also only differs by currying from a modification between such strong transformations. Let $\epsilon,\eta:W\to \Cat(F-,\C)$ be two strong transformations and let $\Gamma:\epsilon\to\eta$ be a modification between them. $\Gamma$ consists of a $\bJ$-indexed family of natural transformations
\begin{align*}
\Gamma_j:\epsilon_j\to\eta_j:Wj\to\Cat(Fj,\C)
\end{align*}
which since $\Gamma$ is a modification has to satisfy a certain property. Using currying, this the natural transformations $\Gamma_j$ correspond to natural transformations
\begin{align*}
\tilde{\Gamma}_j:\tilde{\epsilon}\to\tilde{\eta}:Wj\times Fj\to \C
\end{align*}
and the property they need to satisfy can be written as the diagram
\begin{center}
\begin{tikzcd}
{\tilde{\epsilon}_{j\p}(a,F(f)x)} \arrow[r, "{\tilde{\epsilon}_{a,f,x}}"] \arrow[d, "{\tilde{\Gamma}_{j\p,a,F(f)x}}"] & {\tilde{\epsilon}_{j}(W(f)a,x)} \arrow[d, "{\tilde{\Gamma}_{j,W(f)a,x}}"] \\
{\tilde{\eta}_{j\p}(a,F(f)x)} \arrow[r, "{\tilde{\eta}_{a,f,x}}"]                                                     & {\tilde{\eta}_{j}(W(f)a,x)}                                              
\end{tikzcd}
\end{center}
which has to commute for all objects $j,j\p$ in $\bJ$, morphisms $f:j\to j\p$ and objects $a$ in $Wj\p$ and $x$ in $Fj$. Now let $G,H:\colim^W F\to \C$ be two functors and let $\phi:G\to H$ be a natural transformations. We've already seen that due to the structure of $\colim^W F$ the two functors $G,H$ consists of $\bJ$-indexed families of functors $G_j,H_j:Wj\times Fj\to \C$ together with the natural transformations $G(\gamma)$ and $H(\gamma)$. A natural transformation $\phi:G\to H$ now consist of a $\bJ$-indexed family of natural transformations $\phi_j:G_j\to H_j$ which also have to be natural with respect to $\gamma$. This can be represented by the diagram
\begin{center}
\begin{tikzcd}
{G_{j\p}(a,F(f)x)} \arrow[r, "{G(\gamma_{a,f,x})}"] \arrow[d, "{\phi_{j\p,(a,F(f)x)}}"] & {G_{j}(W(f)a,x)} \arrow[d, "{\phi_{j,(W(f)a,x)}}"] \\
{H_{j\p}(a,F(f)x)} \arrow[r, "{H(\gamma_{a,f,x})}"]                                     & {H_{j}(W(f)a,x)}                                  
\end{tikzcd}
\end{center}
which has to commute for all objects $j,j\p$ in $\bJ$, morphisms $f:j\to j\p$ and objects $a$ in $Wj\p$ and $x$ in $Fj$. So now we also see that a natural transformation $\phi:G\to H$ between functors $G,H:\colim^W F\to \C$ consists of the same data as a modification $\Gamma:\epsilon\to\eta$ between strong transformations $\epsilon,\eta:W\to\Cat(F-,\C)$ and the presentation of this data also only differs by currying.

Thus, we have shown that precomposition with $\lambda:W\to\Cat(F-,\colim^W F)$, which does the same as currying, actually defines an isomorphism
\begin{align*}
\lambda^*:\Cat(\colim^W F,\C)\to \Psh{\bJ}(W,\Cat(F-,\C)).
\end{align*}
for each category $\C$. Thus $\colim^W F$ together with $\lambda$ forms a colimit of $F$ weighted by $W$.
\end{proof}

\begin{corollary} \label{catcc}
The 2-category $\Cat$ of categories is cocomplete, i.e., for every bicategory $\bJ$, every weight $W:\bJ\op\to\Cat$ and every pseudofunctor $F:\bJ\to\Cat$ the colimit of $F$ weighted by $W$ exists.
\end{corollary}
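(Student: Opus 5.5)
The corollary is a direct consequence of Theorem \ref{x1}: construction \ref{const} produces, for \emph{arbitrary} data $(\bJ,W,F)$ with $\bJ$ a (small) bicategory, $W:\bJ\op\to\Cat$ and $F:\bJ\to\Cat$, a category $\colim^W F$ together with a strong transformation $\lambda$ exhibiting it as the weighted colimit. So the plan is simply to check that the construction never used any hypothesis beyond these, and then invoke the theorem.

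Concretely, I would first verify that $\colim^W F$ is a legitimate (small) category. Its objects are those of $\coprod_{j\in\Ob\bJ}Wj\times Fj$, which form a small set because $\bJ$, each $Wj$ and each $Fj$ are small. Its morphisms are generated by the morphisms of the coproduct together with the freely adjoined isomorphisms $\gamma_{a,f,x}$ and their inverses, indexed by the small set of triples $(a,f,x)$, modulo the naturality and unitality relations. A category presented by generators and relations always exists (form the free category on the graph and quotient by the congruence generated by the relations, including the inverse relations $\gamma\gamma\inv=\id$, $\gamma\inv\gamma=\id$), and it stays small. I would also note that the construction does not assume that the relations are consistent in any stronger sense: the quotient exists regardless, and the isomorphism $\Cat(\colim^W F,\C)\iso\Psh{\bJ}(W,\Cat(F-,\C))$ in the proof of Theorem \ref{x1} depends only on the universal property of this presentation.

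Second, I would observe that Theorem \ref{x1} gives, for every category $\C$, an isomorphism (in particular an equivalence) $\lambda^*:\Cat(\colim^W F,\C)\to\Psh{\bJ}(W,\Cat(F-,\C))$. It is pseudonatural in $\C$ because it is given by precomposition with the fixed $\lambda$, so it assembles into an equivalence of pseudofunctors $\Cat(\colim^W F,-)\to\Psh{\bJ}(W,\Cat(F,-))$, which is exactly the definition of a weighted colimit. Since $\bJ$, $W$ and $F$ were arbitrary, $\Cat$ is cocomplete.

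The only step requiring any care is the size and existence check for the presented category. I do not expect a genuine obstacle there, since free constructions by generators and relations are standard. Everything substantive has already been done in Theorem \ref{x1}.
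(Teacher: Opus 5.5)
Your proposal is correct and takes the same approach as the paper: the paper states this corollary as an immediate consequence of Theorem \ref{x1}, whose Construction \ref{const} works for arbitrary (small) $\bJ$, $W$ and $F$. Your extra size and existence check on the presented category is harmless added care. One small point: the relations you quotient by must also include naturality of the adjoined $\gamma_{a,f,x}$ in $(a,f,x)$, in addition to the two coherence conditions and the inverse relations.
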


Another bicategory that interest us is the bicategory of presheaves over a given bicategory.

\begin{theorem} \label{PshCocomp}
Let $\bB$ be a bicategory. The bicategory $\Psh{\bB}$ is cocomplete.
\end{theorem}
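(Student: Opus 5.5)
We will compute weighted colimits in $\Psh{\bB}$ pointwise, reducing everything to theorem \ref{x1} and corollary \ref{catcc}. Let $\bJ$ be a bicategory, $W:\bJ\op\to\Cat$ a weight and $F:\bJ\to\Psh{\bB}$ a pseudofunctor. For each object $b$ in $\bB$, evaluation gives a pseudofunctor $F(-)(b):\bJ\to\Cat$. Construction \ref{const} then yields a category $C_b=\colim^W F(-)(b)$ together with a universal strong transformation $\lambda^b:W\to\Cat(F(-)(b),C_b)$.

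\textbf{Step 1: the presheaf structure.} We first assemble the $C_b$ into a presheaf $C:\bB\op\to\Cat$. A 1-morphism $h:b\to b\p$ gives a strong transformation $F(-)(h):F(-)(b\p)\to F(-)(b)$ of pseudofunctors $\bJ\to\Cat$. By the explicit description of construction \ref{const}, this induces a functor $C(h):C_{b\p}\to C_b$ sending $(a,x)\mapsto(a,F(j)(h)x)$. On the generating isomorphisms $\gamma$ it acts by $\gamma$ conjugated with the component isomorphisms $F(f)_h$. A 2-morphism $\theta:h\to h\p$ induces a natural transformation with components $(\id_a,F(j)(\theta)_x)$. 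The coherence isomorphisms $C^2$ and $C^0$ come from $F(j)^2$ and $F(j)^0$. The pseudofunctor axioms then follow because $\colim^W$ is functorial. Indeed, by the proof of theorem \ref{x1}, functors out of $C_b$ are in bijection with strong transformations out of $W$. So any equation between induced functors can be checked on the generators $(a,x)$ and $\gamma$, where it reduces to the axioms of $F$. Similarly, the $\lambda^b$ assemble into a strong transformation $\lambda:W\to\Psh{\bB}(F-,C)$. Its components $\lambda_j(a):F(j)\to C$ have components $\lambda^b_j(a)$, and these are strictly natural in $b$ by construction.

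\textbf{Step 2: universality.} We must show that for every presheaf $P$, the functor
\begin{align*}
\lambda^*:\Psh{\bB}(C,P)\to\Psh{\bJ}(W,\Psh{\bB}(F-,P))
\end{align*}
is an equivalence; in fact we expect an isomorphism. The approach is to unpack both sides into data indexed by $b$. A strong transformation $\sigma:C\to P$ consists of functors $\sigma_b:C_b\to P(b)$ with naturality isomorphisms $\sigma_h$. By theorem \ref{x1}, the functors $\sigma_b$ are the same as strong transformations $W\to\Cat(F(-)(b),P(b))$. A strong transformation $W\to\Psh{\bB}(F-,P)$, after swapping the order of the indices $j$ and $b$, is likewise a family of such transformations indexed by $b$, together with compatibility isomorphisms in $h$. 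Two further checks remain. The isomorphisms $\sigma_h:P(h)\sigma_{b\p}\Rightarrow\sigma_b C(h)$ must correspond exactly to these compatibility data; since natural transformations out of $C_{b\p}$ are determined by modifications, we evaluate them on generators. Modifications must also match on both sides. Both sides are ``currying'' rearrangements of the same data, as in the proof of theorem \ref{x1}.

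\textbf{Main obstacle.} We expect the main difficulty to be the bookkeeping in Step 2, namely showing that the coherence axioms on each side correspond. On the left these are the axioms of $\sigma_h$ as a strong transformation. On the right they are the axioms of a strong transformation valued in strong transformations, including how $\lambda_f$ interacts with $F(f)_h$. The first attempt is to use the strict description of morphisms out of $C_b$ to reduce every axiom to its value on the generators $(a,x)$ and $\gamma_{a,f,x}$. If that is too unwieldy, the fallback is a more abstract argument. One identifies $\Psh{\bB}(C,P)$ with a bicategorical end $\int_b\Cat(C_b,Pb)$ and applies the isomorphism of theorem \ref{x1} inside the end. Ends commute with the pointwise weighted limit $\Psh{\bJ}(W,-)$, so the right-hand side becomes $\Psh{\bJ}(W,\int_b\Cat(F(-)(b),Pb))$, which is the target of $\lambda^*$.
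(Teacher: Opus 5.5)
Your proposal is correct and follows essentially the same route as the paper: compute $\colim^W F(-)(b)$ pointwise in $\Cat$, assemble these categories into a presheaf with a strong transformation $\lambda$, and check universality componentwise in $b$. The differences are minor. You define $C(h)$ and $C(\theta)$ explicitly on the generators of construction \ref{const}, where the paper invokes the universal property. You also verify that $\lambda^*$ is bijective on modifications; the paper's proof only produces $\phi$ with $\kappa\cong\phi_*\lambda$ and leaves full faithfulness implicit.
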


\begin{proof}
Let $\bJ$ be a bicategory, $W:\bJ\op\to\Cat$ a weight and $F:\bJ\to\Psh{\bB}$ a pseudofunctor. We can construct a colimit of $F$ weighted by $W$ in the following way. Let $A$ be an object in $\bB$. We now have the pseudofunctor $F(-)(A):\bJ\to\Cat$ and since $\Cat$ is cocomplete the colimit of $F(-)(A)$ weighted by $W$ exists, i.e., we have a universal strong transformation $\lambda_A:W\to \Cat(F(-)(A),\colim^W F(-)(A))$.

We now define the presheaf $\colim^W F:\bB\op\to\Cat$ via $\colim^W F(A)=\colim^W F(-)(A)$. Let $B$ be another object in $\bB$ and $f:A\to B$ a morphism. We can now define the strong transformation $F(-)(f)^*\lambda_A:W\to \Cat(F(-)(B),\colim^W F(A))$ which has components $(F(-)(f)^*\lambda_A)_j(a)=\lambda_{A,j}(a)F(j)(f)$
\begin{center}
\begin{tikzcd}
F(j)(B) \arrow[r, "F(j)(f)"] & F(j)(A) \arrow[r, "{\lambda_{A,j}(a)}"] & \colim^W F(A)
\end{tikzcd}
\end{center}
for each $j$ in $\bJ$ and $a$ in $Wj$. By the universal property of the colimit, we get a morphism 
\begin{align*}
\colim^W F(f):\colim^W F(B)\to\colim^W F(A)
\end{align*}
and an isomorphism $\lambda_f:\colim^W F(f)_*\lambda_B \to F(-)(f)^*\lambda_A$ which we can represent by the following diagram
\begin{center}
\begin{tikzcd}
F(j)(B) \arrow[r, "F(j)(f)"] \arrow[d, "{\lambda_{B,j}(a)}"']                        & F(j)(A) \arrow[d, "{\lambda_{A,j}(a)}"] \\
\colim^W F(B) \arrow[r, "\colim^WF(f)"'] \arrow[ru, "{\lambda_{f,j,a}}", Rightarrow] & \colim^W F(A)                          
\end{tikzcd}
\end{center}
For a second 1-morphism $g:A\to B$ and a 2-morphism $\theta:f\to g$, we get a morphism of strong transformations
\begin{align*}
F(-)(\theta)^*\lambda_A:F(-)(f)^*\lambda_A\to F(-)(g)^*\lambda_A
\end{align*}
which gives us a morphism $\colim^W F(\theta):\colim^W F(f)\to \colim^W F(g)$. This can be represented as
\begin{center}
\begin{tikzpicture}
\node at (0,0) {$F(j)(B)$};
\node at (4,0) {$F(j)(A)$};
\node at (0,-3) {$\colim^W F(B)$};
\node at (4,-3) {$\colim^W F(A)$};
\node at (2,1.1) {$F(j)(g)$};
\node at (2,-1.1) {$F(j)(f)$};
\node at (2,-4) {$\colim^W F(f)$};
\node at (-0.7,-1.5) {$\lambda_{B,j}(a)$};
\node at (4.7,-1.5) {$\lambda_{A,j}(a)$};
\node at (2.2,0) {$F(j)(\theta)$};
\node at (1.4,-1.8) {$\lambda_{f,j,a}$};
\draw[->, out=30, in=150] (0.25,0.25) to (3.75,0.25);
\draw[->, out=-30, in=-150] (0.25,-0.25) to (3.75,-0.25);
\draw[->, out=-30, in=-150] (0.5,-3.25) to (3.5,-3.25);
\draw[->] (0,-0.3) to (0,-2.7);
\draw[->] (4,-0.3) to (4,-2.7);
\draw[-implies, double equal sign distance] (1.3,-0.3) to (1.3,0.3);
\draw[-implies, double equal sign distance] (1.3,-2.5) to (2.7,-1.7);
\node at (6,-1.5) {$=$};
\node at (8,0) {$F(j)(B)$};
\node at (12,0) {$F(j)(A)$};
\node at (8,-3) {$\colim^W F(B)$};
\node at (12,-3) {$\colim^W F(A)$};
\node at (10,1) {$F(j)(g)$};
\node at (10,-2) {$\colim^W F(g)$};
\node at (10,-4) {$\colim^W F(f)$};
\node at (7.3,-1.5) {$\lambda_{B,j}(a)$};
\node at (12.7,-1.5) {$\lambda_{A,j}(a)$};
\node at (10.1,-3) {\tiny{$\colim^W F(\theta)$}};
\node at (9.4,-0.6) {$\lambda_{g,j,a}$};
\draw[->, out=30, in=150] (8.25,0.25) to (11.75,0.25);
\draw[->, out=30, in=150] (8.5,-2.75) to (11.5,-2.75);
\draw[->, out=-30, in=-150] (8.5,-3.25) to (11.5,-3.25);
\draw[->] (8,-0.3) to (8,-2.7);
\draw[->] (12,-0.3) to (12,-2.7);
\draw[-implies, double equal sign distance] (9.25,-3.4) to (9.25,-2.6);
\draw[-implies, double equal sign distance] (9.3,-1.3) to (10.7,-0.5);
\end{tikzpicture}
\end{center}
The presheaf $\colim^W F$ is now well defined along with a strong transformation $\lambda:W\to \Psh{\bB}(F,\colim^W F)$ with components $\lambda_{j,A}(a)=\lambda_{A,j}(a)$. We must now show that $\lambda$ defines a universal strong transformation. Let $\kappa:W\to \Psh{\bB}(F,S)$ be another strong transformation for a presheaf $S$. For an object $A$ in $\bB$, we can then define the natural transformation $\kappa_A:\Psh{\bB}(F(-)(A),S(A))$ and by the universal property of the colimit, we get a morphism $\phi_A:\colim^W F(A)\to S(A)$ such that $\kappa_A\iso (\phi_A)_*\lambda_A$. The $\phi_A$ assemble into a morphism of presheafs $\phi:\colim^W F\to S$ and we get an isomorphism $\kappa\iso \phi_*\lambda$. This shows that $\lambda$ defines a colimit.
\end{proof}

\subsection{Weighted Colimits in \texorpdfstring{\textbf{Cat}\textsubscript{ic}}{Catic}} \label{app2}

We also want to show that $\Catic$ is cocomplete as a locally idempotent complete bicategory. For this we will first have to show that it actually is locally idempotent complete.

\begin{lemma}
For a category $\C$ and a idempotent complete category $\D$, the category $\Cat(\C,\D)$ of functors and natural transformations is idempotent complete.
\end{lemma}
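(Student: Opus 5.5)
We split idempotents of $\Cat(\C,\D)$ pointwise in $\D$ and then check that the chosen pointwise splittings assemble into a functor and two natural transformations. Let $F:\C\to\D$ be a functor and $e:F\to F$ an idempotent natural transformation, so each component $e_c:Fc\to Fc$ is an idempotent in $\D$. Since $\D$ is idempotent complete, for every object $c$ of $\C$ we choose a splitting $(Fc,Gc,r_c,s_c)$ of $e_c$, with $r_c:Fc\to Gc$, $s_c:Gc\to Fc$, $r_c s_c=\id_{Gc}$ and $s_c r_c=e_c$.

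Next we define $G$ on morphisms. For $h:c\to c\p$ in $\C$ we set
\begin{align*}
Gh=r_{c\p}\circ Fh\circ s_c:Gc\to Gc\p.
\end{align*}
This mirrors the definition of $\epsilon_\D$ in the proof of theorem \ref{1adj}.

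We then check that $G$ is a functor.
\begin{itemize}
\item For identities, $G\id_c=r_c s_c=\id_{Gc}$.
\item For composites, take $h:c\to c\p$ and $k:c\p\to c\pp$. We have $Gk\circ Gh=r_{c\pp}Fk\,s_{c\p}r_{c\p}Fh\,s_c=r_{c\pp}Fk\,e_{c\p}Fh\,s_c$.
\item By naturality of $e$, $Fk\,e_{c\p}=e_{c\pp}Fk$. Moreover $r_{c\pp}e_{c\pp}=r_{c\pp}s_{c\pp}r_{c\pp}=r_{c\pp}$.
\item Hence the composite equals $r_{c\pp}F(kh)s_c=G(kh)$.
\end{itemize}

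We then check that $r:F\to G$ and $s:G\to F$ are natural.
\begin{itemize}
\item For $r$: $Gh\circ r_c=r_{c\p}Fh\,e_c=r_{c\p}e_{c\p}Fh=r_{c\p}Fh$.
\item For $s$: $s_{c\p}\circ Gh=e_{c\p}Fh\,s_c=Fh\,e_c s_c=Fh\,s_c$, using $e_c s_c=s_c r_c s_c=s_c$.
\end{itemize}

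Finally, $r\cdot s=\id_G$ and $s\cdot r=e$ hold componentwise. Therefore $e$ splits in $\Cat(\C,\D)$, and $\Cat(\C,\D)$ is idempotent complete.

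The only step requiring care is functoriality of $G$. It rests entirely on naturality of $e$ together with the absorption identities $r e=r$ and $e s=s$, so no genuine obstacle is expected.
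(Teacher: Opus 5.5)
Your proposal is correct and follows the paper's proof essentially verbatim: split each component $e_c$ in $\D$, define the functor on morphisms by $r_{c\p}\circ Fh\circ s_c$, and verify functoriality and naturality of $r$ and $s$ using naturality of $e$ together with the absorption identities. The one cosmetic difference is that for composites you absorb on the left via $r_{c\pp}e_{c\pp}=r_{c\pp}$, whereas the paper absorbs on the right via $e_c s_c=s_c$; both are equally valid.
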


\begin{proof}
Let $F:\C\to\D$ be a functor and $p:F\to F$ an idempotent natural transformation, i.e., for every object $c$ in $\C$ the morphism $p_c:Fc\to Fc$ is an idempotent. Since these are morphisms in $\D$, we can choose a splitting for every idempotent $p_c$. We now choose for every $c$ in $\C$ an object $Sc$ in $\D$ and morphisms $f_c:Fc\to Sc$ and $g_c:Sc\to Fc$ such that $g_c f_c=p_c$ and $f_c g_c=\id_{Sc}$. We can now turn $S$ into a functor. Let $h:c\to d$ be a morphism in $\C$. We define $S(h)=f_d F(h) g_c$. We now have $S(\id_c)=f_c F(\id_c) g_c=f_c g_c=\id_{Sc}$ and for another morphism $k:d\to e$, we have
\begin{align*}
S(k) S(h) & \, =f_e F(k) g_d f_d F(h) g_c=f_e F(k) p_d F(h) g_c=f_e F(k) F(h) p_c g_c \\
& = f_e F(kh) g_c f_c g_c= f_e F(kh) g_c \id_{Sc} = S(kh)
\end{align*}
which shows that $S$ defines a functor. We lastly need to show that the $f_c$ and $g_c$ define natural transformations $f:F\to S$ and $g:S \to F$. For this we need to check that the following two diagrams commute.
\begin{center}
\begin{tikzcd}
Fc \arrow[r, "F(h)"] \arrow[d, "f_c"] & Fd \arrow[d, "f_d"] & Sc \arrow[r, "S(h)"] \arrow[d, "g_c"] & Sd \arrow[d, "g_d"] \\
Sc \arrow[r, "S(h)"]                  & Sd                  & Fc \arrow[r, "F(h)"]                  & Fd                 
\end{tikzcd}
\end{center}
We can show
\begin{align*}
S(h) f_c = f_d F(h) g_c f_c=f_d F(h) p_c = f_d p_d F(h) = f_d g_d f_d F(h)= \id_{Sd} f_d F(h) = f_d F(h) 
\end{align*}
and thus $f:F\to S$ defines a natural transformation and
\begin{align*}
g_d S(h) = g_d f_d F(h)g_c = p_d F(h) g_c = F(h) p_c g_c = F(h) g_c f_c g_c = F(h) g_c \id_{Sc} = F(h) g_c
\end{align*}
and thus $g:S\to F$ defines a natural transformation. The natural transformations $f$ and $g$ now satisfy $gf=p$ and $fg=\id_{S}$ and thus $p$ splits.
\end{proof}

\begin{corollary} \label{Catlic}
The bicategories $\Catic$ and $\Catic\op$ are locally idempotent complete.
\end{corollary}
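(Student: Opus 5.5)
The statement to prove is that $\Catic$ and $\Catic\op$ are locally idempotent complete, i.e.\ that every Hom-category is idempotent complete. I would derive it directly from the preceding lemma.

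For $\Catic$ itself: for idempotent complete categories $\C$ and $\D$, the Hom-category $\Catic(\C,\D)$ is, since $\Catic$ is a full sub-2-category of $\Cat$, the functor category $\Cat(\C,\D)$. The preceding lemma applies because $\D$ is idempotent complete; it only requires this of the codomain, and $\C$ may be arbitrary. So every idempotent natural transformation $p:F\to F$ splits through a functor $S$ via natural transformations $f$ and $g$.

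One small point needs checking: a splitting in $\Catic(\C,\D)$ must live in that Hom-category. This is automatic, because the splitting object $S$ is again a functor $\C\to\D$ between idempotent complete categories. Hence it is a 1-morphism of $\Catic$, and $f$ and $g$ are 2-morphisms of $\Catic$.

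For $\Catic\op$: the opposite bicategory only reverses 1-morphisms, as in the conventions section, and leaves 2-morphisms untouched. So
\begin{align*}
\Catic\op(\C,\D)=\Catic(\D,\C)=\Cat(\D,\C),
\end{align*}
which is idempotent complete by the same lemma, since $\C$ is idempotent complete. Idempotence and splitting are statements purely about vertical composition of 2-morphisms inside a single Hom-category, so they transfer verbatim.

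There is no real obstacle here. The only thing to watch is the bookkeeping of which convention "op" uses. If one instead read $\op$ as reversing 2-cells, each Hom-category would be replaced by its opposite category. That is still fine, because the opposite of an idempotent complete category is idempotent complete: the pair $(f,g)$ with $fg=\id$ and $gf=p$ becomes $(g,f)$ in the opposite category.
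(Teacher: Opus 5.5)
Your proposal is correct and takes the same approach as the paper. The paper states the corollary as an immediate consequence of the preceding lemma (that $\Cat(\C,\D)$ is idempotent complete whenever $\D$ is), applied to the Hom-categories $\Catic(\C,\D)=\Cat(\C,\D)$ and $\Catic\op(\C,\D)=\Cat(\D,\C)$, exactly as you do.
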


\begin{theorem} \label{x2}
The locally idempotent complete bicategory $\Catic$ of idempotent complete categories is ic-cocomplete, i.e., for every bicategory $\bJ$, every ic-weight $W:\bJ\op\to\Catic$ and every pseudofunctor $F:\bJ\to\Catic$ the colimit of $F$ weighted by $W$ exists.
\end{theorem}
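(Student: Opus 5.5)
The plan is to build the ic-weighted colimit in two stages: first take the colimit in $\Cat$, then idempotent complete it. Let $W:\bJ\op\to\Catic$ and $F:\bJ\to\Catic$ be given. Regarding $W$ and $F$ as $\Cat$-valued, Theorem \ref{x1} gives the category $\colim^W F$ of Construction \ref{const}, together with a universal strong transformation $\lambda:W\to\Cat(F-,\colim^W F)$. This category need not be idempotent complete, since freely adjoining the isomorphisms $\gamma$ can create idempotents that do not split. The candidate for the ic-weighted colimit is therefore
\[
\K:=\Kar{\colim^W F},\qquad \tilde\lambda:=(\iota_{\colim^W F})_*\lambda:W\to\Catic(F-,\K).
\]
By Proposition \ref{kar1com}, $\K$ is an object of $\Catic$.

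The key step is to check, for each idempotent complete category $\D$, that
\[
\tilde\lambda^*:\Catic(\K,\D)\to\Pshic{\bJ}(W,\Catic(F-,\D))
\]
is an equivalence of categories. This map factors as
\[
\Catic(\K,\D)\xrightarrow{\iota^*}\Cat(\colim^W F,\D)\xrightarrow{\lambda^*}\Psh{\bJ}(W,\Cat(F-,\D)).
\]
The first functor is an equivalence by Corollary \ref{kar1equ}, because $\D$ is idempotent complete. The second is an isomorphism by Theorem \ref{x1}.

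Two points remain. First, $\Catic$ is a full sub-2-category of $\Cat$, so $\Catic(\K,\D)=\Cat(\K,\D)$. Second, $\Pshic{\bJ}(W,\Catic(F-,\D))$ and $\Psh{\bJ}(W,\Cat(F-,\D))$ are literally the same category. Strong transformations and modifications between presheaves that land in the full sub-2-category $\Catic$ are the same data as in $\Cat$, and $\Catic(Fj,\D)$ lies in $\Catic$ by the preceding lemma. Composing the two functors gives the required equivalence. We also need compatibility: the composite must agree with precomposition by $\tilde\lambda$, at least up to isomorphism. Since $\tilde\lambda^*G=(G\iota)_*\lambda$, it agrees on the nose, so this holds.

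The step I expect to need the most care is pseudonaturality in $\D$. The definition in \ref{lPcolim} only asks for an equivalence of categories for each object $\D$, so objectwise equivalence suffices. If one wants an equivalence of pseudofunctors, it follows because $\tilde\lambda^*$ is given by postcomposition, which is strictly natural in $\D$. A second subtlety is that Corollary \ref{kar1equ} relies on the adjunction of Theorem \ref{1adj}, which uses choices of splittings. However, we only need that $\iota^*$ is an equivalence, and Proposition \ref{2adjequiv} provides exactly that.
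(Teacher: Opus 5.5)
Your proof is correct. It produces the same colimit $\Kar{\C}$ (with $\C=\colim^W F$) and the same cocone $(\iota_\C)_*\lambda$ as the paper, but by a more direct route.

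The paper does not factor $\tilde\lambda^*$ through $\iota_\C^*$. Instead it proceeds in three steps:
\begin{enumerate}
\item It applies Proposition \ref{adjpres} (left adjoint pseudofunctors preserve weighted colimits) to the adjunction $\Kartwo\dashv\U$ of Theorem \ref{1adj}. This shows that $\Kar{\C}$ together with $\Kartwo\lambda$ is the $W$-weighted colimit of $\Kartwo F$ in $\Catic$.
\item It transports this along the equivalence $\iota_F\colon F\to\Kartwo F$, using the commuting squares $\iota_\C\,\lambda_j(a)=\Kar{\lambda_j(a)}\,\iota_{Fj}$, to replace $\Kartwo\lambda$ by $(\iota_\C)_*\lambda$.
\item It notes that the target hom-category may be taken in $\Pshic{\bJ}$.
\end{enumerate}

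Your factorisation $\tilde\lambda^*=\lambda^*\circ\iota_\C^*$ uses only Corollary \ref{kar1equ} and Theorem \ref{x1}. It holds strictly because $\Cat$ is a strict 2-category. This lets you avoid Proposition \ref{adjpres} altogether. It also avoids a step the paper leaves implicit: checking that the equivalence obtained after transporting along $\iota_F$ really is precomposition with $(\iota_\C)_*\lambda$.

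What the paper's version buys is conceptual packaging: the result appears as an instance of the principle that a reflector preserves weighted colimits. Yours is shorter and fully explicit.

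At bottom the two proofs rest on the same input. Proposition \ref{adjpres} is itself derived from the hom-equivalence of Proposition \ref{2adjequiv}, and that hom-equivalence is exactly what underlies Corollary \ref{kar1equ}.
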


\begin{proof}
Let $\bJ$ be a bicategory, $W:\bJ\op\to\Catic$ a weight and $F:\bJ\to\Catic$ a pseudofunctor. We can regard both $W$ and $F$ as pseudofunctors taking values in $\Cat$. By theorem \ref{x1}, we now know that we can construct a category $\C$ along with a strong transformation $\lambda:W\to\Cat(F-,\C)$ such that we have a natural equivalence
\begin{align*}
\lambda^* :\Cat(\C,-)\to \Psh{\bJ}(W,\Cat(F,-)).
\end{align*}
We know by theorem \ref{1adj}, that we have a left adjoint pseudofunctor $\Kartwo:\Cat\to\Catic$ and thus, by proposition \ref{adjpres}, we have a natural equivalence
\begin{align*}
(\Kartwo\lambda)^*:\Catic(\Kar{\C},-)\to\Psh{\bJ}(W,\Catic(\Kartwo F,-))
\end{align*}
where $\Kartwo\lambda$ is defined by
\begin{align*}
(\Kartwo\lambda)_j(a)=\Kar{\lambda_j(a)}:\Kar{Fj}\to\Kar{\C}
\end{align*}
for each $j$ in $\bJ$ and $a$ in $Wj$. Since $F$ already takes values in $\Catic$, $F$ and $\Kartwo F$ are equivalent with an equivalence given by $\iota_F:F\to \Kartwo F$ which has components given by $\iota_{Fj}:Fj\to \Kar{Fj}$. We also know that the diagram
\begin{center}
\begin{tikzcd}
Fj \arrow[d, "\iota_{Fj}"] \arrow[r, "\lambda_j(a)"] & \C \arrow[d, "\iota_\C"] \\
\Kar{Fj} \arrow[r, "\Kar{\lambda_j(a)}"]             & \Kar{\C}                 
\end{tikzcd}
\end{center}
commutes. These two facts combined tell us that we have an equivalence
\begin{align*}
((\iota_\C)_*\lambda)^*: \Catic(\Kar{\C},-)\to \Psh{\bJ}(W,\Catic(F,-))
\end{align*}
and since $W$ takes values in $\Catic\op$ and $\Catic$ is locally idempotent complete, we get the desired result that we have an equivalence
\begin{align*}
((\iota_\C)_*\lambda)^*: \Catic(\Kar{\C},-)\to \Pshic{\bJ}(W,\Catic(F,-))
\end{align*}
\end{proof}

Lastly, we also want to show that the bicategory $\Pshic{\bB}$ of presheaves taking values in idempotent complete categories is cocomplete as a locally idempotent complete bicategory.

\begin{lemma}
For a bicategory $\bB$ and a locally idempotent complete bicategory $\bC$ the bicategory $\Bicat(\bB,\bC)$ is locally idempotent complete.
\end{lemma}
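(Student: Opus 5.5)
The plan is to mimic the proof of the 1-categorical lemma in Appendix~\ref{app2} (that $\Cat(\C,\D)$ is idempotent complete when $\D$ is), now one level up. Fix pseudofunctors $F,G:\bB\to\bC$, a strong transformation $\alpha:F\to G$ and an idempotent modification $\Theta:\alpha\to\alpha$, so that $\Theta\cdot\Theta=\Theta$. For every object $b$ of $\bB$, the component $\Theta_b:\alpha_b\to\alpha_b$ is an idempotent 2-morphism in $\bC(Fb,Gb)$. Since $\bC$ is locally idempotent complete, we choose a splitting of it: a 1-morphism $\sigma_b:Fb\to Gb$ together with 2-morphisms $r_b:\alpha_b\to\sigma_b$ and $s_b:\sigma_b\to\alpha_b$ satisfying $r_b s_b=\id_{\sigma_b}$ and $s_b r_b=\Theta_b$.

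The next step is to make $\sigma$ into a strong transformation. For a 1-morphism $h:b\to b\p$ we need a 2-isomorphism $\sigma_h:G(h)\circ\sigma_b\to\sigma_{b\p}\circ F(h)$. We define it, as in the 1-categorical case, by conjugating the component of $\alpha$:
\begin{align*}
\sigma_h=(r_{b\p}\circ\id_{F(h)})\cdot\alpha_h\cdot(\id_{G(h)}\circ s_b).
\end{align*}
The modification axiom for $\Theta$ says that $(\Theta_{b\p}\circ\id_{F(h)})\cdot\alpha_h=\alpha_h\cdot(\id_{G(h)}\circ\Theta_b)$. This is the analogue of naturality of $p$ in the 1-categorical proof.

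This step is the main obstacle, because $\sigma_h$ must be invertible and not merely a 2-morphism. The candidate inverse is
\begin{align*}
(\id_{G(h)}\circ r_b)\cdot\alpha_h^{-1}\cdot(s_{b\p}\circ\id_{F(h)}).
\end{align*}
Composing the two, the middle terms collapse through $s r=\Theta$. We then use the modification axiom, rewritten with $\alpha_h^{-1}$, to push $\Theta$ across $\alpha_h^{\pm1}$ and absorb it into $r$ or $s$, using $r\Theta=r$ and $\Theta s=s$. What remains is $r s=\id$ on both sides.

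Once that is settled, the remaining checks are routine. Naturality of $\sigma_h$ in $h$ follows from naturality of $\alpha_h$. The compositionality and unitality axioms for $\sigma$ are obtained from those of $\alpha$ in the same way as the functoriality computation for $S$: we insert $s_{b\p}r_{b\p}=\Theta_{b\p}$ at the middle object and slide $\Theta$ away using the modification axiom. Finally, $r=(r_b)_b$ and $s=(s_b)_b$ must be modifications $\alpha\to\sigma$ and $\sigma\to\alpha$. Each of these is a direct computation with the same absorption identities, exactly mirroring the check that $f$ and $g$ are natural. Since $s\cdot r=\Theta$ and $r\cdot s=\id_\sigma$ hold componentwise, $\Theta$ splits, and so every Hom-category of $\Bicat(\bB,\bC)$ is idempotent complete.
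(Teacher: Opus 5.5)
Your proposal is correct and matches the paper's proof: you split each component $\Theta_b$ in $\bC(Fb,Gb)$, define $\sigma_h$ by composing $\alpha_h$ with $s_b$ and $r_{b'}$, and check that the splitting maps assemble into modifications. Your explicit check that $\sigma_h$ is invertible, using the modification axiom together with $r\Theta=r$ and $\Theta s=s$, fills in a step the paper leaves implicit.
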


\begin{proof}
Let $F,G:\bB\to \bC$ be pseudofuncors, $\alpha:F\to G$ a strong transformation and $p:\alpha\to \alpha$ an idempotent modification, i.e., for each object $b$ in $\bB$ the morphism $p_b:\alpha_b\to \alpha_b$ in $\bC(Fb,Gb)$ is idempotent. Since $\bC$ is locally idempotent complete, we can choose a splitting for each $p_b$ for all $b$ in $\bB$. We therefore have 1-morphisms $s_b:Fb\to Gb$, and 2-morphisms $f_b:\alpha_b\to s_b$ and $g_b:s_b\to \alpha_b$ such that $g_b f_b=p_b$ and $f_b g_b=\id_{s_b}$. $s:F\to G$ forms a strong transformation with component 2-morphisms $s_h:G(h)s_b\to s_c F(h)$ given by
\begin{center}
\begin{tikzpicture}
\node at (0,0) {$Fb$};
\node at (3,0) {$Fc$};
\node at (0,-3) {$Gb$};
\node at (3,-3) {$Gc$};
\node at (1.5,0.25) {$F(h)$};
\node at (1.5,-3.25) {$G(h)$};
\node at (-0.85,-1.5) {$s_b$};
\node at (0.8,-1) {$\alpha_b$};
\node at (2.2,-2) {$\alpha_c$};
\node at (3.85,-1.5) {$s_c$};
\node at (0,-1.4) {$g_b$};
\node at (3,-1.4) {$f_b$};
\node at (1.35,-1.4) {$\alpha_h$};
\draw[->] (0.3,0) to (2.7,0);
\draw[->] (0.3,-3) to (2.7,-3);
\draw[->, out=-120, in=120] (-0.2,-0.2) to (-0.2,-2.8);
\draw[->, out=-60, in=60] (0.2,-0.2) to (0.2,-2.8);
\draw[->, out=-120, in=120] (2.8,-0.2) to (2.8,-2.8);
\draw[->, out=-60, in=60] (3.2,-0.2) to (3.2,-2.8);
\draw[-implies, double equal sign distance] (-0.4,-1.8) to (0.4,-1.8);
\draw[-implies, double equal sign distance] (2.6,-1.8) to (3.4,-1.8);
\draw[-implies, double equal sign distance] (1.3,-2.2) to (2.1,-0.8);
\end{tikzpicture}
\end{center}
for a 1-morphism $h:b\to c$ in $\bB$. $f:\alpha\to s$ and $g:s\to \alpha$ now form modifications such that $gf=p$ and $fg=\id_s$ and thus $p$ splits.
\end{proof}

\begin{corollary} \label{Pshlic}
Let $\bB$ be a bicategory. The bicategory $\Pshic{\bB}$ of idempotent complete presheaves on $\bB$ is locally idempotent complete.
\end{corollary}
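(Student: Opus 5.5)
The statement is Corollary \ref{Pshlic}: $\Pshic{\bB}$ is locally idempotent complete. I plan to obtain it by specialising the preceding lemma (local idempotent completeness of $\Bicat(\bB,\bC)$) to $\bC=\Catic$ and to the source $\bB\op$.

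The identification is as follows. By the conventions of the paper, $\Pshic{\bB}$ is the full subbicategory of $\Psh{\bB}=\Bicat(\bB\op,\Cat)$ on the presheaves taking values in $\Catic$. Since $\Catic$ is a \emph{full} sub-2-category of $\Cat$, a strong transformation between two such presheaves, and a modification between two such transformations, involve only data in $\Catic$. Hence $\Pshic{\bB}$ coincides with $\Bicat(\bB\op,\Catic)$ as a bicategory: same objects, same Hom-categories.

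The argument then has two steps. First, by Corollary \ref{Catlic}, $\Catic$ is locally idempotent complete; this is the lemma that $\Cat(\C,\D)$ is idempotent complete whenever $\D$ is. Second, apply the preceding lemma with source bicategory $\bB\op$ and target $\bC=\Catic$. This shows that every Hom-category $\Bicat(\bB\op,\Catic)(S,T)$ is idempotent complete. Concretely, an idempotent modification $p$ on $\alpha:S\to T$ is split componentwise in $\Catic(Sb,Tb)$, the splittings are assembled into a strong transformation, and the splitting maps become modifications.

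The only point needing care is that the splitting constructed in that lemma stays inside $\Pshic{\bB}$. This holds automatically: the splitting object is a strong transformation between the same presheaves $S,T$, so no new objects are introduced, and fullness of $\Pshic{\bB}$ in $\Psh{\bB}$ does the rest. I therefore expect no real obstacle. If anything deserves checking, it is that the component 2-cells $s_h$ defined in the lemma are invertible, so that $s$ is genuinely strong. This follows by exhibiting the inverse $(\id\circ f_b)\cdot\alpha_h^{-1}\cdot(g_c\circ\id)$ and using $f g=\id$ together with the fact that $p$ commutes with $\alpha_h$, which is the modification axiom.
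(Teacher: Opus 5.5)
Your proposal is correct and follows the paper's own route: the corollary is the preceding lemma on $\Bicat(\bB,\bC)$ specialised to source $\bB\op$ and target $\Catic$, with Corollary \ref{Catlic} supplying the local idempotent completeness of $\Catic$. Your explicit check that the component 2-cells $s_h$ are invertible, using the modification axiom for $p$, fills in a detail the paper's lemma leaves implicit, and your check of it is correct.
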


Combining the two theorems \ref{PshCocomp} and \ref{x2}, and their proofs furthermore yields the following result.

\begin{corollary} \label{Pshicc}
Let $\bB$ be a bicategory. The locally idempotent complete bicategory $\Pshic{\bB}$ is ic-cocomplete.
\end{corollary}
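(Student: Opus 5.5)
The statement to prove is Corollary \ref{Pshicc}: $\Pshic{\bB}$ is ic-cocomplete. The plan is to combine the pointwise construction from Theorem \ref{PshCocomp} with the idempotent-completion step from Theorem \ref{x2}.

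Fix a bicategory $\bJ$, an ic-weight $W:\bJ\op\to\Catic$ and a pseudofunctor $F:\bJ\to\Pshic{\bB}$. Regard $F$ as taking values in $\Psh{\bB}$. For each object $A$ of $\bB$, Theorem \ref{x2} supplies an ic-weighted colimit of $F(-)(A):\bJ\to\Catic$. Concretely, this is $\Kar{\colim^W F(-)(A)}$ with the universal strong transformation $(\iota)_*\lambda_A$. Define a presheaf $S$ by $S(A)=\Kar{\colim^W F(-)(A)}$; it lands in $\Catic$ by Proposition \ref{kar1com}.

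The action of $S$ on 1- and 2-morphisms is obtained exactly as in the proof of Theorem \ref{PshCocomp}. For $f:A\to B$, precomposing the universal transformation at $A$ with $F(-)(f)$ gives a strong transformation $W\to\Catic(F(-)(B),S(A))$. By the ic-universal property at $B$, this yields a functor $S(f):S(B)\to S(A)$ together with a comparison isomorphism $\lambda_f$. A 2-morphism $\theta$ induces $S(\theta)$ in the same way. The pseudofunctor coherence of $S$, and the strong transformation axioms for the assembled $\lambda:W\to\Pshic{\bB}(F,S)$ with components $\lambda_{j,A}(a)=\iota\lambda_{A,j}(a)$, follow from uniqueness up to unique invertible 2-cell in the universal property. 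This step needs the universal property to be an equivalence of categories, which Theorem \ref{x2} provides.

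For universality, take a presheaf $T$ in $\Pshic{\bB}$ and a strong transformation $\kappa:W\to\Pshic{\bB}(F,T)$. Evaluating at each $A$ gives $\kappa_A:W\to\Catic(F(-)(A),T(A))$. Since $T(A)$ is idempotent complete, Theorem \ref{x2} yields $\phi_A:S(A)\to T(A)$ with $\kappa_A\iso(\phi_A)_*\lambda_A$. The same uniqueness argument makes the $\phi_A$ assemble into a strong transformation $\phi:S\to T$ with $\kappa\iso\phi_*\lambda$. Modifications are handled pointwise in the same manner, which gives that $\lambda^*$ is an equivalence of categories $\Pshic{\bB}(S,T)\to\Pshic{\bJ}(W,\Pshic{\bB}(F,T))$.

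The main obstacle is the assembly step: showing that the pointwise choices of $S(f)$ and $\phi_A$ satisfy the pseudonaturality coherence axioms. I would handle it the way Theorem \ref{PshCocomp} does, using only that each comparison 2-cell is the unique one furnished by a universal property, so every coherence diagram is forced to commute. A cleaner fallback is available: evaluation at $A$ preserves these colimits, so one can check that $S$ is equivalent to $\Kartwo$ applied pointwise to the $\Cat$-valued colimit of Theorem \ref{PshCocomp}. Universality then transfers through the adjunction $\Kartwo\dashv\U$ of Theorem \ref{1adj} via Proposition \ref{adjpres}, applied objectwise.
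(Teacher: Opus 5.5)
Your proposal is correct and is essentially the paper's argument. The paper proves this corollary only by saying it follows from combining Theorem \ref{PshCocomp} with Theorem \ref{x2} and their proofs, which is exactly your construction: define $S(A)=\Kar{\colim^W F(-)(A)}$ pointwise, then obtain the functoriality and universality from the uniqueness in the universal property. One small caveat concerns your fallback: to genuinely bypass the assembly step you should apply Proposition \ref{adjpres} to the postcomposition pseudofunctor $\Kartwo_*:\Psh{\bB}\to\Pshic{\bB}$, which is left adjoint to the inclusion, rather than objectwise, since the objectwise version only recovers the pointwise colimits.
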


\subsection{Extensions along Embeddings} \label{app3}

In the following we will look at the theory of extending pseudofunctors along embeddings, i.e., pseudofunctors that are fully faithful.

\begin{definition}{(Extensions)}
Let $\bB$, $\bC$ and $\bD$ be bicategories, $F:\bB\to \bC$ an arbitrary pseudofunctor and $\iota:\bB\to\bD$ a fully faithful pseudofunctor. An \textit{extension} of $F$ along $\iota$ is a pseudofunctor $F\p:\bD\to\bC$ together with an equivalence of pseudofunctors $\phi:F\p\iota\to F$.
\end{definition}

\begin{proposition}\label{ext}
Let $\bB$, $\bC$, $\bD$, $F:\bB\to \bC$  and $\iota:\bB\to\bD$ be as above. An extension of $F$ along $\iota$ exists, if for all $d\in\bD$ the colimit of $F$ weighted by $\bD(\iota-,d):\bB\op\to\Cat$ exists.
\end{proposition}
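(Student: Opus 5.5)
We construct the extension as a pointwise weighted colimit, the bicategorical analogue of a left Kan extension along a fully faithful functor. For each object $d$ in $\bD$ we choose a colimit
\begin{align*}
F\p(d) := \colim^{\bD(\iota-,d)} F
\end{align*}
with universal strong transformation $\lambda^d:\bD(\iota-,d)\to\bC(F-,F\p(d))$. These choices exist by hypothesis.

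\emph{Action on morphisms.} A 1-morphism $h:d\to d\p$ induces, by postcomposition, a strong transformation $h_*:\bD(\iota-,d)\to\bD(\iota-,d\p)$ of weights. Composing with $\lambda^{d\p}$ gives a strong transformation $\lambda^{d\p}h_*:\bD(\iota-,d)\to\bC(F-,F\p(d\p))$. Because $(\lambda^d)^*$ is an equivalence of categories, we pick a preimage $F\p(h):F\p(d)\to F\p(d\p)$ together with an invertible modification $F\p(h)_*\lambda^d\iso\lambda^{d\p}h_*$.

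A 2-morphism $\theta:h\Rightarrow h\p$ induces a modification $\theta_*:h_*\to h\p_*$. By full faithfulness of $(\lambda^d)^*$, this yields a unique 2-morphism $F\p(\theta)$.

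The coherence isomorphisms $F\p(k)F\p(h)\iso F\p(kh)$ and $\id_{F\p(d)}\iso F\p(\id_d)$ are likewise the unique 2-isomorphisms corresponding to the canonical isomorphisms of induced strong transformations. The pentagon- and unit-type axioms for $F\p$ then hold automatically: both sides of each axiom correspond under the fully faithful functor $(\lambda^d)^*$ to the same modification.

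This is the main bookkeeping obstacle. I would handle it by phrasing everything through the single principle that $(\lambda^d)^*$ is fully faithful and essentially surjective, so every required equation reduces to an equation between modifications built from coherence data of $\bD$ and $\bC$.

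\emph{Comparison with $F$.} We must produce an equivalence $\phi:F\p\iota\to F$. For $b$ in $\bB$, full faithfulness of $\iota$ gives an equivalence of weights
\begin{align*}
\bD(\iota-,\iota b)\equiv\bB(-,b).
\end{align*}
A colimit weighted by the representable $\bB(-,b)$ is computed by $Fb$ itself: by the bicategorical Yoneda lemma, the strong transformation $\bB(-,b)\to\bC(F-,Fb)$ given by applying $F$ is universal.

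Since weighted colimits with equivalent weights agree, we get an equivalence $\phi_b:F\p(\iota b)\to Fb$. It is characterised as the morphism whose induced transformation is the Yoneda one. The naturality 2-isomorphisms $\phi_f$ for $f:b\to b\p$ arise again by uniqueness: both $\phi_{b\p}F\p(\iota f)$ and $F(f)\phi_b$ correspond under $(\lambda^{\iota b})^*$ to isomorphic strong transformations $\bD(\iota-,\iota b)\to\bC(F-,Fb\p)$.

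The axioms of a strong transformation follow from the same uniqueness argument. Each $\phi_b$ is an equivalence, so $\phi$ is an equivalence of pseudofunctors. Hence $(F\p,\phi)$ is an extension of $F$ along $\iota$.
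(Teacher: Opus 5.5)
Your proposal is correct and follows essentially the same route as the paper: define $F'(d)$ as the colimit weighted by $\bD(\iota-,d)$, and obtain the action on 1- and 2-morphisms and the coherence isomorphisms from the equivalence $(\lambda^d)^*$ (the paper does this via a chosen inverse $\eta^d$, setting $F'f=\eta^d(\lambda^{d'}\circ f_*)$). Then, as in the paper, deduce $F'\iota\simeq F$ from $\bD(\iota-,\iota b)\simeq\bB(-,b)$ and the Yoneda lemma; you are somewhat more explicit than the paper about the 2-cell action and the naturality 2-cells of the comparison $\phi$.
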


\begin{proof}
We define $F\p$ on objects $d\in\bD$ to be the weighted colimit $\colim^{\bD(\iota-,d)}F$. On Hom-categories, we have functors $\bD(d,d\p) \to \bC(F\p d,F\p d\p)$ which are defined in the following way:

For every 1-morphism $f:d\to d\p$, we have a strong transformation $f_*:\bD(\iota-,d)\to \bD(\iota-,d\p)$ given by postcompostion. By our assumption, we have a universal strong transformation $\lambda^d:\bD(\iota-,d)\to\bC(F-,F\p d)$ for each $d\in\bD$. We can now form $\lambda^{d\p}\circ f_*: \bD(\iota-,d)\to\bC(F-,F\p d\p)$.

For all $d\in \bD$ we have an equivalence of categories
\begin{align*}
\bC(F\p d, c)\equiv \Psh{\bJ}(\bD(\iota-,d),\bC(F-,c))
\end{align*}
given by precomposition with $\lambda^d$. We define $\eta^d$ to be an inverse to this equivalence and can now define the functors $F\p:\bD(d,d\p) \to \bC(F\p d,F\p d\p)$ via $F\p f =\eta^d(\lambda^{d\p}\circ f_*)$.

We now need to check that this assignment is functorial, i.e., that for objects $d$, $d\p$, $d\pp$ and morphisms $f:d\to d\p$ and $g:d\p \to d\pp$,  we have coherence isomorphisms $\id_{F\p d}\iso F\p \id_d$ and $F\p g \circ F\p f \iso F\p(g\circ f)$ subject to compatibility conditions.

The first isomorphism is simply given by $F\p \id_d = \eta^d(\lambda^d \circ (\id_d)_*) \iso \eta^d(\lambda^d)\iso \id_{F\p d}$. For the second one, we need to find an isomorphism
\begin{align*}
F\p g\circ F\p f=\eta^{d\p}(\lambda^{d\pp}\circ g_*)\circ \eta^d(\lambda^{d\p} \circ f_*)\iso \eta^d(\lambda^{d\pp}\circ(g\circ f)_*)=F\p(g\circ f). 
\end{align*}
By precomposing both sides with $\lambda^d$, we have a chain of isomorphisms
\begin{align*}
F\p g\circ F\p f \circ \lambda^d & = \eta^{d\p}(\lambda^{d\pp}\circ g_*)\circ \eta^d(\lambda^{d\p} \circ f_*)\circ \lambda^d \iso \eta^{d\p}(\lambda^{d\pp}\circ g_*)\circ \lambda^{d\p} \circ f_* \\
& \iso \lambda^{d\pp}\circ g_* \circ f_* \iso \eta^d(\lambda^{d\pp}\circ(g\circ f)_*) \circ \lambda^d = F\p(g\circ f) \circ \lambda_d.
\end{align*}
Since precomposing with $\lambda^d$ is an equivalence, we have our desired isomorphism. It is a relatively straightforward calculation to check that these isomorphism satisfy compatibility with associators and unitors and therefore define a pseudofunctor.

Finally we need to show that $F\p$ actually extends $F$, i.e., that we have an equivalence $F\p\iota \equiv F$. This equivalence exists because of the Yoneda lemma as we will elaborate briefly.

For an object $b$ in $\bB$, $F\p \iota b$ is defined by taking the colimit of $F$ weighted by $\bD(\iota-,\iota b)$. But since $\iota$ is fully faithful, this weight is equivalent to the representable weight $\bB(-,b)$ and by the Yoneda lemma we have an equivalence
\begin{align*}
\Psh{\bJ}(\bB(-,b),\bC(F-,c))\equiv \bC(Fb,c)
\end{align*}
for all objects $c$ in $\bC$. Thus the colimit of $F$ weighted by $\bD(\iota-,\iota b)$ is equivalent to $Fb$, giving us our desired equivalence.
\end{proof}

\begin{remark}
The extension constructed here is a special case of a bicategorical left Kan extension.
\end{remark}

\urlstyle{same}
\bibliographystyle{alpha}
\addcontentsline{toc}{section}{References}
\bibliography{bibliography}
\end{document}